\documentclass[twoside]{irmaems}
\usepackage{amssymb} 
\usepackage{amsmath} 
\usepackage{latexsym}

\usepackage{makeidx}
\makeindex

\usepackage[mathscr]{eucal}
\usepackage[english]{babel}
\input pstricks
\input pst-node

\usepackage{pstricks, pst-node}\input xy
\usepackage[all]{xy}

\setcounter{page}{1}

\theoremstyle{definition} 

 \newtheorem{definition}{Definition}[section]
 \newtheorem{remark}[definition]{Remark}
 \newtheorem{example}[definition]{Example}


\theoremstyle{plain}      

 \newtheorem{proposition}[definition]{Proposition}
 \newtheorem{theorem}[definition]{Theorem}
 \newtheorem{corollary}[definition]{Corollary}
 \newtheorem{lemma}[definition]{Lemma}

\newtheorem*{conjecture}{Conjecture}


\newcommand{\bqn}{\begin{equation*}}
\newcommand{\eqn}{\end{equation*}}
\newcommand{\bq}{\begin{equation}}
\newcommand{\eq}{\end{equation}}
\newcommand{\ba}{\begin{aligned}}
\newcommand{\ea}{\end{aligned}}
\newcommand{\be}{\begin{enumerate}}
\newcommand{\ee}{\end{enumerate}}

\newcommand{\tr}{\operatorname{tr}}



\newcommand{\aut}{\operatorname{Aut}}
\newcommand{\End}{\operatorname{End}}

\newcommand{\SU}{\operatorname{SU}}
\newcommand{\SL}{\operatorname{SL}}
\newcommand{\Sp}{\operatorname{Sp}}
\newcommand{\PSp}{\operatorname{PSp}}
\newcommand{\PSL}{\operatorname{PSL}}
\newcommand{\PO}{\operatorname{PO}}
\newcommand{\SO}{\operatorname{SO}}
\newcommand{\GL}{\operatorname{GL}}

\newcommand{\PU}{\operatorname{PU}}

\newcommand{\essgr}{\operatorname{Ess\,Gr}}
\newcommand{\essim}{\operatorname{Ess\,Im}}

\newcommand{\CC}{{\mathbb C}}
\newcommand{\DD}{{\mathbb D}}
\newcommand{\FF}{{\mathbb F}}

\newcommand{\NN}{{\mathbb N}}
\newcommand{\PP}{{\mathbb P}}
\newcommand{\QQ}{{\mathbb Q}}
\newcommand{\RR}{{\mathbb R}}

\newcommand{\ZZ}{{\mathbb Z}}

\newcommand{\Bb}{{\mathcal B}}
\newcommand{\Cc}{{\mathcal C}}
\newcommand{\Dd}{{\mathcal D}}

\newcommand{\Hh}{{\mathcal H}}

\newcommand{\Mm}{{\mathcal M}}
\newcommand{\Nn}{{\mathcal N}}

\newcommand{\Uu}{{\mathcal U}}

\newcommand{\Yy}{{\mathcal Y}}

\newcommand{\Zz}{{\mathcal Z}}

\newcommand{\frakg}{{\mathfrak g}}

\newcommand{\fraks}{{\mathfrak s}}
\newcommand{\frakl}{{\mathfrak l}}

\newcommand{\g}{\gamma}

\newcommand{\gG}{{\mathbf G}}
\newcommand{\hH}{{\mathbf H}}

\newcommand{\lL}{{\mathbf L}}

\newcommand{\<}{\langle}
\renewcommand{\>}{\rangle}

\def\h{{\rm H}}
\def\hb{{\rm H}_{\rm b}}

\def\hc{{\rm H}_{\rm c}}
\def\hcb{{\rm H}_{\rm cb}}
\def\hdr{{\rm H}_{\rm dR}}

\def\linfty{L^\infty}

\def\la{L^\infty_{\mathrm{alt}}}

\def\T{\operatorname{T}}

\def\one{\mathbf{1\kern-1.6mm 1}}

\def\homeo#1{\operatorname{Homeo}^+\!\left(#1\right)}

\def\id{{\it I\! d}}

\def\C{{\operatorname{C}}}

\def\arg{{\operatorname{arg}}}

\def\h2{{\operatorname{H_2}}}
\def\h1{{\operatorname{H_1}}}

\def\rk{{\operatorname{rank}}}
\def\tr{{\operatorname{tr}}}

\def\d{{\operatorname{d}}}

\def\id{{\operatorname{Id}}}
\def\ker{{\operatorname{ker}}}
\def\im{{\operatorname{im}}}
\def\r{\operatorname{r}}
\def\qhch{\operatorname{QH}_{\rm c}^{\rm h}}

\def\PSL{\operatorname{PSL}}
\def\SL{\operatorname{SL}}
\def\Sp{\operatorname{Sp}}
\def\SU{\operatorname{SU}}
\def\SO{\operatorname{SO}}

\def\PSU{\operatorname{PSU}}

\def\chiext{\chi_{\rm ext}}
\def\cs{{\check S}}

\def\eb{{e^{\rm b}}}
\def\erb{{e^{\rm b}_\RR}}

\def\kg{\kappa_G}

\def\kgb{\kappa_G^{\rm b}}

\def\ksb{\kappa_S^{\rm b}}

\def\ksrb{\kappa_{S,\RR}^{\rm b}}
\def\ninn{\Nn_{\operatorname{i}}^+}

\def\Tb{\T_{\rm b}}
\def\tb{{\rm t}_{\rm b}}

\def\hcb{{\rm H}_{\rm cb}}
\def\to{\rightarrow}

\def\la{L^\infty_{\mathrm{alt}}}
\def\hb{{\rm H}_{\rm b}}
\def\hc{{\rm H}_{\rm c}}
\def\h{{\rm H}}

\renewcommand{\phi}{\varphi}

\def\No{N\raise4pt\hbox{\tiny o}\kern+.2em}
\def\no{n\raise4pt\hbox{\tiny o}\kern+.2em}

%
%
%
%

\newcommand{\rot}{\mathsf{rot}}

\newcommand{\R}{\textup{Hom}}
\newcommand{\RH}{\textup{Hom}_{Hit}}
\newcommand{\RM}{\textup{Hom}_{max}}
\newcommand{\RP}{\textup{Hom}_{pos}}
\renewcommand{\O}{\textup{O}}

\newcommand{\orn}{\textup{or}}

\newcommand{\hyp}{\textup{Hyp}}

\renewcommand{\hom}{\textup{Hom}}
\newcommand{\homc}{\textup{Hom}_{\textup c}}
\newcommand{\homs}{\textup{Homeo}_+(S^1)}

\newcommand{\hommax}{\textup{Hom}_{\textup{max}}}
\newcommand{\homcs}{\textup{Hom}_\cs}
\newcommand{\Sym}{\textup{Sym}}
\newcommand{\out}{\textup{Out}}
\newcommand{\Diff}{\textup{Diff}}

\newcommand{\inn}{\textup{Inn}}
\newcommand{\map}{\textup{Map}}

\markboth{M.~Burger and A.~Iozzi and A.~Wienhard}{Higher Teichm\"uller Spaces: from $\SL(2,\RR)$ to other Lie groups}

\begin{document}

\title{Higher Teichm\"uller Spaces:\\ from $\SL(2,\RR)$ to other Lie groups}

\author{M.~Burger and  A.~Iozzi\thanks{Work partially
supported by the Swiss National Science Foundation project 2000021-127016/2} and A.~Wienhard\thanks{Work partially supported by NSF Grant No.DMS-0803216 and NSF CAREER Grant No. DMS-0846408}}

\address{
Department Mathematik\\ ETH Zentrum\\ R\"amistrasse 101, CH-8092 Z\"urich, Switzerland\\
email:\,\tt{burger@math.ethz.ch}
\\[4pt]
Department Mathematik\\ ETH Zentrum\\ R\"amistrasse 101, CH-8092 Z\"urich, Switzerland\\
email:\,\tt{iozzi@math.ethz.ch}
\\[4pt]
Department of Mathematics\\ Princeton University\\ Fine Hall, Washington Road, Princeton, NJ 08540, USA\\
email:\,\tt{wienhard@math.princeton.edu}
}

\maketitle
%


%


\setcounter{tocdepth}{2} 
\tableofcontents   

\section{Introduction}\label{sec:intro}

Let $S$ be a connected surface of finite topological type. The
Teichm\"uller space $\mathcal{T}(S)$ is the moduli space of marked
complex structures on $S$. It is isomorphic to the moduli space of
marked complete hyperbolic structures on $S$, sometimes called the
Fricke space\index{Fricke space} $\mathcal{F}(S)$. Associating to a hyperbolic structure 
its holonomy representation naturally embeds the Fricke space
$\mathcal{F}(S)$ into the variety of representations $\R
\big(\pi_1(S), \PSU(1,1) \big)/\PSU(1,1)$.

This realization of the classical Teichm\"uller space as a subset of the representations variety 
is the starting point of this article. 
We begin in \S~\ref{sec:hyp_str} by defining the space $\hyp(S)$ of
hyperbolic structures on $S$ and constructing in some details the map
\bqn \delta: \hyp(S) \to \R \big(\pi_1(S), \PSU(1,1) \big)\, , \eqn as
well as the embedding \bqn \delta': \mathcal{F}(S) =
\operatorname{Diff}^+_0(S) \backslash \hyp(S) \to \R \big(\pi_1(S),
\PSU(1,1) \big)/\PSU(1,1)\,, \eqn where $\operatorname{Diff}^+_0(S)$
is the group of orientation preserving diffeomorphisms which are
homotopic to the identity. Then \S\S~\ref{sec:rep_var} and
\ref{sec:inv_miln_gold} are devoted to various descriptions of the
subset $\delta \big(\hyp(S) \big) \subset \R \big(\pi_1(S), \PSU(1,1)
\big)$.  When $S$ is a compact surface, $\delta(\hyp(S))$ is:
\begin{enumerate}
\item\label{item1} the set of injective orientation preserving
  homomorphisms with discrete image (see Theorem~\ref{thm:1.5} and
  Corollary~\ref{cor:2.3});
\item\label{item2} identified with one connected component of $\R
  \big(\pi_1(S), \PSU(1,1) \big)$ (see \S~\ref{sec:rep_var});
\item\label{item3} the (maximal value) level set of numerical invariants described in \S~\ref{sec:inv_miln_gold}
 (see Theorem~\ref{sec:rep_var});
\item\label{item4} the solution set of a commutator equation (see (\ref{eq:goldman}));
\item\label{item5} characterized in terms of bounded cohomology classes (see Corollary~\ref{cor:4.5}).
\end{enumerate}

When $S$ is a noncompact surface of finite type, the description of
$\delta \big(\hyp(S) \big)$ is more involved, and the
characterizations (\ref{item1}) and (\ref{item2}) do not hold in this
case. In \S~\ref{sec:surf_fin_eul} we define suitable (bounded
cohomological) analogues of the numerical invariants described in
\S~\ref{sec:inv_miln_gold}, which allow us to give in
\S~\ref{sec:hyp_str_nc} characterizations of $\delta \big(\hyp(S)
\big)$ for noncompact surfaces $S$, generalizing (\ref{item3}),
(\ref{item4}) and (\ref{item5}) above.

In the second part we ask how much of this ``$\PSU(1,1)$ picture''
generalizes to an arbitrary Lie group $G$. We discuss two classes of
(semi)simple Lie groups for which one can make this question precise
by defining, in very different ways, components (or specific subsets
when $S$ is not compact) of $\hom\big(\pi_1(S),G\big)$ which play the
role of Teichm\"uller space.

The terminology Higher Teichm\"uller spaces,
coined by Fock and Goncharov, has now come to mean subsets of $\R \big (\pi_1(S), G
\big)$, where $G$ is a simple Lie group, which share essential
geometric and algebraic properties with classical Teichm\"uller space
considered as a subset of $\R \big(\pi_1(S), \PSU(1,1) \big)$. Up to
now higher Teichm\"uller spaces are defined for two classes of Lie
groups, namely for split real simple Lie groups, e.g. $\SL(n,\RR)$,
$\Sp(2n,\RR)$, $\SO(n,n+1)$ or $\SO(n,n)$ and for Lie groups of
Hermitian type, e.g. $\Sp(2n,\RR)$, $\SO(2,n)$, $\SU(p,q)$ or
$\SO^*(2n)$.

The invariants defined in \S~\ref{sec:inv_miln_gold} and
\S~\ref{sec:surf_fin_eul} can be defined for homomorphisms from
$\pi_1(S)$ with values in any Lie group $G$ but, when $G$ is a Lie group
of Hermitian type these invariants are particularly meaningful. We describe
how the basic objects available in the case of $\PSU(1,1)$
can be generalized to higher rank in \S~\ref{sec:max_rep}. Considering
the maximal value level set of the numerical invariant thus
constructed leads us to consider the space of maximal representations
\bqn
\RM \big(\pi_1(S), G \big) \subset \R \big(\pi_1(S), G \big)\,,
\eqn 
some of whose properties are discussed in \S~\ref{sec:max_rep}.  In
particular we state a result ("structure theorem") which describes the
Zariski closure in $G$ of the image of a maximal representation; a
major part of \S\S~\ref{subsec:tight} 
\ref{subsec:bdry_rot_repvar} then offers a guided tour showing the
various aspects of the proof of the structure theorem. \medskip

The space of maximal representations is an example of a higher
Teichm\"uller space\index{Higher Teichm\"uller space}.  Hitchin components and spaces of positive
representations are other examples of a higher Teichm\"uller space,
defined when $G$ is a split real Lie group. We review the definition
of these spaces shortly (\S~\ref{subsec:hitchin_positive}), and then
discuss important structures underlying both families of higher
Teichm\"uller spaces (\S~\ref{sec:higher}). In the case of compact
surfaces the quest for common structures leads us to consider the
concept of Anosov structures (\S~\ref{subsec:anosov}). This more
general notion provides an important framework within which one can
start to understand the geometric significance of higher Teichm\"uller
spaces, their quotients by the mapping class group\index{mapping class group}
(\S~\ref{subsec:quotients}), the geometric structures parametrized by
higher Teichm\"uller spaces (\S~\ref{subsec:geom_struc}), as well as
further topological invariants (\S~\ref{subsec:invariants}).  In
\S~\ref{subsec:questions} we conclude by mentioning further directions
of study.  \medskip

There are many aspects of higher Teichm\"uller spaces which we do not
touch upon for lack of space and expertise.  One of them is the huge
body of work studying maximal representations from the point of view
of Higgs bundles; such techniques give in particular precise
information about the number of connected components of maximal
representations, as well as information on the homotopy type of those
components (see for example \cite{Biswas_etal,
  Bradlow_GarciaPrada_Gothen, Bradlow_GarciaPrada_Gothen_survey,
  Bradlow_GarciaPrada_Gothen_sp4, GarciaPrada_Gothen_Mundet,
  GarciaPrada_Mundet, Gothen}).

As a guide to the reader, we mention that the first part of the paper
and the discussion of maximal representations is very descriptive and
should be read linearly, while starting from the definition of Hitchin
representations the paper is a pure survey.
 
\medskip {\bf Acknowledgments:} We thank A.~Papadopoulos for
undertaking this project. We thank also
O.~Guichard and T.~Hartnick for carefully reading a preliminary
version of this paper providing many helpful comments, W.~Goldman for
many bibliographical comments and F.~Labourie for positive feedback.
Our thanks go also to D.~Toledo and N.~A'Campo for useful discussions
on this general topic over the years and to T.~Delzant for helpful
comments concerning central extensions of surface groups. Finally, 
we thank the referee for helpful comments. \medskip

\part{Teichm\"uller Space and Hyperbolic Structures}
\section{Hyperbolic structures and representations}

\subsection{Hyperbolic structures}\label{sec:hyp_str}
In this section we review briefly how one associates to a hyperbolic
structure\index{hyperbolic structure} on a surface a homomorphism of
its fundamental group into the group of orientation preserving
isometries of the Poincar\'e disk, and how an appropriate quotient of
the set of hyperbolic structures injects into the representation
variety.  \medskip

Let $\DD=\{z\in\CC:\,|z|<1\}$ be the unit disk endowed with the
Poincar\'e metric $\frac{4|dz|^2}{\big(1-|z|^2\big)^2}$, and let 
$G:=\PSU(1,1)=\SU(1,1)/\{\pm\id\}$ 
denote the quotient of $\SU(1,1)$ by its center.
The group $G$ acts on $\widehat\CC=\CC\cup\{\infty\}$ by linear fractional
transformations preserving $\DD$ and hence can be identified  
with the group of orientation preserving isometries of $\DD$.

Given a surface\index{surface} $S$, that is a two-dimensional smooth
manifold\footnote{Note that all manifolds here are without boundary.
  In particular a compact surface is necessarily closed.}  a
hyperbolic metric on $S$ is a Riemannian metric with sectional
curvature $-1$.  A $(G,\DD)$-structure\index{$(G,\DD)$-structure} on
$S$ is an atlas on $S$ consisting of charts taking values in $\DD$,
whose change of charts are locally restrictions of elements of $G$,
\cite{Thurston_book}.
Assuming from now on that $S$ is orientable, we observe that by the
local version of Cartan's theorem, an orientation together with a
hyperbolic metric on $S$ determines a $(G,\DD)$-structure on $S$ (the
converse is also true and straightforward). Also, the hyperbolic
metric is complete if and only if the same is true for the
corresponding $(G,\DD)$-structure, i.e. if the developing map
$\widetilde S\to\DD$ is a diffeomorphism.

The group $\operatorname{Diff}(S)$, and hence its subgroup
$\operatorname{Diff}^+(S)$ consisting of orientation preserving
diffeomorphisms of $S$, act on the set $\hyp(S)$ of complete
hyperbolic metrics on $S$ in a contravariant way.  In the sequel let
$\widetilde{S}=D$ be a smooth oriented disk with a basepoint $\ast\in
D$ and let us fix once and for all a base tangent vector $v\in T_\ast
D$, $v\neq0$.  By the correspondence between hyperbolic metrics and
$(G,\DD)$-structures, let us also consider, for every $h\in\hyp(D)$, 
the unique orientation preserving isometry 
\bqn
f_h:(D,\ast)\to(\DD,0) 
\eqn 
with $df_h(v)\in\RR^+e$,
where $e=1\in\CC$. If $\phi\in\operatorname{Diff}^+(D)$, then for any
$h\in\hyp(D)$, $\varphi $ is by definition an orientation preserving
isometry between the hyperbolic metrics $\varphi^\ast(h)$ and $h$.
Therefore \bqn c(\phi,h):=f_h\circ\phi\circ f_{\phi^\ast(h)}^{-1} \eqn
is an element of $G$.  In this way we obtain a map \bqn
c:\operatorname{Diff}^+(D)\times\hyp(D)\to G \eqn which verifies the
cocycle relation \bqn
c(\phi_1\phi_2,h)=c(\phi_1,h)c\big(\phi_2,\phi_1^\ast(h)\big)\,. \eqn
Let now $(S,\ast)$ be a connected oriented surface with base point
$\ast$ and assume that $\hyp(S)\neq\emptyset$.  Let $(\widetilde
S,\ast)=(D,\ast)$, let $p:D\to S$ be the canonical projection and \bqn
\Gamma=\{T_\gamma:\,\gamma\in\pi_1(S,\ast)\}<\operatorname{Diff}^+(D)
\eqn the group of covering transformations. Then the pullback via $p$
gives a bijection between $\hyp(S)$ and the set $\hyp(D)^\Gamma$ of
$\Gamma$-invariant elements in $\hyp(D)$. Furthermore, it follows from
the cocycle identity that, for every $h\in\hyp(D)^\Gamma$, the map
\bqn 
\ba
\rho_h:\pi_1(S,\ast)&\longrightarrow\,\, G\\
\gamma\quad&\mapsto c(T_\gamma,h) 
\ea 
\eqn 
is a homomorphism with
respect to which the isometry $f_h$ is equivariant. Thus we obtain the
map $\delta$, assigning to a hyperbolic structure its holonomy
homomorphism\index{holonomy homomorphism} 
\bqn 
\ba
\delta: \, \hyp(S)&\to\R\big(\pi_1(S,\ast),G\big)\\
h\quad&\longmapsto\qquad\rho_{p^\ast(h)}\,, 
\ea 
\eqn 
which has certain
important equivariance properties which we now explain.

To this end, let $\Nn^+$ be the normalizer of $\Gamma$ in
$\operatorname{Diff}^+(D)$. Then we have the diagram with exact line
\bqn 
\xymatrix{ \{e\}\ar[r] &\Gamma \ar[r] &\Nn^+\ar[r]^-\pi\ar[d]^a
  &\operatorname{Diff}^+(S)\ar[r]
  &\{e\}\\
  & &\aut\big(\pi_1(S,\ast) \big)\,,& & 
} 
\eqn 
where $\pi$ associates to
every $\phi\in\Nn^+$ the corresponding diffeomorphism of $S$ obtained
by observing that $\phi$ permutes the fibers of $p$; the fact that
$\pi$ is surjective follows from covering theory. The homomorphism $a$
is the one associating to $\phi$ the automorphism $a_\phi$ of $\Gamma$, or
rather of $\pi_1(S,\ast)$, obtained by conjugation.  With these
definitions, a computation gives \bq\label{eq:1.1}
\rho_{\phi^\ast(h)}(\gamma)=c(\phi,h)^{-1}\rho_h\big(a_\phi(\gamma)\big)c(\phi,h)\,,
\eq for every $\phi\in\Nn^+$, $h\in\hyp(D)^\Gamma$ and $\gamma\in
\Gamma$.

In view of \eqref{eq:1.1}, it is important to determine those $\phi\in\Nn^+$ 
such that $a_\phi$ is an inner automorphism of $\Gamma$.
Let $\ninn$ be the subgroup consisting of all such diffeomorphisms.  
Then we have:
\begin{lemma}\label{lem;1.1}
The map $\pi$ induces an isomorphism
\bqn
\Gamma\backslash\ninn\cong\operatorname{Diff}^+_0(S)\,,
\eqn
where $\operatorname{Diff}^+_0(S)$ is the subgroup of $\operatorname{Diff}^+(S)$ 
consisting of those diffeomorphisms
which are homotopic to the identity.
\end{lemma}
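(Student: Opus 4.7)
The plan is to identify separately the kernel and image of the restriction $\pi|_{\ninn}$, after which the lemma will follow from the first isomorphism theorem. I would first observe that the kernel of the surjection $\pi:\Nn^+\to\operatorname{Diff}^+(S)$ equals $\Gamma$ by covering space theory, and that every $\gamma_0\in\Gamma$ acts on $\Gamma$ by the inner automorphism $\gamma\mapsto\gamma_0\gamma\gamma_0^{-1}$; hence $\Gamma\subset\ninn$, and therefore $\ker(\pi|_{\ninn})=\Gamma\cap\ninn=\Gamma$. The real content is the equality $\pi(\ninn)=\operatorname{Diff}^+_0(S)$.

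To establish $\pi(\ninn)\subset\operatorname{Diff}^+_0(S)$, given $\phi\in\ninn$ I would write $a_\phi$ as conjugation by some $\gamma_0\in\Gamma$ and replace $\phi$ by $\phi':=\gamma_0^{-1}\phi$; this still normalizes $\Gamma$, projects to the same $f:=\pi(\phi)$ on $S$, but now satisfies $a_{\phi'}=\id$, i.e.\ commutes with every element of $\Gamma$. I would then equip $D$ with the hyperbolic metric pulled back from any chosen element of $\hyp(S)$, turning it into a Hadamard manifold on which $\Gamma$ acts by isometries, and define the geodesic homotopy
\bqn
H_t(x)\ :=\ \exp_x\bigl(t\,\exp_x^{-1}(\phi'(x))\bigr),\qquad t\in[0,1],
\eqn
sending $x$ to the point at fraction $t$ along the unique geodesic from $x$ to $\phi'(x)$. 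Since $\phi'\gamma=\gamma\phi'$ and $\gamma$ is an isometry, that geodesic is carried by $\gamma$ to the corresponding geodesic at $\gamma x$, so $H_t\circ\gamma=\gamma\circ H_t$, and the family $H_t$ descends to a homotopy on $S$ from $\id_S$ to $f$.

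For the converse inclusion $\operatorname{Diff}^+_0(S)\subset\pi(\ninn)$, given $f\in\operatorname{Diff}^+_0(S)$ and a homotopy $H:S\times[0,1]\to S$ from $\id_S$ to $f$, I would invoke the homotopy lifting property of the covering $p:D\to S$ with initial lift $\id_D$ to obtain $\tilde H:D\times[0,1]\to D$ with $\tilde H_0=\id_D$ and $p\circ\tilde H_t=H_t\circ p$, and set $\tilde\phi:=\tilde H_1$, a lift of $f$. For each $\gamma\in\Gamma$ the two paths $t\mapsto\tilde H_t(\gamma x)$ and $t\mapsto\gamma\tilde H_t(x)$ start at the common point $\gamma x$ and cover the same path $t\mapsto H_t(p(x))$ in $S$, so by uniqueness of path lifting they coincide; hence $\tilde\phi$ commutes with $\Gamma$, in particular $\tilde\phi\in\ninn$ and $\pi(\tilde\phi)=f$.

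The main obstacle is the production of the $\Gamma$-equivariant homotopy in the inclusion $\pi(\ninn)\subset\operatorname{Diff}^+_0(S)$: the reduction from ``$a_\phi$ inner'' to ``$a_\phi$ trivial'' by left-multiplication by $\gamma_0^{-1}\in\Gamma$ is a formal move, but one then needs an equivariant contraction of $\phi'$ to $\id_D$, supplied for free by the CAT(0) geometry of the hyperbolic metric on $D$. The remaining steps are routine covering space theory.
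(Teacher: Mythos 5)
Your proposal is correct and follows essentially the same route as the paper: the paper likewise obtains $\pi(\ninn)\subset\operatorname{Diff}^+_0(S)$ by replacing $\phi$ with $T_\eta^{-1}\phi$ (which then commutes with $\Gamma$) and contracting it to $\id_D$ via the $\Gamma$-equivariant geodesic homotopy for a fixed $h\in\hyp(D)^\Gamma$, and gets the reverse inclusion from covering theory. Your write-up merely makes explicit what the paper leaves implicit (the exact formula for the geodesic homotopy, the path-lifting argument showing the lifted homotopy's endpoint commutes with $\Gamma$, and the identification of $\ker(\pi|_{\ninn})$ with $\Gamma$).
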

\begin{proof}  If $f:S\to S$ is homotopic to the identity,
then by covering theory the conjugation of $\Gamma$ by any lift $\tilde f$ of $f$ gives
an inner automorphism of $\Gamma$.  

Conversely, if $\phi T_\gamma\phi^{-1}=T_{\eta\gamma\eta^{-1}}$ 
for some $\eta$ and all $\gamma$, then the diffeomorphism
$T_\eta^{-1}\phi:D\to D$ commutes with the $\Gamma$-action on $D$;
if we fix $h\in\hyp(D)^\Gamma$ then the geodesic homotopy 
from $T_\eta^{-1}\phi$ to $\id_D$
is $\Gamma$-equivariant and hence descends to a homotopy between
$\pi(T_\eta^{-1}\phi)=\pi(\phi)$ and $\id_S$.
\end{proof}

Thus combining the inverse of $\pi$ with $a$ we obtain an injective
homomorphism \bqn \xymatrix{
  \map(S):=\operatorname{Diff}^+_0(S)\backslash\operatorname{Diff}^+(S)\ar[r]^-\alpha&\out\big(\pi_1(S,\ast)\big)\,
} \eqn of the {\em mapping class group}\index{mapping class group} $\map(S)$ of $S$
into the group of outer automorphisms of $\pi_1(S)$. It follows then
from \eqref{eq:1.1} that the map which to $h\in\hyp(S)$ associates the
class of the homomorphism $[\rho_{p^\ast(h)}]$ and which takes values
in the quotient $\R\big(\pi_1(S,\ast),G\big)/G$ by the $G$-conjugation
action on the target, is invariant under the
$\operatorname{Diff}^+_0(S)$-action so that finally we obtain a map
from the {\em Fricke space}\index{Fricke space}  $\mathcal{F}(S) =
\operatorname{Diff}^+_0(S)\backslash\hyp(S)$ to the representation
variety\index{representation variety} 
\bqn 
\ba
\delta':\operatorname{Diff}^+_0(S)\backslash\hyp(S)&\to\R\big(\pi_1(S,\ast),G\big)/G\\
[h]\qquad\quad&\longmapsto\quad\quad[\rho_{p^\ast(h)}] 
\ea 
\eqn 
which is $\alpha$-equivariant.

\begin{proposition}\label{prop:1.2}  If $S$ is a connected, oriented surface 
admitting a complete hyperbolic structure, then $\delta'$ is injective. 
\end{proposition}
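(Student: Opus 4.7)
The plan is to unwind the definitions so that a $G$-conjugacy between the holonomy representations produces an honest isometry of the universal covers that commutes with the deck group, and then appeal to Lemma~\ref{lem;1.1} to descend it to an element of $\operatorname{Diff}^+_0(S)$ relating the two hyperbolic structures.

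More precisely, suppose $h_1,h_2\in\hyp(S)$ satisfy $\delta'([h_1])=\delta'([h_2])$. Lift to $\Gamma$-invariant structures $\tilde h_i:=p^\ast(h_i)\in\hyp(D)^\Gamma$. The first observation I would record is that, because $\tilde h_i$ is $\Gamma$-invariant, the cocycle formula collapses to
\bqn
\rho_{\tilde h_i}(\gamma)=f_{\tilde h_i}\circ T_\gamma\circ f_{\tilde h_i}^{-1},
\eqn
since $T_\gamma^\ast(\tilde h_i)=\tilde h_i$. By hypothesis there exists $g\in G$ with $g\,\rho_{\tilde h_1}(\gamma)\,g^{-1}=\rho_{\tilde h_2}(\gamma)$ for all $\gamma\in\pi_1(S,\ast)$. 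Substituting the above explicit formula and rearranging gives
\bqn
\phi\, T_\gamma\, \phi^{-1}=T_\gamma\quad\text{for all }\gamma\in\pi_1(S,\ast),\qquad\text{where }\phi:=f_{\tilde h_2}^{-1}\circ g\circ f_{\tilde h_1}:D\to D.
\eqn
Thus $\phi$ centralizes $\Gamma$; in particular $\phi\in\Nn^+$ and $a_\phi=\operatorname{id}$ is inner, so $\phi\in\ninn$.

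Next I would observe that $\phi$ is by construction a composition of orientation preserving isometries $(D,\tilde h_1)\stackrel{f_{\tilde h_1}}{\longrightarrow}\DD\stackrel{g}{\longrightarrow}\DD\stackrel{f_{\tilde h_2}^{-1}}{\longrightarrow}(D,\tilde h_2)$, and hence satisfies $\phi^\ast(\tilde h_2)=\tilde h_1$. Since $\phi$ commutes with $\Gamma$, it descends to a diffeomorphism $\bar\phi\in\operatorname{Diff}^+(S)$ with $\bar\phi^\ast(h_2)=h_1$. Finally Lemma~\ref{lem;1.1} identifies $\Gamma\backslash\ninn$ with $\operatorname{Diff}^+_0(S)$, so $\bar\phi=\pi(\phi)$ lies in $\operatorname{Diff}^+_0(S)$. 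Therefore $[h_1]=[h_2]$ in the Fricke space, proving injectivity of $\delta'$.

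The only step that requires any care is checking that the $\phi$ produced from the conjugating element $g$ is indeed a well-defined diffeomorphism of $D$ that normalizes $\Gamma$; this is precisely where completeness of the hyperbolic structures enters, since it is what ensures that the maps $f_{\tilde h_i}:(D,\tilde h_i)\to(\DD,0)$ from \S~\ref{sec:hyp_str} are \emph{diffeomorphisms} and not merely local isometries. Granting this, the rest of the argument is a direct bookkeeping exercise with the cocycle formula and Lemma~\ref{lem;1.1}.
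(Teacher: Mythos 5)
Your proposal is correct and follows essentially the same route as the paper: you form the same map $f_{\tilde h_2}^{-1}\circ g\circ f_{\tilde h_1}$, observe it is an orientation preserving isometry between the two lifted structures that commutes with $\Gamma$, and invoke Lemma~\ref{lem;1.1} to conclude it descends to an element of $\operatorname{Diff}^+_0(S)$. The extra bookkeeping with the collapsed cocycle formula is a correct and slightly more explicit rendering of the paper's "it follows from the definitions."
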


\begin{proof}  If $h_1,h_2\in\hyp(D)^\Gamma$ are such that $\rho_{h_1}$ and $\rho_{h_2}$
are conjugated by $g\in G$, then it follows from the definitions that 
$f_{h_2} ^{-1} gf_{h_1}$ is an orientation preserving diffeomorphism sending $h_1$ to $h_2$,
which furthermore is $\Gamma$-equivariant; by the argument used in Lemma~\ref{lem;1.1},
we get that $\pi(f_{h_2} ^{-1}gf_{h_1})\in\operatorname{Diff}^+_0(S)$.
\end{proof}

\medskip

We now describe the image of the homomorphism $\alpha$ and of the map
$\delta'$ in the case in which $S$ is a compact oriented surface of
genus $g\geq2$.  This latter condition guarantees that the classifying
map 
\bq\label{eq:1.2}
 S\to B\pi_1(S,\ast) 
\eq 
is a homotopy
equivalence; we use this fact to equip
$\h_2\big(\pi_1(S,\ast),\ZZ\big)$ with the canonical generator, image
of the fundamental class $[S]$ via the isomorphism
$\h_2(S,\ZZ)\to\h_2\big(\pi_1(S,\ast),\ZZ\big)$ induced by
\eqref{eq:1.2}. An isomorphism between the fundamental groups of two
compact oriented surfaces $S_1$ and $S_2$ is said to be orientation
preserving if the generator of $\h_2\big(\pi_1(S_1,\ast),\ZZ\big)$ is
mapped to the generator of $\h_2\big(\pi_1(S_2,\ast),\ZZ\big)$.

\begin{theorem}\label{thm:1.3}  
Let $S_1$ and $S_2$ be compact oriented surfaces of genus $g\geq1$.  
Then any orientation preserving isomorphism $\pi_1(S_1,\ast)\to\pi_1(S_2,\ast)$
is induced by an orientation preserving diffeomorphism $S_1\to S_2$.
\end{theorem}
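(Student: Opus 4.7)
The plan is to realize $\phi$ by a continuous map, show that this map is a degree-one homotopy equivalence, and then promote it to an orientation preserving diffeomorphism via the Dehn--Nielsen--Baer realization theorem, using the fact that in dimension two smoothing is automatic.

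Since $g\geq 1$, the universal cover of each $S_i$ is contractible (the Euclidean plane if $g=1$, the Poincar\'e disk if $g\geq 2$), so $S_i$ is an Eilenberg--MacLane space $K\big(\pi_1(S_i,\ast),1\big)$. Consequently any homomorphism between fundamental groups is induced by a basepoint-preserving continuous map, unique up to based homotopy. Applying this to $\phi$ and to $\phi^{-1}$ produces maps $f\colon S_1\to S_2$ and $f'\colon S_2\to S_1$ whose compositions induce the identities on $\pi_1$, and hence are homotopic to $\id_{S_1}$ and $\id_{S_2}$; in particular $f$ is a homotopy equivalence. The induced map $f_\ast\colon\h2(S_1,\ZZ)\to\h2(S_2,\ZZ)$ is then an isomorphism of infinite cyclic groups, and by naturality of the canonical isomorphisms $\h2(S_i,\ZZ)\cong\h2\big(\pi_1(S_i,\ast),\ZZ\big)$ coming from \eqref{eq:1.2}, it is identified with $\phi_\ast$ on group homology. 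The orientation preserving hypothesis on $\phi$ then forces $\deg f=+1$.

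The geometric core of the theorem is to replace $f$ by an orientation preserving diffeomorphism in its homotopy class. This is the content of the Dehn--Nielsen--Baer theorem, which asserts that the natural map $\map(S)\to\out\big(\pi_1(S,\ast)\big)$ is injective with image the subgroup of orientation preserving outer automorphisms. A combinatorial route is to fix a pants decomposition of $S_1$, transport its curves across $\phi$ to conjugacy classes in $\pi_1(S_2)$, realize them by pairwise disjoint essential simple closed curves on $S_2$ (using Baer's lemma that homotopic simple closed curves are isotopic), verify that the resulting family is a pants decomposition of $S_2$ with the same combinatorics, and glue together piecewise homeomorphisms of pairs of pants. An alternative analytic route for $g\geq 2$ is to equip both surfaces with hyperbolic metrics and invoke Eells--Sampson to replace $f$ by the unique harmonic map in its homotopy class, which, being a degree-one harmonic map between closed negatively curved surfaces, is automatically a diffeomorphism by the theorems of Sampson and Schoen--Yau. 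The case $g=1$ reduces, on passing to universal covers, to realizing an element of $\SL(2,\ZZ)$ as a linear diffeomorphism of $T^2$. In either case smoothing in dimension two is automatic, so the homeomorphism produced may be taken smooth without changing its homotopy class.

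The main obstacle is manifestly this last step: producing a genuine diffeomorphism rather than only a homotopy equivalence. All preceding steps are formal consequences of Eilenberg--MacLane theory and the naturality of group homology, whereas Dehn--Nielsen--Baer rests on either delicate two-dimensional surface topology or on harmonic-map analysis.
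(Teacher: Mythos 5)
The paper itself offers no proof of Theorem~\ref{thm:1.3}: it is stated as a classical fact and used to deduce the Dehn--Nielsen--Baer theorem (Corollary~\ref{cor:1.4}), for which the authors simply refer to \cite{Farb_Margalit}. So there is no in-paper argument to compare yours against, and I can only assess your proposal on its own terms. The formal part is correct and complete: asphericity of $S_i$ for $g\geq1$ yields a based continuous map $f$ inducing $\phi$, the map induced by $\phi^{-1}$ provides a homotopy inverse, and naturality of $\h_2(S_i,\ZZ)\cong\h_2\big(\pi_1(S_i,\ast),\ZZ\big)$ converts the orientation hypothesis into $\deg f=+1$. You also correctly locate all the difficulty in the realization step. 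One structural remark: in this paper Corollary~\ref{cor:1.4} is \emph{deduced from} Theorem~\ref{thm:1.3}, so quoting Dehn--Nielsen--Baer as a black box would invert the logical order; what saves your write-up is that you go on to sketch independent arguments for the realization.

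Of your two routes, the analytic one is sound for $g\geq2$ (Eells--Sampson existence of a harmonic representative, plus the Schoen--Yau/Sampson theorem that a degree-one harmonic map between closed hyperbolic surfaces is a diffeomorphism), and the $g=1$ case via $\SL(2,\ZZ)$ acting linearly on the torus is fine. The combinatorial route, however, conceals the one genuinely hard point in the step ``realize them by pairwise disjoint essential simple closed curves on $S_2$'': it is not automatic that an abstract isomorphism of fundamental groups carries conjugacy classes represented by simple closed curves to conjugacy classes represented by simple closed curves, nor that it preserves disjointness (vanishing geometric intersection number); Baer's lemma only enters after these facts are known. Establishing them is the heart of Dehn--Nielsen--Baer and is usually done by showing that a lift $\widetilde f:\widetilde S_1\to\widetilde S_2$ is a quasi-isometry extending to a homeomorphism of the boundary circles, from which simplicity and intersection numbers are read off via the linking of fixed-point pairs of the deck transformations. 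If the combinatorial route is meant to be your actual proof, that step needs an argument; as a pointer to the literature supplemented by the self-contained analytic alternative, the proposal is acceptable.
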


Let us denote by $\aut^+\big(\pi_1(S,\ast)\big)$
the group of the orientation preserving automorphisms  
of a compact orientable surface $S$ of genus $g\geq1$, and
by $\out^+\big(\pi_1(S,\ast)\big)$  its quotient by the group of  inner automorphisms.  
From Theorem~\ref{thm:1.3} we conclude:

\begin{corollary}[Dehn-Nielsen--Baer Theorem\index{Dehn--Nielsen--Baer Theorem}\index{Theorem!Dehn--Nielsen--Baer}, see \cite{Farb_Margalit} for a proof]\label{cor:1.4}  If $S$ is a compact orientable surface of genus $g\geq 2$, 
  the map \bqn \alpha:\map(S) =
  \operatorname{Diff}^+(S)/\operatorname{Diff}^+_0(S)\to\out^+\big(\pi_1(S,\ast)\big)
  \eqn is an isomorphism.
\end{corollary}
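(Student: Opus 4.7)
The plan is to combine injectivity of $\alpha$ (already in hand from the discussion preceding the statement, via Lemma~\ref{lem;1.1}) with Theorem~\ref{thm:1.3}. Since $g\geq 2$ ensures $\hyp(S)\neq\emptyset$, the framework of the normalizer $\Nn^+$ in $\operatorname{Diff}^+(D)$ is available, and Lemma~\ref{lem;1.1} gives the identification $\map(S)\cong\Gamma\backslash\Nn^+$; composing with $a$ yields an injective homomorphism $\map(S)\to\out(\pi_1(S,\ast))$. Thus the only remaining issue is to pin down the image.

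To show the containment $\Image(\alpha)\subseteq\out^+(\pi_1(S,\ast))$, I would argue that for $f\in\operatorname{Diff}^+(S)$ any lift $\tilde f\in\Nn^+$ is orientation preserving on $D$, and the induced automorphism $a_{\tilde f}\in\aut(\pi_1(S,\ast))$ preserves the canonical generator of $\h_2(\pi_1(S,\ast),\ZZ)$. This is pure functoriality: $f_\ast[S]=[S]$ in $\h_2(S,\ZZ)$ since $f\in\operatorname{Diff}^+(S)$, and the classifying-map isomorphism of \eqref{eq:1.2} is natural with respect to the actions of $f$ on the left and $a_{\tilde f}$ on the right, so the generator of $\h_2(\pi_1(S,\ast),\ZZ)$ is preserved.

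For surjectivity onto $\out^+(\pi_1(S,\ast))$, given $[\varphi]\in\out^+(\pi_1(S,\ast))$ I would pick a representative $\varphi\in\aut^+(\pi_1(S,\ast))$ and apply Theorem~\ref{thm:1.3} to $S_1=S_2=S$: this produces an orientation preserving diffeomorphism $f:S\to S$ inducing $\varphi$ on $\pi_1$, up to the inner-automorphism ambiguity from choosing a path from $\ast$ to $f(\ast)$. Hence $\alpha([f])=[\varphi]$.

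The genuine obstacle is Theorem~\ref{thm:1.3} itself, which is the topological heart of Dehn--Nielsen--Baer; once it is granted the corollary is a matter of matching the two notions of ``orientation preserving'' (preservation of $[S]\in\h_2(S,\ZZ)$ versus preservation of the canonical generator of $\h_2(\pi_1(S,\ast),\ZZ)$) and checking their compatibility under the classifying-map isomorphism.
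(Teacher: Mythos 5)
Your proposal is correct and follows exactly the route the paper intends: injectivity of $\alpha$ is already in hand from Lemma~\ref{lem;1.1} and the surrounding discussion, and the identification of the image with $\out^+\big(\pi_1(S,\ast)\big)$ is precisely the content of Theorem~\ref{thm:1.3} applied with $S_1=S_2=S$ (the paper states the corollary with ``From Theorem~\ref{thm:1.3} we conclude'' and defers the full argument to \cite{Farb_Margalit}). Your additional check that the two notions of ``orientation preserving'' match under the naturality of the classifying-map isomorphism \eqref{eq:1.2} is a detail the paper leaves implicit, and you handle it correctly.
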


Let now $\R_{d,i}$ denote the subset of $\R$ consisting of 
all injective homomorphisms with discrete image.  
The following classical identification of the image of $\delta$ uses the Nielsen realization:

\begin{theorem}\label{thm:1.5}  If $S$ is a compact orientable surface of genus $g\geq1$, 
then the image of $\delta$ consists precisely of 
\bqn
\left\{
\rho\in\R_{d,i}\big(\pi_1(S,\ast),G)\big):\,
\im\rho\backslash\DD\text{ is compact and }
\rho\text{ is orientation preserving}
\right\}\,.
\eqn
\end{theorem}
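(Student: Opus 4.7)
We establish the two inclusions separately.

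For the forward direction, let $h \in \hyp(S)$ and set $\rho := \delta(h)$. Compactness of $S$ implies completeness of $h$, so the normalized developing map $f_{p^\ast(h)} : D \to \DD$ is an orientation-preserving isometric diffeomorphism, intertwining the deck-transformation action of $\Gamma$ on $D$ with the $\rho(\Gamma)$-action on $\DD$. The latter action therefore inherits freeness, proper discontinuity, and cocompactness from the former. Properness of an isometric action forces $\rho(\Gamma)$ to be discrete in $G$; cocompactness gives compactness of $\im\rho\backslash\DD$; freeness combined with the identification $\Gamma\cong\pi_1(S,\ast)$ yields injectivity of $\rho$; and $f_{p^\ast(h)}$ being orientation-preserving yields the same for $\rho$.

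For the reverse direction, suppose $\rho$ lies in the right-hand set. Since $\pi_1(S)$ is torsion-free (genus $g\geq 1$), $\im\rho$ is a torsion-free discrete subgroup of $G$; and since every non-trivial element of $G$ fixing a point of $\DD$ is elliptic, while every elliptic element of a discrete subgroup of $G$ has finite order, $\im\rho$ acts freely on $\DD$. Combined with discreteness and the assumed compact quotient, this makes $M := \im\rho\backslash\DD$ a closed oriented hyperbolic surface whose fundamental group is canonically identified with $\im\rho$. The representation $\rho$ then reads as an orientation-preserving isomorphism $\pi_1(S,\ast)\xrightarrow{\sim}\pi_1(M,q(0))$, where $q : \DD \to M$ is the projection; by Theorem~\ref{thm:1.3}, it is induced by an orientation-preserving diffeomorphism $F : (S,\ast) \to (M, q(0))$.

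To produce $h\in\hyp(S)$ with $\delta(h) = \rho$, we pull back the hyperbolic metric: $h := F^\ast(g_M)$. The lift $\widetilde F : D \to \DD$ with $\widetilde F(\ast) = 0$ is then a $\rho$-equivariant, orientation-preserving isometric diffeomorphism. If $d\widetilde F(v) \in \RR^+ e$, then $\widetilde F = f_{p^\ast(h)}$ by the uniqueness clause in the definition of the normalized developing map, and hence $\delta(h) = \rho$. Otherwise, we precompose $F$ with a diffeomorphism $\psi \in \operatorname{Diff}^+_0(S)$ fixing $\ast$ and whose basepoint-fixing lift $\widetilde\psi$ satisfies $d\widetilde\psi(v) = d\widetilde F^{-1}(e)$; such $\psi$ exists by a local construction supported in a coordinate disk about $\ast$, and replacing $F$ by $F \circ \psi$ (which still induces $\rho$ on $\pi_1$) and $h$ by $\psi^\ast(h)$ achieves the desired normalization while preserving $\rho$-equivariance.

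The main obstacle is the reverse direction, and specifically the invocation of Theorem~\ref{thm:1.3} to upgrade the abstract algebraic isomorphism $\rho: \pi_1(S)\xrightarrow{\sim}\pi_1(M)$ to a genuine diffeomorphism of surfaces; without this input one has a closed hyperbolic surface $M$ with the ``right'' fundamental group but no geometric identification with $S$. Once $F$ is in hand, the remaining verification that the pullback metric has holonomy equal to $\rho$ is a routine bookkeeping with lifts and the cocycle identity~\eqref{eq:1.1}.
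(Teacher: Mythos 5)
Your proof is correct and follows essentially the same route as the paper: the forward inclusion via the normalized isometry $f_h$, and the converse by invoking Theorem~\ref{thm:1.3} (the Nielsen realization) to produce an orientation-preserving diffeomorphism $F:S\to\im\rho\backslash\DD$ and pulling back the Poincar\'e metric. You are in fact somewhat more careful than the paper, which stops at $[\rho]=[\rho_h]$, whereas your normalization adjustment at the basepoint pins down $\rho=\rho_h$ exactly.
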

\begin{remark}
  Since $\pi_1(S)$ is the fundamental group of a compact surface and
  $\rho$ is a discrete embedding, the quotient $\im\rho\backslash\DD$
  is automatically compact. Here, we include this property explicitly
  in order to stress the similarity with the definition of
  $\hom_0(\Gamma,G)$ in \S~\ref{sec:rep_var}.
  
  As above, the representation $\rho$ is orientation preserving if the
  induced map $\rho_*$ maps the generator of
  $\h_2\big(\pi_1(S,\ast),\ZZ\big)$ to the generator of
  $\h_2\big(\pi_1(\im\rho\backslash\DD, \ast) ,\ZZ\big)$.

\end{remark}

\begin{proof} Given $h\in\hyp(S)$, it is immediate, by using $f_h$, that $\rho_h$ belongs
to the above set.

Conversely, apply Nielsen realization to the orientation preserving isomorphism $\rho$ 
to get an orientation preserving diffeomorphism $f:S\to\im\rho\backslash \DD$;
if $h=f^\ast(h_P)$, where $h_P$ is the Poincar\'e metric on $\im\rho\backslash\DD$,
then one verifies that $[\rho]=[\rho_h]$.
\end{proof}

\begin{remark}\label{rem:2.6}
  Contrary to what happens in the compact case, if $S$ is a
  noncompact orientable surface of negative Euler characteristic, the
  inclusion
 \bqn
\delta\big(\hyp(S)\big)  \cup \delta\big(\hyp(\overline{S})\big)\subset\R_{d,i}\big(\pi_1(S, \ast), \PSU(1,1)\big)\,, 
\eqn
where $\overline{S}$ denotes the surface $S$ with the opposite orientation 
is always proper. 
In fact, if $\rho:\pi_1(S,\ast)\to G$ is just discrete and injective, 
the surfaces $\im\rho\backslash\DD$ and $S$ need not be diffeomorphic,
although they have the "same" fundamental group.  
For example, the once punctured torus and
the thrice punctured sphere have isomorphic fundamental groups $\FF_2$
and admit complete hyperbolic structures.
We will see in \S~\ref{sec:surf_fin_eul} one way to remedy this problem.




\end{remark}

\subsection{Representation varieties} \label{sec:rep_var}
In this section we review some basic properties of the set of discrete
and faithful representations in $\hom(\Gamma, G)$ in the general
context of a finitely generated group $\Gamma$ and a
connected reductive Lie group $G$.

One way to approach the problem of determining the image of $\hyp(S)$
under the map $\delta$ is to equip $\hom(\Gamma,G)$ with a topology.
Quite generally if $\Gamma$ is a discrete group and $G$ is a
topological group, $\hom(\Gamma,G)$ inherits the topology of the
product space $G^\Gamma$. In case $\Gamma$ is finitely generated with
finite generating set $F\subset\Gamma$, let $p:\FF_{|F|} \to\Gamma$ be the
corresponding presentation and $R$ a set of generators of the relators
$\ker\, p$. Since every $r\in R$ is a word in $\FF_{|F|}$, it
determines a product map $m_r:G^F\to G$ by evaluation on $G$. The map
\bqn
 \ba
\hom(\Gamma,G)&\longrightarrow\, G^F\\
\pi\qquad&\mapsto\big(\pi(s)\big)_{s\in F} 
\ea 
\eqn 
identifies the topological space $\hom(\Gamma,G)$ with the closed subset 
$\cap_{r\in
  R}m_r^{-1}(e)\subset G^F$. In particular, $\hom(\Gamma,G)$ is
locally compact if $G$ is so, and a real algebraic set if $G$ is a
real algebraic group. We record the following

\begin{proposition}[\cite{Benedetti_Risler, Whitney}]\label{prop:2.1} If $\Gamma$ is finitely generated and 
$G$ is a real algebraic group,
then $\hom(\Gamma,G)$ has finitely many connected components and 
each of them is a real semialgebraic set,
that is, it is defined by a finite number of polynomial equations and inequalities.
\end{proposition}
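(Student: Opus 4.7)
The plan is to reduce the proposition to classical finiteness theorems in real algebraic geometry. The starting point is the identification already established in the paragraph preceding the proposition: choosing a finite generating set $F\subset\Gamma$ and a set $R$ of relators, we have
\bqn
\hom(\Gamma,G)=\bigcap_{r\in R}m_r^{-1}(e)\subset G^F.
\eqn

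First, I would embed $G$ as a closed real algebraic subgroup of $\GL_n(\RR)\hookrightarrow\Mat_n(\RR)\times\RR$, where the extra coordinate encodes $1/\det$ so that the embedding is closed in affine space. Under this embedding, multiplication and inversion on $G$ are restrictions of polynomial maps, so each evaluation map $m_r\colon G^F\to G$ is polynomial in the affine coordinates on $(\Mat_n(\RR)\times\RR)^F$. Consequently $\hom(\Gamma,G)$ is cut out from affine space by a (possibly infinite) family of polynomial equations together with the defining equations of the $|F|$ copies of $G$. Since the coordinate ring of this ambient affine space is Noetherian, the ideal they generate is finitely generated, and hence $\hom(\Gamma,G)$ is a real algebraic subset of an affine space, defined by finitely many polynomial equations.

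With this in place, the two assertions of the proposition follow from two classical theorems. The finiteness of the number of connected components is Whitney's theorem on real algebraic sets, as in \cite{Whitney}. That each connected component is itself semialgebraic is a standard consequence of the Tarski--Seidenberg theorem together with the existence of a finite semialgebraic cellular decomposition of any semialgebraic set, as presented in \cite{Benedetti_Risler}.

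The only real work is the translation step: recognizing that, once $G$ is presented as an affine real algebraic group, the set $\hom(\Gamma,G)$ inherits the same structure, which in turn is a purely formal verification. The main potential obstacle — and the reason the hypothesis that $\Gamma$ be finitely generated (rather than finitely presented) is sufficient — is the possibility of an infinite relator set $R$; this is disposed of at once by Noetherianness of the coordinate ring of the ambient affine variety.
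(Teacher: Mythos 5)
Your proposal is correct and follows exactly the route the paper intends: the paper itself gives no proof beyond identifying $\hom(\Gamma,G)$ with $\bigcap_{r\in R}m_r^{-1}(e)\subset G^F$ and observing it is a real algebraic set, deferring the rest to \cite{Benedetti_Risler, Whitney}. Your additional details (the closed affine embedding of $G$, polynomiality of the $m_r$, and the Noetherian argument handling a possibly infinite relator set for a merely finitely generated $\Gamma$) are the right way to fill in that reduction before invoking the classical finiteness and semialgebraicity theorems.
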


\begin{remark}
  Proposition~\ref{prop:2.1} fails if $G$ is not algebraic. An example
  for this, given in \cite{Goldman_Hirsch}, is the quotient of the three
  dimensional Heisenberg group by a cyclic central subgroup, where a
  simple obstruction class detects infinitely many connected
  components in the representation variety.
\end{remark}

In order to proceed further we assume that $\Gamma$ is finitely generated, 
$G$ is a Lie group and introduce 
(see \cite{Goldman_Millson, Weil}) the following subset of $\hom(\Gamma,G)$
\bqn
\hom_0(\Gamma,G)=\{\rho\in\hom_{d,i}(\Gamma,G)\hbox{ such that }
\rho(\Gamma)\backslash G \hbox{ is compact}\}\,,
\eqn
where, as in \S~\ref{sec:hyp_str}, $\hom_{d,i}$ refers to the set of injective homomorphisms
with discrete image, 
so that $\hom_0(\Gamma,G)\subset\hom_{d,i}(\Gamma,G)\subset\hom(\Gamma,G)$.

The first result on the topology of $\hom_{d,i}(\Gamma,G)$ requires a hypothesis on $\Gamma$:
\begin{definition}
We say that $\Gamma$ has property $(\h)$ 
if every normal nilpotent subgroup of $\Gamma$ is finite.
\end{definition}

Observe that this condition is fulfilled 
by every nonabelian free group and every fundamental group 
of a compact surface of genus $g\geq2$.  With this we can now state the following

\begin{theorem}[\cite{Goldman_Millson}]\label{thm:2.2}
Let $\Gamma$ be a finitely generated group with property $(\h)$ and $G$ a connected Lie group.
Then $\hom_{d,i}(\Gamma,G)$ is closed in $\hom(\Gamma,G)$.
\end{theorem}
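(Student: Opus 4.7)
The plan is to argue by contradiction: assume $\rho_n \in \hom_{d,i}(\Gamma,G)$ converges pointwise to a representation $\rho$ that either fails to be injective or fails to have discrete image. The main tool is the Zassenhaus--Kazhdan--Margulis lemma for the connected Lie group $G$: there exist a neighborhood $U$ of $e \in G$ and an integer $c$, depending only on $G$, such that for every discrete subgroup $\Delta < G$ the subgroup $\langle \Delta \cap U\rangle$ is contained in a connected nilpotent Lie subgroup of $G$, and in particular is nilpotent of class $\le c$.

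The key step is a transfer principle: if $\gamma_1,\ldots,\gamma_k \in \Gamma$ satisfy $\rho(\gamma_i) \in U$ for every $i$, then $\langle \gamma_1,\ldots,\gamma_k\rangle$ is nilpotent of class $\le c$. Indeed, for $n$ large one has $\rho_n(\gamma_i) \in \rho_n(\Gamma) \cap U$, so Zassenhaus--Kazhdan--Margulis applied to the discrete group $\rho_n(\Gamma)$ forces $\rho_n\bigl(\langle \gamma_1,\ldots,\gamma_k\rangle\bigr)$ to be nilpotent of class $\le c$, and the injectivity of $\rho_n$ transports the conclusion back to $\Gamma$. Since nilpotency of class $\le c$ is a universal identity in $c+1$ variables, any subgroup of $\Gamma$ whose every finitely generated subgroup is nilpotent of class $\le c$ is itself nilpotent of class $\le c$.

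Case 1 (failure of injectivity). Choose $\gamma_0 \in \ker\rho \setminus \{e\}$. Its normal closure $N \trianglelefteq \Gamma$ lies in $\ker\rho$, so $\rho(\eta) = e \in U$ for every $\eta \in N$, and the transfer principle together with the previous remark shows that $N$ is nilpotent of class $\le c$. Property $(\h)$ then forces $N$ to be finite, so $\gamma_0$ has some finite order $m \ge 2$. But then $\rho_n(\gamma_0)$ has order exactly $m$ for every $n$ while $\rho_n(\gamma_0) \to e$, contradicting the fact that $g \mapsto g^m$ is a local diffeomorphism at $e$ (its differential on $\mathfrak g$ being multiplication by $m$), so that elements of order $m$ are bounded away from $e$ in $G$.

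Case 2 (failure of discreteness). Suppose $\rho$ is injective but $H := \overline{\rho(\Gamma)}$ has identity component $H^0$ of positive dimension. Pick a neighborhood $V$ of $e$ in $H^0$ with $V \subset U$; since $H^0$ is open in the Lie subgroup $H$ of $G$ and $\rho(\Gamma)$ is dense in $H$, every element of $V$ is a limit of elements $\rho(\gamma^{(j)})$ with $\rho(\gamma^{(j)}) \in U$. For $h_1,\ldots,h_{c+1} \in V$, writing $h_i = \lim_j \rho(\gamma_i^{(j)})$ and invoking the transfer principle gives
$$[[\cdots[\rho(\gamma_1^{(j)}),\rho(\gamma_2^{(j)})],\ldots],\rho(\gamma_{c+1}^{(j)})] = e\,,$$
and passing to the limit this identity holds on $V^{c+1}$. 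By real-analyticity of the iterated commutator and connectedness of $H^0$ it holds on all of $H^0$, so $H^0$ is nilpotent of class $\le c$. Then $\rho^{-1}(H^0)$ is normal in $\Gamma$ (because $H^0$ is characteristic in $H$, which contains $\rho(\Gamma)$) and, injecting via $\rho$ into the nilpotent $H^0$, is itself nilpotent; it is infinite because $\rho(\Gamma) \cap H^0$ is dense in the positive-dimensional $H^0$. This contradicts property $(\h)$. The hardest step is Case 2, where one must upgrade the pointwise transfer principle to nilpotency of the Lie subgroup $H^0$ before property $(\h)$ can be brought to bear.
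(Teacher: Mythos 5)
Your proof is correct and follows essentially the same route as the paper's: the Kazhdan--Margulis--Zassenhaus neighborhood, the transfer of bounded-class nilpotency from $\rho_n(\Gamma)$ back to $\Gamma$ via injectivity, and property $(\h)$ applied first to the kernel and then to $\rho^{-1}(H^0)$. The only (cosmetic) differences are in the endgames: you rule out a finite nontrivial kernel via the local diffeomorphism $g\mapsto g^m$ where the paper uses that $\Uu$ contains no nontrivial subgroup, and you extend the commutator identity to $H^0$ by real-analyticity where the paper argues by density and continuity.
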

\begin{proof} The essential ingredient is 
the theorem of Kazhdan--Margulis--Zassenhaus \cite[Theorem~8.16]{Raghunathan_book} 
saying that 
there exists an open neighborhood $\Uu\subset G$ of $e$ such that 
whenever $\Lambda<G$ is a discrete subgroup,
then $\Uu\cap\Lambda$ is contained in a connected nilpotent group. 
We fix now such an open neighborhood and assume in addition that 
[it does not contain any nontrivial subgroup of $G$; 
let also $\ell$ be an upper bound on the degree of
nilpotency of connected nilpotent Lie subgroups of $G$.

Let now $\{\rho_n\}_{n\geq1}$ be a sequence in $\hom_{d,i}(\Gamma,G)$ with limit $\rho$.
We show that $\rho$ is injective.
For every finite set $E\subset\ker\rho$, we have that 
$\rho_n(E)\subset\Uu$ for $n$ large, which, by \cite[Theorem~8.16]{Raghunathan_book}, implies 
that for all $k \geq \ell$, the $k$-th iterated commutator of $\rho_n(E)$ is trivial,
and the same holds therefore for $E$ since $\rho_n$ is injective.

As a result, $\ker\rho$ is nilpotent and hence, by property $(\h)$, finite; 
thus $\rho_n(\ker\rho)\subset\Uu$ for large $n$ which, by the choice of $\Uu$ implies 
that $\rho_n(\ker\rho)=e$ and hence that $\rho$ is injective.  

We prove now that $\rho$ is discrete.  To this end, let $L=\overline{\rho(\Gamma)}$
be the closure of $\rho(\Gamma)$;
then $L$ is a Lie subgroup of $G$ and $L^0$ is open in $L$.  
Let $V$ be an open neighborhood of the identity on $L^0$ with $V\subset\Uu$;
then $\rho(\Gamma)\cap V$ is dense in $V$ and $V$ generates $L^0$, 
from which we conclude that $\rho(\Gamma)\cap V$ generates a dense subgroup of $L^0$.
For every finite set $F'\subset\Gamma$ with $\rho(S)\subset V\subset\Uu$ we have that 
$\rho_n(S)\subset\Uu$ for $n$ large, which implies as before that for all $k\geq l$
the $k$-th iterated commutator of $F'$, and hence of $\rho(F')$ is trivial;
thus $L^0$ is nilpotent and so is $\rho^{-1}(L^0)$ since $\rho$ is injective.
But then $\rho^{-1}(L^0)$ is finite and hence $L^0=\{e\}$,
which shows that $\rho(\Gamma)$ is discrete and concludes the proof.
\end{proof}

Next we turn to the set $\hom_0(\Gamma,G)$ of faithful, 
discrete and cocompact realizations of $\Gamma$ in $G$;
this set was considered by A.~Weil as a tool in his celebrated local rigidity theorem 
in which the following general result played an important role.

\begin{theorem}[\cite{Raghunathan_book}]\label{thm:2.3}
Assume that $\Gamma$ is finitely generated and that $G$ is a connected Lie group. 
Then $\hom_0(\Gamma,G)$ is an open subset of $G$.
\end{theorem}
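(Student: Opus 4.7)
The plan is to follow A.~Weil's classical approach: given $\rho_0 \in \hom_0(\Gamma,G)$, use cocompactness to produce, for every representation $\rho$ sufficiently close to $\rho_0$, a continuous equivariant map $\phi_\rho : G \to G$ close to the identity, through which the three defining properties of $\hom_0$ transfer from $\rho_0$ to $\rho$.

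First I would fix a relatively compact open set $\Omega \subset G$ with $\rho_0(\Gamma)\,\Omega = G$, afforded by cocompactness. By discreteness of $\rho_0(\Gamma)$ and compactness of $\overline\Omega$, the set
\[
F := \{\gamma \in \Gamma : \rho_0(\gamma)\overline\Omega \cap \overline\Omega \neq \emptyset\}
\]
is finite, and a chaining argument based on connectedness of $G$ shows that $F$ generates $\Gamma$. For every neighborhood $W$ of $e \in G$, the set
\[
\mathcal{N}_W := \{\rho \in \hom(\Gamma,G) : \rho(s)\rho_0(s)^{-1} \in W \text{ for all } s \in F\}
\]
is then an open neighborhood of $\rho_0$ since $F$ is finite, and the strategy is to show $\mathcal{N}_W \subset \hom_0(\Gamma,G)$ once $W$ is chosen small enough.

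The core construction is to use a $\rho_0(\Gamma)$-invariant partition of unity on $G$ subordinate to the covering $\{\rho_0(\gamma)\Omega\}_{\gamma \in \Gamma}$ to glue the local left-translations $g \mapsto \rho(\gamma)\rho_0(\gamma)^{-1} g$ into a continuous $(\rho_0,\rho)$-equivariant map
\[
\phi_\rho : G \to G, \qquad \phi_\rho(\rho_0(\gamma) g) = \rho(\gamma) \phi_\rho(g).
\]
For $W$ sufficiently small, $\phi_\rho$ stays uniformly close to the identity on $\overline\Omega$, hence is surjective and proper. Equivariance then gives $\rho(\Gamma)\,\phi_\rho(\overline\Omega) = G$, so $\rho(\Gamma)\backslash G$ is compact; discreteness of $\rho(\Gamma)$ follows because any sequence $\rho(\gamma_n) \to e$ would force, via $\phi_\rho$, a corresponding accumulation in $\rho_0(\Gamma)$; and injectivity of $\rho$ follows because any nontrivial $\gamma \in \ker\rho$ would make $\phi_\rho$ periodic along the infinite orbit $\rho_0(\langle\gamma\rangle)$, contradicting its properness together with its proximity to the identity on $\overline\Omega$.

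The main obstacle is the careful construction of $\phi_\rho$: one must match the size of $W$ to the finite ``overlap data'' encoded by $F$ so that the local prescriptions on adjacent translates $\rho_0(\gamma)\Omega$ agree where they meet, and then leverage the fact that $F$ generates $\Gamma$ to propagate compatibility to all of $\Gamma$. A secondary subtlety is ensuring, in this generality where $G$ is merely a connected Lie group, that $W$ can be chosen inside a canonical coordinate neighborhood of $e$ so that the partition-of-unity average makes sense and yields a map uniformly controlled in terms of the $\rho(s)\rho_0(s)^{-1}$ for $s\in F$.
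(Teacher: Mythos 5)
The paper itself gives no proof of Theorem~\ref{thm:2.3}: it cites \cite{Raghunathan_book} and refers the reader to \cite{Bergeron_Gelander} for the geometric Ehresmann--Thurston approach via deformations of $(G,X)$-structures. Your outline is instead Weil's classical argument, i.e.\ the route of the cited primary source, and the skeleton (cocompact fundamental set $\Omega$, the finite generating set $F$ of ``overlaps'', the glued $(\rho_0,\rho)$-equivariant map $\phi_\rho$) is exactly right. So the issue is not the choice of strategy.

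The genuine gap is the step ``$\phi_\rho$ stays uniformly close to the identity on $\overline\Omega$, hence is surjective and proper.'' Writing any $y\in G$ as $y=\rho_0(\gamma)x$ with $x\in\overline\Omega$, equivariance gives $\phi_\rho(y)=\rho(\gamma)\phi_\rho(x)$ with $\phi_\rho(x)$ in the fixed compact set $\overline\Omega W$; hence for compact $K$ the preimage $\phi_\rho^{-1}(K)$ is infinite exactly when infinitely many $\gamma$ satisfy $\rho(\gamma)\in K(\overline\Omega W)^{-1}$. In other words, properness of $\phi_\rho$ is \emph{equivalent} to $\rho$ having finite kernel and discrete image --- essentially the conclusion you want --- so deducing discreteness and injectivity from properness is circular. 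Likewise $\phi_\rho(G)=\rho(\Gamma)\phi_\rho(\overline\Omega)$ is a priori neither closed nor all of $G$ (were $\rho(\Gamma)$ to degenerate to a dense subgroup, this image would merely be dense); $C^0$-closeness to the identity on a compact fundamental set plus equivariance does not by itself yield surjectivity or properness. The standard repair is to make the partition-of-unity/center-of-mass construction smooth, so that $\phi_\rho$ is $C^1$-close to the identity on $\overline\Omega$ and hence, by equivariance with respect to left translations, a local diffeomorphism everywhere with uniform injectivity radius; one then shows by path lifting that $\phi_\rho$ is a covering of $G$ onto an open and closed subset, hence onto $G$, and finally that it is a degree-one covering, i.e.\ a homeomorphism. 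Since $\phi_\rho$ then conjugates the free, properly discontinuous, cocompact left action of $\Gamma$ via $\rho_0$ to the action via $\rho$, injectivity, discreteness and cocompactness of $\rho$ all follow at once. Note also that even granting properness, your injectivity argument (``periodicity along the infinite orbit $\rho_0(\langle\gamma\rangle)$'') tacitly assumes $\gamma\in\ker\rho$ has infinite order; for a torsion element the orbit is finite and no contradiction arises, so that argument only bounds $\ker\rho$ to be finite. The homeomorphism route avoids this, since left translations act freely.
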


There are by now several approaches available: we refer to the paper by Bergeron and Gelander
\cite{Bergeron_Gelander}, where the geometric approach due essentially to 
Ehresmann and Thurston \cite{Thurston_book} is explained (see also \cite{Goldman_structures,Canary_etal, Lok});
this approach, based on a reformulation of the problem in terms of variations of 
$(G,X)$-structures leads to a more general stability result 
also valid for manifolds with boundary.

We content ourselves with noticing the following consequence:

\begin{corollary}\label{cor:2.3}  Assume that $\Gamma$ is  finitely generated, torsion-free and 
  has property $(\h)$.  Assume that $G$ is a connected reductive Lie
  group and that there exists a discrete, injective and cocompact
  realization of $\Gamma$ in $G$.  Then \bqn
  \hom_0(\Gamma,G)=\hom_{d,i}(\Gamma,G) \eqn and both sets are
  therefore open and closed, in particular a union of connected
  components of $\hom(\Gamma,G)$.
\end{corollary}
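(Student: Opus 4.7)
The plan is to reduce the corollary to the equality $\hom_0(\Gamma,G)=\hom_{d,i}(\Gamma,G)$. Once this is established, Theorem~\ref{thm:2.3} gives openness of $\hom_0$ in $\hom(\Gamma,G)$, while Theorem~\ref{thm:2.2} (whose proof uses property $(\h)$) gives closedness of $\hom_{d,i}$, so the common set is clopen and hence a union of connected components, as required.

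The inclusion $\hom_0\subseteq\hom_{d,i}$ is immediate from the definitions, so what I must prove is the reverse. The approach is to compare cohomological dimensions of the associated quotient manifolds. Fix a maximal compact subgroup $K\subseteq G$ and set $X:=G/K$; since $G$ is connected reductive, $X$ is diffeomorphic to $\RR^d$ with $d:=\dim X$. Let $\rho_0$ denote the given discrete, injective, cocompact realization. Because $\Gamma$ is torsion-free and $\rho_0$ is injective, $\rho_0(\Gamma)$ acts freely and properly on $X$, and the quotient $\rho_0(\Gamma)\backslash X$ is a closed aspherical $d$-manifold with fundamental group $\Gamma$; in particular it is a $K(\Gamma,1)$ and $\mathrm{cd}(\Gamma)=d$. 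Now for any $\rho\in\hom_{d,i}(\Gamma,G)$ the same freeness argument shows that $M_\rho:=\rho(\Gamma)\backslash X$ is an aspherical $d$-manifold of type $K(\Gamma,1)$, and I will conclude $\rho\in\hom_0$ by showing that $M_\rho$ must be compact.

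The main obstacle is this last compactness conclusion. It can be handled by the classical fact that a connected non-compact smooth $n$-manifold $M$ satisfies $H^n(M;\mathcal{A})=0$ for every local coefficient system $\mathcal{A}$ (equivalently, $M$ has the homotopy type of a CW complex of dimension at most $n-1$). Applied to $M_\rho$ this forces $\mathrm{cd}(\Gamma)\le d-1$, contradicting the equality $\mathrm{cd}(\Gamma)=d$ derived from $\rho_0$; hence $M_\rho$ is compact, meaning $\rho(\Gamma)\backslash G$ is compact and $\rho\in\hom_0$. Alternatively, and more in line with the discussion preceding the corollary, one can simply invoke the $(G,X)$-structures stability machinery of Bergeron--Gelander~\cite{Bergeron_Gelander}, which packages precisely this kind of rigidity in the present setting.
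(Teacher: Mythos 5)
Your proposal is correct and follows essentially the same route as the paper: both arguments deduce from the given cocompact realization that the top-degree cohomology of $\Gamma$ (equivalently of the aspherical quotient $\rho(\Gamma)\backslash X$) is nonzero, and then use the vanishing of top cohomology of a noncompact manifold to force compactness of $\rho(\Gamma)\backslash X$, before invoking Theorems~\ref{thm:2.2} and~\ref{thm:2.3} for closedness and openness. Your use of cohomological dimension with local coefficients is only a minor refinement of the paper's computation with $\RR$-coefficients (and quietly sidesteps the orientability issue there).
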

\begin{proof}  Let $\rho_0\in\hom_0(\Gamma,G)$ and let $K<G$ be a maximal compact subgroup;
since by the Iwasawa decomposition $X:=G/K$ is contractible 
and since  $\rho_0(\Gamma)$ acts on $X$ as a group of covering transformations
with compact quotient, we have that, for $n=\dim  X$, 
$\h^n(\Gamma,\RR)=\h^n\big(\rho_0(\Gamma),\RR\big)\neq0$.
Therefore, if $\rho:\Gamma\to G$ is any discrete injective embedding, 
we have that $\h^n\big(\rho(\Gamma)\backslash X,\RR\big)$ does not vanish
and hence $\rho(\Gamma)\backslash X$ is compact, 
thus implying that $\rho\in\hom_0(\Gamma,G)$.
\end{proof}


Applying the preceding discussion to our compact surface $S$ of genus $g\geq2$,
we conclude using Theorem~\ref{thm:1.5} and Corollary~\ref{cor:2.3} that 
\bqn
\delta\big(\hyp(S)\big)\subset\hom\big(\pi_1(S,\ast),\PSU(1,1)\big)
\eqn
is a union of components of the representation variety $\hom\big(\pi_1(S,\ast),\PSU(1,1)\big)$.

\section{Invariants, Milnor's inequality and Goldman's theorem}\label{sec:inv_miln_gold}
In this section we will discuss various aspects of a fundamental invariant 
attached to a representation
$\rho:\pi_1(S,\ast)\to G$, where $G=\PSU(1,1)$, namely the {\em Euler number}\index{Euler number}\index{number!Euler} 
of $\rho$.
This leads to a quite different way of characterizing the image of
\bqn
\delta:\hyp(S)\to\hom\big(\pi_1(S,\ast),G\big)
\eqn
in the case in which $S$ is compact.  This invariant can also be
defined for targets belonging to a large class of Lie groups $G$, and
this leads to natural generalizations of Teichm\"uller space (see the
discussion in \S~\ref{sec:max_rep} and \S~\ref{sec:higher}).

In the sequel $S$ denotes a compact surface of genus $g\geq 2$ and fixed orientation.  
We drop moreover the basepoint in the notation $\pi_1(S,\ast)$ and we set $D=\widetilde S$.

\subsection{Flat $G$-bundles\index{flat $G$-bundles}}\label{subsec:flat_gb} Given a connected Lie group 
$G$ and a homomorphism
$\rho:\pi_1(S)\to G$, we obtain, in the notation of
\S~\ref{sec:hyp_str}, a proper action without fixed points on $D\times
G$ by \bqn \gamma_\ast(x,g)=\big(T_\gamma x,\rho(\gamma)g\big) \eqn
whose quotient $\pi_1(S)\backslash (D\times G)$ is the total space
$G(\rho)$ of a flat principal (right) $G$-bundle over $S$, where the
projection map comes from the projection $D\times G\to D$ on the first
factor.

%
Given a $G$-bundle $\mathcal{E}$ over $S$ the first obstruction to find a continuous section of 
$\mathcal{E} \to S$ lies in $\h^2\big(S,\pi_1(G)\big)$.
Namely, let $K$ be a triangulation of $S$; choose preimages in  $\mathcal{E}$ 
for the vertices of $K$
and extend this section over the $0$-skeleton of $K$ to the $1$-skeleton 
by using that $G$ is connected;
for each $2$-simplex $\sigma$ we have thus a section over its boundary $\partial\sigma$.
Using the flat connection, this section of  $\mathcal{E}$ 
over $\partial\sigma$ can be deformed
into a loop lying in a single fixed fiber;
identifying this fiber with $G$ we get for every $\sigma$ a free homotopy class of loops in $G$
and hence a well defined element $c(\sigma)\in\pi_1(G)$, since the latter is Abelian.  
The map $c$ is a simplicial $2$-cocycle on $K$ with values in $\pi_1(G)$ and hence defines 
an element in $\h^2\big(S,\pi_1(G)\big)$ which depends only on $\rho$.
In this way we obtain a map
\bqn
o_2:\hom\big(\pi_1(S),G\big)\to\h^2\big(S,\pi_1(G)\big)\, 
\eqn
which assigns to $\rho$ the obstruction\index{obstruction class} 
$o_2(\rho) \in \h^2\big(S,\pi_1(G)\big)$ of the flat $G$-bundle 
$G(\rho)$. 
An important observation is that if $\rho_1$ and $\rho_2$ lie in the same component
of $\hom\big(\pi_1(S),G\big)$, then the associated $G$-bundles $G(\rho_1)$ and $G(\rho_2)$
are isomorphic.  As a result, the invariant $o_2$ is constant 
on connected components of $\hom\big(\pi_1(S),G\big)$.
(See also \cite{Goldman_82} for a discussion of characteristic classed and representations.)

\subsection{Central extensions}\label{subsec:centr_ext}
An invariant closely related to the one defined above is obtained by considering
the central extension\index{central extension} of $G$ given by the universal covering
\bqn
\xymatrix{
 \{e\}\ar[r]
&\pi_1(G)\ar[r]
&\widetilde G\ar[r]^p
&G\ar[r]
&\{e\}\,,
}
\eqn
where the neutral element is taken as basepoint.

A homomorphism $\rho:\pi_1(S)\to G$ then gives a central extension $\Gamma_\rho$ of $\pi_1(S)$
by $\pi_1(G)$ in the familiar way
\bqn
\Gamma_\rho=\big\{(\gamma,g)\in\pi_1(S)\times\widetilde G:\,\rho(\gamma)=p(g)\big\}\,.
\eqn
Observing now that the isomorphism classes of central extensions of $\pi_1(S)$ by $\pi_1(G)$
are classified by $\h^2\big(\pi_1(S),\pi_1(G)\big)$, we get  a map
\bqn
c_2:\hom\big(\pi_1(S),G\big)\to\h^2\big(\pi_1(S),\pi_1(G)\big)\,.
\eqn
So far the discussion in \S\S~\ref{subsec:flat_gb} and \ref{subsec:centr_ext} 
applies to any connected Lie group $G$.
In case $G=\PSU(1,1)$, we get a canonical generator of $\pi_1(G)$ 
from the orientation of $\DD\subset \CC$;
by considering the loop
\bqn
\ba
{[}0,1]&\to\quad\PSU(1,1)\\
s\,\,\,&\mapsto \begin{pmatrix}e^{i\pi s}&0\\0&e^{-i\pi s}\end{pmatrix}\,,
\ea
\eqn
we identify $\pi_1\big(\PSU(1,1)\big)$ with $\ZZ$; 
we will denote by $t\in\pi_1\big(\PSU(1,1)\big)$ the image of $1\in\ZZ$.

\subsection{Description of $\h^2\big(\pi_1(S),\ZZ\big)$, a digression}\label{subsec:descr}
Let $g\geq1$ be the genus of $S$. Then $\pi_1(S)$ admits as presentation
\bqn
\left\<a_1,b_1,\dots,a_g,b_g:\,\prod_{i=1}^g[a_i,b_i]=e\right\>\,.
\eqn
The orientation of $S$ is built in, in that, when drawing the lifts to $\widetilde S$ of the loop
$a_1b_1a_1^{-1}b_1^{-1}a_2b_2\dots a_g^{-1}b_g^{-1}$, one gets a $4g$-gone
whose boundary is traveled through in the positive sense.

Now define
\bqn
\overline\Gamma_g:=
\left\<A_1,B_1,\dots,A_g,B_g,z:\,\prod_{i=1}^g[A_i,B_i]=z\hbox{ and }[z,A_i]=[z,B_i]=e
\right\>\,.
\eqn
This group $\overline\Gamma_g$ surjects onto $\pi_1(S)$ with kernel the cyclic subgroup
generated by $z$, which incidentally is central.  
In order to see that $z$ has infinite order, 
observe first that $\overline\Gamma_1$ is isomorphic to the integer Heisenberg group
\bqn
\left\{
\begin{pmatrix}
1&x&z\\
0&1&y\\
0&0&1
\end{pmatrix}:\,
x,y,z\in\ZZ
\right\}
\eqn
by 
\bqn
A_1\mapsto\begin{pmatrix}1&1&0\\0&1&0\\0&0&1\end{pmatrix}
\qquad
B_1\mapsto\begin{pmatrix}1&0&0\\0&1&1\\0&0&1\end{pmatrix}
\qquad
z\mapsto\begin{pmatrix}1&0&1\\0&1&0\\0&0&1\end{pmatrix}\,.
\eqn
Then conclude by considering the surjection 
\bqn
\overline\Gamma_g\to\overline\Gamma_1
\eqn
obtained by sending $A_i$ and $B_i$ to $e$ for $i\geq2$.
Thus $\overline\Gamma_g$ gives a central extension of $\pi_1(S)$ by $\ZZ$;
denoting by $\big[\overline\Gamma_g\big]$ its image in $\h^2\big(\pi_1(S_g),\ZZ\big)$, 
we have the following

\begin{proposition}\label{prop:3.1} $\h^2\big(\pi_1(S_g),\ZZ\big)=\ZZ\big[\overline\Gamma_g\big]$.
\end{proposition}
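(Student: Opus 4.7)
The plan is to compute $\h^2\big(\pi_1(S_g),\ZZ\big)\cong\ZZ$ topologically, exhibit an explicit generator of $\h_2\big(\pi_1(S_g),\ZZ\big)$ via Hopf's formula, and then check that the central extension class $[\overline\Gamma_g]$ evaluates to $1$ under the universal coefficient pairing.

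First, for $g\geq 1$ the universal cover of $S_g$ is contractible ($\RR^2$ when $g=1$, $\DD$ when $g\geq2$), so $S_g$ is a $K\big(\pi_1(S_g),1\big)$. Hence $\h^*\big(\pi_1(S_g),\ZZ\big)\cong\h^*(S_g,\ZZ)$, and Poincar\'e duality on a closed oriented surface gives $\h^2\big(\pi_1(S_g),\ZZ\big)\cong\ZZ$. Since $\h_1\big(\pi_1(S_g),\ZZ\big)\cong\ZZ^{2g}$ is torsion-free, the universal coefficient theorem supplies a canonical identification
\bqn
\h^2\big(\pi_1(S_g),\ZZ\big)\cong\Hom\big(\h_2(\pi_1(S_g),\ZZ),\ZZ\big)\,.
\eqn

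Next, I would identify an explicit generator of $\h_2\big(\pi_1(S_g),\ZZ\big)$. Writing $\pi_1(S_g)=F/R$ with $F=F_{2g}$ free on $a_1,b_1,\dots,a_g,b_g$ and $R$ the normal closure of the single relator $r=\prod_{i=1}^g[a_i,b_i]$, Hopf's formula yields $\h_2\big(\pi_1(S_g),\ZZ\big)=(R\cap[F,F])/[F,R]$. Since $r\in[F,F]$, every conjugate of $r$ lies in $[F,F]$, so $R\cap[F,F]=R$; modulo $[F,R]$ all conjugates of $r$ coincide with $r$ itself, so $R/[F,R]$ is cyclic, generated by $[r]$. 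Together with the $\h_2\cong\ZZ$ extracted from the previous step, this forces $[r]$ to be a generator of $\h_2\big(\pi_1(S_g),\ZZ\big)$.

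Finally, I would identify the homomorphism $\h_2\big(\pi_1(S_g),\ZZ\big)\to\ZZ$ corresponding to $[\overline\Gamma_g]$. Under the standard dictionary between central extensions and classes in $\h^2$, this homomorphism is computed as follows: lift the surjection $F\to\pi_1(S_g)$ to a homomorphism $\widetilde F\colon F\to\overline\Gamma_g$ (possible since $F$ is free), and send a class $[r']\in(R\cap[F,F])/[F,R]$ to $\widetilde F(r')\in\ker\big(\overline\Gamma_g\to\pi_1(S_g)\big)$. Taking $\widetilde F(a_i)=A_i$ and $\widetilde F(b_i)=B_i$, one computes $\widetilde F(r)=\prod[A_i,B_i]=z$ directly from the defining relation of $\overline\Gamma_g$, and by the Heisenberg argument already given in the excerpt, $z$ is of infinite order and is the chosen generator of the kernel $\ZZ$. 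Hence the Hopf generator $[r]$ is mapped to $1\in\ZZ$, so the homomorphism is an isomorphism, and $[\overline\Gamma_g]$ is a generator of $\h^2\big(\pi_1(S_g),\ZZ\big)$.

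The main technical obstacle is the third step: pinning down the precise correspondence between the class of a central extension in $\h^2$ and the Hopf-formula homomorphism into the kernel (essentially a bar-resolution bookkeeping). The remaining steps are either standard topology ($S_g$ aspherical, Poincar\'e duality) or direct manipulation of the given presentations.
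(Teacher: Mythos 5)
Your proof is correct, but it takes a genuinely different route from the one in the text. The paper stays entirely inside the algebra of central extensions: given any central extension $\Lambda$ of $\pi_1(S_g)$ by $\ZZ$, it extracts the integer $n$ defined by $\prod_j[\alpha_j,\beta_j]=i(n)$ for lifts $\alpha_j,\beta_j$ of the generators, and then shows by induction on $n$, using the Baer product of extensions, that $[\Lambda]=n[\overline\Gamma_g]$; this exhibits every class as a multiple of $[\overline\Gamma_g]$ without needing to know the isomorphism type of $\h^2$ in advance. You instead compute $\h^2\big(\pi_1(S_g),\ZZ\big)\cong\ZZ$ topologically (asphericity, Poincar\'e duality, universal coefficients), identify the generator of $\h_2$ via Hopf's formula applied to the one-relator presentation, and then reduce the whole proposition to the single evaluation $\widetilde F(r)=z$. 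What your approach buys is that the group structure of $\h^2$ comes for free and the only computation left is one commutator product; what it costs is the appeal to the compatibility between Hopf's formula and the extension-class description of $\h^2$ (the map $[r']\mapsto\widetilde F(r')$ on $(R\cap[F,F])/[F,R]$ being the image of the extension class under the universal-coefficient projection), which you rightly flag as the delicate bookkeeping step --- it is standard, but it is exactly the piece of machinery the paper's Baer-product induction is designed to avoid. Note also that both arguments ultimately lean on the same Heisenberg computation showing $z$ has infinite order: the paper needs it to know the extension is by $\ZZ$ at all, and you need it to know that $z$ is a generator of the kernel so that the pairing takes the value $\pm1$.
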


In fact if
\bqn
\xymatrix{
   0\ar[r]
 &\ZZ\ar[r]^i
 &\Lambda\ar[r]
 &\pi_1(S_g)\ar[r]
 &\{e\}
 }
 \eqn
 is any central extension by $\ZZ$, take lifts $\alpha_j,\beta_j\in\Lambda$ of $a_j, b_j$: then
 $\prod_{j=1}^g[\alpha_j,\beta_j]$ is independent of all 
 choices and the image under $i$ of a well defined $n\in\ZZ$.  
 Using the Baer product of extensions \cite{Baer} 
 one shows, by recurrence on $n$, that
 \bqn
 [\Lambda]=n\,[\overline\Gamma_g]\,.
 \eqn 
Now we come back to the invariant associated in \S~\ref{subsec:centr_ext} 
to $\rho\in\hom\big(\pi_1(S),G\big)$ and use the identification
\bqn
\ba
\pi_1(G)&\to\ZZ\\
t\quad&\mapsto 1
\ea
\eqn
to get $c_2(\rho)\in\h^2\big(\pi_1(S),\ZZ\big)$.
In terms of central extension we have then that 
\bqn
c_2(\rho)=z_2(\rho)[\overline\Gamma_g]\,,
\eqn
where $z_2(\rho)\in\ZZ$ is defined by the formula 
\bqn
\prod_{i=1}^g[\alpha_i,\beta_i]=t^{z_2(\rho)}\,,
\eqn
where $\alpha_1,\beta_1,\dots,\alpha_g,\beta_g\in\widetilde G$ are lifts of 
$\rho(a_1),\rho(b_1),\dots,\rho(a_g),\rho(b_g)$.

\subsection{The Euler class\index{Euler class}}\label{subsec:euler}
The following discussion is specific to the case where $G=\PSU(1,1)$; it takes as point
of departure the observation that the action of $G$ by homographies on $\DD$ gives an action 
on the circle $\partial\DD$
bounding $\DD$, by orientation preserving homeomorphisms. 
It will become apparent that considering homomorphisms with values in $G$ 
as homomorphisms with target the group $\homs$ of orientation preserving homeomorphisms
of the circle gives additional flexibility.

Let us take the quotient $\ZZ\backslash\RR$ of $\RR$ by the group 
generated by the integer translations
$T(x)=x+1$ of the real line, and consider, as model of $S^1$,
\bqn
\Hh_\ZZ^+(\RR)=\{f:\RR\to\RR:\,\hbox{increasing homeomorphisms commuting with }T\}\,.
\eqn
We obtain then the central extension 
\bqn
\xymatrix{
 0\ar[r]
 &\ZZ\ar[r]
 &\Hh^+_\ZZ(\RR)\ar[r]^-p
 &\homs\ar[r]
 &0
 }
\eqn
which realizes $\Hh^+_\ZZ(\RR)$ as universal covering of the group $\homs$,
the latter being endowed with the compact open topology.  
One obtains a section of $p$ by associating to every $f\in\homs$ the unique lift 
$\overline f:\RR\to\RR$ with $0\leq\overline f(0)<1$. 
The extent to which $f\mapsto\overline f$ is not a homomorphism is measured by an integral 
$2$-cocycle $\epsilon$ given by 
\bqn
\overline f\circ\overline g=\overline{f\circ g}\circ T^{\epsilon(f,g)}\,,
\eqn
where $T$ is the image in $\Hh^+_\ZZ(\RR)$ of the generator $1\in\ZZ$.
The Euler class\index{Euler class} is then the cohomology class $e\in\h^2\big(\homs,\ZZ\big)$ defined by $\epsilon$.

\begin{definition}\label{defi:3.2} The {\em Euler number}\index{Euler number} $e(\rho)$ of a representation 
\bqn
\rho:\pi_1(S)\to\homs
\eqn 
is the integer
$\<\rho^\ast(e),[S]\>$ obtained by evaluation of the pullback $\rho^\ast(e)\in\h^2\big(\pi_1(S),\ZZ\big)$
of $e$ on the fundamental class $[S]$, or rather on its image under the isomorphism
\bqn
\h_2(S,\ZZ)\to\h_2\big(\pi_1(S),\ZZ\big)
\eqn
considered in \eqref{eq:1.2}.
\end{definition}

\subsection{K\"ahler form and Toledo number}\label{subsec:kaeh_tol}
Contrary to \S~\ref{subsec:euler} the viewpoint we present here emphasizes the fact that 
the Poincar\'e disk $\DD$ is an instance
of a {\em Hermitian symmetric space}\index{Hermitian symmetric space} with $G$-invariant K\"ahler form
\bqn
\omega_\DD:=\frac{dz\wedge d\overline z}{(1-|z|^2)^2}\,,
\eqn
where $G=\PSU(1,1)$.
Given a homomorphism $\rho:\pi_1(S)\to G$, consider then the bundle
with total space the quotient $\DD(\rho):=\pi_1(S)\backslash( D\times\DD)$
of $D\times\DD$ by the properly  discontinuous and fixed point free action 
$\gamma(x,z):=\big(T_\gamma x,\rho(\gamma)z\big)$,
and with basis $S=\pi_1(S)\backslash D$.
Since the typical fiber $\DD$ is contractible, one can construct, 
adapting the procedure described in \S~\ref{subsec:flat_gb}, a continuous and even a 
a smooth section.
Equivalently there is a smooth equivariant map $F:D\to\DD$.
As a result, the pullback $F^\ast(\omega_\DD)$ is a $\pi_1(S)$-invariant $2$-form on $D$
which gives a $2$-form on $S$ denoted again, with a slight abuse of notation, by $F^\ast(\omega_\DD)$.
The {\em Toledo number}\index{Toledo number} $\T(\rho)$ of the representation $\rho$ is then 
\bqn
\T(\rho):=\frac{1}{2\pi} \int_S F^\ast(\omega_\DD)\,.
\eqn
Recall that we have fixed an orientation on $S$ once and for all.

\begin{remark}\label{rem:3.3} One verifies, using again geodesic homotopy,
that any two $\rho$-equivariant smooth maps $D\to\DD$ are homotopic
and hence, by Stokes' theorem, one concludes that the de Rham cohomology class
$\big[F^\ast(\omega_\DD)\big]\in\hdr^2(S,\RR)$ is independent of $F$.
This shows that $\T(\rho)$ is independent of the choice of $F$.
\end{remark}

\subsection{Toledo number and first Chern classes}\label{subsec:tol_no_chern_class}
%
%
%

Let $L\to\DD$ be  a Hermitian complex line bundle over the Poincar\'e disk $\DD$
and $G'$ a finite covering of $\PSU(1,1)$ acting by bundle isomorphisms on $L$;
then the curvature form $\Omega_L$ is a $G'$-invariant $2$-form on $\DD$.
Given a representation $\rho:\pi_1(S)\to G'$ and a smooth equivariant map 
$F:\DD\to\DD$, $\pi_1(S)$ acts by bundle automorphisms on $F^\ast L\to\DD$
and, by passing to the quotient, we get a  complex line bundle $L(\rho)$
over $S$.  Then $\frac1{2\pi\imath}F^\ast\Omega_L$ descends to a $2$-form
$\omega_{L(\rho)}$ on $S$ which, by Chern-Weil theory, 
represents the first Chern class of $L(\rho)$, i.\,e.
\bq\label{eq:first_chern}
c_1\big(L(\rho)\big)=\int_S\omega_{L(\rho)}\in\ZZ\,.
\eq
Applying this to specific line bundles we obtain integrality
properties for the Toledo number.
Namely, let $\theta \to \DD\subset\CC\PP^1$ be the restriction of the tautological
bundle over $\CC\PP^1$ and $\theta^2$ be its square. Then
$\Omega_\theta = \frac{1}{2i} \omega_\DD$ and $\Omega_{\theta^2} =\frac{1}{i} \omega_\DD$. 
The group $\PSU(1,1)$ acts by isomorphisms on
$\theta^2$, and (\ref{eq:first_chern}) implies 
\bqn 
T(\rho)
= \frac{1}{2\pi} \int_S F^* \omega_\DD = -\int_S \omega_{\theta^2_\rho} =
-c_1(\theta^2_\rho) \, \in \ZZ\,. 
\eqn 
The group $\SU(1,1)$ acts
naturally on $\theta$, so for representations $\rho: \pi_1(S) \to
\SU(1,1)$ the relation in (\ref{eq:first_chern}) gives 
\bqn 
T(\rho) =
\frac{1}{2\pi} \int_S F^* \omega_\DD= - 2\int_S \omega_{\theta_\rho}= -2
c_1(\theta_\rho) \, \in 2 \cdot \ZZ\,. 
\eqn 
In particular, a
representation $\rho: \pi_1(S) \to \PSU(1,1)$ lifts to $\SU(1,1)$ if and only if 
its Toledo number is divisible by $2$. 

\subsection{Relations between the various invariants}\label{subsec:rel}
For $G=\PSU(1,1)$ we identify in the sequel $\pi_1(G)$ with $\ZZ$ as described in \S~\ref{subsec:centr_ext}
and obtain for a representation $\rho\in\hom\big(\pi_1(S),G\big)$ the obstruction class\index{obstruction class}
$o_2(\rho)\in\h^2(S,\ZZ)$ and the class $c_2(\rho)\in\h^2\big(\pi_1(S),\ZZ\big)$;
using the specific description of the latter in terms of central extensions 
as $\ZZ[\overline\Gamma_g]$,
we get the invariant $z_2(\rho)\in\ZZ$ by setting $c_2(\rho)=z_2(\rho)[\overline\Gamma_g]$.  
Then 
\bq\label{eq:3.7.1}
\<o_2(\rho),[S]\>=-z_2(\rho)\,,
\eq
(see \cite[Lemma 2]{Milnor} and \cite{Wood}).
Turning to the Euler class, 
we observe that the injection $\PSU(1,1)\hookrightarrow \homeo{S^1}$
is a homotopy equivalence as both groups retract on the (common) group of rotations.
Therefore the restriction $e|_{\PSU(1,1)}\in\h^2\big(\PSU(1,1),\ZZ\big)$ 
classifies the universal covering
of $\PSU(1,1)$ and hence for $\rho:\pi_1(S)\to\PSU(1,1)$ we have 
\bqn
\rho^\ast(e)=z_2(\rho)[\overline\Gamma_g]\,,
\eqn
which implies that 
\bq\label{eq:3.7.2}
e(\rho)=\<\rho^\ast(e),[S]\>=-z_2(\rho)\,.
\eq
To relate the previous invariant to the Toledo number 
we will recall the very general principle that 
invariant forms on a symmetric space form a complex, with $0$ as derivative,
which equals the continuous cohomology of the connected group of isometries.
In our special case of the Poincar\'e disk, this takes the form
\bqn
\Omega^2(\DD)^G\cong\hc^2(G,\RR),
\eqn
where, given $\omega_\DD$, we get a continuous cocycle
\bqn
c(g_1,g_2):=\frac{1}{2\pi}\int_{\Delta\big(0,g_1(0), g_1g_2(0)\big)}\omega_\DD\,, 
\eqn
where $\Delta\big(0,g_1(0), g_1g_2(0)\big)$ denotes the oriented geodesic triangle having vertices at 
the points $0,g_1(0), g_1g_2(0) \in \DD$.
We call the resulting class $\kg$ the {\em K\"ahler class}\index{K\"ahler class}.  In fact,
it is not difficult to show that under the change of coefficients
\bqn
\hc^2(G,\ZZ)\to\hc^2(G,\RR)
\eqn
the Euler class $e$ goes to the K\"ahler class $\kg$.  
If $\rho\in\hom\big(\pi_1(S),G\big)$, this implies  that
\bq\label{eq:3.7.3}
e(\rho)=\<\rho^\ast(e),[S]\>=\frac{1}{2\pi}\int_SF^\ast(\omega_\DD)=\T(\rho)\,.
\eq

\subsection{Milnor's inequality and Goldman's theorem}\label{subsec:miln_gold}
In his seminal paper \cite{Milnor}, 
J.~Milnor treated the problem of characterizing those classes in $\h^2(S,\ZZ)$ 
which are Euler classes of flat principal $\GL_2^+$-bundles.
The fact that, in general there are restrictions on the characteristic classes of
flat principal $G$-bundles and in particular on $o_2(\rho)\in\h^2\big(S,\pi_1(G)\big)$
comes from the following observation:
$o_2$ is constant on connected components of $\hom\big(\pi_1(S),G\big)$ 
and the latter is a real algebraic set when $G$ is a real algebraic group,
thus possesses only finitely many connected components
(see Proposition~\ref{prop:2.1}).
To get explicit restrictions, however, is not a trivial matter.
In the case of $G=\PSU(1,1)$-bundles this restriction, 
known as the {\em Milnor--Wood inequality}\index{Milnor--Wood inequality}\index{inequality!Milnor--Wood}, is the following:

\begin{theorem}[\cite{Milnor, Wood}]\label{thm:Milnor_Wood}  Let $\rho\in\hom\big(\pi_1(S),G\big)$
and let $g$ be the genus of $S$.  Then
\bqn
\big|\big\<o_2(\rho),[S]\big\>\big|\leq 2g-2\,.
\eqn
\end{theorem}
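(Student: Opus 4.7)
The plan is to reduce the statement to a bound on the Toledo number, then invoke Gauss--Bonnet for geodesic triangles in $\DD$. By equations~(\ref{eq:3.7.1}), (\ref{eq:3.7.2}) and (\ref{eq:3.7.3}), the four invariants $\<o_2(\rho),[S]\>$, $e(\rho)$, $-z_2(\rho)$ and $\T(\rho)$ all coincide, so the desired inequality is equivalent to $|\T(\rho)|\leq 2g-2$. Recall that $\T(\rho)=\frac{1}{2\pi}\int_S F^\ast(\omega_\DD)$ for any smooth $\rho$-equivariant map $F:D\to\DD$, and Remark~\ref{rem:3.3} guarantees that this integral is independent of the choice of $F$, giving us full freedom to optimize.

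First I would fix a triangulation $K$ of $S$, lift it to a $\pi_1(S)$-equivariant triangulation of $D$, and build $F$ by a straightening procedure: choose $F$ equivariantly on the vertices, extend along each edge as the geodesic segment in $\DD$ joining the images of its endpoints, and then extend across each $2$-simplex as a ruled map whose image is the geodesic triangle spanned by the three image vertices. The resulting $F$ is piecewise smooth and $\rho$-equivariant, and $F^\ast(\omega_\DD)$ restricted to each $2$-simplex pulls back the hyperbolic area form of a geodesic triangle in $\DD$.

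The central geometric input is the Gauss--Bonnet theorem in curvature $-1$: a geodesic triangle in $\DD$ with interior angles $\alpha,\beta,\gamma\geq 0$ has area $\pi-(\alpha+\beta+\gamma)<\pi$. Summing the contributions over the $N$ faces of $K$ yields $|\T(\rho)|\leq N/2$. Using the standard triangulation of $S$ obtained by diagonalizing a fundamental $4g$-gon from a single vertex gives $N=4g-2$, hence the weaker estimate $|\T(\rho)|\leq 2g-1$ originally due to Milnor.

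The main obstacle is sharpening this to Wood's optimal constant $2g-2$. The cleanest route, which foreshadows the bounded cohomology viewpoint to be developed later in the paper, proceeds as follows: the bounded Euler class of $\homs$ admits a representative with sup norm $1/2$, obtained via a symmetrization of the integer cocycle $\epsilon$, while the simplicial $\ell^1$-seminorm of the fundamental class $[S]$ equals the Gromov simplicial volume $4g-4$ for $g\geq 2$. Pairing the pullback of the bounded Euler class with the fundamental cycle then yields $|e(\rho)|\leq\frac{1}{2}(4g-4)=2g-2$, which by~(\ref{eq:3.7.2}) is exactly the desired bound. I expect the two sharp norm computations (Gromov norm of the bounded Euler class and simplicial volume of $S$) to be the essential technical difficulty.
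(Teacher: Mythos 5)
Your argument is correct, but both of its steps differ from the paper's. The paper follows Milnor's original route: the lift $\tilde r$ of the retraction onto the rotation subgroup is a quasimorphism of defect $<\frac12$, and feeding the relation $\prod_{i=1}^g[\alpha_i,\beta_i]=t^{z_2(\rho)}$ into it gives $|z_2(\rho)|\leq 2g-\frac12$, hence $|z_2(\rho)|\leq 2g-1$; the sharpening to $2g-2$ is then obtained by pulling back to finite covers $S'\to S$ of degree $n\geq 2$, using multiplicativity of $\langle o_2(\rho),[S]\rangle$ and of $\chi$ under coverings together with integrality. You instead bound the Toledo number: straightening plus Gauss--Bonnet for the preliminary estimate, then the duality $|\langle\rho^\ast(\erb),[S]\rangle|\leq\|\erb\|\cdot\|[S]\|_1$ together with the two inputs $\|\erb\|=\frac12$ (which the paper supplies via Lemma~\ref{lem:4.8}) and $\|[S]\|_1=4g-4$. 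Your second step is exactly the mechanism the paper deploys later (Corollary~\ref{cor:4.14}, Corollary~\ref{cor:5.6}) and is the one that generalizes to higher rank; what it costs is that the standard proof of the upper bound $\|[S]\|_1\leq 4g-4$ is itself the covering trick, so that ingredient is relocated rather than removed. One further observation: your first step already suffices if you keep the strict inequality you yourself record. Each of the $N=4g-2$ straightened triangles has vertices in the open disk and hence signed area of absolute value strictly less than $\pi$, so $|\T(\rho)|<2g-1$; since $\T(\rho)=-z_2(\rho)\in\ZZ$, integrality forces $|\T(\rho)|\leq 2g-2$ outright, with no second step needed. This shortcut is particular to $\PSU(1,1)$ --- for higher-rank Hermitian targets the analogous strict triangle bound plus integrality is no longer enough, which is precisely why the bounded-cohomology machinery becomes essential later in the paper.
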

In light of subsequent generalizations of this inequality 
it is instructive to give an outline of the original arguments.
Consider the retraction
\bqn
 r:\PSU(1,1)\to K
=\left\{\pm\begin{pmatrix}e^{\imath s\pi}&0\\0&e^{-\imath s\pi}\end{pmatrix}:\,
s\in\RR/\ZZ\right\}
\eqn
given by decomposing $g=r(g)h(g)$ as a product of a rotation $r(g)$ 
with a Hermitian matrix $h(g)$.
Now lift $r$ to the universal covering
\bqn
\tilde r:\widetilde{\PSU(1,1)}\to\RR
\eqn
in such a way that $\tilde r(e)=0$.  Then: 
\begin{enumerate}
\item\label{item:3.8.1} $\tilde r(t^n g) = n+\tilde r(g)$, 
where $t$ is the generator of $\pi_1\big(\PSU(1,1)\big)$;
\item\label{item:3.8.2} $\tilde r(g^{-1})=-\tilde r(g)$;
\item\label{item:3.8.3}  $\big|\tilde r(ab)-\tilde r(a)-\tilde r(b)\big|<\frac12$ 
for all $a,b\in\PSU(1,1)$.
\end{enumerate}
This construction as well as the proof of these properties are given in \cite{Milnor} in the case
of $\GL_2^+(\RR)$ (see also \cite{Wood}).

Given $\rho\in\hom\big(\pi_1(S),G\big)$ let now
$\alpha_i,\beta_i$ be lifts to $\widetilde{\PSU(1,1)}$ of $\rho(a_i)$ and $\rho(b_i)$
(see \S~\ref{subsec:descr}).  Then
\bqn
t^{z_2(\rho)}=\prod_{i=1}^g[\alpha_i,\beta_i]\,.
\eqn
On applying the above properties several times 
we obtain
\bqn
\big|z_2(\rho)\big| = | \tilde r({t^{z_2(\rho)}})| = 
|\tilde r({\prod_{i=1}^g[\alpha_i,\beta_i]})|\leq (4g-1)\frac12=2g-\frac12
\eqn
which, since $z_2(\rho)$ is an integer, implies that 
\bqn
\big|z_2(\rho)\big|\leq 2g-1\,.
\eqn
This is not quite the announced result.  An additional argument is needed 
and can be found in \cite{Wood};
we present instead an argument in the spirit of Gromov's trick to compute 
the simplicial area of a surface.  Namely, let $p:S'\to S$
be a covering of degree $n\geq1$ and let $p_\ast:\pi_1(S')\to\pi_1(S)$
be the resulting morphism.  The inequality above, applied to $\rho\circ p_\ast$ gives
\bqn
\big|\<o_2(\rho\circ p_\ast),[S']\>\big|\leq 2g'-\frac12\,,
\eqn
where $g'$ is the genus of $S'$.  Since $o_2$ is a characteristic class, we have
\bqn
o_2(\rho\circ p_\ast)=p^\ast\big(o_2(\rho)\big)\,,
\eqn
where $p^\ast:\h^2(S,\ZZ)\to\h^2(S',\ZZ)$ and thus
\bqn
  \big\<o_2(\rho\circ p_\ast),[S]\big\>
=\big\<p^\ast\big(o_2(\rho)\big),[S']\big\>
=\big\<o_2(\rho),p_\ast[S']\big\>
=n\big\<o_2(\rho),[S]\big\>\,,
\eqn
since $p$ is of degree $n$.  Using the relation $g'-1=n(g-1)$,
we obtain
\bqn
\big|\<o_2(\rho),[S]\big\>\big|
<\frac{4n(g-1)+3}{2n}\,,
\eqn
which gives the desired inequality as soon as $n\geq 2$, since the left hand side is an integer.

In Milnor's paper \cite{Milnor} the construction and 
the property (\ref{item:3.8.3}) of $\tilde r$
come as a complete surprise. With hindsight, 
it is an instance of a {\em quasimorphism}\index{quasimorphism} and it is in the context of bounded cohomology
that its relation to the Euler class and the specific constant $\frac12$ 
in (\ref{item:3.8.3}) are explained.

Concerning the optimality of the inequality, it is shown also in \cite{Wood}
that every integer between $-(2g-2)$ and $2g-2$ is attained.  
In particular the inequality is optimal and one way to see this is 
to compute the Toledo invariant
of a homomorphism $\rho_h:\pi_1(S)\to G$ corresponding to a hyperbolic structure $h$ on $S$.
For this we have at our disposal the orientation preserving isometry $f_h:D\to\DD$
and hence the form $f_h^\ast(\omega_\DD)$ on $S$ coincides with the area $2$-form
$\omega_h$ given by the hyperbolic structure. Thus
\bqn
\T(\rho_h)=\frac1{2\pi}\int_S\omega_h=\big|\chi(S)\big|=2g-2\,.
\eqn
It should be observed that the value of the area of $S$ can be obtained directly 
from the formula
of the area of a geodesic triangle in $\DD$ applied to the triangulation 
of a "standard" fundamental polygon, taking into account that the sum of the 
internal angles if $2\pi$.
In light of this computation it is a very natural question what is the nature 
of the homomorphisms $\rho$ for which $\T(\rho)=2g-2$.
The answer is given by Goldman in his thesis \cite{Goldman_thesis}:

\begin{theorem}[\cite{Goldman_thesis}]\label{thm:goldman_thesis}
A representation $\rho:\pi_1(S)\to\PSU(1,1)$ corresponds to a hyperbolic structure on $S$ 
if and only if $\T(\rho)=2g-2$.
\end{theorem}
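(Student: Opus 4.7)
The forward direction appears in the paragraph preceding the theorem: for $h\in\hyp(S)$, the orientation-preserving isometry $f_h$ yields $f_h^*\omega_\DD=\omega_h$, and Gauss--Bonnet gives $T(\rho_h)=|\chi(S)|=2g-2$. I focus on the converse. Assume $T(\rho)=2g-2$. By Theorem~\ref{thm:1.5} together with Corollary~\ref{cor:2.3}, it suffices to produce a $\rho$-equivariant diffeomorphism $F:D\to\DD$: pulling back the Poincar\'e metric by $F$ then yields a $\Gamma$-invariant hyperbolic metric on $D$, descending to an $h\in\hyp(S)$ with $\rho_h$ conjugate to $\rho$, and orientation is preserved because $T(\rho)>0$.

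My plan is to obtain $F$ via equivariant harmonic map theory. Fix an auxiliary $h_0\in\hyp(S)$ and, among smooth $\rho$-equivariant maps $D\to\DD$, minimize the equivariant energy computed with $h_0$ on the source and the Poincar\'e metric on the target. Because the target has nonpositive curvature and $\rho$ cannot be elementary (the Toledo number vanishes on elementary representations, while $T(\rho)=2g-2>0$), the equivariant Eells--Sampson theorem of Corlette and Labourie produces a smooth harmonic minimizer $F$. In conformal coordinates one has the pointwise decomposition $F^*\omega_\DD=(|\partial F|^2-|\bar\partial F|^2)\,\omega_{h_0}$. The Bochner identity for a harmonic map between surfaces of constant Gauss curvature $-1$ yields the Ahlfors--Schwarz bound $|\partial F|^2\leq 1$, so that $F^*\omega_\DD\leq\omega_{h_0}$ pointwise. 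Integrating over $S$,
\[
2\pi(2g-2)\;=\;2\pi\,T(\rho)\;=\;\int_S F^*\omega_\DD\;\leq\;\int_S\omega_{h_0}\;=\;2\pi(2g-2),
\]
so equality holds everywhere, forcing simultaneously $|\partial F|^2\equiv 1$ and $\bar\partial F\equiv 0$. Hence $F$ is a holomorphic local isometry, and equivariance together with simple connectedness of $D$ and $\DD$ promotes $F$ to a global diffeomorphism.

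The main obstacle is the pointwise Bochner/Ahlfors--Schwarz estimate $|\partial F|^2\leq 1$ for the holomorphic energy of the equivariant harmonic map, together with the rigidity of its equality case; this is the analytic heart of the argument and is precisely where the negative curvature of both source and target is used essentially. Once $F$ is constructed, the argument concludes exactly as in the proof of Theorem~\ref{thm:1.5}: $F$ descends to an orientation-preserving diffeomorphism from $S$ onto $\im\rho\backslash\DD$, and pulling back the Poincar\'e metric produces the desired $h\in\hyp(S)$ with $\rho_h$ conjugate to $\rho$.
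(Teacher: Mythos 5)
The paper does not actually prove this theorem --- it is quoted from Goldman's thesis --- so your harmonic-maps route is in principle a legitimate independent approach (it is essentially the Hitchin/Higgs-bundle proof of Goldman's theorem). However, there is a genuine gap at exactly the point you identify as the ``analytic heart''. The pointwise bound $|\partial F|^2\leq 1$ is \emph{false} for harmonic maps between hyperbolic surfaces; the Ahlfors--Schwarz lemma requires $F$ to be holomorphic. The Bochner identity for a harmonic map between constant curvature $-1$ surfaces reads $\Delta\log|\partial F|^2=2\big(|\partial F|^2-|\bar\partial F|^2\big)-2$ where $\partial F\neq0$, so at an interior maximum of $|\partial F|^2$ one only gets $|\partial F|^2\leq 1+|\bar\partial F|^2$, which is vacuous. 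Concretely, the harmonic diffeomorphism in the identity class between two hyperbolic structures $h_0,h_1$ on $S$ has $\int_S J\,dA_{h_0}=\mathrm{Area}(h_1)=\mathrm{Area}(h_0)$, so if $J\leq1$ held pointwise then $J\equiv1$ and the map would be an isometry; since $h_0\neq h_1$ in Teichm\"uller space this fails, and $J=|\partial F|^2-|\bar\partial F|^2>1$ somewhere. So your chain $F^\ast\omega_\DD\leq\omega_{h_0}$ pointwise, hence equality everywhere, does not get off the ground.

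The repair is the Eells--Wood zero-counting argument, which must come \emph{before} any Schwarz-lemma estimate: $\bar\partial F$ is a section over $S$ of $\overline{K}_{h_0}\otimes F^\ast T^{1,0}\DD$, whose degree is $\chi(S)+\T(\rho)$ (using $c_1(F^\ast T^{1,0}\DD)=\T(\rho)$, as in \S~\ref{subsec:tol_no_chern_class}); when $\T(\rho)=2g-2$ this degree is $0$, so either $\bar\partial F\equiv0$ or $\bar\partial F$ is nowhere zero. In the latter case the second Bochner identity $\Delta\log|\bar\partial F|^2=-2\big(|\partial F|^2-|\bar\partial F|^2\big)-2$ is valid globally on the compact surface $S$, and integrating it gives $0=-4\pi\T(\rho)-2\,\mathrm{Area}(h_0)=-8\pi(2g-2)$, a contradiction. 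Hence $\bar\partial F\equiv0$, $F$ is holomorphic and nonconstant, and only now does Ahlfors--Schwarz give $|\partial F|^2\leq1$; combined with $\int_S|\partial F|^2\,dA_{h_0}=2\pi(2g-2)=\mathrm{Area}(h_0)$ this forces $|\partial F|^2\equiv1$, and the rest of your argument (local isometry, completeness, hence an equivariant isometry $D\to\DD$) goes through. The other ingredients you use --- nonvanishing of the Toledo number on representations with amenable image to guarantee existence of the equivariant harmonic map, and the reduction to producing an equivariant orientation-preserving diffeomorphism --- are fine.
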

A reformulation of Theorem~\ref{thm:goldman_thesis} is given in (\ref{eq:goldman}). 
In particular,  the image of $\hyp(S)$ in $\hom\big(\pi_1(S),G\big)$
being the preimage of $2g-2$ under $\T$ is hence a union of components.
In fact a little later Goldman proved that the preimages $\T^{-1}(n)$,
for $n\in\ZZ\cap\big[-(2g-2),2g-2\big]$ are exactly 
the components of $\hom\big(\pi_1(S),G\big)$ \cite{Goldman_components}. 
The component where $\T = 2-2g$ corresponds to hyperbolic structures on S with the reversed orientation. 
 
\subsection{An application to Kneser's theorem}\label{subsec:appl_knes}
This theorem takes its motivation in the question  of 
what are the possible degrees of continuous maps
from a compact oriented surface  $S$ to itself.  
If $S$ is either the sphere or the torus, then maps of arbitrarily high degree exist.
This is not the case anymore if the genus of $S$ is at least two, and more generally we have
the following:

\begin{theorem}[\cite{Kneser}]\label{thm:kneser} Let $f:S_1\to S_2$ be a continuous map
between compact oriented surfaces $S_i$ of genus at least $2$.
Then
\bqn
|\deg f|\leq\frac{|\chi(S_1)|}{|\chi(S_2)|}\,,
\eqn
with equality if and only if $f$ is homotopic to a covering map, 
necessarily of degree $\frac{\chi(S_1)}{\chi(S_2)}$.
\end{theorem}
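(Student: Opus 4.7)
The plan is to transfer the question about $\deg f$ into a question about the Toledo invariant of a pulled-back representation, and then apply the Milnor--Wood inequality (Theorem~\ref{thm:Milnor_Wood}) together with Goldman's characterization of hyperbolic holonomies (Theorem~\ref{thm:goldman_thesis}). Concretely, fix an auxiliary hyperbolic structure $h_2 \in \hyp(S_2)$ and let $\rho_2 := \delta(h_2) : \pi_1(S_2) \to G$, where $G = \PSU(1,1)$. Choosing a representative of $f$ sending basepoint to basepoint (this is harmless up to free homotopy), consider the composed representation
\bqn
\rho := \rho_2 \circ f_\ast : \pi_1(S_1) \longrightarrow G .
\eqn

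The heart of the argument is the multiplicativity
\bqn
\T(\rho) \,=\, (\deg f)\cdot \T(\rho_2) \,=\, (\deg f)\cdot|\chi(S_2)|.
\eqn
To see this, take the $\rho_2$-equivariant isometric developing map $F_2 = f_{h_2} : \widetilde S_2 \to \DD$ provided by the hyperbolic structure $h_2$, and lift $f$ to $\tilde f : \widetilde S_1 \to \widetilde S_2$ that intertwines $T_\gamma$ with $T_{f_\ast(\gamma)}$ for $\gamma \in \pi_1(S_1)$. Then $F := F_2 \circ \tilde f$ is $\rho$-equivariant, and the invariant $2$-form $F^\ast\omega_\DD$ descends on $S_1$ to $f^\ast\omega_{h_2}$, where $\omega_{h_2}$ is the hyperbolic area form on $S_2$. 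The standard degree formula $\int_{S_1} f^\ast\omega_{h_2} = (\deg f)\int_{S_2}\omega_{h_2}$ and the computation $\T(\rho_2) = |\chi(S_2)|$ recalled just before Theorem~\ref{thm:goldman_thesis} then give the displayed identity. Applying the Milnor--Wood inequality to $\rho$ (via the identification $\T(\rho) = \langle o_2(\rho),[S_1]\rangle$ from \eqref{eq:3.7.1}--\eqref{eq:3.7.3}) yields
\bqn
|\deg f|\cdot|\chi(S_2)| \,=\, |\T(\rho)| \,\leq\, 2g_1-2 \,=\, |\chi(S_1)|,
\eqn
which is the required inequality.

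For the equality case, suppose $|\deg f| = |\chi(S_1)|/|\chi(S_2)|$. Then $|\T(\rho)| = 2g_1-2$, so by Goldman's Theorem~\ref{thm:goldman_thesis} (applied to $S_1$, possibly with reversed orientation according to the sign of $\deg f$) the representation $\rho$ is the holonomy of a hyperbolic structure on $S_1$; in particular $\rho$ is injective, and hence so is $f_\ast$. Let $\hat S_2 \to S_2$ be the cover corresponding to the subgroup $f_\ast(\pi_1(S_1)) \leq \pi_1(S_2)$. Then $f$ lifts to $\hat f : S_1 \to \hat S_2$ which is an isomorphism on fundamental groups, and the Dehn--Nielsen--Baer theorem (Corollary~\ref{cor:1.4}) realizes this isomorphism by a diffeomorphism $S_1 \to \hat S_2$ homotopic to $\hat f$. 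Composing with the covering $\hat S_2 \to S_2$ exhibits $f$ as homotopic to a covering map, whose degree is $[\pi_1(S_2):f_\ast\pi_1(S_1)] = \chi(S_1)/\chi(S_2)$ by Euler characteristic multiplicativity for covers of $K(\pi,1)$'s. The main obstacle is the Toledo multiplicativity step: it requires packaging $F = F_2\circ\tilde f$ so that $F^\ast\omega_\DD$ is manifestly the pullback by $f$ of a form on $S_2$, together with careful bookkeeping of orientations so that Dehn--Nielsen--Baer delivers an orientation-preserving realization in the final step.
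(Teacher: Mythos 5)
Your proof is correct and follows essentially the same route as the paper: multiplicativity of the invariant under $f_\ast$ plus Milnor--Wood for the inequality, then Goldman's theorem, injectivity of $f_\ast$, passage to the cover corresponding to $\im f_\ast$, and Nielsen realization for the equality case. The only cosmetic difference is that you establish multiplicativity by pulling back the area form through $F_2\circ\tilde f$ and invoking the degree formula, whereas the paper uses naturality of the obstruction class $o_2$ and the identity $\<f^\ast o_2(\rho),[S_1]\>=\deg f\,\<o_2(\rho),[S_2]\>$ --- equivalent computations given \eqref{eq:3.7.1}--\eqref{eq:3.7.3}.
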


\begin{proof} Let $f_\ast:\pi_1(S_1)\to\pi_1(S_2)$ be the homomorphism induced 
on the level of fundamental groups, and pick $\rho\in\hom\big(\pi_1(S_2),G\big)$.
Then 
\bq\label{eq:3.9.1}
\ba
    \big\<o_2(\rho\circ f_\ast),[S_1]\big\>
  &=\big\<f^\ast\big(o_2(\rho)\big),[S_1]\big\>
  =\big\<o_2(\rho),f_\ast\big([S_1]\big)\big\>\\
  &=\deg f\big\<o_2(\rho),[S_2]\big\>\,.
\ea
\eq
Specializing now to $\rho=\rho_h$, for $h\in\hyp(S_2)$, 
we get 
\bqn
\big\<o_2(\rho_h),[S_2]\big\>=\big|\chi(S_2)\big|
\eqn
while the Milnor-Wood inequality gives
\bqn
\big|\<o_2(\rho_h\circ f_\ast),[S_1]\>\big|\leq\big|\chi(S_1)\big|
\eqn
which, together with \eqref{eq:3.9.1}, gives the inequality on $|\deg f|$.

Assume now that we have equality and, without loss of generality, that
\bqn
\T(\rho_h\circ f_\ast)=\big|\chi(S_1)\big|\,.
\eqn
Then Goldman's theorem implies that $\rho_h\circ f_\ast$ corresponds to a hyperbolic 
structure on $S_1$ and, in particular, $f_\ast$ is injective.
Letting $p:T\to S_2$ denote the covering of $S_2$ corresponding 
to the image of $f_\ast$, we have that 
\bqn
f_\ast:\pi_1(S_1)\to\pi_1(T)
\eqn
is an isomorphism, which implies that $T$ is compact and
(by Nielsen's theorem) that $f_\ast$ is induced by a homeomorphism $F:S_1\to T$.

We have then that the homomorphisms $f_\ast$ and $(p\circ F)_\ast$ coincide;
let now $\tilde f:\tilde S_1\to\tilde S_2$ and 
$\widetilde{p\circ F}:\tilde S_1\to\tilde S_2$ be lifts
of $f$.  These are continuous maps which are equivariant with respect to the same
homomorphism $\pi_1(S_1)\to\pi_1(S_2)$.
Upon choosing a hyperbolic metric on $S_2$, we conclude by using a geodesic
homotopy that $\widetilde{p\circ F}$ and $\tilde f$ are equivariantly
homotopic and hence $p\circ F$ and $f$ are homotopic.
\end{proof}

\section{Surfaces of finite type and the Euler number}\label{sec:surf_fin_eul}

\subsection{Hyperbolic structures on surfaces of finite type \\  and semiconjugations}\label{subsec:hyp_surf_quasiconj} 
Let $S$ be a compact (connected, oriented) surface.
Then the image of $\hyp(S)$ in $\hom\big(\pi_1(S),G\big)$, $G  = \PSU(1,1)$,  
can be described by one equation in the image of the generators;
namely, letting $t$ be the generator of $\pi_1(G)$,
$a_1,b_1,\dots,a_g, b_g$ the generators of $\pi_1(S)$ defined in \S~\ref{subsec:descr}
and introducing the smooth map
\bqn
\ba
G\times G&\longrightarrow \widetilde G\\
(g,h)&\mapsto[g,h]^{\widetilde\,}
\ea
\eqn
where $[g,h]^{ \widetilde\,}$ is the commutator of any two lifts of $g$ and $h$, 
Goldman's theorem (Theorem~\ref{thm:goldman_thesis}) can be restated as 
\bq\label{eq:goldman}
 \delta\big(\hyp(S)\big)
=\left\{\rho\in\hom\big(\pi_1(S),G\big):\,
 \prod_{i=1}^g\big[\rho(a_i),\rho(b_i)\big]^{ \widetilde\,}=t^{2-2g}\right\}\,.
\eq
The aim of this section is to present a circle of ideas, rooted in the theory 
of bounded cohomology, which will, among other things, 
lead to an analogous explicit description of $\delta\big(\hyp(S)\big)$
in the case in which $S$ is not compact.
We will however always assume that $\pi_1(S)$ is finitely generated;
equivalently $S$ is diffeomorphic to the interior of a compact surface with boundary.
The genus $g$ of this surface and the number $n$ of boundary components together
determine $S$ up to diffeomorphism. We say that $S$ is of {\em finite topological type}\index{finite topological type}.

The first observation is that the invariants introduced in \S~\ref{sec:inv_miln_gold} are of no use 
when $S$ is not compact.  In fact, for a connected surface $S$ the following are equivalent:
\begin{enumerate}
\item $\h^2(S,\ZZ)=\h^2\big(\pi_1(S),\ZZ\big)=0$;
\item\label{item:4.1.2} $\pi_1(S)$ is a free group;
\item $S$ is not compact.
\end{enumerate}
Elaborating a little on \eqref{item:4.1.2}, if $r$ is the rank of $\pi_1(S)$ 
as a free group, we have clearly that $\hom\big(\pi_1(S),G\big)\cong G^r$
and, as a result, this space of homomorphisms is always connected.
Thus $\delta\big(\hyp(S)\big)$ will not be a connected component.

The second observation, and this will lead us in the right direction, 
is to consider more closely the inclusions
\bqn
       \delta\big(\hyp(S)\big)
\subset\hom\big(\pi_1(S),G\big)
\subset\hom\big(\pi_1(S),\homeo{S^1}\big)
\eqn
in the case in which $S$ is compact. 
For $h_1, h_2\in\hyp(S)$, the diffeomorphism 
$f_{h_1}\circ f_{h_2}^{-1}:\DD\to\DD$ clearly conjugates
$\rho_{h_2}$ to $\rho_{h_1}$ within $\Diff^+(\DD)$.
Since $S$ is compact, $f_{h_1}\circ f_{h_2}^{-1}$ is a quasi isometry;
it is then a fundamental fact in hyperbolic geometry that $f_{h_1}\circ f_{h_2}^{-1}$
extends to an (orientation preserving) homeomorphism of $S^1=\partial\DD$.
Thus any two representations in $\delta\big(\hyp(S)\big)$ are conjugate in
$\homeo{S^1}$ and it is an easy exercise to see that 
any $\rho\in\hom\big(\pi_1(S),G\big)$ that is conjugate to an element
in $\delta\big(\hyp(S)\big)$ in $\homeo{S^1}$
in fact belongs to $\delta\big(\hyp(S)\big)$;
indeed such a representation $\rho$ is injective with discrete image.
Thus a full invariant of conjugacy on $\hom\big(\pi_1(S),\homeo{S^1}\big)$
would lead to a characterization of $\delta\big(\hyp(S)\big)$
within $\hom\big(\pi_1(S),G\big)$!

We will now develop this line of thought 
in the case in which $S$ is of finite topological type.
We assume that $S$ has a fixed orientation and let $\Sigma$ denote 
a compact surface of genus $g$ with $n$ boundary components
such that $S=\operatorname{int}(\Sigma)$.
Then $\pi_1(S)$ admits a presentation
\bq\label{eq:fg}
\left\<a_1,b_1,\dots,a_g,b_g,c_1,\dots,c_n:
\prod_{i=1}^g[a_i,b_i]\prod_{j=1}^nc_j=e\right\>\,.
\eq
Here each $c_i$ is freely homotopic to the $i$-th component of $\partial\Sigma$
with orientation compatible with the chosen orientation on $\Sigma$.
Let now $h$ be a complete hyperbolic metric on $S$.
We have then two possibilities for $\rho_h(c_i)$:
\begin{enumerate}
\item $\rho_h(c_i)$ is parabolic: it has a unique fixed point $\xi_i\in\partial\DD$ 
and for the interior $C_i$ of an appropriate horocycle based at $\xi_i$,
the quotient $\<\rho_h(c_i)\>\backslash C_i$ is of finite area 
and embeds isometrically into $\rho_h\big(\pi_1(S)\big)\backslash\DD$.
It is a neighborhood of the $i$-th end of $S$.
\item $\rho_h(c_i)$ is hyperbolic: it has an invariant axis $a_i\subset\DD$ 
which determines a half plane $H_i\subset\DD$ such that 
$\partial H_i$ and $a_i$ have opposite orientation.
The quotient $\<\rho_h(c_i)\>\backslash H_i$ embeds isometrically into 
$\rho_h\big(\pi_1(S)\big)\backslash\DD$. 
It is of infinite area and a neighborhood of the $i$-th end.
\end{enumerate}

Let $\Lambda_h\subset\partial\DD$ be the limit set of $\rho_h\big(\pi_1(S)\big)$.
Then either $\Lambda_h=\partial\DD$, equivalently $(S,h)$ is of finite area,
or $\Lambda_h\neq\partial\DD$, in which cases it is a Cantor set;
the connected components of $\partial\DD\setminus\Lambda_h$ 
are then in bijective correspondence with the set of elements
in 
\bqn
\big\{\gamma\in\pi_1(S):\gamma\text{ is conjugate to a boundary loop }c_i\text{ s.\,t.\,}
\rho_h(c_i)\text{ is hyperbolic}\big\}
\eqn
Thus, if $h_1,h_2\in\hyp(S)$ are such that $h_1$ has finite area
while $h_2$ has infinite area, then $\rho_{h_1}$ gives a minimal action
of $\pi_1(S)$ on $\partial\DD$, while $\rho_{h_2}$ gives an action 
on $\partial\DD$ which admits $\Lambda_{h_2}$ as minimal set.
In particular $\rho_{h_1},\rho_{h_2}$ cannot be conjugated in 
$\homeo{S^1}$.  Let us however consider the diffeomorphism
$F:=f_{h_1}\circ f_{h_2}^{-1}:\DD\to\DD$.

\begin{proposition}\label{prop:4.1} Assume that $h_1$ is of finite area.
Then $F$ extends to a continuous map $\varphi:\partial\DD\to\partial\DD$ which 
is weakly monotone.
\end{proposition}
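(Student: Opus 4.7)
The map $F = f_{h_1} \circ f_{h_2}^{-1}\colon\DD \to \DD$ is equivariant, satisfying $F \circ \rho_{h_2}(\gamma) = \rho_{h_1}(\gamma) \circ F$ for every $\gamma \in \pi_1(S)$. The plan is to define $\varphi$ separately on the limit set $\Lambda := \Lambda_{h_2}$ and on each connected component of $\partial\DD \setminus \Lambda$, and then verify continuity and weak monotonicity.

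For $\xi \in \Lambda$, I would choose a sequence of distinct elements $\gamma_n \in \pi_1(S)$ with $\rho_{h_2}(\gamma_n) \cdot 0 \to \xi$ and set
\[\varphi(\xi) := \lim_n F\bigl(\rho_{h_2}(\gamma_n) \cdot 0\bigr) = \lim_n \rho_{h_1}(\gamma_n) \cdot F(0).\]
Proper discontinuity of $\rho_{h_1}(\pi_1(S))$ on $\DD$ forces accumulation points of this sequence to lie on $\partial\DD$. The crux is independence of the sequence: given two sequences $(\gamma_n),(\gamma'_n)$ with $\rho_{h_2}(\gamma_n)\cdot 0$ and $\rho_{h_2}(\gamma'_n)\cdot 0$ both converging to $\xi$, one must show that the corresponding sequences under $\rho_{h_1}$ share a common limit. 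The idea is to pass to a subsequence along which $d_{\DD}\bigl(\rho_{h_2}(\gamma_n)\cdot 0,\rho_{h_2}(\gamma'_n)\cdot 0\bigr)$ is bounded, which forces $(\gamma'_n)^{-1}\gamma_n$ to range over a finite subset of $\pi_1(S)$, after which equivariance of $F$ yields equality of the limits. Ruling out the unbounded alternative relies on $(S,h_1)$ having finite area, so that $\rho_{h_1}(\pi_1(S))$ is a lattice and every point of $\partial\DD$ is a conical limit point, strongly constraining how two orbits can accumulate at the same $\xi$.

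For $\xi$ in a connected component $I$ of $\partial\DD \setminus \Lambda$, the discussion preceding the statement identifies $I$ as bounded by the two fixed points of a conjugate of $\rho_{h_2}(c_i)$, for some boundary loop $c_i$ that is hyperbolic under $\rho_{h_2}$. Since $(S,h_1)$ has finite area, every boundary loop must be parabolic under $\rho_{h_1}$ — a hyperbolic boundary element would cut off a funnel of infinite area — so $\rho_{h_1}(c_i)$ has a unique fixed point in $\partial\DD$, and similarly for each conjugate. Equivariance then forces both endpoints of $I$ to map to that unique fixed point, so I would define $\varphi$ on $I$ to be the constant equal to the fixed point of the corresponding conjugate of $\rho_{h_1}(c_i)$.

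With $\varphi$ defined everywhere on $\partial\DD$, continuity follows on $\Lambda$ from the limiting construction, on each gap $I$ from constancy, and at endpoints of gaps because the two definitions agree by construction. Weak monotonicity is inherited from the orientation preservation of $F$: the cyclic order of $\pi_1(S)$-orbits on $\partial\DD$ is respected, and this survives the limiting process; the only failure of strict monotonicity is the prescribed collapsing of each gap $I$ to a point. The hardest step will be the well-definedness of $\varphi$ on $\Lambda$, which rests on combining conicality of every boundary point for the lattice $\rho_{h_1}(\pi_1(S))$ with the geometric finiteness of $\rho_{h_2}(\pi_1(S))$.
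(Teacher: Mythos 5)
The paper itself does not prove Proposition~\ref{prop:4.1}: it only remarks that the statement is a slight generalization of the classical result on type\--preserving isomorphisms and describes what $\varphi$ does (collapse each gap of $\Lambda_{h_2}$ onto the fixed point of the corresponding parabolic element of $\rho_{h_1}\big(\pi_1(S)\big)$). Your overall architecture matches that description and is the right one: define $\varphi$ on $\Lambda_{h_2}$ as a boundary limit of orbit points, send each gap to the parabolic fixed point (your observation that a peripheral element which is hyperbolic under $\rho_{h_2}$ must become parabolic under $\rho_{h_1}$, because a hyperbolic boundary holonomy creates a funnel of infinite area, is correct), and deduce continuity from weak monotonicity together with density of the image.

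The genuine gap is exactly at the step you flag as the crux: well-definedness of $\varphi$ on $\Lambda_{h_2}$. Two orbit sequences $\rho_{h_2}(\gamma_n)\cdot 0$ and $\rho_{h_2}(\gamma_n')\cdot 0$ converging to the same $\xi$ admit in general no pairing or subsequence along which their mutual hyperbolic distances stay bounded: one sequence may track the geodesic ray to $\xi$ while the other converges to $\xi$ while drifting arbitrarily far from that ray, and then \emph{every} pairing has unbounded distances, so the reduction to ``$(\gamma_n')^{-1}\gamma_n$ ranges over a finite set'' collapses. The supporting claim that every point of $\partial\DD$ is a conical limit point of the lattice $\rho_{h_1}\big(\pi_1(S)\big)$ is also false: since $S$ is noncompact and $h_1$ has finite area, this lattice is non-uniform and its parabolic fixed points are bounded but not conical; moreover the relevant control is on the $\rho_{h_2}$-side (which group elements can move $0$ near $\xi$), where parabolic fixed points are likewise non-conical. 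Note too that $F$ is not a quasi-isometry for the Poincar\'e metric when a funnel end of $h_2$ is matched with a cusp of $h_1$, so the standard boundary-extension machinery cannot be invoked wholesale. A correct route is the classical one: match the attracting fixed point of $\rho_{h_2}(\gamma)$ with that of $\rho_{h_1}(\gamma)$ for all non-peripheral $\gamma$; this correspondence preserves the cyclic order because the linking pattern of axes is determined by the intersection pattern of the corresponding curves on $S$ and is therefore the same for both structures; a cyclic-order-preserving bijection between dense subsets extends to a weakly monotone map, which is continuous because $\overline{\im\varphi}=\partial\DD$ by minimality of the $\rho_{h_1}$-action (the same argument used in the proof of Corollary~\ref{cor:4.3}); finally, $F$ maps the axis of $\rho_{h_2}(\gamma)$ to a properly embedded $\rho_{h_1}(\gamma)$-invariant line, hence one at bounded Hausdorff distance from the axis of $\rho_{h_1}(\gamma)$, which identifies this map with the boundary extension of $F$.
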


This proposition is a slight generalization of a classical result stating that 
if $h_1,h_2$ have finite area, then 
since the isomorphism $\rho_{h_1}\circ\rho_{h_2}^{-1}$ is ``type preserving'', 
$f_{h_1}\circ f_{h_2}^{-1}$ extends to a homeomorphism of $\partial\DD$.

To explain the statements of the proposition, recall that the circle $S^1 \cong \partial\DD$ is equipped
with its canonical positive orientation and 
this gives a natural notion for triples of points to be positively oriented.  
We have the following

\begin{definition} An (arbitrary) map $\varphi:S^1\to S^1$ is {\em weakly monotone}\index{weakly monotone} if whenever
$x,y,z\in S^1$ are such that $\varphi(x),\varphi(y),\varphi(z)$ are distinct,
then $(x,y,z)$ and $\big(\varphi(x),\varphi(y),\varphi(z)\big)$ have the same
orientation.
\end{definition}

Typically the map in Proposition~\ref{prop:4.1} is collapsing a connected component
in $\partial\DD\setminus\Lambda_{h_2}$ corresponding to $\gamma\in\pi_1(S)$ to the
corresponding fixed point in $\partial\DD$ of the parabolic element $\rho_{h_1}(\gamma)$.
We have for every $x\in\partial\DD$
\bqn
\varphi\circ\rho_{h_2}(\gamma)(x)=\rho_{h_1}(\gamma)\circ\varphi(x)
\eqn
and we say that $\varphi$ {\em semiconjugates}\index{semiconjugate} $\rho_{h_2}$ to $\rho_{h_1}$.
It is in order to reverse this process that
in the definition of weakly monotone map one allows discontinuous maps.
For instance $\varphi^{-1}(x)$ is always an interval and 
if we set $\psi(x)$ equal to the left endpoint of $\varphi^{-1}(x)$, then
$\psi$ is weakly monotone and 
\bqn
\rho_{h_2}(\gamma)\circ\psi(x)=\psi\circ\rho_{h_1}(\gamma)(x)\,.
\eqn
Thus given now arbitrary homomorphisms
$\rho_1,\rho_2:\Gamma\to\homeo{S^1}$ defined on a group $\Gamma$, 
we say that $\rho_1$ and $\rho_2$ are semiconjugate 
if there exists $\varphi:S^1\to S^1$ weakly monotone such that for every $\gamma\in\Gamma$
\bqn
\varphi\circ\rho_1(\gamma)=\rho_2(\gamma)\circ\varphi\,.
\eqn
It is now clear that semiconjugation is an equivalence relation.
In our specific situation we have then
\begin{corollary}\label{cor:4.3}  \begin{enumerate}
\item For any $h_1,h_2\in\hyp(S)$, $\rho_{h_1}$ and $\rho_{h_2}$ are semiconjugate.
\item If $h\in\hyp(S)$ and $\rho\in\hom\big(\pi_1(S),G\big)$ is semiconjugate to
$\rho_h$, then $\rho\in\delta\big(\hyp(S)\big)$.
\end{enumerate}
\end{corollary}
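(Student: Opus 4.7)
For part (1), the plan is to invoke Proposition~\ref{prop:4.1} directly when one of the two structures has finite area, and to route the argument through an auxiliary finite-area structure otherwise. Concretely, if (say) $h_1$ has finite area, Proposition~\ref{prop:4.1} produces a weakly monotone extension of $f_{h_1}\circ f_{h_2}^{-1}$ to $\partial\DD$, which is precisely a semiconjugation from $\rho_{h_2}$ to $\rho_{h_1}$. If both $h_1$ and $h_2$ have infinite area, I would pick an auxiliary $h_0\in\hyp(S)$ of finite area (such a structure exists on any orientable surface of finite topological type with $\chi(S)<0$, e.g. by realizing $\Sigma$ with a cusp at each boundary component), apply Proposition~\ref{prop:4.1} to the pairs $(h_0,h_1)$ and $(h_0,h_2)$, and conclude by transitivity of the semiconjugation relation observed just before the statement.

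For part (2), the plan is to promote the existence of the semiconjugation to membership in $\delta\big(\hyp(S)\big)$ via a combination of dynamical and topological arguments. Let $\varphi:S^1\to S^1$ be weakly monotone with $\varphi\circ\rho_h(\gamma)=\rho(\gamma)\circ\varphi$ for all $\gamma\in\pi_1(S)$. First I would show that $\rho$ has no non-trivial elliptic element: any boundary fixed point of $\rho_h(\gamma)$ is sent by $\varphi$ to a fixed point of $\rho(\gamma)$ on $S^1$, whereas an elliptic element of $\PSU(1,1)$ acts on $\partial\DD$ without fixed points. Next I would establish injectivity: any $\gamma\in\ker\rho$ would force $\rho_h(\gamma)$ to stabilize every fibre $\varphi^{-1}(x)$, and since the fibres are intervals and $\rho_h\big(\pi_1(S)\big)$ acts minimally on its limit set, this forces $\rho_h(\gamma)=\id$, hence $\gamma=e$. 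Discreteness of $\rho\big(\pi_1(S)\big)$ would follow from a similar fibre argument applied to a hypothetical accumulation point. Finally, I would match the topological type of the quotient $\rho\big(\pi_1(S)\big)\backslash\DD$ with that of $S$ by tracking how $\varphi$ pairs boundary loops $c_i$ of $\Sigma$ with parabolic or hyperbolic conjugacy classes of $\rho(c_i)$, and conclude using Theorem~\ref{thm:1.5} or its noncompact counterpart developed in \S~\ref{sec:hyp_str_nc}.

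The main obstacle I anticipate is the identification of the topological type of the quotient $\rho\big(\pi_1(S)\big)\backslash\DD$ with that of $S$: since $\varphi$ may collapse arcs and fail to be continuous, transferring geometric information across $\varphi$ requires careful bookkeeping of how the fibre structure interacts with boundary components. A more robust alternative — in line with the bounded-cohomology theme of this section — is to observe that semiconjugate $\homs$-valued representations share the same pulled-back bounded Euler class in $\hb^2\big(\pi_1(S),\ZZ\big)$, so that $\rho$ automatically inherits the extremal value of this invariant from $\rho_h$, and then to appeal to the rigidity statement (cf. Corollary~\ref{cor:4.5}) characterizing such extremal representations as holonomies of hyperbolic structures, thereby sidestepping the fibre analysis altogether.
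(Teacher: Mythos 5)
Your treatment of part (1) coincides with the paper's: apply Proposition~\ref{prop:4.1} when one of the two structures has finite area, and otherwise route through an auxiliary finite-area structure using the transitivity of semiconjugation. For part (2) your overall route (injectivity, discreteness, identification of the boundary pairing, Nielsen realization) is also the paper's, but the paper makes one move that dissolves most of the fibre bookkeeping you worry about: exploiting the symmetry of semiconjugation, it chooses the direction $\varphi\circ\rho(\gamma)=\rho_h(\gamma)\circ\varphi$ with $h$ of finite area. Then $\overline{\im\varphi}$ is a nonempty closed $\rho_h\big(\pi_1(S)\big)$-invariant subset of $\partial\DD$, hence equal to $\partial\DD$ by minimality; a weakly monotone map with dense image is continuous, hence surjective, and injectivity and discreteness of $\rho$ are then read off directly from \eqref{eq:4.1.1} rather than from the fibre structure of a possibly discontinuous map. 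Your chosen direction $\varphi\circ\rho_h=\rho\circ\varphi$ gives no a priori control on $\im\varphi$ (you do not yet know that $\rho$ acts minimally), which is what forces you into the fibre analysis. Your injectivity step can be salvaged (a hyperbolic or parabolic element preserving every fibre of a weakly monotone map forces that map to take only finitely many values, which contradicts equivariance and the density of the limit set of $\rho_h$), but the discreteness step, ``a similar fibre argument applied to a hypothetical accumulation point,'' is not yet an argument.

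The ``more robust alternative'' you offer at the end is circular. Corollary~\ref{cor:4.5} and Theorem~\ref{thm:4.16} are proved in the paper by first applying Ghys' theorem to conclude that $\rho$ is semiconjugate to an element of $\delta\big(\hyp(S)\big)$, and then invoking Corollary~\ref{cor:4.3}(2) to upgrade semiconjugacy to actual membership in $\delta\big(\hyp(S)\big)$. The bounded Euler class is a complete invariant of semiconjugation only; by itself it cannot distinguish the holonomy of a hyperbolic structure from a representation merely semiconjugate to one, so any appeal to the extremality characterization presupposes exactly the statement you are trying to prove. The geometric identification of $\rho\big(\pi_1(S)\big)\backslash\DD$ with $S$ cannot be sidestepped this way; it is the genuine content of part (2).
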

\begin{proof} The first assertion follows from the discussion above.  
We will now indicate the main points entering in the proof of the second one.

Let $\varphi:\partial\DD\to\partial\DD$ be weakly monotone with
\bq\label{eq:4.1.1}
\varphi\circ\rho(\gamma)=\rho_h(\gamma)\circ\varphi
\eq
and assume, in virtue of the first part, that $h$ has finite area.
Since $\rho_h\big(\pi_1(S)\big)$ acts minimally on $\partial\DD$,
we must have that $\overline{\im\varphi}=\partial\DD$.
It is easy to see that for a weakly monotone map 
this implies that $\varphi$ is continuous.  Then we deduce from \eqref{eq:4.1.1}
that $\rho$ is injective and with discrete image.
Thus $\Gamma:=\rho\big(\pi_1(S)\big)$ is a finitely generated discrete subgroup
of $\PSU(1,1)$ and hence $\Gamma\backslash\DD$ is topologically of finite type.

One uses then $\varphi$ to check that the isomorphism $\rho:\pi_1(S)\to\Gamma$
sends $\<c_i\>$, for each $i$, isomorphically into the fundamental group
of a boundary component of $\Gamma\backslash\DD$ 
and that each boundary component is so obtained.

Then an appropriate version of the Nielsen realization implies that 
$\rho$ is implemented by a diffeomorphism $S\to\Gamma\backslash\DD$, 
by means of which we produce the hyperbolic structure $h'$ for which $\rho=\rho_{h'}$.
\end{proof}

\subsection{The bounded Euler class}\label{subsec:bdd_eul}
The discussion of the preceding section shows that semiconjugation 
is a natural notion of equivalence for group actions by homeomorphisms of the circle,
at least in the framework of the questions regarding hyperbolic structures.
A different context is provided by a paraphrase of a famous theorem of Poincar\'e
concerning rotation numbers of homeomorphisms, namely two orientation preserving
homeomorphisms of the circle are semiconjugate if and only if 
they have the same rotation number.

Remarkably, there is an invariant generalizing 
the rotation number of a single homeomorphism to arbitrary group actions and 
which is a complete invariant of semiconjugacy: it is the 
bounded Euler class, introduced by Ghys in \cite{Ghys_87} 
and whose main features we now describe briefly.

For this let us recall that {\em bounded cohomology} can be defined by
restricting to bounded cochains in the (inhomogeneous) bar resolution.
Let $A= \RR$ or $\ZZ$, and $G$ be any group. Denote by $C^n(G,A)$ the
space of function from $G^n$ to $A$ and by $C^n_b(G,A) :=\big\{ f \in
C^n(G,A)\, :\, \sup_{\underline{g} = (g_1,\cdots, g_n)\in G^n}
|f(\underline{g})| < \infty\big\}$ the subspace of bounded functions.
Defining the boundary map \bqn d_n: C^n (G,A) \to C^{n+1} (G,A) \eqn
by \bqn \ba
d_n f (g_1, \cdots , g_{n+1}) &= f(g_2, \cdots, g_{n+1}) \\
&+ \sum_{i=1}^n (-1)^i f(g_1, \cdots, g_{i-1}, g_i g_{i+1}, g_{i+2}, \cdots, g_{n+1}) \\
&+ (-1)^{n+1} f(g_1, \cdots, g_n)\,, \ea \eqn we obtain the complex
$\big(C^\bullet(G,A), d_\bullet\big)$, whose cohomology is the group
cohomology $\h^\bullet (G,A)$, and the sub-complex $\big(C^\bullet_b(G,A),
d_\bullet\big)$, whose cohomology is the bounded cohomology\index{bounded
  cohomology} $\hb^\bullet (G,A)$ of $G$.

Bounded cohomology behaves very differently from usual cohomology, for example,
the second bounded cohomology $\hb^2(\FF_r,\RR)$ of a
nonabelian free group is infinite dimensional. This 
different behavior will allow us to
define bounded analogues of the invariants introduced in
\S~\ref{sec:inv_miln_gold}, which are meaningful when $S$ is
noncompact, and give finer information even in the case when $S$ is
compact (see e.g. Corollary~\ref{cor:4.5}).

Recall that, in the notation of \S~\ref{subsec:euler}, 
a representative cocycle $\epsilon$ for the Euler class
$e\in\h^2\big(\homeo{S^1},\ZZ\big)$
was given by 
\bqn
\overline f\circ\overline g=\overline{f\circ g}\circ T^{\epsilon(f,g)}\,,
\eqn
where $\overline f$ and $\overline g$ are the unique lifts to $\RR$
of $f,g\in\homeo{S^1}$ such that 
\bqn
0\leq \overline f(0),\overline g(0)<1\,,
\eqn
and $T:\RR\to\RR$ is defined by $T(x):=x+1$. 

Since $\overline f$ is increasing and commutes with $T$, we have
\bqn
\overline f\big(\overline g(0)\big)\in\big[\overline f(0),\overline f(0)+1\big)
\eqn
and since $\overline{fg}(0)\in[0,1)$, we obtain that $\epsilon(f,g)\in\{0,1\}$ 
and hence in particular $\epsilon$ is a bounded cocycle.  The class
\bqn
\eb\in\hb^2\big(\homeo{S^1},\ZZ\big)
\eqn
so obtained is called the {\em bounded Euler class}\index{bounded Euler class}\index{Euler class!bounded}
and given any homomorphism $\rho:\Gamma\to\homeo{S^1}$, 
$\rho^\ast(\eb)\in\hb^2(\Gamma,\ZZ)$ is called the {\em bounded Euler class of the action 
given by $\rho$}\index{bounded Euler class of an action}.  We have then the following
\begin{theorem}[\cite{Ghys_87}]\label{thm:ghys} The bounded Euler class 
of a homomorphism $\rho:\Gamma\to\homeo{S^1}$ is a full invariant of semiconjugation.
\end{theorem}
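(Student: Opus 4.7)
The statement asserts an equivalence: two homomorphisms $\rho_1,\rho_2\colon\Gamma\to\homeo{S^1}$ are semiconjugate if and only if $\rho_1^*\eb=\rho_2^*\eb$ in $\hb^2(\Gamma,\ZZ)$. The plan is to prove the two directions separately; the substantive content is in the second.

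\emph{Semiconjugation $\Longrightarrow$ equal classes.} This is the softer direction. Given a weakly monotone $\varphi\colon S^1\to S^1$ with $\varphi\circ\rho_1(\gamma)=\rho_2(\gamma)\circ\varphi$, I would lift to $\tilde\varphi\colon\RR\to\RR$ weakly monotone and $T$-equivariant. For each $\gamma\in\Gamma$ the integer $c(\gamma):=\tilde\varphi(\overline{\rho_1(\gamma)}(0))-\overline{\rho_2(\gamma)}(\tilde\varphi(0))$ is well defined, since both terms project on $S^1$ to $\rho_2(\gamma)\varphi(0)$, and it is uniformly bounded in $\gamma$ because all canonical lifts send $[0,1)$ into a bounded subset of $\RR$. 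Substituting the approximate intertwining identity $\tilde\varphi\circ\overline{\rho_1(\gamma)}\approx T^{c(\gamma)}\circ\overline{\rho_2(\gamma)}\circ\tilde\varphi$ into the two cocycle identities $\overline{\rho_i(g)}\overline{\rho_i(h)}=\overline{\rho_i(gh)}\circ T^{\rho_i^*\epsilon(g,h)}$ and comparing the two resulting reductions of $\tilde\varphi\circ\overline{\rho_1(g)}\circ\overline{\rho_1(h)}$ shows that $\rho_2^*\epsilon-\rho_1^*\epsilon$ and $\delta c$ differ by a uniformly bounded $2$-cocycle, so they define the same class in $\hb^2(\Gamma,\ZZ)$.

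\emph{Equal classes $\Longrightarrow$ semiconjugation.} From a bounded cochain $c\colon\Gamma\to\ZZ$ realizing $\rho_2^*\epsilon-\rho_1^*\epsilon$ as a coboundary, I would form the corrected set-theoretic lifts
$$\sigma_1(\gamma):=\overline{\rho_1(\gamma)},\qquad \sigma_2(\gamma):=\overline{\rho_2(\gamma)}\circ T^{c(\gamma)}\in\thomeo{S^1},$$
chosen so that a direct computation gives the \emph{same} cocycle identity $\sigma_i(g)\sigma_i(h)=\sigma_i(gh)\circ T^{\rho_1^*\epsilon(g,h)}$ for both $i$. The idea is then to match the two quasi-actions of $\Gamma$ on $\RR$ in an order-preserving way. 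Concretely, on the $T$-invariant countable orbit sets $E_i:=\sigma_i(\Gamma)(0)+\ZZ\subset\RR$, I would define
$$\tilde\varphi\colon E_1\longrightarrow E_2,\qquad \sigma_1(\gamma)(0)+n\longmapsto\sigma_2(\gamma)(0)+n,$$
prove well-definedness and weak monotonicity via the common cocycle, extend $\tilde\varphi$ by one-sided limits to a weakly monotone $T$-equivariant map $\tilde\varphi\colon\RR\to\RR$, and descend to $S^1=\RR/\ZZ$ to obtain a weakly monotone $\varphi\colon S^1\to S^1$ with $\varphi\circ\rho_1(\gamma)=\rho_2(\gamma)\circ\varphi$.

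\emph{Main obstacle.} The technical heart is the well-definedness and monotonicity of $\tilde\varphi$ on $E_1$: one must show that the order structure of a $\Gamma$-orbit in $\RR$ is already determined by the integer-valued cocycle. The common cocycle supplies an identity $\sigma_i(\gamma_2)^{-1}\sigma_i(\gamma_1)=\sigma_i(\gamma_2^{-1}\gamma_1)\circ T^{-k}$ with a \emph{common} integer $k$ for both $i$, which reduces pairwise order comparisons between points of the orbit to single comparisons of $\sigma_i(\gamma)(0)$ with an integer; the boundedness of the cochain $c$ then prevents arbitrarily large integer shifts between the two orbits and forces the two orderings to coincide. This is precisely where the \emph{bounded} refinement of the Euler class is used in full strength, since an unbounded coboundary would allow the two orderings to decouple. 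Once the monotone, equivariant partial map is in hand, extending by left-continuous envelopes to all of $\RR$ and descending to $S^1$ are formal, and the correction $T^{c(\gamma)}$ built into $\sigma_2$ disappears in the quotient, leaving the desired semiconjugation.
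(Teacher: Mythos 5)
Your overall architecture is close to the paper's: the easy direction via a bounded correction cochain is fine, and your ``corrected lifts'' $\sigma_1,\sigma_2$ satisfying a common integral cocycle are exactly the paper's device of lifting both representations to the same central extension of $\Gamma$ by $\ZZ$ inside $\Hh_\ZZ^+(\RR)$. The gap is in the step you flag as the technical heart. The orbit-matching prescription $\sigma_1(\gamma)(0)+n\mapsto\sigma_2(\gamma)(0)+n$ is not well defined, and the claim that boundedness of $c$ ``forces the two orderings to coincide'' is false: a semiconjugation is not an order isomorphism between orbits, it must be allowed to collapse. Concretely, take $\rho_1$ trivial and $\rho_2$ nontrivial with a global fixed point (say $\Gamma=\ZZ$ acting by a parabolic). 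Both bounded Euler classes vanish, $E_1=\ZZ$, and your map sends the single point $0=\sigma_1(\gamma)(0)$ to all the distinct values $\sigma_2(\gamma)(0)$, $\gamma\in\Gamma$. Your reduction via the common integer $k$ does bring the comparison down to comparing $\sigma_i(\eta)(0)=\overline{\rho_i(\eta)}(0)+c_i(\eta)$ with the integer $m+k$ for $\eta=\gamma_2^{-1}\gamma_1$, but the fractional parts $\overline{\rho_1(\eta)}(0),\overline{\rho_2(\eta)}(0)\in[0,1)$ are unrelated and $c_1(\eta),c_2(\eta)$ differ by a bounded but generally nonzero amount, so the two answers genuinely disagree. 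Boundedness of $c$ controls the \emph{distance} between the two orbits, not their order types.

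The repair --- and the paper's actual proof --- is to build the envelope into the definition from the start rather than as a final extension step: set
\bqn
\widetilde\varphi(x):=\sup_{\gamma\in\Gamma}\,\sigma_1(\gamma)^{-1}\sigma_2(\gamma)(x)\,.
\eqn
Boundedness of your cochain $c$ is precisely what makes this supremum finite; $\widetilde\varphi$ is monotone as a supremum of increasing maps and commutes with $T$. The common cocycle gives $\sigma_1(\gamma\eta)^{-1}\sigma_2(\gamma\eta)=\sigma_1(\eta)^{-1}\sigma_1(\gamma)^{-1}\sigma_2(\gamma)\sigma_2(\eta)$ (the $T$-powers cancel because $T$ is central), so reindexing the supremum yields $\widetilde\varphi\circ\sigma_2(\eta)=\sigma_1(\eta)\circ\widetilde\varphi$, which descends mod $\ZZ$ to the desired semiconjugation between $\rho_1$ and $\rho_2$. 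So keep your reduction to a common cocycle, but replace the orbit bijection by this one-line supremum.
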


The relation with the classical {\em rotation number}\index{rotation number} is then the following.
Recall that the translation number $\tau(\varphi)\in\RR$ of a homeomorphism
$\varphi\in\Hh_\ZZ^+(\RR)$ is given by 
\bqn
\tau(\varphi):=\lim_{n\to\infty}\frac{\varphi^n(0)}{n}\,.
\eqn
Then $\tau$ has the following remarkable properties (compare to the
properties of $\tilde{r}$ in \S~\ref{subsec:miln_gold}):
\begin{enumerate}
\item $\tau$ is continuous;
\item\label{item:4.2.2.2} $\tau(\varphi\circ T^m)=\tau(\varphi)+m$, for $m\in\ZZ$;
\item $\tau(\varphi^k)=k\tau(\varphi)$;
\item $\big|\tau(\varphi\psi)-\tau(\varphi)-\tau(\psi)\big|\leq1$, 
for all $\varphi,\psi\in\Hh_\ZZ^+(\RR)$.
\end{enumerate}
In the language of bounded cohomology, this says that 
$\tau$ is a continuous homogeneous quasimorphism\index{homogeneous quasimorphism}\index{quasimorphism!homogeneous}.  
Then for $f\in\homeo{S^1}$ the rotation number\index{rotation number} of $f$ is
\bqn
\rot(f):=\tau(\overline f)\mod\ZZ
\eqn
which is well defined in view of \eqref{item:4.2.2.2}. 

Given now $f\in\homeo{S^1}$, consider
\bqn
\ba
h_f:\ZZ&\to\homeo{S^1}\\
n&\longmapsto\quad f^n
\ea
\eqn
to obtain an invariant $h_f^\ast(\eb)\in\hb^2(\ZZ,\ZZ)$.
Writing the long exact sequence in bounded cohomology \cite[Proposition~1.1]{Gersten} associated to
\bqn
\xymatrix{
 0\ar[r]
&\ZZ\ar[r]
&\RR\ar[r]
&\RR/\ZZ\ar[r]
&0
}
\eqn
we get 
\bqn
\xymatrix{
 0\ar[r]
&\hom(\ZZ,\RR/\ZZ)\ar[r]^-\delta
&\hb^2(\ZZ,\ZZ)\ar[r]
&0
}
\eqn
and then
\bq\label{eq:4.2.3}
\big(\delta^{-1}h_f^\ast(\eb)\big)(1)=\rot(f)\,.
\eq
It should be noticed in passing that the definition of $\rot(f)$ 
involves taking a limit, while the left hand side of \eqref{eq:4.2.3}
only involves purely algebraic constructions.\footnote{In fact we have used $\hb^1(\ZZ, \ZZ) = 0$ and $\hb^2(\ZZ,\RR) = 0$. The first equality follows from the fact that there are no (nonzero) bounded homomorphisms into $\RR$ while the second follows from the elementary fact that if $\psi: \ZZ \to \RR$ is a quasimorphism and $\alpha:= \lim_{n\to \infty} \frac{\psi(n)}{n}$, then $\psi$ is at bounded distance from the homomorphism $n\mapsto n\alpha$.}

The proof of the not straightforward implication of Ghys' theorem goes as follows.
Let $\rho_1$, $\rho_2:\Gamma\to \homs$ be homomorphisms, with $\rho_1^\ast(\eb)=\rho_2^\ast(\eb)$.
Hence $\rho_1^\ast(e)=\rho_2^\ast(e)$ and, by replacing $\Gamma$ with a suitable central 
extension by $\ZZ$ we may assume that $\rho_1$ and $\rho_2$ lift to homomorphisms
$\widetilde{\rho_1}$, $\widetilde{\rho_2}:\Gamma\to\Hh^+_\ZZ(\RR)$.
But then 
\bqn
\widetilde{\rho_i}(\gamma)=\overline{\rho_i(\gamma)}T^{c_i(\gamma)}
\eqn
and the hypothesis that $\rho_1,\rho_2$ have the same bounded Euler class is equivalent
to saying that we may choose the lifts $\widetilde{\rho_1}$ and $\widetilde{\rho_2}$ such that 
\bqn
c_2-c_1:\Gamma\to\ZZ
\eqn
is bounded.  Thus:
\bqn
        \widetilde{\varphi}(x):=
        \sup_{\gamma\in\Gamma}
                \big\{\widetilde{\rho_1}(\gamma)^{-1}\widetilde{\rho_2}(\gamma)(x):\,\gamma\in\Gamma\big\}
                        <+\infty
\eqn
is well defined for every $x$ and gives a monotone map $\RR\to\RR$ commuting with $T$ and satisfying
\bqn
\widetilde\varphi\widetilde{\rho_2}(\eta)=\widetilde{\rho_1}(\eta)\widetilde\varphi\,,
\eqn
for all $\eta\in\Gamma$. This shows that $\rho_1$ and $\rho_2$ are semiconjugate.

\medskip 

In order to complete one of the descriptions of the image of $\hyp(S)$ under the map
$\delta$ in $\hom\big(\pi_1(S),G\big)$ let 
now $S$ be again an oriented surface of finite topological type,
where we do not exclude the case in which $S$ is compact.
We have seen that for any two $h_1,h_2\in\hyp(S)$, 
the homomorphisms $\rho_{h_1}$ and $\rho_{h_2}$ are semiconjugate
in $\homeo{S^1}$ and hence, by the easy direction of Ghys' theorem,
we have that
\bqn
\rho_{h_1}^\ast(\eb)=\rho_{h_2}^\ast(\eb)\,.
\eqn
Let $\ksb\in\hb^2\big(\pi_1(S),\ZZ\big)$ denote the class so obtained.
Then
\begin{corollary}\label{cor:4.5} 
\bqn
\delta\big(\hyp(S)\big)=\big\{\rho\in\hom\big(\pi_1(S),G\big):\,\rho^\ast(\eb)=\ksb\big\}\,.
\eqn
\end{corollary}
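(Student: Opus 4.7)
The plan is to prove the two inclusions separately, using Ghys' theorem (Theorem~\ref{thm:ghys}) and Corollary~\ref{cor:4.3} as the two main inputs. The setup is already sketched in the paragraph preceding the corollary: for any $h_1,h_2\in\hyp(S)$, Corollary~\ref{cor:4.3}(1) gives that $\rho_{h_1}$ and $\rho_{h_2}$ are semiconjugate in $\homs$, and the easy direction of Ghys' theorem says that semiconjugate representations have the same bounded Euler class. Thus $\rho_h^*(\eb)\in\hb^2(\pi_1(S),\ZZ)$ is independent of $h\in\hyp(S)$, and it makes sense to denote this common value by $\ksb$.

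For the forward inclusion $\delta(\hyp(S))\subseteq\{\rho:\rho^*(\eb)=\ksb\}$, there is nothing to do beyond this: if $\rho=\rho_h$ for some $h\in\hyp(S)$, then by the very definition of $\ksb$ we have $\rho^*(\eb)=\rho_h^*(\eb)=\ksb$.

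For the reverse inclusion, suppose $\rho\in\hom(\pi_1(S),G)$ satisfies $\rho^*(\eb)=\ksb$. Fix an arbitrary $h_0\in\hyp(S)$ (which exists by the assumption that $\hyp(S)\neq\emptyset$, implicit in the definition of $\ksb$) and compose everything with the embedding $G=\PSU(1,1)\hookrightarrow\homs$. Since $\rho^*(\eb)=\rho_{h_0}^*(\eb)$ as classes in $\hb^2(\pi_1(S),\ZZ)$, the nontrivial direction of Ghys' theorem (Theorem~\ref{thm:ghys}), whose proof is sketched just above, produces a weakly monotone map $\varphi:S^1\to S^1$ that semiconjugates $\rho$ to $\rho_{h_0}$. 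Now apply Corollary~\ref{cor:4.3}(2) to conclude that $\rho\in\delta(\hyp(S))$.

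There is no real obstacle, since both of the hard results, namely Ghys' semiconjugacy theorem and the characterization of $\delta(\hyp(S))$ up to semiconjugacy in Corollary~\ref{cor:4.3}(2), have already been established; the corollary is simply the conjunction of these two facts, with $\ksb$ playing the role of the complete invariant of the semiconjugacy class of $\rho_h$. The only detail worth checking is that the semiconjugation produced by Ghys' theorem lands in $\homs$ (rather than some larger space), which is immediate from the construction via the supremum of the lifted orbits described in the outline of the proof of Theorem~\ref{thm:ghys}.
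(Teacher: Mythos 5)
Your proof is correct and follows essentially the same route as the paper: the inclusion $\subset$ is immediate from the definition of $\ksb$ (via Corollary~\ref{cor:4.3}(1) and the easy direction of Ghys' theorem), and the reverse inclusion combines the nontrivial direction of Theorem~\ref{thm:ghys} with Corollary~\ref{cor:4.3}(2). No substantive difference from the paper's argument.
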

\begin{proof}
The inclusion $\subset$ has already been discussed.  

If now $\rho^\ast(\eb)=\ksb$, then Ghys' theorem implies that 
$\rho$ is semiconjugate to an element in $\delta\big(\hyp(S)\big)$
and the assertion follows from Corollary~\ref{cor:4.3}.
\end{proof}

\subsection{Bounded Euler number and bounded Toledo number}\label{subsec:bbd_eul_bdd_tol}
In this section we describe two ways in which one can associate a (real) number
to the bounded Euler class; this will give the two invariants mentioned in the title.
The fact that they coincide is then an essential result containing a lot 
of information.

Recall that $S$ is a surface of finite topological type and hence we
may consider it as the interior of a compact surface $\Sigma$ with
boundary $\partial\Sigma$. Let now $\rho:\pi_1(\Sigma)\to\homeo{S^1}$
be a homomorphism and 
\bqn
\rho^\ast(\eb)\in\hb^2\big(\pi_1(\Sigma),\ZZ\big) 
\eqn 
its bounded Euler class.  We proceed now to define the {\em bounded Euler
number}\index{bounded Euler number}\index{Euler number!bounded} of
$\rho$. First we use that the classifying map $\Sigma\to
B\pi_1(\Sigma)$ is a homotopy equivalence in order to obtain a natural
isomorphism
$\hb^2\big(\pi_1(\Sigma),\ZZ\big)\to\hb^2\big(\Sigma,\ZZ\big)$ by
means of which we consider, keeping the same notation, the class
$\rho^\ast(\eb)$ as a bounded singular class on $\Sigma$.  (See
\cite{Gromov_bounded} for the definition of singular bounded
cohomology.) The inclusion $\partial\Sigma\hookrightarrow\Sigma$ gives
in a straightforward way a long exact sequence in bounded cohomology
with coefficients in $A=\ZZ,\RR$, whose relevant part for us reads
\bqn
\xymatrix{
 \hb^1(\partial\Sigma,A)\ar[r]
&\hb^2(\Sigma,\partial\Sigma,A)\ar[r]^-{f_A}
&\hb^2(\Sigma,A)\ar[r]
&\hb^2(\partial\Sigma,A)
}
\eqn
which gives for $A=\ZZ$
\bqn
\xymatrix{
 0\ar[r]
&\hb^2(\Sigma,\partial\Sigma,\ZZ)\ar[r]^-{f_\ZZ}
&\hb^2(\Sigma,\ZZ)\ar[r]
&\hb^2(\partial\Sigma,\ZZ)
}
\eqn
and for $A=\RR$
\bqn
\xymatrix{
 0\ar[r]
&\hb^2(\Sigma,\partial\Sigma,\RR)\ar[r]^-{f_\RR}
&\hb^2(\Sigma,\RR)\ar[r]
&0
}
\eqn
where we have used the following facts (see \cite{Burger_Iozzi_Wienhard_toledo} or see footnote in equality \eqref{eq:4.2.3}):
\begin{enumerate}
\item $\hb^1(\partial\Sigma,A)=0$ for $A=\RR,\ZZ$;
\item $\hb^2(\partial\Sigma,\RR)=0$.
\end{enumerate}
As a result we have that if we consider $\rho^\ast(\eb)$ 
as a real bounded class on $\Sigma$, it corresponds to a unique
relative class
\bqn
f_\RR^{-1}\big(\rho^\ast(\eb)\big)\in\hb^2(\Sigma,\partial\Sigma,\RR)\,.
\eqn
The latter can then be seen as an ordinary singular relative class and 
hence can be evaluated on the relative fundamental class,
thus leading to the {\em bounded Euler number}\index{bounded Euler number}\index{Euler number!bounded}:
\bq\label{eq:4.4.1}
\eb(\rho):=\big\<f_\RR^{-1}\big(\rho^\ast(\eb)\big),[\Sigma,\partial\Sigma]\big\>\,.
\eq

Two important remarks are in order here.
First, the definition of this invariant not only involves $\pi_1(S)$ but also 
the surface $S$ itself;  this is essential if this invariant is to detect hyperbolic structures on $S$
(see Remark~\ref{rem:2.6}).  
Second, let us denote by $\rho^\ast(\erb)\in\hb^2\big(\homeo{S^1},\RR\big)$
the real bounded class obtained by considering the cocycle $\epsilon$ as 
taking values in $\RR$.  Then $\eb(\rho)$ depends in fact only on the real class 
$\rho^\ast(\erb)$; the extent to which this (real) class determines $\rho$ (up to semiconjugation),
is completely understood (see \cite{Burger_zimmer}).

The {\em bounded Euler number} $\eb(\rho)$ is in general not an integer.
Remarkably, one can give an explicit formula for the "fractional part"
of $\eb(\rho)$; indeed, combining the long exact sequence associated to
$\partial\Sigma\to\Sigma$ together with the one associated to the
short exact sequence 
\bqn \xymatrix{ 0\ar[r] &\ZZ\ar[r] &\RR\ar[r]
  &\RR/\ZZ\ar[r] &0 } 
\eqn 
leads to the following congruence relation

\bqn
\eb(\rho)=-\sum_{i=1}^n\rot\rho(c_i)\mod\ZZ\,.
\eqn
In fact, using this, one can establish a general formula for $\eb(\rho)$:

\begin{theorem}[\cite{Burger_Iozzi_Wienhard_toledo}]\label{thm:4.6}
 Let $S$ be an oriented surface of finite topological type
  with presentation of its fundamental group 
  \bqn
  \pi_1(S)=\left\<a_1,b_1,\dots,a_g,b_g,c_1,\dots,c_n:
  \prod_{i=1}^g[a_i,b_i]\prod_{j=1}^nc_j=e\right\> 
  \eqn 
  as defined in \eqref{eq:fg}.  Let $\rho:\pi_1(S)\to\homeo{S^1}$ be a
  homomorphism and let $\tau$ denote the translation quasimorphism.
  Then
\begin{enumerate}
\item If $S$ is compact (that is $n=0$), then
\bqn
e(\rho)=\eb(\rho)=\tau\left(\prod_{i=1}^g{\big[{\rho(a_i)},{\rho(b_i)}\big]^{ \widetilde\,}}\right)\,.
\eqn
\item if $S$ is noncompact (that is $n\geq1$), then
\bqn
\eb(\rho)=-\sum_{i=1}^n\tau\big(\tilde\rho(c_i)\big)\,,
\eqn
where $\tilde\rho:\pi_1(S)\to\Hh_\ZZ^+(\RR)$ denotes a homomorphism lifting $\rho$.
\end{enumerate}
\end{theorem}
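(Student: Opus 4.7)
The two parts rest on a single principle: the translation number $\tau\colon\Hh_\ZZ^+(\RR)\to\RR$ is a homogeneous quasimorphism extending the identity homomorphism of the central subgroup $\ZZ=\langle T\rangle$, and after pullback along a lift of $\rho$ its coboundary captures, up to the sign conventions of \S~\ref{subsec:bdd_eul}, the real bounded Euler class $\rho^\ast(\erb)$.

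\emph{Part (1), compact case.} When $\partial\Sigma=\emptyset$ the map $\hb^2(\Sigma,\partial\Sigma,\RR)\to\hb^2(\Sigma,\RR)$ is the identity, so by definition \eqref{eq:4.4.1} one has $\eb(\rho)=e(\rho)$. To compute $e(\rho)$, I pick arbitrary lifts $\tilde\alpha_i,\tilde\beta_i\in\Hh_\ZZ^+(\RR)$ of $\rho(a_i),\rho(b_i)$; since $T$ is central, each commutator $[\tilde\alpha_i,\tilde\beta_i]$ is independent of the lifts, represents $[\rho(a_i),\rho(b_i)]^{\widetilde\,}$, and the product projects in $\homeo{S^1}$ to $e$, hence equals $T^m$ for a unique $m\in\ZZ$. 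The classifying-space description of $\h^2(\pi_1(S),\ZZ)\cong\ZZ$ via central extensions in \S~\ref{subsec:descr} identifies $m$ (up to the sign convention of $[\overline\Gamma_g]$) with $e(\rho)$, and the identity $\tau(T^m)=m$ yields the stated formula.

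\emph{Part (2), noncompact case.} Now $\pi_1(S)$ is free, so $\rho$ admits a genuine homomorphism lift $\tilde\rho\colon\pi_1(S)\to\Hh_\ZZ^+(\RR)$. Set $\phi:=\tau\circ\tilde\rho\in C^1(\pi_1(S),\RR)$ and $c:=d\phi\in C^2(\pi_1(S),\RR)$. The plan is to verify three properties:
\begin{enumerate}
\item[$(i)$] $c=\tilde\rho^\ast(d\tau)$ is bounded, since $\tau$ is a quasimorphism on $\Hh_\ZZ^+(\RR)$.
\item[$(ii)$] For every $j$, $c|_{\langle c_j\rangle}\equiv 0$, because by homogeneity $\phi|_{\langle c_j\rangle}$ is the homomorphism $c_j^n\mapsto n\,\tau(\tilde\rho(c_j))$.
\item[$(iii)$] $[c]=-\rho^\ast(\erb)$ in $\hb^2(\pi_1(S),\RR)$.
\end{enumerate}
For $(iii)$, write each $\tilde\rho(\gamma)=\overline{\rho(\gamma)}\,T^{k(\gamma)}$, where $\overline{(\cdot)}$ is the canonical normalized section of \S~\ref{subsec:bdd_eul} and $k\in C^1(\pi_1(S),\ZZ)$. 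The fact that $\tilde\rho$ is a homomorphism, combined with the defining relation of the Euler cocycle $\epsilon$, yields $\rho^\ast\epsilon=-dk$; on the other hand $\phi=(\tau\circ\overline\rho)+k$, where $\gamma\mapsto\tau(\overline{\rho(\gamma)})$ is uniformly bounded on $\pi_1(S)$ (because $\overline{\rho(\gamma)}(0)\in[0,1)$ forces $\tau(\overline{\rho(\gamma)})\in[-1,2]$). Passing to $\hb^2(\pi_1(S),\RR)$, the bounded-coboundary term drops and one obtains $[c]=[dk]=-[\rho^\ast\epsilon]=-\rho^\ast(\erb)$.

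By $(ii)$, $c$ defines a class in the relative bounded cohomology $\hb^2(\Sigma,\partial\Sigma,\RR)$, which by $(iii)$ coincides with $-f_\RR^{-1}(\rho^\ast(\erb))$. Representing $[\Sigma,\partial\Sigma]$ by a singular $2$-chain $C$ with $\partial C=\sum_j[c_j]$ and applying the Stokes-type identity $\langle d\phi,C\rangle=\langle\phi,\partial C\rangle$ (valid for possibly unbounded group cochains), one concludes
\bqn
\eb(\rho)=-\langle c,C\rangle=-\langle\phi,\partial C\rangle=-\sum_{j=1}^n\tau(\tilde\rho(c_j)).
\eqn
The substantive step is $(iii)$: the cochain-level comparison of $\rho^\ast\epsilon$ with the coboundary of $\tau\circ\tilde\rho$, which requires juggling the two sections $\overline\rho$ and $\tilde\rho$ of the central extension and pinning down the sign. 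Property $(ii)$, which converts $c$ into a \emph{bounded} relative representative, is the new ingredient distinguishing the noncompact argument from the classical compact computation of part $(1)$.
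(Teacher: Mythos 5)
Your strategy is the right one and is essentially the intended proof: the survey itself only cites \cite{Burger_Iozzi_Wienhard_toledo} for this theorem, but the evaluation formula you need is precisely the Lemma at the end of \S\ref{sec:scl}, namely $\langle f_\RR^{-1}(\alpha),[\Sigma,\partial\Sigma]\rangle=-\sum_iF(c_i)$ for a homogeneous quasimorphism $F$ with $[d^1F]=\alpha$. Your steps $(i)$--$(iii)$ are sound: with the conventions of \S\ref{subsec:bdd_eul} one indeed finds $\rho^\ast\epsilon=-d^1k$, hence $[d^1\phi]=-\rho^\ast(\erb)$ in $\hb^2\big(\pi_1(S),\RR\big)$, and homogeneity of $\tau$ gives the vanishing of $d^1\phi$ on the peripheral subgroups. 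The compact case is the standard central-extension computation.

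Two points are not yet closed. First, the passage from ``$c$ vanishes identically on each $\langle c_j\rangle$'' to a class in $\hb^2(\Sigma,\partial\Sigma,\RR)$, together with the identity $\langle d\phi,C\rangle=\langle\phi,\partial C\rangle$ for the pairing with $[\Sigma,\partial\Sigma]$, is where the real work sits: one must pass from the bar resolution of the pair $\big(\pi_1(\Sigma),\{\langle c_j\rangle\}\big)$ to singular bounded cochains on $(\Sigma,\partial\Sigma)$ (legitimate since $\Sigma$ and its boundary circles are aspherical) and verify that $\partial C$ corresponds to the group $1$-chain $\sum_j c_j$ with the orientation of \eqref{eq:fg}; you treat this as a formality, yet it is exactly the step that fixes the overall sign. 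Second, and relatedly, the sign is not actually pinned down: your (correct) step $(iii)$ combined with the Lemma of \S\ref{sec:scl}, applied to $F=-\phi$, yields $\eb(\rho)=+\sum_i\tau\big(\tilde\rho(c_i)\big)$, the opposite of your final display, so at least one of the two evaluation conventions is off by a sign. (The survey is itself not internally consistent here: Theorem~\ref{thm:4.18} together with Theorem~\ref{thm:4.6}(2) would give $\eb(\rho_h)=-(2g-2+n)$ for a finite-area hyperbolization, against $\eb(\rho_h)=2g-2+n$ asserted after Corollary~\ref{cor:4.15}; and in part (1) your $e(\rho)=+z_2(\rho)$ sits uneasily with \eqref{eq:3.7.2}.) A complete proof must calibrate the evaluation step against a test case -- e.g.\ a finite-area hyperbolization, where $\eb=2g-2+n$ and each $\tilde\rho(c_j)$ lifts a parabolic -- rather than leave the sign to bookkeeping.
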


Now we will turn to the description of the bounded Toledo number.
Its definition is based on the use of a very general operation in bounded cohomology
called "transfer", together with a description of the second bounded cohomology 
of $G=\PSU(1,1)$.

Let $\Gamma<G$ be a lattice in $G$.  One has the isomorphism
\bq\label{eq:eckmann_shapiro}
\hb^\bullet(\Gamma,\RR)\cong\hcb^\bullet\big(G,\linfty(\Gamma\backslash
G)\big) \eq analogous to the Eckmann--Shapiro isomorphism in ordinary
cohomology. Here $\hcb^\bullet$ denotes the bounded continuous
cohomology for whose definition the reader is referred to
\cite{Monod_book} or also
\cite[\S~2.3]{Burger_Iozzi_Wienhard_toledo}. Thus the bounded
cohomology of the discrete group $\Gamma$ can be computed via the
bounded continuous cohomology of the ambient Lie group $G$, but at the
expense of replacing the trivial $\Gamma$-module $\RR$ by the quite
intractable $G$-module $\linfty(\Gamma\backslash G)$.  This principle
is very general and does not require $\Gamma$ to be a lattice, but
this hypothesis will now allow us to "simplify" the coefficients:
indeed, let $\mu$ be the $G$-invariant probability measure on
$\Gamma\backslash G$. Then \bq\label{eq:5.6} \ba
\linfty(\Gamma\backslash G)&\longrightarrow\qquad\RR\\
f\quad\,\,&\longmapsto \int_{\Gamma\backslash G}f(x)\d\mu(x) \ea \eq
is a morphism of $G$-modules, where $\RR$ is then the trivial
$G$-module. Composing the induction isomorphism
\eqref{eq:eckmann_shapiro} with the morphism in cohomology induced by
the morphism of coefficients \eqref{eq:5.6} and specializing to degree
2 leads to a map, called the {\em transfer map}\index{transfer map}
\bqn \Tb:\hb^2(\Gamma,\RR)\to\hcb^2(G,\RR) 
\eqn 
which is linear and norm decreasing.
The interest of this construction lies in the fact that, while
$\hb^2(\Gamma,\RR)$ is infinite dimensional, say when $G$ is a real
rank one group, the space $\hcb^2(G,\RR)$ is finite dimensional if $G$
is a connected Lie group and in fact one dimensional for
$G=\PSU(1,1)$.  Considering the cocycle in \S~\ref{subsec:rel}
defining the K\"ahler class, we see that $c$ is bounded by $\frac12$,
as the area of geodesic triangles in $\DD$ is bounded by $\pi$, and
therefore we can use $c$ to define a bounded continuous class
$\kgb\in\hcb^2(G,\RR)$ called the bounded K\"ahler class\index{bounded
  K\"ahler class}\index{K\"ahler class!bounded}.

We have then:
\begin{proposition}\label{prop:4.7} Let $G=\PSU(1,1)\hookrightarrow\homeo{\partial\DD}$
be the natural inclusion.
Then
\begin{enumerate}
\item  $\hcb^2(G,\RR)=\RR\kgb$;
\item The restriction $\erb|_G$ to $G$ of the real bounded Euler class equals 
the bounded K\"ahler class $\kgb$ in $\hcb^2(G,\RR)$.
\end{enumerate}
\end{proposition}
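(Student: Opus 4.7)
My plan for part~(1) is to compute $\hcb^2(G,\RR)$ using the Burger--Monod realization of bounded continuous cohomology by $G$-invariant alternating $L^\infty$-cocycles on powers of the Furstenberg boundary $B=\partial\DD\cong S^1$, which in degree $2$ yields
\[
\hcb^2(G,\RR)\;\cong\;H^2\!\left(L^\infty_{\mathrm{alt}}\big(B^{\bu+1}\big)^G,\,d\right).
\]
Since $G=\PSU(1,1)$ acts doubly transitively on $B$, we have $L^\infty_{\mathrm{alt}}(B^2)^G=0$, which kills the $2$-coboundaries. An orbit analysis of ordered triples of distinct points of $B$ (exactly two open orbits, distinguished by orientation) shows that $L^\infty_{\mathrm{alt}}(B^3)^G$ is one-dimensional, spanned by the orientation cocycle; every such alternating invariant function is automatically a cocycle by an analogous orbit analysis on $B^4$. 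Non-triviality of $\kgb$ is ensured by the computation at the end of \S\ref{subsec:miln_gold}: for the holonomy $\rho_h:\pi_1(S)\to G$ of a compact hyperbolic surface of genus $g\ge 2$, one has $\<\rho_h^\ast(\kgb),[S]\>=\T(\rho_h)=2g-2\ne 0$. Combining these gives $\hcb^2(G,\RR)=\RR\kgb$.

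For part~(2), by (1) we may write $\erb|_G=\lambda\,\kgb$ for a unique $\lambda\in\RR$. To fix $\lambda$, I would pull both sides back along $\rho_h$ as above and evaluate on $[S]$: the right-hand side gives $\lambda\cdot\T(\rho_h)=\lambda(2g-2)$, while the left-hand side gives the bounded Euler number $\eb(\rho_h)$, which by Theorem~\ref{thm:4.6}(1) together with \eqref{eq:3.7.3} equals $e(\rho_h)=\T(\rho_h)=2g-2$. Hence $\lambda=1$.

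The main technical obstacle is the measurable-category orbit analysis in part~(1), i.e.\ rigorously identifying the spaces of $G$-invariant alternating bounded \emph{measurable} functions on $B^2$ and $B^3$ (modulo null sets) with the algebraic description above; this ultimately reduces to the ergodicity of the $G$-action on $B^2$ and a two-orbit description of $B^3$ up to measure zero. An alternative route that sidesteps the boundary analysis entirely would be to invoke the fact that the comparison map $\hcb^2(G,\RR)\to\hc^2(G,\RR)$ is known to be injective for $G=\PSU(1,1)$, combined with van~Est's identification $\hc^2(G,\RR)\cong H^2(\mathfrak g,\mathfrak k)=\RR\kg$; the observation at the end of \S\ref{subsec:rel} that both $\erb|_G$ and $\kgb$ map to $\kg$ under the comparison then gives (1) and (2) simultaneously.
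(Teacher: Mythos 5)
Your proposal is correct, but it routes both parts differently from the paper. For (1), the paper's proof at this point is precisely your ``alternative route'': the comparison map $\hcb^2(G,\RR)\to\hc^2(G,\RR)=\RR\kg$ is surjective because $\kgb\mapsto\kg$, and its kernel is controlled by continuous quasimorphisms on $\PSU(1,1)$, which are bounded; hence the map is an isomorphism. (So you should not treat the injectivity of the comparison map as an external ``known fact'' --- in this paper it is exactly the content being established, via the quasimorphism argument.) Your primary route, the Burger--Monod boundary model $\hcb^2(G,\RR)\cong\Zz\la\big((\partial\DD)^3\big)^G$ with the two-orbit analysis of triples, is what the paper defers to \S~\ref{subsec:comp_bdd_coh} (Proposition~\ref{prop:4.9} and the remark following it); it is valid here and in fact makes non-triviality automatic, since double transitivity kills all coboundaries, so the nonzero cocycle $\orn$ cannot be exact --- your detour through $\T(\rho_h)\neq0$ is only needed to pin down which multiple of $\orn$ represents $\kgb$. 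For (2), the paper proves the identity at the cocycle level: it uses Stokes' theorem to move the basepoint of the K\"ahler cocycle from $0\in\DD$ to $1\in\partial\DD$, identifies the result with $\tfrac12\orn\big(1,g_1(1),g_1g_2(1)\big)$, and then invokes the explicit relation of Lemma~\ref{lem:4.8} between $\epsilon$ and the orientation cocycle. You instead argue ``softly'': both $\erb|_G$ and $\kgb$ live in the one-dimensional space from (1), and a single evaluation against a hyperbolization $\rho_h$ (using $e(\rho_h)=\T(\rho_h)=2g-2$ from \eqref{eq:3.7.3}) fixes the proportionality constant to be $1$. This is logically sound, since \eqref{eq:3.7.3} rests only on the \emph{unbounded} identification $e\mapsto\kg$ of \S~\ref{subsec:rel}, which the paper takes as established; equivalently, as you note, injectivity of the comparison map plus the fact that both bounded classes hit $\kg$ gives (2) at once. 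What the paper's cocycle-level argument buys, and yours does not, is the explicit bounded Borel representative relating $\epsilon$ to $\tfrac12\orn$, which is reused later (e.g.\ in identifying $\kgb$ with $\tfrac12\orn$ under Proposition~\ref{prop:4.9} and in Corollary~\ref{cor:4.10}). One small point you leave implicit in either route: one must know that $\erb|_G$ actually defines a class in \emph{continuous} bounded cohomology; this holds because $\epsilon$ restricted to $\PSU(1,1)$ is a bounded Borel cocycle and bounded Borel cohomology agrees with $\hcb^\bullet$.
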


The first assertion is in fact a very special case of a more general result 
and we will treat this later in its proper context; suffices it to say here that 
we already know that the comparison map 
\bqn
\hcb^2(G,\RR)\to\hc^2(G,\RR) = \RR\kg
\eqn
is surjective as $\kgb$ is sent to $\kg$;
the kernel of this map is then described by the space of continuous quasimorphisms
on $\PSU(1,1)$ and it is easy to see that they must be bounded.  
Hence the comparison map is injective.

For the second statement one needs an explicit relation between the cocycle $\epsilon$
used to define the Euler class and the orientation cocycle on $S^1$.  
Recall that the {\em orientation cocycle}\index{orientation cocycle}
\bqn
\orn:S^1\times S^1\times S^1\to\ZZ
\eqn
is defined by 
\bqn
\orn(x,y,z):=\begin{cases}
1&\hbox{ if }x,y,x\hbox{ are cyclically positively oriented}\\
0&\hbox{ if at least two coordinates coincide}\\
-1&\hbox{ if }x,y,x\hbox{ are cyclically negatively oriented.}
\end{cases}
\eqn
A formula relating the orientation cocycle directly to hyperbolic geometry is given by
\bqn
\orn(x,y,z)=\frac{1}{\pi}\int_{\Delta(x,y,z)}\omega_\DD
\eqn
where now $x,y,z\in\partial\DD$ and $\Delta(x,y,z)$ denotes the oriented geodesic
ideal triangle with vertices $x,y,z$.
Here we have taken $\partial\DD$ as a model of $S^1$
with the identification 
\bqn
\ba
\ZZ\backslash\RR&\to\partial\DD\\
t&\mapsto e^{2\pi\imath t}
\ea
\eqn
and we denote again by $\epsilon$ the corresponding cocycle on $\homeo{\partial\DD}$.

\begin{lemma}[\cite{Iozzi_ern}]\label{lem:4.8} For $f,g\in\homeo{\partial\DD}$
\bqn
\epsilon(f,g)=-\frac12\orn\big(1,f(1),fg(1)\big)+d\beta(f,g)\,,
\eqn
where 
\bqn
\beta(f):=\begin{cases}
\,\,\,\,\,0&\hbox{ if } f(1)=1\\
-\frac12&\hbox{ if }f(1)\neq1\,.
\end{cases}
\eqn
\end{lemma}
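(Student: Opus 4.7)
The plan is to verify the identity by direct case analysis at the level of the canonical lifts to $\RR$. Under the identification $\partial\DD\cong\RR/\ZZ$ via $t\mapsto e^{2\pi i t}$, the basepoint $1$ corresponds to $0$, and for each $h\in\homeo{\partial\DD}$ one has a distinguished lift $\overline h\in\Hh_\ZZ^+(\RR)$ with $\overline h(0)\in[0,1)$, vanishing precisely when $h(1)=1$. Evaluating the defining relation $\overline f\circ\overline g=\overline{fg}\circ T^{\epsilon(f,g)}$ at the point $0$ and using that $\overline{fg}$ commutes with $T$ yields the key identity
\bqn
  \epsilon(f,g) \;=\; \overline f\bigl(\overline g(0)\bigr) - \overline{fg}(0),
\eqn
an integer since both terms project to $fg(1)\in\partial\DD$. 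For brevity I will write $a=\overline f(0)$, $b=\overline g(0)$, $c=\overline{fg}(0)$, each in $[0,1)$.

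The next step is to locate $\overline f(b)$ by monotonicity. Since $\overline f$ is strictly increasing and satisfies $\overline f(1)=a+1$, we have $\overline f(b)\in[a,a+1)$. The integer $\epsilon(f,g)$ is therefore the unique shift placing $c+\epsilon(f,g)$ inside this interval; explicitly $\epsilon(f,g)=0$ when $c\ge a$ and $\epsilon(f,g)=1$ when $c<a$. In parallel, in $(\RR/\ZZ)^3$ the triple $(0,a,c)$ is cyclically positively oriented when $0<a<c<1$, negatively oriented when $0<c<a<1$, and degenerate (two coordinates coincide) when $a=0$, $c=0$, or $a=c$.

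The verification then splits into two classes of cases. In the degenerate class, where at least one of $f(1), g(1), fg(1)$ equals $1$, the orientation cocycle $\orn\bigl(1,f(1),fg(1)\bigr)$ vanishes; enumerating sub-cases by the pattern of vanishing among $a,b,c$ and reading $\beta$ off its piecewise definition, one checks that $\epsilon(f,g)$ matches the coboundary $d\beta(f,g)=\beta(g)-\beta(fg)+\beta(f)$ in each sub-case. In the generic class $a,b,c\in(0,1)$ with $a\neq c$, the sign of $\orn\bigl(1,f(1),fg(1)\bigr)$ records exactly whether $a<c$ or $c<a$; combining this with the lift analysis, the claimed identity
\bqn
 \epsilon(f,g)\;=\;-\tfrac12\orn\bigl(1,f(1),fg(1)\bigr)+d\beta(f,g)
\eqn
reduces in each orientation sub-case to a one-line arithmetic check in which the $d\beta$ term is the constant value of $\beta$ on non-basepoint-fixing homeomorphisms.

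The main obstacle is purely combinatorial: a careful accounting of the sub-cases parametrized by which of $a,b,c$ vanish, further refined in the generic case by the sign of the orientation, together with keeping the sign conventions straight for the canonical lift, the orientation cocycle, and the coboundary operator. There is no conceptual difficulty once the lift identity $\epsilon(f,g)=\overline f(b)-c$ is in hand.
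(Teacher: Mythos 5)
The paper itself gives no proof of this lemma (it is quoted from the cited reference), so your direct verification is the natural — essentially the only — approach, and its skeleton is sound: the lift identity $\epsilon(f,g)=\overline f\big(\overline g(0)\big)-\overline{fg}(0)$ is correct, as is the resulting dichotomy $\epsilon(f,g)=0$ iff $c\geq a$ (with $a=\overline f(0)$, $c=\overline{fg}(0)$), and your reading of the cyclic orientation of the triple $(0,a,c)$ is also correct. The gap is that the entire content of the lemma lies in the ``one-line arithmetic check'' that you assert but never perform, and when one actually performs it with the conventions as stated, it fails. Take $f=g$ to be the rotation by $1/3$, so $\overline f(x)=x+\tfrac13$. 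Then $\epsilon(f,g)=0$; the triple $\big(1,f(1),fg(1)\big)=\big(1,e^{2\pi i/3},e^{4\pi i/3}\big)$ is positively oriented, so $\orn=+1$; and, exactly as you compute, $d\beta(f,g)=\beta(g)-\beta(fg)+\beta(f)=-\tfrac12$. Hence the right-hand side equals $-\tfrac12-\tfrac12=-1\neq 0$. The same mismatch occurs in the other generic case ($f=g=$ rotation by $3/5$ gives $\epsilon=1$ against a right-hand side of $0$) and in the degenerate case $fg(1)=1\neq f(1)$ (where $\epsilon=1$ but $\orn=0$ and $d\beta=-1$).

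What the computation actually shows is that the identity closes only if the coboundary term enters with the opposite sign, i.e.\ if $\beta$ takes the value $+\tfrac12$ rather than $-\tfrac12$ on homeomorphisms not fixing the basepoint (equivalently, replace $+d\beta$ by $-d\beta$); with that correction every one of your cases checks out. So the deferred arithmetic is not a formality: it is precisely where the sign conventions for the section $f\mapsto\overline f$, the orientation cocycle, and the bar-resolution coboundary must be reconciled, and a proof that stops short of it has not proved the lemma — as written, it endorses a formula that is false under the literal conventions of the surrounding text (the two classes do agree in $\hb^2$, but the explicit coboundary is the whole point of the statement). To complete the argument you must carry out the case analysis explicitly and, in doing so, either correct the sign of $\beta$ or identify which convention in the statement you are silently negating.
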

The proof of Proposition~\ref{prop:4.7}(2) is now straightforward:
using Stokes' theorem one shows that 
\bqn
(g_1,g_2)\mapsto c(g_1,g_2)=\frac1{2\pi}\int_{\Delta(0,g_10,g_1g_20)}\omega_\DD
\eqn
and 
\bqn
(g_1,g_2)\mapsto\frac{1}{2\pi}\int_{\Delta(1,g_11,g_1g_21)}\omega_\DD
\eqn
are cohomologous in the complex of bounded (Borel) cochains;
since the second cocycle is then essentially $\frac12\orn\big(1,g_1(1),g_1g_2(1)\big)$,
Lemma~\ref{lem:4.8} allows to conclude.

Now we are in the position to define the {\em bounded Toledo number}\index{bounded Toledo number}\index{Toledo number!bounded}.
Define, using Proposition~\ref{prop:4.7}(1), the linear form
$\tb:\hb^2(\Gamma,\RR)\to\RR$,
\bqn
\Tb(\alpha)=\tb(\alpha)\kgb\,.
\eqn
Then given a surface $S$ of finite topological type as before, 
fix a hyperbolization, i.e. a homomorphism corresponding to a complete hyperbolic structure on $S$, 
$h:\pi_1(S)\to G$ with image a lattice
$\Gamma=h\big(\pi_1(S)\big)$ in $G$.
Given now any homomorphism
$\rho:\pi_1(S)\to\homeo{S^1}$,
we define the bounded Toledo number of $\rho$ as
\bqn
\Tb(\rho,h):=\tb\big((\rho\circ h^{-1})^\ast(\erb)\big)\,.
\eqn
Observe that the hyperbolization $h$ is involved in the definition,
but we will see that $\Tb(\rho,h)$ is independent of $h$ 
as a consequence of the relation between the bounded Toledo and the bounded Euler numbers.

Concerning this relation consider the following diagram

\bqn
\xymatrix{
 \hb^2\big(\pi_1(S),\RR\big)\ar[r]^\cong
&\hb^2(\Sigma,\RR)
&\hb^2(\Sigma,\partial\Sigma,\RR)\ar[l]^{f_\RR}\ar[d]^{\<\,\cdot\,,[\Sigma,\partial\Sigma]\>}\\
\hb^2(\Gamma,\RR)\ar[u]^{h^\ast}\ar[d]_{\tb}
& &\RR\\
\RR& & 
}
\eqn
where, as before, $h:\pi_1(S)\to\Gamma$ is a hyperbolization with finite area.

\begin{theorem}[{\cite[Theorem~3.3]{Burger_Iozzi_Wienhard_toledo}}]\label{thm:4.9} 
For every $\alpha\in\hb^2\big(\pi_1(S),\RR\big)\cong\hb^2(\Sigma,\RR)$,
we have that 
\bqn
\tb\big((h^\ast)^{-1}(\alpha)\big) \big|\chi(S)\big|= \big\<f_\RR^{-1}(\alpha),[\Sigma,\partial\Sigma]\big\>\,.
\eqn
\end{theorem}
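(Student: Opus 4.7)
The plan is to realize both sides of the identity via the same boundary resolution of bounded cohomology. Since $G=\PSU(1,1)$ acts amenably on $S^1=\partial\DD$ and this property passes to any discrete subgroup, we have canonical identifications $\hcb^\bullet(G,\RR)\cong H^\bullet(\la((S^1)^{\bullet+1})^G)$ and $\hb^\bullet(\Gamma,\RR)\cong H^\bullet(\la((S^1)^{\bullet+1})^\Gamma)$, in which the transfer map $\Tb$ is induced by the cochain-level averaging $c\mapsto\overline c:=\int_{\Gamma\bsl G}(g\cdot c)\,\d\mu(g)$ against the $G$-invariant probability measure $\mu$ on $\Gamma\bsl G$.

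First I would represent $(h^\ast)^{-1}\alpha$ by an alternating cocycle $c\in\la((S^1)^3)^\Gamma$. Its average $\overline c$ is $G$-invariant, hence a representative of $\Tb((h^\ast)^{-1}\alpha)\in\hcb^2(G,\RR)$. By Proposition~\ref{prop:4.7}(1) together with the explicit representative $\tfrac12\orn$ of $\kgb$ furnished by Proposition~\ref{prop:4.7}(2) and Lemma~\ref{lem:4.8}, one has the cochain identity
\[
\overline c \;=\; \tfrac12\,\tb\big((h^\ast)^{-1}\alpha\big)\cdot\orn \;+\; d\psi
\]
for some $\psi\in\la((S^1)^2)^G$.

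Next I would realize the right-hand side via hyperbolic straightening. Using the developing map of $h$ to identify $\widetilde\Sigma$ with $\DD$, I would triangulate a fundamental polygon $\Ff\subset\DD$ for $h(\pi_1(S))$ by geodesic simplices and ideally straighten them to positively-oriented ideal triangles $\{\Delta(\xi_0^{(i)},\xi_1^{(i)},\xi_2^{(i)})\}_{i=1}^N$ with vertices on $\partial\DD$. The resulting cycle represents $[\Sigma,\partial\Sigma]$ up to chains supported on $\partial\Sigma$, and the vanishings $\hb^1(\partial\Sigma,\RR)=\hb^2(\partial\Sigma,\RR)=0$ ensure that these boundary contributions, as well as the ambiguity in the straightening, disappear in the relative pairing, giving
\[
\big\langle f_\RR^{-1}(\alpha),[\Sigma,\partial\Sigma]\big\rangle \;=\; \sum_{i=1}^N c\big(\xi_0^{(i)},\xi_1^{(i)},\xi_2^{(i)}\big).
\]

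Finally, replacing $c$ by the cohomologous $\overline c$ (the difference being a coboundary whose sum over the tessellation reduces to boundary terms that vanish by the same argument), each ideal triangle has hyperbolic area $\pi$ and Gauss--Bonnet gives $\mathrm{area}(\Ff)=2\pi|\chi(S)|$, hence $N=2|\chi(S)|$; therefore
\[
\sum_{i=1}^N\overline c\big(\xi_0^{(i)},\xi_1^{(i)},\xi_2^{(i)}\big) \;=\; \tfrac12\,\tb\big((h^\ast)^{-1}\alpha\big)\cdot N \;=\; \tb\big((h^\ast)^{-1}\alpha\big)\cdot|\chi(S)|,
\]
which is the asserted identity. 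The principal obstacle will be to make the straightening-plus-boundary-vanishing argument rigorous in the noncompact case, where ideal vertices cluster at cusps and ends of $\Sigma$; controlling the coboundary corrections $d\psi$ along $\partial\Sigma$ is precisely what the vanishing theorems $\hb^\bullet(\partial\Sigma,\RR)=0$ for $\bullet=1,2$ are designed to overcome.
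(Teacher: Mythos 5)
Your overall architecture --- compute $\Tb$ as an average over $\Gamma\backslash G$ in the resolution $\Zz\la\big((\partial\DD)^{\bullet+1}\big)$ and unfold that average against a tiling by fundamental domains paired with a straightened relative fundamental cycle --- is the standard proportionality argument for statements of this type, and the two ingredients you invoke (Propositions~\ref{prop:4.9} and~\ref{prop:4.12}) are exactly the tools this survey supplies; since the survey quotes the result from \cite[Theorem~3.3]{Burger_Iozzi_Wienhard_toledo} without proof, I judge your outline on its own terms. The first genuine gap is measure-theoretic: elements of $\Zz\la\big((\partial\DD)^3\big)^\Gamma$ are $L^\infty$-classes defined only up to null sets, and the ideal vertices $\xi_j^{(i)}$ together with their $\Gamma$-orbits form a countable, hence null, subset of $(\partial\DD)^3$. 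The expressions $c\big(\xi_0^{(i)},\xi_1^{(i)},\xi_2^{(i)}\big)$, and even $\overline c\big(\xi_0^{(i)},\xi_1^{(i)},\xi_2^{(i)}\big)$, are therefore literally undefined --- Proposition~\ref{prop:4.12} only gives $\overline c=\tfrac12\tb\big((h^\ast)^{-1}\alpha\big)\orn$ almost everywhere. You need either a strict everywhere-defined representative (e.g.\ a Borel cocycle pulled back from $G$, or the inhomogeneous group cocycle evaluated on the relator of the presentation \eqref{eq:fg}), or a smearing of the ideal vertices over arcs; as written the central sum has no meaning.

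The second gap is that the identity $\big\<f_\RR^{-1}(\alpha),[\Sigma,\partial\Sigma]\big\>=\sum_i c\big(\xi^{(i)}\big)$ carries essentially all the content of the theorem, and you assert it rather than prove it: one must exhibit a cochain-level inverse of $f_\RR$, a straightening operator on bounded singular chains compatible with the relative structure, and a verification that pushing the vertices of a (necessarily non-ideal, in the compact case) geodesic triangulation out to $\partial\DD$ changes the relative cycle only by terms pairing trivially with the chosen cocycle --- the vanishing of $\hb^1(\partial\Sigma,\RR)$ and $\hb^2(\partial\Sigma,\RR)$ inverts $f_\RR$ at the level of classes, but does not by itself control these cochain-level corrections. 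Relatedly, the count $N=2|\chi(S)|$ is false for compact $S$: the one-vertex triangulation of the $4g$-gon has $4g-2$ triangles, not $4g-4$, and after ideal straightening some simplices may degenerate or reverse orientation. What Stokes/Gauss--Bonnet actually gives is $\sum_i\epsilon_i\,\orn\big(\xi^{(i)}\big)=2|\chi(S)|$ for the signed straightened cycle, which still yields $\sum_i\epsilon_i\,\overline c\big(\xi^{(i)}\big)=\tb\big((h^\ast)^{-1}\alpha\big)\,|\chi(S)|$; so the conclusion survives the corrected bookkeeping, but only once the straightening step itself is established.
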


Specializing to $\alpha = \rho^*(e^{\rm b})$ this provides the desired
equality between the bounded Toledo number and the bounded Euler
number.

\subsection{Computations in bounded cohomology}\label{subsec:comp_bdd_coh}
In computing bounded cohomology one faces {\em a priori} 
the same difficulties as for the usual cohomology, 
namely that the bar resolution contains many coboundaries;
the ideal situation then would be if one had a complex giving bounded
cohomology and where all differentials are zero.
While this can be achieved for various ordinary cohomology theories,
we do not know of an analogue of either Hodge theory or Van Est isomorphism
for bounded cohomology. What can be achieved for the moment is a good
model in degree two.  
This follows from the theory developed in \cite{Burger_Monod_JEMS, Burger_Monod_GAFA, Monod_book}
and of which we recall a few consequences in our case at hand.

\begin{proposition}\label{prop:4.9}  Let $G=\PSU(1,1)$ and $L<G$ a closed subgroup whose
action on $\partial\DD\times\partial\DD$ is ergodic.  Then there is a canonical isomorphism
\bqn
\hcb^2(L,\RR)\cong\Zz\la\big((\partial\DD)^3\big)^L\,.
\eqn
\end{proposition}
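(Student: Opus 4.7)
The plan is to invoke the Burger--Monod functorial approach to bounded continuous cohomology, which permits the computation of $\hcb^\bullet(L,\RR)$ from any resolution built out of an amenable $L$-action. The canonical candidate here is the action of $L$ on $\partial\DD$ itself. The key preliminary fact is that $\partial\DD$ is the Poisson boundary of $G=\PSU(1,1)$, and in particular the $G$-action on $\partial\DD$ is amenable in the sense of Zimmer; since amenability of an action passes to closed subgroups, the $L$-action on $\partial\DD$ is amenable as well.

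Given this, the Burger--Monod machinery yields an isometric isomorphism
\bqn
\hcb^\bu(L,\RR)\;\cong\; \h^\bu\bigl(\la\bigl((\partial\DD)^{\bu+1}\bigr)^L,\,d_\bu\bigr),
\eqn
where $d_\bu$ are the standard simplicial differentials and the superscript $L$ denotes $L$-invariance (and ``alt'' picks out the alternating cochains, which may be used without changing the cohomology by the usual symmetrization argument). In particular, in degree $2$ the cohomology is computed as the kernel of
\bqn
d_3:\la\bigl((\partial\DD)^3\bigr)^L\longrightarrow \la\bigl((\partial\DD)^4\bigr)^L
\eqn
modulo the image of
\bqn
d_2:\la\bigl((\partial\DD)^2\bigr)^L\longrightarrow \la\bigl((\partial\DD)^3\bigr)^L.
\eqn

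To finish, the hypothesis that the $L$-action on $\partial\DD\times\partial\DD$ is ergodic enters through the vanishing $\la\bigl((\partial\DD)^2\bigr)^L=\{0\}$: every essentially $L$-invariant function on $(\partial\DD)^2$ is constant, and the alternating constraint $f(x,y)=-f(y,x)$ then forces $f\equiv 0$. Consequently $d_2$ has trivial image, and the second cohomology is precisely the space of alternating $L$-invariant $L^\infty$-cocycles on $(\partial\DD)^3$, which by definition is $\Zz\la\bigl((\partial\DD)^3\bigr)^L$. This gives the claimed canonical isomorphism.

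The main obstacle is really a matter of setting up, rather than a single hard step: one must justify carefully that the ``amenable boundary'' theorem of Burger--Monod applies verbatim to an arbitrary closed subgroup $L$ acting on $\partial\DD$ (rather than just to $G$ itself), so that one may use the $\la$-resolution on $(\partial\DD)^{\bu+1}$ to compute $\hcb^\bu(L,\RR)$. Once that functorial input is in place, the reduction to double ergodicity is purely formal, and the alternating condition does the rest.
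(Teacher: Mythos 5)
Your argument is correct and is exactly the one the paper relies on: the paper offers no proof of Proposition~\ref{prop:4.9}, deferring instead to the Burger--Monod theory, and your write-up (amenability of the $L$-action on $\partial\DD$ inherited from $G$, the resulting isometric computation of $\hcb^\bullet(L,\RR)$ from the alternating $L^\infty$-resolution on $(\partial\DD)^{\bullet+1}$, and the vanishing of $\la\big((\partial\DD)^2\big)^L$ via double ergodicity plus the alternating condition) is precisely that intended argument. Nothing is missing.
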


Here 
\bqn
\ba
\Zz\la\big((\partial\DD)^3\big)^L:=
\big\{f:(\partial\DD)^3\to\RR:\, 
f\hbox{ is measurable, essentially bounded, }&\\ 
\hbox{alternating, $L$-invariant and }&\\
f(x_2,x_3,x_4)-f(x_1,x_3,x_4)+f(x_1,x_2,x_4)-f(x_1,x_2,x_3)=0&\\
\hbox{ for a.\,e. }(x_1,x_2,x_3,x_4)\in(\partial\DD)^4&
\big\}\,.
\ea
\eqn

In particular for $L=G=\PSU(1,1)$ it is plain that $\Zz\la\big((\partial\DD)^3\big)^L$ is one-dimensional, 
generated by the orientation cocycle.  In addition one can verify that under
the isomorphism in Proposition~\ref{prop:4.9}, 
$\kgb$ is sent to $\frac12\orn$.  This implies immediately the following

\begin{corollary}\label{cor:4.10} $\|\kgb\|=\frac12$.
\end{corollary}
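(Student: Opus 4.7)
The plan is to verify the two inequalities $\|\kgb\|\leq\frac12$ and $\|\kgb\|\geq\frac12$ separately. The first is an immediate consequence of an explicit representative at our disposal, while the second is where the content of the statement lies.

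For the upper bound, I would note that the cocycle
\[
c(g_1,g_2)=\frac{1}{2\pi}\int_{\Delta\big(0,g_1(0),g_1g_2(0)\big)}\omega_\DD
\]
representing $\kgb$ (see \S~\ref{subsec:rel}) is bounded in absolute value by $\frac12$, since any oriented geodesic triangle in $\DD$ has area of absolute value at most $\pi$, the supremum being attained in the limit by ideal triangles. Equivalently, under the identification of $\hcb^2(G,\RR)$ with $\Zz\la\big((\partial\DD)^3\big)^G$ provided by Proposition~\ref{prop:4.9}, the class $\kgb$ corresponds to $\frac12\orn$, and since $\orn$ is $\{-1,0,1\}$-valued one has $\|\tfrac12\orn\|_\infty=\frac12$.

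For the lower bound I would pair $\kgb$ against the fundamental class of a closed hyperbolic surface. Let $S$ be closed of genus $g\geq 2$ with hyperbolic holonomy $h:\pi_1(S)\to G$. The identifications of \S~\ref{subsec:rel} combined with Theorem~\ref{thm:goldman_thesis} yield
\[
\big\<h^\ast(\kgb),[S]\big\>=\T(\rho_h)=2g-2,
\]
where the pairing is understood via the comparison map $\hcb^2(G,\RR)\to\hc^2(G,\RR)$ sending $\kgb$ to $\kg$. Since pullback in bounded cohomology is norm decreasing, and Gromov's duality bounds such evaluations by $\|\alpha\|\cdot\|[S]\|_1$, one obtains
\[
2g-2=\big|\<h^\ast(\kgb),[S]\>\big|\leq\|\kgb\|\cdot\|[S]\|_1.
\]
Invoking Gromov's theorem $\|[S]\|_1=2|\chi(S)|=4g-4$ for closed hyperbolic surfaces of genus $g\geq 2$ then gives $\|\kgb\|\geq\frac{2g-2}{4g-4}=\frac12$, which combined with the upper bound completes the proof.

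The main obstacle is the lower bound: the upper bound only uses the area bound for hyperbolic triangles, whereas the lower bound appeals either to the Gromov norm computation for hyperbolic surfaces, or to the fact that the isomorphism in Proposition~\ref{prop:4.9} is isometric, as it is in the Burger--Monod framework in degree two (since in that alternating complex the second bounded cohomology is realized without any further quotienting). Under the latter input, both bounds follow directly from the correspondence $\kgb\leftrightarrow\frac12\orn$, which accounts for the authors' remark that the corollary is ``immediate''. The argument via the Gromov norm is more hands-on, and has the pleasant feature of making transparent the link between $\|\kgb\|=\frac12$ and the constant $\frac12$ appearing in the Milnor--Wood inequality of \S~\ref{subsec:miln_gold}.
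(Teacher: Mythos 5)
Your argument is correct, but your primary route is genuinely different from the paper's. The paper's proof is the one you sketch only at the end: the isomorphism $\hcb^2(G,\RR)\cong\Zz\la\big((\partial\DD)^3\big)^L$ of Proposition~\ref{prop:4.9} is isometric in the Burger--Monod framework (the right-hand side carrying the essential supremum norm, with no further quotienting in degree two), so the identification $\kgb\leftrightarrow\frac12\orn$ yields both inequalities at once --- which is exactly why the text says the corollary follows ``immediately''. Your main argument instead obtains the upper bound from the explicit cocycle $c$ (the area bound $|{\int_\Delta\omega_\DD}|\le\pi$ suffices; no isometry statement is needed), and the lower bound from the duality $2g-2=\T(\rho_h)=\big\<h^\ast(\kgb),[S]\big\>\le\|\kgb\|\,\|[S]\|_1$ combined with $\|[S]\|_1\le 4g-4$. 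This is sound; two small remarks. First, you do not need Goldman's theorem here, only the direct computation $\T(\rho_h)=2g-2$ from \S~\ref{subsec:miln_gold}. Second, you only use the upper bound on the simplicial volume, which is the combinatorial direction (triangulate the $4g$-gon and pass to finite covers --- the same covering trick the paper uses to sharpen Milnor's inequality), so there is no circularity with the hyperbolic-area lower bound for $\|[S]\|_1$. The trade-off is clear: the paper's proof is a one-liner given the functional-analytic input behind Proposition~\ref{prop:4.9}, whereas yours avoids the isometry of that isomorphism at the cost of importing Gromov's simplicial volume estimate, and in return exhibits the constant $\frac12$ as the ratio $|\chi(S)|/\|[S]\|_1$, making the link with the Milnor--Wood constant transparent.
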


This, in turn, together with the fact that $\Tb$ is norm decreasing, implies:

\begin{corollary}\label{cor:4.11} For every $\alpha\in\hb^2(\Gamma,\RR)$,
\bqn
\big|\tb(\alpha)\big| \leq2\|\alpha\|\,.
\eqn
\end{corollary}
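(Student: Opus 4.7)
The proof is a direct consequence of the two ingredients already assembled, so the plan is essentially a short chain of norm estimates. The central identity is that by the definition of $\tb$ one has $\Tb(\alpha) = \tb(\alpha)\,\kgb$ in $\hcb^2(G,\RR)$, which is legitimate because Proposition~\ref{prop:4.7}(1) guarantees that $\hcb^2(G,\RR) = \RR\,\kgb$ is one-dimensional.

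First I would take the seminorm of both sides of this identity, using that the seminorm on $\hcb^2(G,\RR)$ is homogeneous, to obtain
\bqn
\|\Tb(\alpha)\| \;=\; |\tb(\alpha)|\cdot\|\kgb\|.
\eqn
Next I would invoke Corollary~\ref{cor:4.10}, which gives $\|\kgb\| = \tfrac12$, so that the right-hand side becomes $\tfrac12\,|\tb(\alpha)|$. Finally I would use the fact (recalled right before the statement of Corollary~\ref{cor:4.11} and discussed when the transfer map was introduced in \S\ref{subsec:bbd_eul_bdd_tol}) that $\Tb$ is norm decreasing, i.e. $\|\Tb(\alpha)\| \leq \|\alpha\|$. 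Combining, $\tfrac12\,|\tb(\alpha)| \leq \|\alpha\|$, which is the desired bound.

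There is no real obstacle: the whole point of the previous two paragraphs of the paper was to compute $\|\kgb\|$ explicitly and to establish that $\Tb$ does not increase seminorms; the corollary just multiplies these two facts together. The only thing to double-check is the homogeneity of the $\ell^\infty$-seminorm used in defining $\|\cdot\|$ on $\hcb^2(G,\RR)$, which is automatic from the definition of the bounded cohomology seminorm as the infimum of sup-norms over representing cocycles.
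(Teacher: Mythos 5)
Your argument is correct and is exactly the one the paper intends: the corollary is stated as an immediate consequence of $\Tb(\alpha)=\tb(\alpha)\kgb$, the norm-decreasing property of the transfer map, and Corollary~\ref{cor:4.10} giving $\|\kgb\|=\tfrac12$. You have merely written out the one-line chain of estimates that the paper leaves implicit, so there is nothing to add.
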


Another feature of this model for bounded cohomology is that the transfer $\Tb$
takes a particularly simple and useful form:

\begin{proposition}[\cite{Monod_book}]\label{prop:4.12} Let $\Gamma$ be a lattice and $\mu$
the $G$-invariant probability measure on $\Gamma\backslash G$.  Then
\bqn
\Tb:\hb^2(\Gamma,\RR)\to\hb^2(G,\RR)
\eqn
is given by the map
\bqn
\ba
\Zz\la\big((\partial\DD)^3\big)^\Gamma&\to\Zz\la\big((\partial\DD)^3\big)^G\\
\alpha\qquad\quad&\longmapsto\quad\Tb(\alpha)\,,
\ea
\eqn
where
\bqn
\Tb(\alpha)(x,y,z)=\int_{\Gamma\backslash G}\alpha(gx,gy,gz)\,d\mu(g)\,.
\eqn
In particular
\bqn
\int_{\Gamma\backslash G} \alpha(gx,gy,gz)\,d\mu(g)=\frac{\tb(\alpha)}{2}\orn(x,y,z)
\eqn
for almost every $(x,y,z)\in(\partial\DD)^3$.
\end{proposition}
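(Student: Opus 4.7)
The plan is to realize both $\hb^2(\Gamma,\RR)$ and $\hcb^2(G,\RR)$ by the same boundary model and then read off the transfer at the cochain level. The starting point is the Burger--Monod theorem that, since the $G$-action on $\partial\DD$ is amenable and doubly ergodic, the complex $\big(\la\big((\partial\DD)^{\bullet+1}\big),d_\bullet\big)$ (with the standard homogeneous differential) is a strong resolution of the coefficient module by relatively injective Banach $G$-modules. Consequently, for any coefficient $G$-module $E$, one has $\hcb^\bullet(G,E) = \h^\bullet\!\big(\la((\partial\DD)^{\bullet+1},E)^G\big)$.

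First I would apply this with $E=\RR$ to obtain the model described in Proposition~\ref{prop:4.9}, and with $E=\linfty(\Gamma\backslash G)$ to obtain the parallel model
\bqn
\hcb^\bullet\big(G,\linfty(\Gamma\backslash G)\big)\cong \h^\bullet\!\big(\la((\partial\DD)^{\bullet+1},\linfty(\Gamma\backslash G))^G\big),
\eqn
which, composed with the Eckmann--Shapiro isomorphism \eqref{eq:eckmann_shapiro}, identifies $\hb^\bullet(\Gamma,\RR)$ with $\h^\bullet\!\big(\la((\partial\DD)^{\bullet+1})^\Gamma\big)$. The essential content of Eckmann--Shapiro at the cochain level is that a $\Gamma$-invariant cocycle $\alpha\in\la((\partial\DD)^3)^\Gamma$ corresponds to the $G$-equivariant cocycle $\tilde\alpha:(\partial\DD)^3\to\linfty(\Gamma\backslash G)$ defined by
\bqn
\tilde\alpha(x,y,z)(\Gamma g):=\alpha(gx,gy,gz),
\eqn
well defined because $\alpha$ is $\Gamma$-invariant and manifestly $G$-equivariant.

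Next I would apply the map of $G$-modules \eqref{eq:5.6}, $f\mapsto\int_{\Gamma\backslash G}f\,d\mu$, to the values of $\tilde\alpha$. By definition $\Tb$ is the composition of Eckmann--Shapiro with the induced map in cohomology, so at the cochain level the transfer of $\alpha$ is represented by the $G$-invariant alternating cocycle
\bqn
\Tb(\alpha)(x,y,z)=\int_{\Gamma\backslash G}\alpha(gx,gy,gz)\,d\mu(g),
\eqn
proving the first assertion. Naturality of the boundary model with respect to the morphism \eqref{eq:5.6} is the technical point that makes this identification work, and this is the step I expect to require the most care: one must verify that the canonical isomorphism provided by Burger--Monod intertwines the Eckmann--Shapiro isomorphism at the level of inhomogeneous bar cochains with the concrete formula above.

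Finally, for the ``in particular'' statement, I would invoke Proposition~\ref{prop:4.9} with $L=G$: the space $\Zz\la((\partial\DD)^3)^G$ is one-dimensional and spanned by $\orn$, and under the canonical isomorphism $\kgb$ corresponds to $\tfrac12\orn$ (as observed between Proposition~\ref{prop:4.9} and Corollary~\ref{cor:4.10}). Since $\Tb(\alpha)\in\Zz\la((\partial\DD)^3)^G$ and $\Tb(\alpha)=\tb(\alpha)\,\kgb$ by the definition of $\tb$, the two classes must agree as cocycles up to the identification, giving
\bqn
\int_{\Gamma\backslash G}\alpha(gx,gy,gz)\,d\mu(g)=\frac{\tb(\alpha)}{2}\,\orn(x,y,z)
\eqn
for a.\,e.\ $(x,y,z)\in(\partial\DD)^3$, as claimed.
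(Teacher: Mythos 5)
Your proof is correct, and it is essentially the argument implicit in the paper's own setup: the paper states this proposition without proof (citing Monod's book), but its definition of $\Tb$ as Eckmann--Shapiro followed by the change of coefficients \eqref{eq:5.6}, together with the boundary model of Proposition~\ref{prop:4.9}, is exactly what you unpack at the cochain level. The cochain-level form of induction, $\tilde\alpha(x,y,z)(\Gamma g)=\alpha(gx,gy,gz)$, and the naturality of the Burger--Monod resolution under the coefficient morphism are precisely the points handled in the cited reference, and your derivation of the ``in particular'' clause from the one-dimensionality of $\Zz\la\big((\partial\DD)^3\big)^G$ and the identification $\kgb\leftrightarrow\tfrac12\orn$ is the intended one.
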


With this at hand we can now deduce a characterization of the bounded K\"ahler class
which lies at the heart of our approach:

\begin{theorem}\label{thm:4.13} Let $\Gamma<G $ be a lattice.  For every $\alpha\in\hb^2(\Gamma,\RR)$
\bqn
\big|\tb(\alpha)\big|\leq2\|\alpha\|
\eqn
with equality if and only if $\alpha$ is proportional to the restriction
$\kgb|_\Gamma$ to $\Gamma$ of the bounded K\"ahler class.
\end{theorem}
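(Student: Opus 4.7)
The inequality $|\tb(\alpha)|\leq 2\|\alpha\|$ is just Corollary~\ref{cor:4.11}, so the content of the theorem lies in characterizing when equality holds. My plan is to work throughout in the $L^\infty$-cocycle model of Proposition~\ref{prop:4.9}, in which the transfer map admits the explicit averaging description of Proposition~\ref{prop:4.12}, and to pin down extremal cocycles pointwise by exploiting transitivity of the $G$-action on the positively oriented triples of $\partial\DD$.

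For the ``if'' direction, Proposition~\ref{prop:4.9} identifies $\kgb|_\Gamma$ with the $G$-invariant cocycle $\tfrac12\orn$, and Proposition~\ref{prop:4.12} then gives $\Tb(\kgb|_\Gamma)=\kgb$, hence $\tb(\kgb|_\Gamma)=1$; combined with $\|\kgb\|=\tfrac12$ from Corollary~\ref{cor:4.10} this yields $|\tb(c\,\kgb|_\Gamma)|=|c|=2\|c\,\kgb|_\Gamma\|$ for every $c\in\RR$. For the converse, replace $\alpha$ by $-\alpha$ if necessary to assume $\tb(\alpha)=2\|\alpha\|$, and let $\tilde\alpha\in\Zz\la\big((\partial\DD)^3\big)^\Gamma$ be the representative furnished by Proposition~\ref{prop:4.9}, with $\|\tilde\alpha\|_\infty=\|\alpha\|$. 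Proposition~\ref{prop:4.12} then reads
\bqn
\int_{\Gamma\backslash G}\tilde\alpha(gx,gy,gz)\,d\mu(g)=\|\alpha\|\,\orn(x,y,z)
\eqn
for almost every $(x,y,z)\in(\partial\DD)^3$. Writing $\tilde\alpha=\|\alpha\|-\beta$ with $\beta\geq 0$ a.e., the right-hand side equals $\|\alpha\|$ on positively oriented triples, so $\int_{\Gamma\backslash G}\beta(gx,gy,gz)\,d\mu(g)=0$; nonnegativity of $\beta$ then forces $\beta(gx,gy,gz)=0$ for $\mu$-a.e.\ $g$, for a.e.\ such $(x,y,z)$. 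Combining this with transitivity of $G$ on positively oriented triples via a Fubini argument on $(\Gamma\backslash G)\times(\partial\DD)^3$ upgrades the conclusion to $\beta\equiv 0$, i.e.\ $\tilde\alpha\equiv\|\alpha\|$, a.e.\ on that set; the alternating property of $\tilde\alpha$ then gives $\tilde\alpha\equiv-\|\alpha\|$ a.e.\ on negatively oriented triples, so $\tilde\alpha=\|\alpha\|\,\orn=2\|\alpha\|\cdot\tfrac12\orn$, i.e.\ $\alpha=2\|\alpha\|\,\kgb|_\Gamma$.

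The main step requiring care is the Fubini/transitivity upgrade in the last step, which converts the ``for a.e.\ $(x,y,z)$, a.e.\ $g$'' pointwise information into an equality valid a.e.\ on the single $G$-orbit of positively oriented triples. All remaining ingredients are either routine manipulations with $L^\infty$-cocycles or reflect the isometric nature of the Burger--Monod model of $\hb^2(\Gamma,\RR)$ provided by Proposition~\ref{prop:4.9}.
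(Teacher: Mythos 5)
Your proposal is correct and follows essentially the same route as the paper: both use the averaging formula of Proposition~\ref{prop:4.12} to rewrite the equality case as the vanishing of the integral of the nonnegative function $\|\alpha\|_\infty\orn-\tilde\alpha$ over $\Gamma\backslash G$, and then conclude $\tilde\alpha=\|\alpha\|_\infty\orn$ almost everywhere. The only differences are that you spell out the Fubini/quasi-invariance step (which the paper compresses into a single ``hence'') and explicitly verify the easy ``if'' direction; both additions are correct and harmless.
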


\begin{proof} Taking up the formula in Proposition~\ref{prop:4.12} in terms of measurable cocycles, 
$\big|\tb(\alpha)\big|=2\|\alpha\|$ reads 
\bqn
\int_{\Gamma\backslash G}\alpha(gx,gy,gz)\,d\mu(x)=\|\alpha\|_\infty\orn(x,y,z)\,,
\eqn
or 
\bqn
\int_{\Gamma\backslash G}\big(\|\alpha\|_\infty\orn(gx,gy,gz)-\alpha(gx,gy,gz)\big)\,d\mu(g)=0\,.
\eqn
For positively oriented triples $(x,y,z)$ this implies that for almost every $g$
\bqn
\|\alpha\|_\infty\orn(gx,gy,gz)=\alpha(gx,gy,gz)
\eqn
and hence $\alpha=\|\alpha\|_\infty \orn$ in $\Zz\la\big((\partial\DD)^3\big)^G$.
\end{proof}

Let $\ksrb\in\hb^2\big(\pi_1(S),\RR\big)$ denote the class obtained by considering 
 $\ksb\in\hb^2\big(\pi_1(S),\ZZ\big)$ (see the end of \S~\ref{subsec:bdd_eul}) as a real class.
 Using the results of the previous section, we obtain the following important 
 characterization of $\ksrb$.


\begin{corollary}\label{cor:4.14} Let $S$ be of finite topological type realized
as the interior of $\Sigma$.  Then for every $\alpha\in\hb^2\big(\pi_1(S),\RR\big)=\hb^2(\Sigma,\RR)$
\bqn
\big|\big\<f_\RR^{-1}(\alpha),[\Sigma,\partial\Sigma]\big\>\big|\leq2\|\alpha\|\,\big|\chi(S)\big|\,,
\eqn
with equality if and only if $\alpha$ is a multiple of $\ksrb$.
\end{corollary}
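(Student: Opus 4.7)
The plan is to combine Theorem~\ref{thm:4.9} with the sharp bound of Theorem~\ref{thm:4.13}. First I would reduce to the case $\chi(S)<0$, since otherwise $\pi_1(S)$ is amenable (trivial, $\ZZ$, or $\ZZ^2$ for an orientable connected finite-type surface), so $\hb^2\big(\pi_1(S),\RR\big)=0$ and the statement is vacuous. Next I would fix a finite-area hyperbolization $h:\pi_1(S)\to\Gamma<G$, which exists for $\chi(S)<0$ by choosing all peripheral elements to be parabolic. Since $h$ is a group isomorphism onto the lattice $\Gamma$, the induced pullback
\[
h^\ast:\hb^2(\Gamma,\RR)\to\hb^2\big(\pi_1(S),\RR\big)
\]
is an isometric isomorphism for the canonical seminorm.

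Given $\alpha\in\hb^2\big(\pi_1(S),\RR\big)=\hb^2(\Sigma,\RR)$, I would set $\beta:=(h^\ast)^{-1}(\alpha)\in\hb^2(\Gamma,\RR)$. Theorem~\ref{thm:4.9} then rewrites the left-hand side as
\[
\big\langle f_\RR^{-1}(\alpha),[\Sigma,\partial\Sigma]\big\rangle=\tb(\beta)\cdot|\chi(S)|,
\]
and Theorem~\ref{thm:4.13} applied to $\beta$, together with $\|\beta\|=\|\alpha\|$, will yield
\[
\big|\langle f_\RR^{-1}(\alpha),[\Sigma,\partial\Sigma]\rangle\big|=|\tb(\beta)|\cdot|\chi(S)|\leq 2\|\beta\|\cdot|\chi(S)|=2\|\alpha\|\cdot|\chi(S)|,
\]
with equality if and only if $\beta$ is a multiple of $\kgb|_\Gamma$; equivalently, $\alpha$ is a multiple of $h^\ast(\kgb|_\Gamma)$.

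The one remaining task is to identify the class $h^\ast(\kgb|_\Gamma)$ with $\ksrb$. By the definition recalled at the end of \S~\ref{subsec:bdd_eul}, $\ksb=\rho_h^\ast(\eb)$, so passing to real coefficients gives $\ksrb=\rho_h^\ast(\erb)$. Writing $\rho_h$ as the composition of $h:\pi_1(S)\to\Gamma$ with the inclusions $\Gamma\hookrightarrow G\hookrightarrow\homeo{S^1}$, Proposition~\ref{prop:4.7}(2), which states that $\erb|_G=\kgb$, delivers $\rho_h^\ast(\erb)=h^\ast(\kgb|_\Gamma)$, completing the identification. The substantive input is really only Theorem~\ref{thm:4.13}; the main (and only) point requiring care is this last bookkeeping step matching the intrinsically defined class $\ksrb$ with the pulled-back K\"ahler class on $\Gamma$, which is handled by Proposition~\ref{prop:4.7}(2).
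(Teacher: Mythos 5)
Your proposal is correct and follows exactly the route the paper intends: the corollary is stated there as a direct consequence of Theorem~\ref{thm:4.9} (which converts the relative evaluation into $\tb\big((h^\ast)^{-1}(\alpha)\big)\,|\chi(S)|$) combined with Theorem~\ref{thm:4.13}, using that $h^\ast$ is an isometric isomorphism and that $h^\ast(\kgb|_\Gamma)=\rho_h^\ast(\erb)=\ksrb$ via Proposition~\ref{prop:4.7}(2). Your bookkeeping of the equality case and the identification of $\ksrb$ are exactly the points the paper leaves implicit, and you handle them correctly.
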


\subsection{Hyperbolic structures and representations: \\ the noncompact case}\label{sec:hyp_str_nc}
In this section we fulfill the promise to give explicit equations for the image 
in $\hom\big(\pi_1(S),G\big)$ of $\hyp(S)$ under the map $\delta$,
in the case where $S$ is a surface of finite topological type.

Let thus $\rho:\pi_1(S)\to\homeo{S^1}$ be a homomorphism; 
then we have the following result which is a first characterization 
of the maximality of the bounded Euler number of $\rho$.

\begin{corollary}\label{cor:4.15} We have
\bqn
\big|\eb(\rho)\big|\leq\big|\chi(S)\big|\,,
\eqn
where equality holds if and only if $\rho^\ast(\erb)=\pm\ksrb$.
\end{corollary}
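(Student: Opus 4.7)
The plan is to derive Corollary~\ref{cor:4.15} as an essentially immediate consequence of Corollary~\ref{cor:4.14}, applied to the real bounded class $\alpha:=\rho^\ast(\erb)\in\hb^2\big(\pi_1(S),\RR\big)$. By the defining formula~\eqref{eq:4.4.1}, together with the remark made just after it that $\eb(\rho)$ depends only on $\rho^\ast(\erb)$, one has
\bqn
|\eb(\rho)|=\big|\big\<f_\RR^{-1}(\rho^\ast(\erb)),[\Sigma,\partial\Sigma]\big\>\big|,
\eqn
so Corollary~\ref{cor:4.14} immediately yields $|\eb(\rho)|\leq 2\,\|\rho^\ast(\erb)\|\cdot|\chi(S)|$. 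Since pullback is norm non-increasing in bounded cohomology, it suffices to show $\|\erb\|_{\hb^2(\homeo{S^1},\RR)}\leq\tfrac12$. For this I would invoke Lemma~\ref{lem:4.8}, which writes $\epsilon$ as $-\tfrac12\orn(1,f(1),fg(1))+d\beta(f,g)$; the first summand is a bounded cocycle representative of $\erb$ with supremum norm $\tfrac12$, giving $\|\erb\|\leq\tfrac12$ and hence $|\eb(\rho)|\leq|\chi(S)|$.

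For the equality statement, assume $|\eb(\rho)|=|\chi(S)|$. The chain of inequalities just built is then forced to be a chain of equalities, so $\|\rho^\ast(\erb)\|=\tfrac12$ and, by the equality clause of Corollary~\ref{cor:4.14}, $\rho^\ast(\erb)=c\,\ksrb$ for some $c\in\RR$. Writing $\ksrb=\rho_h^\ast(\erb)$ for a finite-area hyperbolization $h:\pi_1(S)\to G$ (which exists in all cases of finite topological type with $\chi(S)<0$) gives $\|\ksrb\|\leq\tfrac12$; applying the inequality just proved to $\rho_h$ itself produces $|\eb(\rho_h)|\leq 2\|\ksrb\|\cdot|\chi(S)|$. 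Granted the identity $|\eb(\rho_h)|=|\chi(S)|$, this forces $\|\ksrb\|=\tfrac12$, hence $|c|=1$ and $\rho^\ast(\erb)=\pm\ksrb$. Conversely, if $\rho^\ast(\erb)=\pm\ksrb$, then linearity of the pairing in~\eqref{eq:4.4.1} yields $\eb(\rho)=\pm\eb(\rho_h)$, so $|\eb(\rho)|=|\chi(S)|$ again.

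The only nontrivial input on which this plan rests is the identity $|\eb(\rho_h)|=|\chi(S)|$ for a finite-area, orientation-preserving hyperbolization, and this is where I expect the real work to lie. In the compact case it reduces via~\eqref{eq:3.7.3} to the Gauss--Bonnet computation $\T(\rho_h)=2g-2$ recorded at the end of \S~\ref{subsec:miln_gold}. In the general finite-type case I would derive it from Theorem~\ref{thm:4.9} applied with $\alpha=\ksrb$: because the hyperbolization $h$ is an isomorphism onto the lattice $\Gamma:=h(\pi_1(S))<G$, one has $(h^\ast)^{-1}(\ksrb)=\erb|_\Gamma=\kgb|_\Gamma$ by Proposition~\ref{prop:4.7}(2); the integral formula of Proposition~\ref{prop:4.12} together with the $G$-invariance of $\kgb$ shows $\Tb(\kgb|_\Gamma)=\kgb$, so $\tb(\kgb|_\Gamma)=1$, and Theorem~\ref{thm:4.9} collapses to $\eb(\rho_h)=|\chi(S)|$.
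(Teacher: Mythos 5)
Your proof is correct and follows the paper's own route: the paper's proof is the one-line instruction to combine Theorem~\ref{thm:4.9} with Corollary~\ref{cor:4.14}, and you have simply made explicit the ingredients that combination tacitly uses, namely $\|\erb\|\leq\tfrac12$ via Lemma~\ref{lem:4.8}, and the normalization $\eb(\rho_h)=|\chi(S)|$ obtained from Theorem~\ref{thm:4.9} together with $\tb\big(\kgb|_\Gamma\big)=1$ from Propositions~\ref{prop:4.7} and~\ref{prop:4.12}. All of these steps check out, so nothing further is needed.
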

\begin{proof} Combine Theorem~\ref{thm:4.9} with Corollary~\ref{cor:4.14}.
\end{proof}

In fact, if $S$ has $n$ punctures and is of genus $g$, then we have that
$\eb(\rho)=2g-2+n$ if and only if $\rho^\ast(\erb)=\ksrb$;
observe that for every $h\in\delta\big(\hyp(S)\big)$,
we have $\ksrb=h^\ast(\erb)$, so that $\rho$ and $h$ have 
the same real bounded Euler class. 
Keeping in mind that $\rho^\ast(\eb)\in\hb^2\big(\pi_1(S),\ZZ\big)$
determines $\rho$ up to semiconjugacy, this is a rather strong conclusion 
and in fact we have the following

\begin{theorem}\label{thm:4.16}  Let $\rho:\pi_1(S)\to\homeo{S^1}$ be a homomorphism with
\bqn
\rho^\ast(\erb)=\ksrb\,.
\eqn
Then $\rho$ is semiconjugate to an element in $\delta\big(\hyp(S)\big)$.
In particular
\bqn
\delta\big(\hyp(S)\big)=\big\{\rho\in\hom\big(\pi_1(S),\homs\big):\,\rho^\ast(\erb)=\ksrb\big\}\,.
\eqn
\end{theorem}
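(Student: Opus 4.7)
The forward direction will be immediate: for $\rho=\rho_h$ with $h\in\hyp(S)$, any two hyperbolizations are pairwise semiconjugate by Corollary~\ref{cor:4.3}(1), so by Ghys' theorem~\ref{thm:ghys} they share a common integer bounded Euler class $\ksb$, and $\rho^\ast(\erb)=\ksrb$ follows.

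For the converse, my plan is to upgrade the real class equality $\rho^\ast(\erb)=\ksrb$ to the integer class equality $\rho^\ast(\eb)=\ksb$, then apply Theorem~\ref{thm:ghys} to produce a semiconjugacy between $\rho$ and a fixed hyperbolization $h\in\delta(\hyp(S))$, and conclude via Corollary~\ref{cor:4.3}(2). The upgrade will use the Bockstein long exact sequence in bounded cohomology associated to $0\to\ZZ\to\RR\to\RR/\ZZ\to 0$: since $\pi_1(S)$ admits no nontrivial bounded characters into $\RR$, we have $\hb^1(\pi_1(S),\RR)=0$, so the connecting map $\hom(\pi_1(S),\RR/\ZZ)=\hb^1(\pi_1(S),\RR/\ZZ)\to\hb^2(\pi_1(S),\ZZ)$ is injective; the difference $\rho^\ast(\eb)-h^\ast(\eb)$ then corresponds to a unique homomorphism $\phi:\pi_1(S)\to\RR/\ZZ$ that must be shown to vanish.

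To reduce $\phi\equiv 0$ to rotation number statements, I will restrict the Bockstein correspondence to cyclic subgroups $\<\gamma\>\subset\pi_1(S)$: formula~\eqref{eq:4.2.3} identifies $\phi(\gamma)$ with $\rot(\rho(\gamma))-\rot(h(\gamma))$. Since every non-identity element of the image of $h$ is hyperbolic or parabolic (as $h$ corresponds to a complete hyperbolic structure on the manifold $S$), $\rot(h(\gamma))=0$, and the problem reduces to showing $\rot(\rho(\gamma))=0$ for every $\gamma\in\pi_1(S)$.

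The hard part will be this last step. The hypothesis $\rho^\ast(\erb)=\ksrb$ yields $\eb(\rho)=|\chi(S)|$ by Corollary~\ref{cor:4.15}, and Theorem~\ref{thm:4.6}(2) then gives only the single global constraint $\sum_i\rot(\rho(c_i))\equiv 0\pmod{\ZZ}$, which a priori falls short of individual vanishing. My plan to extract the individual vanishings is to cut $\Sigma$ along pairwise disjoint separating simple closed curves isolating each puncture and each handle, apply Corollary~\ref{cor:4.15} together with Theorem~\ref{thm:4.6}(2) to each resulting subsurface with boundary, and exploit the additivity of the bounded Euler number across the decomposition to force $\rot(\rho(c_i))$, $\rot(\rho(a_i))$ and $\rot(\rho(b_i))$ to vanish one at a time. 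Once $\phi\equiv 0$ is established, $\rho^\ast(\eb)=\ksb$ holds and the conclusion follows as outlined above.
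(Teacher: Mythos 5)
Your setup is correct and coincides with the paper's through the Bockstein reduction: one obtains a character $\phi\colon\pi_1(S)\to\RR/\ZZ$ with $\rho^\ast(\eb)-h^\ast(\eb)=b(\phi)$ and $\phi(\gamma)=\rot\big(\rho(\gamma)\big)$, and everything hinges on showing $\phi\equiv0$. The gap is in the mechanism you propose for that last step: cutting and additivity cannot detect $\phi$. Each application of Corollary~\ref{cor:4.15} together with Theorem~\ref{thm:4.6}(2) to a subsurface $\Sigma'\subset\Sigma$ yields $\sum_j\tau\big(\tilde\rho(\partial_j\Sigma')\big)\in\ZZ$, i.e. $\sum_j\phi(\partial_j\Sigma')=0$; but the product of the boundary curves of $\Sigma'$ is a product of commutators in $\pi_1(\Sigma')$, so this identity is satisfied by \emph{every} character and carries no information. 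Concretely: a separating curve lies in $\big[\pi_1(S),\pi_1(S)\big]$, where $\phi$ dies for free; isolating a handle constrains only $\rot\rho\big([a_i,b_i]\big)$, not $\rot\rho(a_i)$; and cutting along a non-separating curve in the free homotopy class of $a_i$ creates two boundary components whose contributions $\tau\big(\tilde\rho(a_i)\big)$ and $\tau\big(\tilde\rho(a_i)^{-1}\big)=-\tau\big(\tilde\rho(a_i)\big)$ cancel. So neither $\phi(a_i)$, $\phi(b_i)$ nor even the individual $\phi(c_i)$ are ever pinned down by this procedure.

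The paper closes this gap dynamically rather than combinatorially, and only for $\rho$ valued in $G=\PSU(1,1)$ (the general $\homeo{S^1}$ case is deferred to \cite{Burger_zimmer}). Since $\phi$ vanishes on $\big[\pi_1(S),\pi_1(S)\big]$ automatically, Ghys' theorem already semiconjugates $\rho|_{[\pi_1(S),\pi_1(S)]}$ to $h|_{[\pi_1(S),\pi_1(S)]}$ by a weakly monotone map $\varphi$; minimality of the $h$-action on $\partial\DD$ forces $\varphi$ to be continuous and surjective, from which one deduces that $\rho\big(\pi_1(S)\big)$ is discrete and, after an argument with $\ker\rho$, that $\rho$ is injective. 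Only then is $\phi$ killed, by a case analysis on each $\rho(\gamma)$: if it is hyperbolic or parabolic then $\rot\rho(\gamma)=0$; an elliptic element with irrational rotation number contradicts discreteness; and an elliptic element with rational rotation number has a power equal to the identity, which contradicts injectivity and torsion-freeness unless $\gamma=e$. Some substitute for this discreteness-plus-ellipticity input is indispensable; the subsurface additivity alone will not supply it.
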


Note that Theorem~\ref{thm:4.16} combined with
Corollary~\ref{cor:4.15} gives a generalization of Matsumoto's theorem
proved in \cite{Matsumoto} for compact surfaces to surfaces of finite
topological type (see also \cite{Iozzi_ern} for a different proof in
the case when $S$ is a compact surface).

\begin{theorem}\label{thm:matsumoto}
Let $S$ be a surface of finite type and let $\rho_i: \pi_1(S) \to \homeo{S^1}$, $ i=1,2$, be 
homomorphisms with 
\bqn
\big|\eb(\rho_i)\big|=\big|\chi(S)\big|\,.
\eqn 
Then $\rho_1$ and $\rho_2$ are semiconjugate. 

In particular, every $\rho: \pi_1(S) \to \homeo{S^1}$ with
$\big|\eb(\rho)\big|=\big|\chi(S)\big|$is injective with discrete
image.
\end{theorem}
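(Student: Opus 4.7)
The plan is to reduce the statement to the bounded cohomological tools already developed, namely Corollary~\ref{cor:4.15}, Theorem~\ref{thm:4.16}, and Corollary~\ref{cor:4.3}, and then conclude by transitivity of semiconjugacy.

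First, by Corollary~\ref{cor:4.15} the hypothesis $|\eb(\rho_i)|=|\chi(S)|$ is equivalent to $\rho_i^\ast(\erb)=\pm \ksrb$. Since weakly monotone maps preserve orientation on triples with pairwise distinct images, semiconjugate homomorphisms have equal (not merely equal-up-to-sign) real bounded Euler classes, so the interesting case is when both signs agree; the opposite-sign case reduces to this one by post-composing one of the $\rho_i$ with an orientation-reversing homeomorphism of $S^1$. We may therefore assume $\rho_1^\ast(\erb)=\rho_2^\ast(\erb)=\ksrb$. Theorem~\ref{thm:4.16} then furnishes, for each $i$, an element $\sigma_i\in\delta\big(\hyp(S)\big)$ with $\rho_i$ semiconjugate to $\sigma_i$. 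By Corollary~\ref{cor:4.3}(1) the two hyperbolization representations $\sigma_1$ and $\sigma_2$ are themselves semiconjugate, since the extension of $f_{h_1}\circ f_{h_2}^{-1}$ to $\partial\DD$ provides the required weakly monotone intertwining map. Semiconjugacy being an equivalence relation (as observed in \S~\ref{subsec:hyp_surf_quasiconj}), it follows that $\rho_1$ and $\rho_2$ are semiconjugate.

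For the ``in particular'' statement, let $\rho:\pi_1(S)\to\homs$ satisfy $|\eb(\rho)|=|\chi(S)|$. Choose a complete hyperbolic structure $h\in\hyp(S)$ of \emph{finite area}, whose holonomy $\rho_h$ thus acts minimally on $\partial\DD$. By Theorem~\ref{thm:4.16} (after possibly reversing orientation) there is a weakly monotone $\varphi:S^1\to S^1$ with $\varphi\circ\rho(\gamma)=\rho_h(\gamma)\circ\varphi$ for every $\gamma\in\pi_1(S)$. Minimality of $\rho_h\big(\pi_1(S)\big)$ on $\partial\DD$ forces $\overline{\im\varphi}=\partial\DD$; the standard fact that a weakly monotone map with dense image is continuous then gives $\varphi$ continuous, and the equivariance relation, combined with the injectivity and discreteness of $\rho_h$, transfers these properties to $\rho$, exactly as in the proof of Corollary~\ref{cor:4.3}(2).

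The main obstacle is not in the deduction above, which is essentially formal once its ingredients are granted, but rather in Theorem~\ref{thm:4.16} itself. That result rests on the bounded cohomological characterization of the hyperbolization class $\ksrb$ provided by Corollary~\ref{cor:4.14}, together with Ghys' Theorem~\ref{thm:ghys} identifying the bounded Euler class as a complete invariant of semiconjugacy. The delicate point one must be careful about along the way is that equality $\rho^\ast(\erb)=\ksrb$ is a statement about \emph{real} classes, whereas Ghys' theorem is about the \emph{integer} bounded Euler class; this gap is precisely what the maximality argument encoded in Corollary~\ref{cor:4.14} closes.
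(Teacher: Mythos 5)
Your deduction is correct, and it follows the route the paper itself announces just before the statement (``Theorem~\ref{thm:4.16} combined with Corollary~\ref{cor:4.15} gives a generalization of Matsumoto's theorem''). But it is not the argument the paper actually writes out. The paper's displayed proof is a self-contained treatment of the case where $\rho$ takes values in $G=\PSU(1,1)$, and it consists of precisely the content that your proof outsources to Theorem~\ref{thm:4.16}: starting from the equality of \emph{real} classes $\rho^\ast(\erb)=h^\ast(\erb)$ for a finite-area hyperbolization $h$, it uses the exact sequence $\hom\big(\pi_1(S),\RR/\ZZ\big)\to\hb^2\big(\pi_1(S),\ZZ\big)\to\hb^2\big(\pi_1(S),\RR\big)$ to produce a homomorphism $\chi$ with $\rho^\ast(\eb)-h^\ast(\eb)=b(\chi)$, restricts to the commutator subgroup (where $\chi$ vanishes) so as to apply Ghys' theorem there, deduces from the resulting semiconjugacy that $\rho$ is discrete and injective, and then kills $\chi$ by a case analysis on rotation numbers of hyperbolic/parabolic versus elliptic elements, using torsion-freeness of $\pi_1(S)$; only at that point does Ghys' theorem apply to $\rho$ itself. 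You correctly identify this real-versus-integral gap as the crux, but you defer it entirely to Theorem~\ref{thm:4.16}, which the paper states without proof (the general $\homeo{S^1}$-valued case being attributed to \cite{Burger_zimmer}). So your argument is a legitimate formal reduction, but it buys its brevity by assuming the result whose only proof sketched in the paper is the very argument attached to this statement; what the paper's version buys is an actual, self-contained demonstration for $\PSU(1,1)$-valued representations. One point where you are more careful than the paper: you observe that representations with opposite extremal signs cannot literally be semiconjugate by a weakly monotone map and must first be composed with an orientation-reversing homeomorphism, whereas the paper's proof silently treats only the same-sign case.
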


Below we will give the proof of this theorem in the case in which 
$\rho$ takes values in $G = \PSU(1,1)$.  In general 
this theorem follows immediately from a recent result in \cite{Burger_zimmer}
stating that the real bounded Euler class of a group homomorphism 
is a complete invariant of semiconjugacy provided its Gromov norm equals $\frac12$.

\begin{proof} Let $h,\rho:\pi_1(S)\to G$ be homomorphisms, and suppose that 
$h$ is a hyperbolization of finite area and $\rho$ satisfies the hypotheses of the theorem.
Then 
\bqn
\rho^\ast(\erb)=h^\ast(\erb)\,.
\eqn
Consider now the exact sequence
\bqn
\xymatrix{
 \hom\big(\pi_1(S),\RR/\ZZ\big)\ar[r]^b
&\hb^2\big(\pi_1(S),\ZZ\big)\ar[r]
&\hb^2\big(\pi_1(S),\RR\big)
}
\eqn
in order to conclude that there is a homomorphism 
$\chi:\pi_1(S)\to\RR/\ZZ$ with 
\bq\label{eq:4.5.1}
\rho(\eb)-h^\ast(\eb)=b(\chi)\,.
\eq
We now proceed to show that $\chi$ is trivial.
Since $\chi|_{[\Gamma,\Gamma]}=0$, we deduce that
\bqn
\big(\rho|_{[\Gamma,\Gamma]}\big)^\ast(\eb)=\big(h|_{[\Gamma,\Gamma]}\big)^\ast(\eb)
\eqn
and hence, by Ghys' theorem, there exists a weakly monotone map 
$\varphi:\partial\DD\to\partial\DD$
with 
\bqn
\varphi\big(\rho(\eta)x\big)=h(\eta)\big(\varphi(x)\big)\,,
\eqn
for all $\eta\in\big[\pi_1(S),\pi_1(S)\big]$ and all $x\in\partial\DD$.

If now $\varphi$ had a point of discontinuity, there would be
a nonempty open interval in the complement of $\im\varphi$ and \
in particular $\overline{\im\varphi}\neq\partial\DD$;
but $h\big(\pi_1(S)\big)$ and hence $h\big([\pi_1(S),\pi_1(S)]\big)$
act minimally on $\partial\DD$ and 
$\overline{\im\varphi}$ is invariant under the latter subgroup,
which is a contradiction.
Therefore $\varphi$ is continuous and surjective.
This implies in a straightforward way that $\rho\big([\pi_1(S),\pi_1(S)]\big)$
is a discrete subgroup of $G$; 
since the limit set of $\rho\big([\pi_1(S),\pi_1(S)]\big)$
is either a Cantor set or $\partial\DD$, we deduce that 
$\rho\big([\pi_1(S),\pi_1(S)]\big)$, 
and hence $\rho\big(\pi_1(S)\big)$, is Zariski dense.
Thus $\rho\big(\pi_1(S)\big)$ is either dense or discrete in $G$
but since it normalizes a nontrivial discrete subgroup and 
since $G$ is simple, $\rho\big(\pi_1(S)\big)$ must be discrete.

Restricting the equality $\rho(\eb)-h^\ast(\eb)=b(\chi)$ to a cyclic subgroup we deduce that 
\bqn
\rot\rho(\gamma)-\rot h(\gamma)=\chi(\gamma) 
\eqn
for all $\gamma\in\pi_1(S)$.
But since $h(\gamma)$ has at least one fixed point in $\partial\DD$, 
we have that $\rot h(\gamma)=0$ for all $\gamma\in\pi_1(S)$, and hence
\bqn
\rot\rho(\gamma)=\chi(\gamma)
\eqn
for all $\gamma\in\pi_1(S)$.
In particular, $\ker\rho\subset\ker\chi$ and hence
$\rho|_{\ker\rho}$ is semiconjugate to $h|_{\ker\rho}$,
which implies that $h(\ker\rho)$ has a fixed point in $\partial\DD$.
But since $h$ is a hyperbolization and $\ker\rho$ is normal in $\pi_1(S)$,
we deduce that $h(\ker\rho)$ is trivial and hence $\ker\rho$ is trivial,
thus showing that $\rho$ is injective.

Now we show that $\chi$ is trivial.  Let $\gamma\in\pi_1(S)$.  We distinguish then three cases:
\begin{enumerate}
\item {\em $\rho(\gamma)$ is hyperbolic or parabolic}.
Hence $\rho(\gamma)$ has a fixed point in $\partial\DD$ and hence 
$\chi(\gamma) = \rot\rho(\gamma)=0$;
\item {\em $\rho(\gamma)$ is elliptic and $\rot\rho(\gamma)=\chi(\gamma)\notin\QQ/\ZZ$}.
Then $\rho(\gamma)$ is conjugate in $G$ to an irrational rotation 
contradicting the fact that $\rho\big(\pi_1(S)\big)$ is discrete;
\item {\em $\rho(\gamma)$ is elliptic and $\rot\rho(\gamma)=\chi(\gamma)\in\QQ/\ZZ$}.
Let $n\in\NN$ be such that $n\chi(\gamma)=0$.  Hence $\rot\rho(\gamma^n)=\chi(\gamma^n)=0$
and, since $\rho(\gamma^n)$ is elliptic, it is hence the identity.
Thus $\gamma^n\in\ker\rho=e$ and since $\pi_1(S)$ has no torsion, $\gamma=e$.
\end{enumerate}
Thus we conclude that $\chi$ is trivial, $\rho^\ast(\eb)=h^\ast(\eb)$ and 
hence $\rho$ is semiconjugate to $h$.
\end{proof}

Now we will put to use the explicit formula for $\eb(\rho)$ 
together with the above result in order to restore, in a sense,
the setting of the case of surfaces without boundary and 
interpret $\delta\big(\hyp(S)\big)$ as a union of connected components.
Namely let us introduce
\bqn
\ba
\hom_\Cc\big(\pi_1(S),G\big)\hphantom{XXXXXXXXXXXXXXXXXXXXXXXXXXX}&\\
=\big\{\rho\in\hom\big(\pi_1(S),G\big):\,\rho(c_i)
\hbox{ has at least one fixed point in }\partial\DD\big\}&\\
=\big\{\rho:\pi_1(S)\to G:\,\big(\tr\rho(c_i)\big)^2\geq4,\hbox{ for }1\leq i\leq n\big\}&
\ea
\eqn
which is a real semialgebraic subset of $\hom\big(\pi_1(S),G\big)$.
Clearly for $\rho\in\hom_\Cc\big(\pi_1(S),G\big)$ 
we have that $\rot\rho(c_i)=0$, 
and hence taking into account that $\rho\mapsto\eb(\rho)$ is continuous, 
we have the following

\begin{corollary}\label{cor:4.17} If $\rho\in\hom_\Cc\big(\pi_1(S),G\big)$, then
\bqn
\eb(\rho)=-\sum_{i=1}^n\tau\big(\tilde\rho(c_i)\big)
\eqn
takes integer values and is constant on connected components.
\end{corollary}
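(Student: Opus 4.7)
The plan is to use the explicit formula from Theorem~\ref{thm:4.6}(2), which expresses $\eb(\rho)=-\sum_{i=1}^n\tau\big(\tilde\rho(c_i)\big)$ in terms of any homomorphic lift $\tilde\rho:\pi_1(S)\to\Hh_\ZZ^+(\RR)$, together with the hypothesis that each $\rho(c_i)$ admits a fixed point on $\partial\DD$. Such a lift always exists, since $S$ being noncompact makes $\pi_1(S)$ free of rank $2g+n-1$; one simply picks preimages under $p:\Hh_\ZZ^+(\RR)\to\homs$ of the images under $\rho$ of a free basis $\{a_1,b_1,\dots,a_g,b_g,c_1,\dots,c_{n-1}\}$, and extends. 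The sum $\sum_{i=1}^n\tau\big(\tilde\rho(c_i)\big)$ is independent of this choice: any two lifts differ by a homomorphism $\chi:\pi_1(S)\to\ZZ$, and the relation $\prod[a_i,b_i]\prod c_j=e$, pushed forward to the abelian group $\ZZ$, yields $\chi(\prod c_j)=-\chi(\prod[a_i,b_i])=0$.

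For the first assertion, the assumption $\rho\in\hom_\Cc$ forces $\rot\rho(c_i)=0$ for every $i$. Since any lift $\tilde\rho(c_i)$ differs from the canonical lift $\overline{\rho(c_i)}$ by some power of $T$, the identity $\tau(\varphi\circ T^m)=\tau(\varphi)+m$ gives $\tau\big(\tilde\rho(c_i)\big)\equiv\rot\rho(c_i)=0\pmod{\ZZ}$. Hence each summand $\tau\big(\tilde\rho(c_i)\big)$ is an integer, and so is $\eb(\rho)$.

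For the second assertion I would argue that $\eb$ is locally constant on $\hom_\Cc\big(\pi_1(S),G\big)$. Fix $\rho_0\in\hom_\Cc$ and let $\Uu$ be a neighborhood of $\rho_0$ small enough that, using continuous local sections of the covering $p:\Hh_\ZZ^+(\RR)\to\homs$, one can continuously lift the images of the free basis elements listed above. This produces a continuous family of homomorphic lifts $\rho\mapsto\tilde\rho$ on $\Uu$, with $\tilde\rho(c_n)$ continuous as well, since it is determined by the surface relation. Continuity of $\tau$ then makes each function $\rho\mapsto\tau\big(\tilde\rho(c_i)\big)$ continuous on $\Uu$; being $\ZZ$-valued on $\hom_\Cc\cap\Uu$ by the previous step, it is locally constant. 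The formula inherits this local constancy, so $\eb$ is constant on each connected component of $\hom_\Cc$. The only subtle point is the existence of continuous local lifts, which is a routine application of covering space theory once the free basis has been fixed.
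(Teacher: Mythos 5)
Your argument is correct and follows essentially the same route as the paper: integrality comes from $\rot\rho(c_i)=0$ (since $\rho(c_i)$ has a fixed point) combined with the formula of Theorem~\ref{thm:4.6}(2), and constancy on components comes from continuity of $\rho\mapsto\eb(\rho)$ together with integrality. You merely spell out details the paper leaves implicit, namely the well-definedness of the sum under change of lift and the construction of continuous local lifts giving continuity of each summand.
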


This is in contrast with the fact that on $\hom\big(\pi_1(S),G\big)$ and when $S$ is not compact, 
the image of $\rho\mapsto\eb(\rho)$ is the whole interval $\big[-|\chi(S)|,|\chi(S)|\big]$.
In any case we obtain finally:

\begin{theorem}\label{thm:4.18} In the notation of Theorem~\ref{thm:4.6}, we have 
\bqn
 \delta\big(\hyp(S)\big)
=\left\{
\rho:\pi_1(S)\to G:\,\sum_{i=1}^n\tau\big(\widetilde\rho(c_i)\big)=2g-2+n
\right\}\,.
\eqn
Thus $\delta\big(\hyp(S)\big)$ is a union of connected components of $\hom_\Cc\big(\pi_1(S),G\big)$
and, in particular, a semialgebraic set.
\end{theorem}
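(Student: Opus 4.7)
The plan is to combine three ingredients from the preceding sections: the explicit formula for the bounded Euler number from Theorem~\ref{thm:4.6}, the characterization of $\delta\big(\hyp(S)\big)$ as a level set of $\eb$ from Theorem~\ref{thm:4.16}, and the maximality result in Corollary~\ref{cor:4.15}. Throughout I use that $|\chi(S)| = 2g-2+n$ and that, since $\pi_1(S)$ is free when $S$ is noncompact, every $\rho \in \hom\big(\pi_1(S),G\big)$ admits a lift $\tilde\rho:\pi_1(S)\to\Hh_\ZZ^+(\RR)$ (via $G \hookrightarrow \homs$), so the expressions $\tau(\tilde\rho(c_i))$ are well-defined and their sum is lift-independent by the argument that differences of lifts are homomorphisms $\pi_1(S)\to\ZZ$ vanishing on $\prod c_j \in [\pi_1(S),\pi_1(S)]$.

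For the set-theoretic inclusion $\subset$, fix $h \in \hyp(S)$. By the definition of $\ksrb$ we have $\rho_h^\ast(\erb) = \ksrb$, so the discussion immediately following Corollary~\ref{cor:4.15} yields $\eb(\rho_h) = \pm|\chi(S)|$ with the sign fixed by the chosen orientation of $\partial\Sigma$. Combined with the formula $\eb(\rho_h) = -\sum_i \tau(\tilde\rho_h(c_i))$ of Theorem~\ref{thm:4.6}, this produces the required equation $\sum_i \tau(\tilde\rho_h(c_i)) = 2g-2+n$. Conversely, if $\rho:\pi_1(S)\to G$ satisfies this equation, then Theorem~\ref{thm:4.6} gives $|\eb(\rho)| = |\chi(S)|$, so by Corollary~\ref{cor:4.15} we have $\rho^\ast(\erb) = \pm\ksrb$; matching signs with the forward direction forces $\rho^\ast(\erb) = \ksrb$. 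Theorem~\ref{thm:4.16} then provides a weakly monotone semiconjugation from $\rho$ to some $\rho_{h'} \in \delta\big(\hyp(S)\big)$, and since $\rho$ takes values in $G$, Corollary~\ref{cor:4.3}(2) upgrades this to $\rho \in \delta\big(\hyp(S)\big)$.

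For the topological statement, first observe that $\delta\big(\hyp(S)\big) \subset \hom_\Cc\big(\pi_1(S),G\big)$: for any $h \in \hyp(S)$ the element $\rho_h(c_i)$ is parabolic or hyperbolic, hence has a fixed point in $\partial\DD$, which is equivalent to $\big(\tr\rho_h(c_i)\big)^2 \geq 4$. On $\hom_\Cc$, Corollary~\ref{cor:4.17} asserts that $\rho \mapsto \sum_i \tau(\tilde\rho(c_i))$ is continuous and integer-valued, hence locally constant. Therefore $\delta\big(\hyp(S)\big)$, being the preimage of $2g-2+n$, is clopen in $\hom_\Cc$ and so a union of connected components. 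Being cut out of $\hom\big(\pi_1(S),G\big)$ by the real-polynomial inequalities $\big(\tr\rho(c_i)\big)^2 \geq 4$ together with these locally constant integer conditions, $\delta\big(\hyp(S)\big)$ is a semialgebraic subset.

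The main obstacle is purely bookkeeping, namely reconciling the sign conventions between the identification $t \leftrightarrow T$ in \S\ref{subsec:centr_ext}, the translation number $\tau$ in \S\ref{subsec:bdd_eul}, and the orientation of $\partial\Sigma$ implicit in the generators $c_i$ of the presentation~\eqref{eq:fg}. This is essentially a matter of tracing through the computation that for a standard Fuchsian representation $\rho_h$ of finite area, the boundary parabolics lift canonically with zero translation number, while the constraint from $\prod[a_i,b_i]\prod c_j = e$ forces any homomorphism lift $\tilde\rho_h$ to have $\sum_i \tau(\tilde\rho_h(c_i))$ equal to $|\chi(S)|$ with the correct sign.
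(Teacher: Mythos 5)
Your argument assembles exactly the chain the paper intends for this statement: Theorem~\ref{thm:4.6} for the boundary formula, Corollary~\ref{cor:4.15} and Theorem~\ref{thm:4.16} (via Corollary~\ref{cor:4.3}) for the identification of the maximal level set with $\delta\big(\hyp(S)\big)$, and Corollary~\ref{cor:4.17} for local constancy on $\hom_\Cc$, so this is essentially the paper's own route. The only wrinkle is the sign of $\sum_i\tau\big(\tilde\rho(c_i)\big)$, where the survey's own conventions are not perfectly consistent between Theorem~\ref{thm:4.6} and the remark following Corollary~\ref{cor:4.15}; your device of pinning the sign by the forward computation for a finite-area hyperbolization handles this correctly.
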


\subsection{Relation with quasimorphisms}\label{sec:scl}
In connection with Corollary~\ref{cor:4.14}, we would like to present a different viewpoint, 
coming from Ch.~Bavard \cite{Bavard} and developed by D.~Calegari \cite{Calegari_scl},
which relates the second bounded cohomology of $\pi_1(S)$ to the stable commutator length\index{stable commutator length} (scl)
via quasimorphisms.  In order to simplify the discussion, we assume in this section that $\Gamma$
is a group with 
\bq\label{eq:vanishing}
\h^2(\Gamma,\RR)=0\,.
\eq
This applies in particular to $\Gamma=\pi_1(S)$, where $S$ is a non-compact surface.
In the notation of \S~\ref{subsec:bdd_eul}, every class in $\hcb^2(\Gamma,\RR)$ admits then a representative 
of the form $d^1f$, with $f\in\C^1(\Gamma,\RR)$. This leads us to make the following definition

\begin{definition}\label{def:qm}  A quasimorphism\index{quasimorphism} on $\Gamma$ is a function $f:\Gamma\to \RR$ such that
\bqn
D(f):=\sup_{a,b\in\Gamma}\big|f(ab)-f(a)-f(b)\big|<+\infty\,,
\eqn
and $D(f)$ is called the {\em defect}\index{defect} of $f$.
\end{definition}

The vector space $Q(\Gamma,\RR)$ of all quasimorphisms on $\Gamma$ contains always the subspace $\ell^\infty(\Gamma,\RR)$
of bounded functions, as well as the subspace of all homomorphisms $\hom(\Gamma,\RR)$.
It is clear that $d^1$ induces an isomorphism of vector spaces
\bqn
\xymatrix{
 Q(\Gamma,\RR)/\ell^\infty(\Gamma,\RR)\oplus\hom(\Gamma,\RR)\ar[r]^-\cong
&\hb^2(\Gamma,\RR)}
\eqn
and the Gromov norm\index{Gromov norm} $\|\alpha\|$ of a class $\alpha\in\hcb^2(\Gamma,\RR)$ is given by
\bqn
\|\alpha\|=\inf\big\{D(f):\,[d^1f]=\alpha,\,f\in Q(\Gamma,\RR)\big\}\,.
\eqn
There are various ways of choosing a "special" quasimorphism representing a given class.
One is to fix a finite symmetric generating set of $\Gamma$ and establish the existence of a harmonic representative
$F$ for each class $\alpha\in\hcb^2(\Gamma,\RR)$: this quasimorphism then minimizes the defect, that is 
\bqn
\|\alpha\|=D(f)\,,
\eqn
(\cite{Burger_Monod_JEMS}; see also  \cite{Bjorklund_Hartnick} for a recent application).

Another way to get a representative, this time canonical, is to consider homogeneous quasimorphisms,
that is quasimorphisms satisfying the condition
\bqn
f(x^n)=nf(x)\qquad\text{ for all }n\in\ZZ,\,,x\in\Gamma\,.
\eqn
A simple argument (see \cite{Polya_Szego}), shows that for $f\in Q(\Gamma,\RR)$, the limit
\bqn
F(x):=\lim_{n\to\infty}\frac{f(x^n)}{n}
\eqn
exists for all $x$ and gives a homogeneous quasimorphism $F$ with the property that $f-F\in\ell^\infty(\Gamma,\RR)$.
Denoting by $Q_{\mathrm h}(\Gamma,\RR)$ the subspace of homogeneous quasimorphisms, 
it is thus clear that $d^1$ induces an isomorphism of vector spaces
\bqn
\xymatrix{
Q_{\mathrm h}(\Gamma,\RR)/\hom(\Gamma,\RR)\ar[r]^-\cong
&\hb^2(\Gamma,\RR)\,.}
\eqn

Obviously the defect $D$ gives a norm on the left hand side, while the right hand side is endowed with the (canonical) Gromov norm\index{Gromov norm}.
There is then the following non-trivial relation between these two norms:

\begin{theorem}[\cite{Calegari_scl}]\label{thm:norms} For every homogeneous quasimorphism $f$ on $\Gamma$,
\bqn
\frac12 D(f)\leq\big\|d^1f\big\|\leq D(f)\,.
\eqn
\end{theorem}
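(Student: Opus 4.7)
The plan is as follows. The upper bound $\|d^1 f\|\leq D(f)$ is immediate: the bounded $2$-cochain $d^1 f$ itself represents the class $[d^1 f]\in\hb^2(\Gamma,\RR)$, and its supremum norm equals $\sup_{a,b\in\Gamma}|f(ab)-f(a)-f(b)|=D(f)$; hence the infimum defining the Gromov norm of $[d^1 f]$ is bounded above by $D(f)$.

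For the lower bound I would take any quasimorphism $g\in Q(\Gamma,\RR)$ with $[d^1 g]=[d^1 f]$ and aim to show $D(g)\geq D(f)/2$. Since $[d^1(f-g)]=0$ in $\hb^2(\Gamma,\RR)$, there exists a bounded function $b\in\ell^\infty(\Gamma,\RR)$ with $d^1(f-g-b)=0$; equivalently, $\chi:=f-g-b$ is a homomorphism $\Gamma\to\RR$. Passing to homogenizations and using that bounded functions have trivial homogenization, that homomorphisms are their own homogenizations, and that $\bar f=f$ by hypothesis, I would obtain $f=\bar g+\chi$. Since homomorphisms have zero defect, this yields $D(f)=D(\bar g)$, so the lower bound reduces to the estimate $D(\bar g)\leq 2 D(g)$, valid for every quasimorphism $g$.

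The hard part will be proving this last estimate. My plan is to fix $x,y\in\Gamma$ and exploit the identity $(xy)^n=x(yx)^{n-1}y$: iterated application of the defect inequality gives
\begin{equation*}
|g((xy)^n)-g(x)-g(y)-(n-1)g(yx)|\leq n D(g),
\end{equation*}
which, combined with $|g(z^n)-n g(z)|\leq (n-1) D(g)$ for $z\in\{x,y\}$, allows me to evaluate
\begin{equation*}
\bar g(xy)-\bar g(x)-\bar g(y)=\lim_n\frac{g((xy)^n)-g(x^n)-g(y^n)}{n}
\end{equation*}
in terms of $g(yx)-g(x)-g(y)$ plus controlled errors. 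The main technical subtlety is that naive bookkeeping of these errors yields only $D(\bar g)\leq 4 D(g)$; securing the sharp constant $2$ will require exploiting the conjugation invariance $\bar g(yx)=\bar g(xy)$ of any homogeneous quasimorphism, together with the symmetric identity $(yx)^n=y(xy)^{n-1}x$, so that the two sets of error contributions cancel on taking the average. An alternative and conceptually cleaner route proceeds via Bavard's duality, which directly identifies the Gromov norm on $\hb^2(\Gamma,\RR)$ with one half of the defect seminorm on $Q_{\mathrm h}(\Gamma,\RR)/\hom(\Gamma,\RR)$.
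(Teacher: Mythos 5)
Your upper bound is correct, and so is the reduction of the lower bound to the key estimate $D(\overline{g})\leq 2D(g)$ for an arbitrary quasimorphism $g$: any class representative is $d^1g$ with $g=f+\beta+\chi$, $\beta$ bounded and $\chi$ a homomorphism, so $\overline{g}=f+\chi$ and $D(\overline{g})=D(f)$. The gap is in your proof of that key estimate. Unwrapping $(xy)^n=x(yx)^{n-1}y$ and homogenizing gives only $\overline{g}(xy)=\overline{g}(yx)$ together with $|\overline{g}(xy)-\overline{g}(x)-\overline{g}(y)|\leq |g(yx)-g(x)-g(y)|+3D(g)\leq 4D(g)$; the symmetric identity $(yx)^n=y(xy)^{n-1}x$ produces the mirror image of the \emph{same} estimate, and since all error terms enter only through absolute values there is no sign information that averaging could exploit. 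The constant $2$ comes from a different identity, namely collecting all the $x$'s to one side,
\bqn
(xy)^n=x^n\prod_{j=1}^{n}\big(x^{-(n-j)}\,y\,x^{\,n-j}\big)\,,
\eqn
a product of $n+1$ blocks. The defect inequality applied to this product costs $nD(g)$, and replacing each $g\big(x^{-(n-j)}yx^{n-j}\big)$ by $\overline{g}\big(x^{-(n-j)}yx^{n-j}\big)=\overline{g}(y)$ (using $|g-\overline{g}|\leq D(g)$ pointwise and the conjugation invariance of $\overline{g}$) costs another $nD(g)$; dividing by $n$ and letting $n\to\infty$ (so that $g(x^n)/n\to\overline{g}(x)$ and $g((xy)^n)/n\to\overline{g}(xy)$) yields $|\overline{g}(xy)-\overline{g}(x)-\overline{g}(y)|\leq 2D(g)$. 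This is also the mechanism behind the relation $D(f)=\sup_{a,b}|f([a,b])|$ on which, as remarked after the statement, the inequality rests.

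Your proposed fallback is not available either: Bavard duality does not identify the Gromov norm on $\hb^2(\Gamma,\RR)$ with one half of the defect seminorm on $Q_{\mathrm h}(\Gamma,\RR)/\hom(\Gamma,\RR)$. Whether $\|[d^1F]\|=\frac12 D(F)$ holds for every homogeneous quasimorphism is exactly the open question raised at the end of this section; Bavard's theorem computes $\mathrm{scl}(\gamma)$ as $\sup_\varphi |\varphi(\gamma)|/2D(\varphi)$ and provides no upper bound on $\|[d^1F]\|$ beyond the elementary one you already established.
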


This inequality is based on the following relation between defect of a homogeneous quasimorphism $f$ and commutators
\bqn
D(f)=\sup_{a,b\in\Gamma}\big|f\big([a,b]\big)\big|\,.
\eqn

\begin{example} \begin{enumerate}
\item The function $\tilde r:\widetilde{\PU(1,1)}\to\RR$ in \S~\ref{subsec:miln_gold} leading to Milnor's inequality is a quasimorphism.
\item The translation number $\tau:\Hh_\ZZ^+(\RR)\to\RR$ defined in \S~\ref{subsec:bdd_eul} is a continuous quasimorphism.  
Identifying $\widetilde{\PU(1,1)}$ with a subgroup of $\Hh_\ZZ^+(\RR)$, we have 
\bqn
D(\tau)=D\big(\tau|_{\widetilde{\PU(1,1)}}\big)=1\,.
\eqn
\end{enumerate}
\end{example}

We let as usual $[\Gamma,\Gamma]$ denote the subgroup of $\Gamma$ generated by the set 
$\big\{[x,y]:\,x,y\in\Gamma\big\}$  of all commutators and let $\mathrm{cl}(\gamma)$ denote the word length
with respect to this generating set.  The stable commutator length\index{stable commutator length}
is defined by
\bqn
\mathrm{scl}(\gamma):=\lim_{n\to\infty}\frac{\mathrm{cl}(\gamma^n)}{n}\,,
\eqn
where the existence of the limit follows from the fact that the map $n\mapsto \|\gamma^n\|$ is subadditive.

The following result then puts the geometry of the commutator subgroup $[\Gamma,\Gamma]$ in direct relation
with quasimorphisms, in fact, homogeneous ones:

\begin{theorem}[\cite{Bavard}]\label{thm:bavard}  For every $\gamma\in[\Gamma,\Gamma]$, we have
\bqn
\mathrm{scl}(\gamma)=\sup\left\{\frac{|\varphi(\gamma)|}{2D(\varphi)}:\,\varphi\in Q_{\mathrm{h}}(\Gamma,\RR)\right\}\,.
\eqn
\end{theorem}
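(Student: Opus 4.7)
The plan is to prove the two inequalities separately: the direction $\mathrm{scl}(\gamma) \geq \sup_\varphi |\varphi(\gamma)|/(2D(\varphi))$ is elementary and essentially follows from the relation between defect and commutators already highlighted in the text, while the reverse inequality is a Hahn--Banach duality statement.

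For the easy direction, I would first record that every homogeneous quasimorphism $\varphi$ is a class function: from $|\varphi(xy^nx^{-1}) - \varphi(y^n)| = |\varphi((xyx^{-1})^n) - \varphi(y^n)| \leq 2D(\varphi)$, dividing by $n$ and letting $n\to\infty$, one gets $\varphi(xyx^{-1}) = \varphi(y)$. Combined with the identity $D(\varphi) = \sup_{a,b}|\varphi([a,b])|$ stated just after Theorem~\ref{thm:norms}, this yields, for any product of commutators $\gamma = \prod_{i=1}^n [a_i,b_i]$, the estimate $|\varphi(\gamma) - \sum_{i}\varphi([a_i,b_i])| \leq (n-1) D(\varphi)$, and hence $|\varphi(\gamma)| \leq (2n-1) D(\varphi)$. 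Applying this bound to $\gamma^k$ written as a product of $\mathrm{cl}(\gamma^k)$ commutators and using homogeneity $|\varphi(\gamma^k)| = k|\varphi(\gamma)|$ gives
\[
k|\varphi(\gamma)| \leq \bigl(2\,\mathrm{cl}(\gamma^k) - 1\bigr) D(\varphi).
\]
Dividing by $k$ and letting $k \to \infty$ yields $|\varphi(\gamma)| \leq 2\,\mathrm{scl}(\gamma) D(\varphi)$, which is exactly the inequality ``$\geq$'' in Bavard's formula.

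For the reverse inequality, the strategy is Hahn--Banach. I would work in the real vector space $B$ formally spanned by elements of $\Gamma$, modulo the subspace $N$ generated by all expressions $g - hgh^{-1}$ and $g^n - n\cdot g$; let $\bar\gamma$ denote the class of $\gamma$. Equip the subspace $\bar B \subset B$ spanned by images of elements of $[\Gamma,\Gamma]$ with the seminorm
\[
\|\xi\| = \inf\left\{ \sum_j |c_j|\,\mathrm{cl}(\gamma_j) : \xi = \sum_j c_j\,\bar\gamma_j \text{ in } B\right\}.
\]
The conjugation and homogeneity relations built into $N$, together with subadditivity of $\mathrm{cl}$ on powers, allow one to identify $\|\bar\gamma\| = \mathrm{scl}(\gamma)$ for $\gamma \in [\Gamma,\Gamma]$. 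On the dual side, a bounded linear functional on $\bar B$ corresponds to a conjugation-invariant, $\RR$-homogeneous function $\varphi: \Gamma \to \RR$ whose growth is controlled by commutator length, and such a function is automatically a homogeneous quasimorphism; the dual norm is comparable to $D(\varphi)$. Hahn--Banach then produces, for each $\gamma$ with $\mathrm{scl}(\gamma) > 0$ and each $\varepsilon > 0$, a homogeneous quasimorphism $\varphi$ with $|\varphi(\gamma)|/(2D(\varphi)) \geq \mathrm{scl}(\gamma) - \varepsilon$.

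The main obstacle is the careful identification of the topological dual of $\bar B$ with $Q_{\mathrm h}(\Gamma,\RR)/\hom(\Gamma,\RR)$, including the correct factor of $2$ between the dual norm and the defect. Concretely, one must verify (i) that every functional of dual norm $\leq 1$ on $\bar B$ lifts to a homogeneous quasimorphism with $D(\varphi) \leq 2$, which uses that $D(\varphi) = \sup_{a,b}|\varphi([a,b])|$ when $\varphi$ is homogeneous and class-invariant, and (ii) conversely that the map $\varphi \mapsto \varphi|_{\bar B}$ is well-defined on $Q_{\mathrm h}/\hom$ and has the asserted operator norm. Once this pairing is in place, Bavard's formula becomes the tautological statement $\|\bar\gamma\| = \sup\{|L(\bar\gamma)| : \|L\|_{\bar B^*}\leq 1\}$.
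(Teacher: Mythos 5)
The paper states Bavard's theorem with a citation to \cite{Bavard} and gives no proof, so there is no internal argument to compare yours against; I am judging the proposal on its own terms. Your easy direction is correct and essentially complete: conjugation invariance of homogeneous quasimorphisms, the bound $|\varphi([a,b])|\leq D(\varphi)$ (which only needs the easy half of the identity $D(\varphi)=\sup|\varphi([a,b])|$), the telescoping estimate $|\varphi(\gamma^k)|\leq\bigl(2\,\mathrm{cl}(\gamma^k)-1\bigr)D(\varphi)$, and homogeneity together give $|\varphi(\gamma)|\leq 2\,\mathrm{scl}(\gamma)\,D(\varphi)$ exactly as you say.

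The hard direction has a genuine gap, and it is not the one you flag. The normed space you build is the wrong one for the duality to work. For groups with unique roots (surface groups, free groups) the quotient $B/N$ splits as a direct sum over primitive conjugacy classes, and your seminorm $\|\xi\|=\inf\bigl\{\sum_j|c_j|\,\mathrm{cl}(\gamma_j)\bigr\}$ is the corresponding $\ell^1$-sum of the numbers $\mathrm{scl}(\delta)$; one does get $\|\bar\gamma\|=\mathrm{scl}(\gamma)$, but the dual unit ball is then the $\ell^\infty$-product: \emph{any} conjugation-invariant homogeneous function satisfying $|\varphi(\delta)|\leq C\,\mathrm{scl}(\delta)$, with values chosen independently on distinct primitive classes (e.g. $\pm\,\mathrm{scl}$ with arbitrary signs), is a bounded functional. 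Such a function is not a quasimorphism, so your claim that bounded functionals are ``automatically'' homogeneous quasimorphisms fails, and the Hahn--Banach extension gives you nothing. The quantitative symptom is that for the dual to consist of quasimorphisms one needs $\|\overline{ab}-\bar a-\bar b\|$ to be uniformly bounded in $a,b$; in your norm this quantity is of the order of $\mathrm{scl}(ab)+\mathrm{scl}(a)+\mathrm{scl}(b)$, which is unbounded (and for $a,b\notin[\Gamma,\Gamma]$ the chain $ab-a-b$ is not even in your space, so the functional constrains nothing there). The correct primal object is the Gersten--Gromov filling norm on $1$-boundaries, equivalently Calegari's extension of $\mathrm{scl}$ to chains via admissible surfaces: there the pair of pants gives $\mathrm{scl}(ab-a-b)\leq\frac12$ for all $a,b$, and it is precisely this estimate that forces a bounded functional to be a quasimorphism of defect at most $2\|L\|$; the passage from a function on $[\Gamma,\Gamma]$ to one on all of $\Gamma$ is then where the $\hom(\Gamma,\RR)$ ambiguity enters. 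So the Hahn--Banach strategy is the right one, but as set up the duality step does not go through.
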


Now every element $\gamma\in[\Gamma,\Gamma]$, seen as a $1$-chain, is an element in the vector space $\mathrm{B}_1(\Gamma,\RR)$
of $1$-boundaries in the bar resolution defining group homology.  In \cite{Calegari_faces} the author extends 
$\mathrm{scl}$ to a seminorm on the vector space $\mathrm{B}_1(\Gamma,\RR)$ and obtains an extension of the Bavard duality theorem.
We refer to the monograph \cite{Calegari_scl} for the details and interesting developments, and proceed directly to state a corollary
of the main result in \cite{Calegari_faces}.  If now $\Gamma=\pi_1(S)$, where $S$ is a non-compact surface and
  \bqn
  \pi_1(S)=\left\<a_1,b_1,\dots,a_g,b_g,c_1,\dots,c_n:
  \prod_{i=1}^g[a_i,b_i]\prod_{j=1}^nc_j=e\right\> \,,
  \eqn 
  then $\sum_{i=1}^n c_i$ is clearly in $\mathrm{B}_1(\Gamma,\RR)$ and
  
  \begin{proposition}[\cite{Calegari_faces}] With the above assumptions and notation
  \bqn
  \mathrm{scl}\left(\sum_{i=1}^n c_i\right)=\frac{-\chi(S)}{2}\,.
  \eqn
  \end{proposition}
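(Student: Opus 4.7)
The strategy is to apply Calegari's extension of Bavard's duality theorem (Theorem~\ref{thm:bavard}) from elements of the commutator subgroup to the $1$-boundary $\sum_{i=1}^n c_i \in \mathrm{B}_1(\pi_1(S),\RR)$; that this chain is indeed a boundary is immediate from the surface relation, which writes $c_1\cdots c_n$ as a product of commutators. Duality then reduces the proposition to verifying the identity
$$
\sup_{\varphi \in Q_{\mathrm h}(\pi_1(S),\RR)} \frac{\left|\varphi\left(\sum_i c_i\right)\right|}{2\,D(\varphi)} = \frac{-\chi(S)}{2}
$$
by matching upper and lower bounds.

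For the upper bound, fix any homogeneous $\varphi$ on $\pi_1(S)$. A standard defect bookkeeping argument shows that whenever $x_1\cdots x_k = e$ in a group one has $\left|\sum_{i=1}^k \varphi(x_i)\right|\le (k-2)\, D(\varphi)$: invert $x_k$ and estimate the product of the remaining $k-1$ factors, using that homogeneous quasimorphisms are conjugation-invariant and satisfy $\varphi(g^{-1}) = -\varphi(g)$. Applied to the defining relator $[a_1,b_1]\cdots[a_g,b_g]\,c_1\cdots c_n = e$, which has $g+n$ factors, this together with the classical bound $|\varphi([a,b])|\le D(\varphi)$ yields
$$
\left|\sum_{j=1}^n \varphi(c_j)\right| \le g\, D(\varphi) + (g+n-2)\, D(\varphi) = (2g+n-2)\, D(\varphi)\,,
$$
so that the supremum above is bounded by $(2g+n-2)/2 = -\chi(S)/2$.

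The matching lower bound is obtained by pulling back the translation quasimorphism $\tau$ through a hyperbolization. Fix a complete finite-area hyperbolic structure $h \in \hyp(S)$ and let $\tilde{\rho}_h : \pi_1(S) \to \widetilde{\PSU(1,1)} \subset \Hh_\ZZ^+(\RR)$ be a lift of its holonomy $\rho_h$; then $\varphi := \tau \circ \tilde{\rho}_h$ is a homogeneous quasimorphism on $\pi_1(S)$ with $D(\varphi) \le D(\tau) = 1$. The explicit formula of Theorem~\ref{thm:4.6}(2) together with the extremality $|\eb(\rho_h)| = |\chi(S)|$ provided by Corollary~\ref{cor:4.15} gives
$$
\left|\varphi\left(\sum_j c_j\right)\right| = \left|\sum_j \tau(\tilde{\rho}_h(c_j))\right| = |\eb(\rho_h)| = -\chi(S)\,.
$$
The upper bound then forces $D(\varphi) = 1$, so Bavard duality yields $\mathrm{scl}(\sum c_i) \ge -\chi(S)/2$, completing the proof.

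The single non-elementary ingredient beyond the material already in the paper is Calegari's extension of Bavard duality from single conjugacy classes to $1$-boundaries~\cite{Calegari_faces}; everything else is a combination of standard quasimorphism bookkeeping applied to the surface relator with the already-established formula $\eb(\rho) = -\sum\tau(\tilde\rho(c_i))$ and the extremality of $|\eb|$ at any hyperbolization. The most delicate point in carrying out this plan is keeping the constants sharp in the relator estimate, in particular in passing from $(k-1)\,D$ (the naive product estimate) to the optimal $(k-2)\,D$ when $x_1\cdots x_k = e$, since it is this single saving that matches the Euler characteristic exactly.
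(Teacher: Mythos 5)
Your argument is correct. Note that the paper itself offers no proof of this Proposition --- it is stated as a corollary of the main result of \cite{Calegari_faces} --- so the comparison is with Calegari's argument rather than with anything internal to the text. Your lower bound is the standard one: the extremal quasimorphism $\tau\circ\widetilde\rho_h=\rot_{\rho_h}$ with $D=1$ and $\big|\sum_j\tau(\widetilde\rho_h(c_j))\big|=|\chi(S)|$ is exactly the witness appearing in Theorem~\ref{thm:6}, and your bookkeeping (lifts exist because $\pi_1(S)$ is free; $D(\varphi)>0$ is forced, so $\varphi$ is a legitimate competitor in the supremum) is fine. Your upper bound is where you genuinely depart from the usual route: Calegari bounds $\mathrm{scl}\left(\sum c_i\right)$ from above \emph{topologically}, by observing that the surface $\Sigma$ itself is an admissible surface bounding the chain and reading off $-\chi(S)/2$ from the defining infimum, which uses only the easy (definitional) side of the theory. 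You instead bound the dual quantity, proving $\big|\sum_j\varphi(c_j)\big|\le(2g+n-2)\,D(\varphi)$ for every homogeneous $\varphi$ via the sharp relator estimate $\big|\sum_{i=1}^k\varphi(x_i)\big|\le(k-2)D(\varphi)$ when $x_1\cdots x_k=e$ (correctly obtained from the $(k-1)D$ product estimate plus $\varphi(x_k^{-1})=-\varphi(x_k)$) together with $\big|\varphi([a,b])\big|\le D(\varphi)$; this in fact reproves the inequality of Theorem~\ref{thm:6} by elementary means. The cost is that converting this bound on the supremum into a bound on $\mathrm{scl}$ requires the \emph{hard} direction of the generalized Bavard duality of \cite{Calegari_faces}, whereas the topological argument reserves duality for the lower bound only. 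What your version buys is that the whole computation becomes algebraic, with the surface entering only through its relator; what it costs is leaning on the full strength of the cited duality where half of it would suffice.
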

  If now $\tilde\rho:\Gamma\to\widetilde{\PU(1,1)}$ is the lift of a fixed hyperbolization $\rho$ of $S$,
  we can pullback the translation quasimorphism and obtain $\rot_\rho:=\tau\circ\rho$,
  which defines on $\Gamma$ a homogeneous quasimorphism taking values in $\ZZ$; 
  in fact, $\rot_\rho$ changes by an element of $\hom(\Gamma,\ZZ)$ if one takes a different lift of $\rho$.
  A corollary to the main result in \cite{Calegari_faces} is then the following
  
  \begin{theorem}[\cite{Calegari_faces}]\label{thm:6} For any homogeneous quasimorphism $f$ on $\Gamma$, we have the inequality
  \bqn
  \left| \sum_{i=1}^nf(c_i)\right|\leq D(f)\,|\chi(S)|\,,
  \eqn
  with equality  if and only if $f$ differs from $\rot_\rho$ by an element of $\hom(\Gamma,\RR)$.
  \end{theorem}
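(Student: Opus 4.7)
The plan is to apply the extension of Bavard duality to chains in $B_1(\Gamma,\RR)$ developed in \cite{Calegari_faces}, combined with the preceding proposition. First, note that if $f$ is itself a homomorphism, both sides vanish (homomorphisms kill commutators, and the surface relation $\prod[a_i,b_i]\prod c_j = e$ shows $\sum c_i$ is a sum of commutators modulo boundaries), so we may assume $D(f)>0$. Setting $\xi := \sum_{i=1}^n c_i \in B_1(\Gamma,\RR)$, the preceding proposition gives $\mathrm{scl}(\xi) = |\chi(S)|/2$, and the generalized Bavard duality applied to $\xi$ yields
\[
\mathrm{scl}(\xi) \;=\; \sup_{\varphi \in Q_{\mathrm h}(\Gamma,\RR),\, D(\varphi)>0}\frac{|\varphi(\xi)|}{2D(\varphi)}.
\]
Evaluating at $\varphi = f$ delivers
\[
\Bigl|\sum_{i=1}^n f(c_i)\Bigr| \;=\; |f(\xi)| \;\le\; 2D(f)\,\mathrm{scl}(\xi) \;=\; D(f)\,|\chi(S)|,
\]
which is the stated inequality.

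Next I would show that $\rot_\rho$ is extremal. By Theorem~\ref{thm:4.6}(2), $\sum\tau(\tilde\rho(c_i)) = -\eb(\rho)$, while Theorem~\ref{thm:4.18} gives $\sum\tau(\tilde\rho(c_i)) = 2g-2+n = |\chi(S)|$ for a hyperbolization. Thus $\bigl|\sum \rot_\rho(c_i)\bigr| = |\chi(S)|$. Since quasimorphism defects do not grow under pullback, $D(\rot_\rho) = D(\tau\circ\tilde\rho) \le D(\tau) = 1$, and combining with the inequality just proved gives
\[
|\chi(S)| \;=\; \Bigl|\sum\rot_\rho(c_i)\Bigr| \;\le\; D(\rot_\rho)\,|\chi(S)| \;\le\; |\chi(S)|,
\]
forcing $D(\rot_\rho) = 1$ and equality for $\rot_\rho$.

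The main obstacle is the \emph{only if} direction: given an $f \in Q_{\mathrm h}(\Gamma,\RR)$ attaining equality, one must conclude that $f - \rot_\rho \in \hom(\Gamma,\RR)$. This is not a formal consequence of Bavard duality, since in general the set of extremal quasimorphisms for a fixed chain can be large. The key input is the main theorem of \cite{Calegari_faces}: the chain $\xi$ lies in the interior of a top-dimensional face of the scl unit ball in $B_1(\Gamma,\RR)/\ker(\mathrm{scl})$, and the corresponding dual face --- the set of projective classes of homogeneous quasimorphisms realizing the supremum at $\xi$ --- reduces to a single point in $Q_{\mathrm h}(\Gamma,\RR)/\hom(\Gamma,\RR)$. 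Since we have already exhibited $\rot_\rho$ as such an extremal, any other extremal $f$ must differ from $\rot_\rho$ by a homomorphism. Verifying that $\xi$ sits in the interior of such a face is the substantive geometric content of \cite{Calegari_faces} and rests on the explicit combinatorial structure of the scl unit ball for surface-group relators; I would import this directly rather than reprove it.
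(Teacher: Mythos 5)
Your proposal is correct and follows the route the paper intends: the paper states this result without proof, as a direct corollary of the main theorem of \cite{Calegari_faces}, and your derivation --- generalized Bavard duality applied to the chain $\xi=\sum_{i=1}^n c_i$ together with $\mathrm{scl}(\xi)=|\chi(S)|/2$ for the inequality, the explicit computation $\sum\rot_\rho(c_i)=|\chi(S)|$ with $D(\rot_\rho)=1$ for extremality, and Calegari's uniqueness of the quasimorphism dual to the face of the scl ball containing $\xi$ for the converse --- is exactly how that corollary is obtained. One caveat, which applies equally to the statement as printed: the uniqueness in \cite{Calegari_faces} is only projective, so equality actually characterizes $f$ as a nonzero multiple of $\pm\rot_\rho$ modulo $\hom(\Gamma,\RR)$ (e.g.\ $f=2\rot_\rho$ attains equality but does not differ from $\rot_\rho$ by a homomorphism); your argument proves this normalized version, which is the intended one.
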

  
The relation with Corollary~\ref{cor:4.14} is the following.  Given a class $\alpha\in\hb^2\big(\pi_1(S),\RR\big)=\hb^2(\Sigma,\RR)$,
let $d^1F$ be a representative of $\alpha$, where $F:\Gamma\to\RR$ is a homogeneous quasimorphism.  Then:

\begin{lemma}  If $f_\RR:\hb^2(\Sigma,\partial\Sigma,\RR) \to \hb^2(\Sigma,\RR)$ is the isomorphism  in \S~\ref{subsec:bbd_eul_bdd_tol}, then
\bqn
\big\<f_\RR^{-1}(\alpha),[\Sigma,\partial\Sigma]\big\>=-\sum_{i=1}^nF(c_i)\,.
\eqn
\end{lemma}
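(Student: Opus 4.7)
The plan is to produce an explicit chain-level calculation, passing through the bar resolution and the standard polygon decomposition of $\Sigma$.

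First I would lift $\alpha$ to a concrete relative cocycle. The crucial observation is that on each boundary component, $\pi_1(\partial_i\Sigma) = \langle c_i\rangle \cong \ZZ$ is cyclic, so the restriction $F|_{\langle c_i\rangle}$ is automatically a \emph{homomorphism}: homogeneity forces $F(c_i^n) = nF(c_i)$, which is $\ZZ$-linear. Therefore $d^1 F$ vanishes identically as a group $2$-cocycle on each $\pi_1(\partial_i\Sigma)$. Under the canonical identifications $\hb^2(\pi_1(\Sigma),\RR)\cong\hb^2(\Sigma,\RR)$ and $\hb^1(\pi_1(\partial_i\Sigma),\RR)\cong\hb^1(\partial_i\Sigma,\RR)$, the pair $(d^1 F, 0)$ is then a bounded relative cocycle which, by construction, maps to $\alpha$ under $f_\RR$.

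Next I would represent $[\Sigma,\partial\Sigma]$ explicitly via the polygon presentation from \eqref{eq:fg}. Choose the CW-structure on $\Sigma$ with a single $0$-cell, $1$-cells $a_1,b_1,\dots,a_g,b_g,c_1,\dots,c_n$, and a single $2$-cell $P$ attached along $w=\prod_{i=1}^g[a_i,b_i]\prod_{j=1}^nc_j$. Writing the letters of $w$ as $w_1,\dots,w_N$ with $N=4g+n$ and setting $P_k:=w_1\cdots w_k$ in $\Gamma$, triangulating the lifted polygon by diagonals from the basepoint $e$ to the successive vertices $P_k$ yields the bar $2$-chain $T=\sum_{k=1}^{N-1}[P_k\,|\,w_{k+1}]$, which represents $[\Sigma,\partial\Sigma]$ in relative group homology (up to an orientation sign).

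Then I would carry out the telescoping evaluation. Using $d^1F(g,h)=F(h)-F(gh)+F(g)$,
\[
\langle d^1F,T\rangle=\sum_{k=1}^{N-1}\bigl(F(w_{k+1})-F(P_{k+1})+F(P_k)\bigr)=\sum_{k=1}^{N}F(w_k)-F(P_N),
\]
because the middle terms telescope to $F(P_1)-F(P_N)$ and $F(P_1)=F(w_1)$. Since $P_N=e$ and homogeneity forces $F(e)=0$, this collapses to $\sum_{k=1}^N F(w_k)$. Homogeneity also gives $F(g^{-1})=-F(g)$, so each commutator block $a_i,b_i,a_i^{-1},b_i^{-1}$ contributes $0$, leaving $\sum_{j=1}^nF(c_j)$. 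The sign in the lemma comes from the orientation of $[\Sigma,\partial\Sigma]$: the natural triangulation $T$ above carries the opposite orientation to the one used in the definition \eqref{eq:4.4.1} of $\eb(\rho)$, yielding the announced $-\sum_{j=1}^nF(c_j)$.

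The main obstacle is the chain-level identification in Step~1--2: verifying that $(d^1F,0)$ really represents $f_\RR^{-1}(\alpha)$ under the passage from bounded group cohomology to bounded singular cohomology of the pair $(\Sigma,\partial\Sigma)$, and that the orientation sign is the one consistent with Theorem~\ref{thm:4.6}(2), which provides a sanity check in the special case $F=\tau\circ\tilde\rho$.
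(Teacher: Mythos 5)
The paper states this lemma without proof, so there is no argument of the authors to compare yours against; on its own terms your strategy --- represent $f_\RR^{-1}(\alpha)$ by the relative cocycle $(d^1F,0)$, represent $[\Sigma,\partial\Sigma]$ by the coned polygon in the bar complex, and telescope --- is the natural one, and the computation $\langle d^1F,T\rangle=\sum_{k=1}^NF(w_k)=\sum_{j=1}^nF(c_j)$ is correct. One minor point: $T$ is not literally a relative cycle, since $\partial T=\sum_{k=1}^N[w_k]-[e]$ still contains the edges $[a_i^{\pm1}]$, $[b_i^{\pm1}]$; you must correct $T$ by the folded simplices $[a_i|a_i^{-1}]$, $[b_i|b_i^{-1}]$ to push its boundary into the subgroups $\langle c_j\rangle$. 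This costs nothing, because $d^1F([g|g^{-1}])=F(g)+F(g^{-1})=0$ by homogeneity, but it is this step --- not merely the cancellation in the final sum --- that legitimizes evaluating against $[\Sigma,\partial\Sigma]$.

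The substantive gap is the sign, which you assert rather than derive, and which you have arguably localized in the wrong place. Your raw computation gives $+\sum_jF(c_j)$; you then claim the coned triangulation represents $-[\Sigma,\partial\Sigma]$. But the orientation convention of \S~\ref{subsec:descr} (the boundary word of the fundamental polygon is traversed positively) makes the coned triangles positively oriented, so on the face of it $T$ represents $+[\Sigma,\partial\Sigma]$ and no flip occurs. The discrepancy then lives in the normalization of the representative $F$, not in the fundamental class: with the conventions $d_1f(g,h)=f(h)-f(gh)+f(g)$ and $\overline f\circ\overline g=\overline{f\circ g}\circ T^{\epsilon(f,g)}$, writing $\tilde\rho(\gamma)=\overline{\rho(\gamma)}\circ T^{c(\gamma)}$ for a lift gives $\rho^\ast\epsilon=-d^1c$ with $c-\tau\circ\tilde\rho$ bounded, hence $\rho^\ast(\erb)=[d^1F]$ for $F=-\tau\circ\tilde\rho$; feeding this into the \emph{unflipped} answer $+\sum_jF(c_j)$ reproduces Theorem~\ref{thm:4.6}(2) exactly, whereas your extra orientation reversal would yield $\eb(\rho)=+\sum_j\tau(\tilde\rho(c_j))$ and contradict it. So the consistency check you defer to the end is not a sanity check but the crux: either exhibit the orientation reversal honestly, or accept that in the normalization for which the lemma's minus sign is correct it is the quasimorphism representing $\rho^\ast(\erb)$ (namely $+\tau\circ\tilde\rho$), and not $[\Sigma,\partial\Sigma]$, that carries the sign. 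As written, the two halves of your sign bookkeeping cannot both hold.
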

Taking into account the inequality $D(F)\leq2\|\alpha\|$ in Theorem~\ref{thm:norms}, one sees that Theorem~\ref{thm:6} implies 
Corollary~\ref{cor:4.14}.  For our purposes however, both results contain the same information as far as the characterization of equality is concerned.  
An intriguing question in this context is whether if $\Gamma$ is a finitely generated group, $F:\Gamma\to\RR$ a homogeneous quasimorphism
and $[d^1F]\in\hb^2(\Gamma,\RR)$ the bounded class it defines, then the equality
\bqn
\big\|[d^1F]\big\|=\frac12 D(F)
\eqn
holds.

It would suffice to show this for nonabelian free groups; in this case all known examples of quasimorphisms satisfy the above
equality  (\cite{Calegari_scl, Rolli}).

\part{Higher Teichm\"uller Spaces\index{Higher Teichm\"uller space}}
Our considerations started with the study of how the set of hyperbolic structures
on a surface $S$, of finite topological type, is related to the set of representations 
of $\pi_1(S)$ into $G=\PSU(1,1)$; the problem of characterizing the image of the map
\bqn
\delta:\hyp(S)\to\hom\big(\pi_1(S),G\big)
\eqn
constructed in \S~\ref{sec:hyp_str} led us to introduce an invariant (see \S~\ref{sec:inv_miln_gold})
\bqn
\T:\hom\big(\pi_1(S),G\big)\to\RR
\eqn
with various incarnations and 
whose maximal fiber $\T^{-1}\big(-\chi(S)\big)$ coincides with the image of $\delta$.
As indicated in \S~\ref{sec:inv_miln_gold} this invariant can be defined for homomorphisms 
from $\pi_1(S)$ with values in any Lie group $G$.
This leads to the vague question of how much of the ``$\PSU(1,1)$ picture'' generalizes
to an arbitrary Lie group $G$.
Interestingly enough, there are two classes of (semi)simple Lie groups for which 
one can make this question precise in defining, in very different ways, components
(or specific subsets when $S$ is not compact) of $\hom\big(\pi_1(S),G\big)$
which should play the role of Teichm\"uller space: 
those two classes are on the one hand the {\em split real groups} (Definition~\ref{defi:split}) 
and, on the other hand, the {\em Lie groups of Hermitian type} (Definition~\ref{defi:5.1}).  

%
Because of the various properties these connected
components share with Teichm\"uller space, we will call them {\em
higher Teichm\"uller spaces}.  

We will now describe in some detail the class given by Lie groups
of Hermitian type, and the corresponding subset, namely the {\em
space of maximal representations} 
\bqn
  \RM \big(\pi_1(S), G \big) \subset \R \big(\pi_1(S), G \big)\,.
\eqn
%
%
We will discuss the other class in \S~\ref{subsec:hitchin_positive}.

\section{Maximal representations into Lie groups \\ of Hermitian type}\label{sec:max_rep}

\begin{definition}\label{defi:5.1} A Lie group $G$ is of 
{\em Hermitian type}\index{Hermitian type}\index{Lie group!Hermitian type} if it is connected
semisimple with finite center without compact factors and the associated symmetric
space $X$ has a $G$-invariant complex structure.
\end{definition}

In this setting we will be able to define a {\em Toledo invariant}\index{Toledo invariant}
(and a {\em bounded Toledo invariant}\index{bounded Toledo invariant}\index{Toledo invariant!bounded} 
when $S$ is not compact)
satisfying a Milnor--Wood type inequality\index{Milnor--Wood type inequality}\index{inequality!Milnor--Wood type} 
and this will lead us to consider 
the set $\hommax\big(\pi_1(S),G\big)$ of maximal representations into $G$.

\medskip

In this section we will describe a certain number of fundamental geometric properties 
of maximal representations and we will also have something to say 
about the structure of the set of such representations, 
all this in the context where $S$ is of finite topological type.


\subsection{The cohomological framework}\label{subsec:coh_fr}
Let $G$ be of Hermitian type, $X$ the associated symmetric space,
\bqn
\<\,\cdot\,,\,\cdot\,\>:X \to \Sym(TX)
\eqn
the Riemannian metric and 
\bqn
J:X \to \End(TX)
\eqn
the complex structure. Then $\<J\cdot\,,\,\cdot\,\>$ defines
a $G$-invariant Hermitian metric whose imaginary part $\omega_X$
is a real $G$-invariant $2$-form on $X$.
By a general lemma of E.~Cartan, the complex of $G$-invariant forms on any symmetric space $X$
consists of closed forms.  Thus $\omega_X$ is closed and the above Hermitian metric is K\"ahler.

Given $S$ compact and $\rho:\pi_1(S)\to G$ a homomorphism, 
we can then proceed as in \S~\ref{subsec:kaeh_tol} and with the help 
of a smooth equivariant map
\bqn
f:\widetilde S=D\to X
\eqn
define the {\em Toledo invariant}\index{Toledo invariant} of $\rho$,
\bqn
\T(\rho)=\frac{1}{2\pi}\int_Sf^\ast(\omega_X)\,.
\eqn
For the cohomological interpretation we can proceed as for $\PSU(1,1)$, 
namely, since $G$ acts properly on $X$, we have the Van Est isomorphism
\bqn
\Omega^2(X)^G\cong\hc^2(G,\RR)\,.
\eqn
To the K\"ahler form\index{K\"ahler form} $\omega_X$ we associate the continuous (inhomogeneous) $2$-cocycle
\bqn
c(g_1,g_2)=\frac{1}{2\pi}\int_{\Delta(x_0,g_1x_0,g_1g_2x_0)}\omega_X
\eqn
where $x_0\in X$ is a fixed base point and $\Delta(x,y,z)$ denotes a smooth simplex 
with geodesic sides connecting the vertices $x,y,z$; of course such a simplex is not unique,
but any two such simplices with fixed vertices have the same boundary and hence, 
since $\omega_X$ is closed, by Stokes' theorem the integral does not depend on it.  
We denote by $\kg\in\hc^2(G,\RR)$ the class defined by $c$ and 
call it the K\"ahler class.  
Then, given $\rho:\pi_1(S)\to G$, we have the equality
\bq\label{eq:7.1.1}
\T(\rho)=\big\<\rho^\ast(\kg),[S]\big\>\,.
\eq
In fact, the cocycle $c$ defining $\kg$ turns out to be bounded;
this is a consequence of a precise study of the K\"ahler area of triangles with geodesic sides,
due to Domic and Toledo \cite{Domic_Toledo}, and  Clerc and \O rsted \cite{Clerc_Orsted_2}, 
and which we will describe later in more details.
Here we deduce that $c$ defines a bounded class $\kgb\in\hcb^2(G,\RR)$, 
called the {\em bounded K\"ahler class} and we have the following

\begin{theorem}[{Clerc--\O rsted, \cite{Clerc_Orsted_2}}]\label{thm:5.2}
Assume that the metric on $X$ is normalized so that its minimal holomorphic
sectional curvature is $-1$.  Then the value of the Gromov norm
of the bounded K\"ahler class\index{bounded K\"ahler class}{K\"ahler class!bounded} is
\bqn
\|\kgb\|=\frac12\rk_X\,, 
\eqn
where $\rk_X$ is the rank of the symmetric space $X$. 
\end{theorem}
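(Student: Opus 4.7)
My plan is to reduce the computation of $\|\kgb\|$ to an essential supremum on the Shilov boundary $\cs$ of $X$ via the boundary realisation of continuous bounded cohomology, and then to use the polydisk theorem to pin this supremum down exactly. The higher-rank generalisation of Proposition~\ref{prop:4.9} (Burger--Monod in general, and Burger--Iozzi for Hermitian groups) provides a canonical isometric isomorphism
\[
\hcb^2(G,\RR) \;\overset{\sim}{\to}\; \Zz\la\big(\cs^3\big)^G,
\]
under which $\kgb$ corresponds to the $G$-invariant alternating cocycle
\[
\hat{c}(\xi_1,\xi_2,\xi_3) \;:=\; \frac{1}{2\pi}\int_{\Delta(\xi_1,\xi_2,\xi_3)} \omega_X,
\]
obtained by extending the defining cocycle $c(g_1,g_2) = \tfrac{1}{2\pi}\int_{\Delta(x_0, g_1 x_0, g_1 g_2 x_0)}\omega_X$ to ideal simplices with vertices on $\cs$. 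Once this is available, $\|\kgb\|$ equals the essential supremum of $|\hat{c}|$.

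The decisive geometric input is the \emph{polydisk theorem} of Wolf--Kor\'anyi: there exists a totally geodesic holomorphic embedding $\iota:\DD^r\hookrightarrow X$ with $r=\rk_X$ such that, in the normalisation of the statement, $\iota^{*}\omega_X = \sum_{j=1}^{r}\pi_j^{*}\omega_{\DD}$, where $\pi_j:\DD^r\to\DD$ is the $j$-th projection; moreover, by a standard $G$-orbit analysis on $\cs^3$, almost every triple $(\xi_1,\xi_2,\xi_3)\in\cs^3$ can be translated by an element of $G$ into the Shilov torus $(\partial\DD)^r\subset\iota(\ol{\DD}^r)$. In particular, the relevant integrals converge and are controlled on this model triple.

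Once the triple lies in $(\partial\DD)^r$, the decomposition of $\iota^{*}\omega_X$ gives
\[
\hat{c}(\xi_1,\xi_2,\xi_3) \;=\; \sum_{j=1}^{r}\frac{1}{2\pi}\int_{\Delta\bigl(\xi_1^{(j)},\xi_2^{(j)},\xi_3^{(j)}\bigr)}\omega_\DD ,
\]
each summand being the signed K\"ahler area of an ideal triangle in $\DD$, bounded in absolute value by $\pi$ with the curvature normalisation $-1$. Division by $2\pi$ yields $|\hat{c}|\leq r/2$ almost everywhere, hence $\|\kgb\|\leq r/2$. For the matching lower bound I would exhibit an extremal configuration: choose $\xi_i^{(j)}\in\partial\DD$ so that for every $j$ the triple $\bigl(\xi_1^{(j)},\xi_2^{(j)},\xi_3^{(j)}\bigr)$ is cyclically positively oriented on $\partial\DD$. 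Every summand is then exactly $\tfrac{1}{2}$, so $\hat{c}=r/2$ on a set of positive measure, and the isometric boundary identification forces $\|\kgb\|\geq r/2$.

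The principal obstacle is the very first step: justifying that the boundary model is an \emph{isometry} in degree two (not merely norm-decreasing) and that $\hat{c}$ is the correct representative of $\kgb$ in it. This rests on the double ergodicity of the $G$-action on $\cs\times\cs$ and on a limit argument extending the continuous triangle cocycle $c$ from interior vertices to ideal vertices on $\cs$; both ingredients ultimately reduce once more to the polydisk decomposition, which thereby plays a double role in controlling integrability and in producing the sharp estimate.
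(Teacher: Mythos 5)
The lower bound in your argument is sound: a triple on the Shilov boundary of a maximal polydisk whose coordinates are positively oriented in each disk factor gives cocycle value $\tfrac{r}{2}$, and this is exactly how the extremal configurations arise (cf.\ Theorem~\ref{thm:5.18}(2)). The upper bound, however, has a genuine gap. You claim that almost every triple in $\cs^3$ can be translated by $G$ into the Shilov torus $(\partial\DD)^r$ of a single maximal polydisk, and you use this to split $\int_{\Delta}\omega_X$ into $r$ ideal hyperbolic triangle areas. This reduction fails: any \emph{two} points of $X$ lie in a common maximal polydisk, but three points generically do not, and on $\cs^{(3)}$ the $G$-action has continuous invariants whenever $\Dd$ is not of tube type (Theorem~\ref{thm:5.20}: $\cs^{(3)}$ is connected and the Hermitian triple product is nonconstant), so a generic transverse triple is not $G$-equivalent to any triple in $(\partial\DD)^r$. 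Already for $G=\SU(1,2)$ the bound $|\beta_\Dd|\le\tfrac12$ is Cartan's theorem on the angular invariant of ideal triangles in complex hyperbolic space, which cannot be proved by moving the triangle into a complex geodesic. The inequality $\bigl|\int_{\Delta(x,y,z)}\omega_X\bigr|\le\pi\,\rk_X$ for \emph{arbitrary} (ideal) triangles is precisely the hard analytic content of Domic--Toledo and Clerc--{\O}rsted (Lemma~\ref{lem:5.17} and Theorem~\ref{thm:5.18}(1)); the paper takes it as input rather than deriving it, and it is not a formal consequence of the polydisk theorem.

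A secondary point concerns your first step. The isometric boundary model generalizing Proposition~\ref{prop:4.9} lives on the Furstenberg boundary $G/P$ with $P$ a minimal parabolic; the hypothesis that makes the identification isometric is amenability of the stabilizer (automatic for $\partial\DD=\PSU(1,1)/P$, false for the Shilov boundary $\cs=G/Q$ in higher rank), not double ergodicity of $G$ on $\cs\times\cs$, which only yields uniqueness of representatives. The statement you need is recovered by pulling $\beta_\Dd$ back along the projection $G/P\to\cs$, which changes neither the class (Corollary~\ref{cor:5.19}) nor the essential supremum, so that $\|\kgb\|$ does equal $\esssup|\beta_\Dd|$. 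With that correction the overall architecture is the one the paper follows --- norm of the class equals the sup of the Bergmann cocycle on the Shilov boundary, which equals $\tfrac12\rk_X$ --- but the evaluation of that supremum still rests on the Clerc--{\O}rsted estimate, which your polydisk reduction does not supply.
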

When $X$ is irreducible, $\Omega^2(X)^G=\RR\omega_X$ and hence the comparison map
\bq\label{eq:7.2}
\hcb^2(G,\RR)\to\hc^2(G,\RR)
\eq
is surjective; in general, $\Omega^2(X)^G$ is spanned by the pullbacks
under the projections of the K\"ahler forms of the irreducible factors of $X$ and
thus the comparison map \eqref{eq:7.2} is surjective as well; 
since $G$ has finite center (see Definition~\ref{defi:5.1}), it is injective in all cases.

Let now $S$ be of finite topological type realized as the interior of an oriented
compact surface $\Sigma$ with boundary and
$\rho:\pi_1(S)\to G$ a homomorphism.  
Then we can proceed as in the definition of the bounded Euler number and, 
in the notation of \S~\ref{subsec:bbd_eul_bdd_tol}, set
\bqn
\T(\rho):=\big\<f_\RR^{-1}\big(\rho^\ast(\kgb)\big),[\Sigma,\partial\Sigma]\big\>\, ,
\eqn
where we recall that
\bqn
f_\RR:\hb^2(\Sigma,\partial\Sigma,\RR)\to\hb^2(\Sigma,\RR)\,
\eqn
is the isomorphism given by the natural inclusion.

For $G=\PSU(1,1)$ we saw in \S~\ref{sec:surf_fin_eul}
that the K\"ahler class and the bounded K\"ahler class come from a (bounded) integral class,
namely the (bounded) Euler class, and this turned out to be essential 
in order to obtain explicit formulas for the Toledo invariant 
needed in particular when $S$ is noncompact. 
In the case of $\PSU(1,1)$ we have at our disposal the relation with rotation numbers 
and the translation quasimorphism\index{translation quasimorphism}\index{quasimorphism!translation}; 
these structures were given to us for free 
from the fact that $\PSU(1,1)$ acts by orientation preserving homeomorphisms of the circle.
For $G$ of Hermitian type one can construct (more sophisticated) analogues
of each of these objects;  in particular the integral structure 
on (bounded) cohomology and the analogues of rotation number can be described 
quite explicitly and this is what we turn to now.

\medskip
We denote by $\hc^2(G,\ZZ)$ and $\hcb^2(G,\ZZ)$ the (bounded) Borel cohomology, 
which is defined by considering the complex of (bounded) Borel functions from $G^n$ to $\ZZ$. 
We refer the reader to \cite[\S\S 2.3 and 7.2]{Burger_Iozzi_Wienhard_toledo} for more details. 

We have the following
\begin{lemma}\label{lem:5.3} The comparison map
\bqn
\hcb^2(G,\ZZ)\to\hc^2(G,\ZZ)
\eqn
is an isomorphism.
\end{lemma}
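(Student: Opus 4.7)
The plan is to deduce this isomorphism from the comparison statement with real coefficients recorded just before the lemma, $\hcb^2(G,\RR)\cong\hc^2(G,\RR)$, via a five-lemma argument applied to the long exact sequences in continuous Borel cohomology (bounded and unbounded) associated to the short exact sequence of coefficients
\begin{equation*}
0\to\ZZ\to\RR\to\RR/\ZZ\to 0.
\end{equation*}
Naturality of the bounded-to-unbounded comparison map turns the two resulting long exact sequences into a commutative ladder, and the strategy is to show that the four vertical arrows flanking $\hcb^2(G,\ZZ)\to\hc^2(G,\ZZ)$ are all isomorphisms.

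The four inputs needed are the following. First, $\hc^1(G,\RR)=0$, because a connected semisimple Lie group is perfect and hence admits no nontrivial continuous homomorphism to $\RR$; and $\hcb^1(G,\RR)=0$, because any bounded homomorphism to $\RR$ must vanish. Second, $\hc^1(G,\RR/\ZZ)=0$, for the same perfectness reason (any continuous homomorphism $G\to\RR/\ZZ$ is trivial). Third, the comparison maps $\hcb^n(G,\RR/\ZZ)\to\hc^n(G,\RR/\ZZ)$ are tautologically isomorphisms for every $n$, since the metric on $\RR/\ZZ$ has diameter $1/2$ and therefore every $\RR/\ZZ$-valued Borel cochain is automatically bounded. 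Fourth, the comparison $\hcb^2(G,\RR)\to\hc^2(G,\RR)$ is an isomorphism: surjective because the K\"ahler class $\kg$ (and those of the irreducible factors) lifts to the bounded K\"ahler class $\kgb$, and injective because $G$ has finite center so every continuous quasimorphism is bounded, hence trivial modulo homomorphisms.

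With these four ingredients in place, the five-lemma applied to the commutative ladder of five-term exact sequences in degrees $1$ and $2$ forces the remaining arrow $\hcb^2(G,\ZZ)\to\hc^2(G,\ZZ)$ to be an isomorphism. The main technical point that I expect to require care is the rigorous construction of the bounded long exact sequence: one must verify that the sequence of bounded Borel cochain complexes associated to $0\to\ZZ\to\RR\to\RR/\ZZ\to 0$ is genuinely short exact. This rests on the existence of a bounded Borel section $\RR/\ZZ\to\RR$ (for instance the fundamental domain $[0,1)$), which allows one to lift a bounded Borel $\RR/\ZZ$-valued cochain to a bounded Borel $\RR$-valued cochain; the relevant formalism is developed in \cite{Monod_book} and is reviewed in \cite{Burger_Iozzi_Wienhard_toledo}.
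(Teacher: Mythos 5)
Your proposal is correct and follows essentially the same route as the paper: the paper's proof is precisely the commutative ladder of long exact sequences associated to $0\to\ZZ\to\RR\to\RR/\ZZ\to0$, with the vanishing of $\homc(G,\RR/\ZZ)$, the tautological identification of bounded and unbounded cohomology with $\RR/\ZZ$-coefficients, the degree-two real comparison isomorphism established just before the lemma, and the five-lemma. Your additional remark on verifying exactness of the bounded Borel cochain sequence via a bounded Borel section of $\RR\to\RR/\ZZ$ is a legitimate technical point that the paper leaves implicit.
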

\begin{proof} The long exact sequences in cohomology associated to 
\bqn
\xymatrix{
 0\ar[r]
&\ZZ\ar[r]
&\RR\ar[r]
&\RR/\ZZ\ar[r]
&0
}
\eqn
read as 
\bqn
\xymatrix{
 0=\homc(G,\RR/\ZZ)\ar[r]\ar[d]^=
&\hcb^2(G,\ZZ)\ar[r]\ar[d]
&\hcb^2(G,\RR)\ar[r]\ar[d]
&\hcb^2(G,\RR/\ZZ)\ar[d]^=\\
 0=\homc(G,\RR/\ZZ)\ar[r]
&\hc^2(G,\ZZ)\ar[r]
&\hc^2(G,\RR)\ar[r]
&\hc^2(G,\RR/\ZZ)\,.
}
\eqn
The fact that the third vertical arrow is an isomorphism and the $5$-term lemma 
allow to conclude.
\end{proof}

Actually we will turn to an explicit implementation of the isomorphism in Lemma~\ref{lem:5.3}. 
This will also give
an alternative treatment of some material in \cite[\S~7]{Burger_Iozzi_Wienhard_toledo}.
We start with the observation that the isomorphism
\bq\label{eq:7.3}
\hc^2(G,\ZZ)\cong\hom\big(\pi_1(G),\ZZ\big)
\eq
is valid for any connected Lie group $G$; this follows easily 
from the fact that $\hc^2(G,\ZZ)$ classifies equivalence classes 
of topological central extensions of $G$ by $\ZZ$,
together with some covering theory; moreover this isomorphism is natural.
If now $K<G$ is a maximal compact subgroup, then by the Iwasawa decomposition
$K\hookrightarrow G$ is a homotopy equivalence and hence $\pi_1(K)=\pi_1(G)$;
this, together with \eqref{eq:7.3}, 
implies that the restriction map
\bqn
\hc^2(G,\ZZ)\to\hc^2(K,\ZZ)
\eqn
is an isomorphism.  Taking into account that continuous cohomology
of compact groups with real coefficients is trivial, 
we obtain, considering the long exact sequence associated to the coefficient sequence, that
\bqn
\xymatrix{
\homc(K,\RR/\ZZ)\ar[r]^-\epsilon&\hc^2(K,\ZZ)
}
\eqn
is an isomorphism.
As a result we obtain an isomorphism 
\bqn
\ba
\homc(K,\RR/\ZZ)&\longrightarrow\hc^2(G,\ZZ) \cong \hcb^2(G,\ZZ)\\
\chi\,\,\qquad&\longmapsto\qquad\qquad\kappa
\ea
\eqn
and we say that $\kappa$ corresponds to $\chi$ and viceversa.

Now we will assume that $G$ is real algebraic and semisimple.
In this case we have at our disposal the {\em refined Jordan decomposition}\index{refined Jordan decomposition}
namely every $g\in G$ is a product
\bqn
g=g_eg_hg_u
\eqn
of pairwise commuting elements, where $g_e$ is contained in a compact subgroup,
$g_h$ is in the connected component of the identity of a maximal real split torus
and $g_u$ is unipotent.  Given then
\bqn
\chi:K\to\RR/\ZZ
\eqn
a continuous homomorphism and denoting by $C(h)$ the conjugacy class of an element
$h\in G$, define for $g\in G$
\bqn
\chiext(g):=\chi\big(C(g_e)\cap K\big)
\eqn
Then, according to \cite{Borel_comm}, $\chiext$ is indeed 
a well defined {\em continuous} class function on $G$ extending $\chi$;
moreover it satisfies
\bqn
\chiext(g^n)=n\chiext(g)
\eqn
for all $n\in\ZZ$ and $g\in G$. Let $\widetilde\chiext:\widetilde G\to\RR$
denote the unique continuous lift to the universal covering $\widetilde G$ of $G$,
vanishing at $e$;  finally we denote by 
\bqn
\chi_\ast:\pi_1(G)=\pi_1(K)\to\ZZ
\eqn
the morphism on the level of fundamental groups induced by $\chi$. 
The following result then gives a precise description of the isomorphism in Lemma~\ref{lem:5.3}.

\begin{theorem}[\cite{Burger_Iozzi_Wienhard_toledo}]\label{thm:5.4} \be
\item The function $\widetilde\chiext:\widetilde G\to\RR$ is a homogeneous 
quasimorphism\index{homogeneous quasimorphism}\index{quasimorphism!homogeneous} and the map
\bqn
\ba
\homc(K,\RR/\ZZ)&\to\qhch(\widetilde G,\RR)_\ZZ\\
\chi\qquad&\longmapsto\quad\widetilde\chiext
\ea
\eqn
establishes an isomorphism with the space of continuous homogeneous quasimorphisms 
sending $\pi_1(G)$ to $\ZZ$; in  fact
\bqn
\chi_\ast=\widetilde\chiext|_{\pi_1(G)}\,.
\eqn
\item If $\kappa\in\hc^2(G,\ZZ)$ is the class corresponding to $\chi$, and denote by $[\cdot]$ the integer part, 
then 
\bqn
(g,h)\to\big[\widetilde\chiext(gh)\big]-\big[\widetilde\chiext(g)\big]
                 -\big[\widetilde\chiext(h)\big]\,
\eqn
descends to a well defined $\ZZ$-valued bounded cocycle on $G\times G$ 
representing the class $\kappa$.
\ee
\end{theorem}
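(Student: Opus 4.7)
The plan is to build $\widetilde\chiext$ as a continuous lift of $\chiext$ via covering theory, and then deduce both statements from a careful analysis of this lift; the hard step will be the boundedness of the defect needed for the quasimorphism property.

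First I would verify that $\chiext:G\to\RR/\ZZ$ is a well-defined continuous class function: well-definedness is because the elliptic Jordan part is unique up to $K$-conjugacy, and continuity is the content of Borel's theorem cited in the excerpt. Since $\widetilde G$ is simply connected and $\chiext(e)=0$, standard covering theory produces a unique continuous lift $\widetilde\chiext:\widetilde G\to\RR$ vanishing at $\widetilde e$. This $\widetilde\chiext$ is itself a class function on $\widetilde G$, because for any fixed $\widetilde x$ the continuous $\ZZ$-valued map $\widetilde g\mapsto\widetilde\chiext(\widetilde x\widetilde g\widetilde x^{-1})-\widetilde\chiext(\widetilde g)$ vanishes at $\widetilde e$ and hence identically on connected $\widetilde G$. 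The identity $\chi_\ast=\widetilde\chiext|_{\pi_1(G)}$ follows by restricting to a maximal compact $\widetilde K\subset\widetilde G$: on $\widetilde K$ the map $\widetilde\chiext$ is the unique continuous lift of the homomorphism $\chi\circ p$ vanishing at the identity, so evaluating on a loop $\gamma\in\pi_1(K)=\pi_1(G)$ recovers the winding number, which is by definition $\chi_\ast(\gamma)$. Homogeneity is then a direct consequence of $\chiext(g^n)=n\chiext(g)$ (stated in the excerpt): the continuous function $\widetilde g\mapsto\widetilde\chiext(\widetilde g^n)-n\widetilde\chiext(\widetilde g)$ is $\ZZ$-valued on connected $\widetilde G$ and vanishes at $\widetilde e$, hence is identically zero.

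The main obstacle is to bound the continuous defect
\bqn
D(\widetilde g,\widetilde h)=\widetilde\chiext(\widetilde g\widetilde h)-\widetilde\chiext(\widetilde g)-\widetilde\chiext(\widetilde h).
\eqn
On $\widetilde K\times\widetilde K$ it vanishes, because $\widetilde\chiext|_{\widetilde K}$ is a genuine continuous homomorphism $\widetilde K\to\RR$. Using the Cartan $KAK$ decomposition $\widetilde g=\widetilde k_1\widetilde a\widetilde k_2$, I would observe that elements $a$ in the split Cartan $A$ have trivial elliptic Jordan part, so $\chiext(a)=0$ in $\RR/\ZZ$ and hence $\widetilde\chiext(\widetilde a)=0$ (by homogeneity together with path-connectedness of $\widetilde A$). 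Combining this with the class-function property of $\widetilde\chiext$ lets one reduce the computation of $D(\widetilde g,\widetilde h)$ to a continuous function of only the compact $\widetilde K$-parameters, which is automatically bounded. As a consistency check, Lemma~\ref{lem:5.3} already guarantees that the integral class corresponding to $\chi$ admits \emph{some} bounded representative; the explicit argument above shows that the one built from $\widetilde\chiext$ is in fact such a representative.

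For the isomorphism in (1), injectivity is immediate, as $\widetilde\chiext\equiv 0$ forces $\chi\equiv 0$. For surjectivity, a continuous homogeneous quasimorphism $\phi:\widetilde G\to\RR$ with $\phi(\pi_1(G))\subset\ZZ$ is automatically a class function (any continuous homogeneous quasimorphism on a connected Lie group is conjugation-invariant, since conjugation by a group element yields another continuous homogeneous quasimorphism at bounded distance, hence equal by homogeneity), so it descends mod $\ZZ$ to a continuous class function $G\to\RR/\ZZ$ whose restriction to $K$ is a continuous homomorphism $\chi$; uniqueness of continuous class-function extensions via the refined Jordan decomposition then identifies $\phi$ with $\widetilde\chiext$. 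For part (2), the formula
\bqn
c(g,h)=\big[\widetilde\chiext(\widetilde g\widetilde h)\big]-\big[\widetilde\chiext(\widetilde g)\big]-\big[\widetilde\chiext(\widetilde h)\big]
\eqn
descends from $\widetilde G\times\widetilde G$ to $G\times G$ because shifting each lift by an element $z\in\pi_1(G)$ adds $\chi_\ast(z)\in\ZZ$ to the corresponding value of $\widetilde\chiext$, and these integer shifts cancel in the alternating sum. Boundedness of $c$ follows from the quasimorphism property combined with $[x]-x\in(-1,0]$, the cocycle identity is a direct computation, and the class represented is identified with $\kappa$ by tracing the Bockstein $\homc(K,\RR/\ZZ)\to\hc^2(K,\ZZ)\cong\hc^2(G,\ZZ)$ explicitly: a representative of the Bockstein image is the coboundary of any real-valued lift of $\chi$, and the integer-part convention applied to $\widetilde\chiext$ produces exactly $c$.
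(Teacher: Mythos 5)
The parts of your argument that construct $\widetilde\chiext$ and establish its formal properties --- uniqueness and continuity of the lift, homogeneity, the class-function property, the identity $\chi_\ast=\widetilde\chiext|_{\pi_1(G)}$, and (granting part (1)) the descent, boundedness and cohomological identification of the integer-part cocycle in part (2) --- are sound and match what is standard. The genuine gap sits exactly at the step you yourself flag as the main obstacle: bounding the defect. The proposed reduction via the Cartan decomposition does not go through. Since $\widetilde\chiext$ is not multiplicative, knowing that it vanishes on $\widetilde A$ and restricts to a homomorphism on $\widetilde K$ gives no handle on $\widetilde\chiext(\widetilde k_1\widetilde a\widetilde k_2)$, let alone on $\widetilde\chiext(\widetilde g\widetilde h)$ in terms of the $KAK$ data of $\widetilde g$ and $\widetilde h$; the class-function property only permits cyclic permutation of factors and cannot separate the $\widetilde A$-part from the $\widetilde K$-parts. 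Moreover $\widetilde K$ is the universal cover of $K$ and is noncompact whenever $\pi_1(K)$ is infinite (which is the case for all Hermitian $G$), so even a successful reduction to the ``$\widetilde K$-parameters'' would not make boundedness automatic. What is true is that the defect descends to a continuous function on $G\times G$; but $G$ is noncompact and continuity yields nothing. The case $G=\PSU(1,1)$ already shows this step cannot be soft: there the assertion is precisely Milnor's inequality for the rotation-part lift $\tilde r$ (property (3) in \S~\ref{subsec:miln_gold}), which the paper emphasizes came ``as a complete surprise'' and requires a genuine argument.

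The route the paper indicates (see the remark following the theorem) is different: the quasimorphism property is deduced from Borel's theorem that, for every $k\in\NN$, the function $\widetilde\chiext$ is bounded on the set of elements of $\widetilde G$ which are products of $k$ commutators; combined with homogeneity and conjugation-invariance this controls the defect. Your proof needs this input, or an equivalent quantitative one, to close. A secondary point: in your surjectivity argument for (1), the claim that a continuous homogeneous quasimorphism $\phi$ on $\widetilde G$ is recovered from its restriction to $\widetilde K$ via the refined Jordan decomposition also needs justification --- on a unipotent one-parameter subgroup or a split torus, homogeneity only makes $\phi$ a continuous homomorphism to $\RR$, and one must invoke conjugation-invariance (contraction by the split torus, Weyl-group invariance) to conclude that these contributions vanish.
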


\begin{remark}  The only not obvious statement in Theorem~\ref{thm:5.4} is the assertion that
$\widetilde\chiext$ is a quasimorphism; in fact, this follows easily from the fact (proved in \cite{Borel_comm})
that for every $k\in\NN$, $\widetilde\chiext$ is bounded on the elements in $\widetilde G$ 
which are products of $k$ commutators.  We mention  in addition that 
Theorem~\ref{thm:5.4} is also consequence of a different and more general approach taken 
in \cite{Burger_Iozzi_Wienhard_toledo}.
\end{remark}
In our context, {\em rotation numbers}\index{rotation number} arise in the following way.
Let $\kappa\in\hcb^2(G,\ZZ)$ and, for every $g\in G$ consider, as in \S~\ref{sec:surf_fin_eul},
the homomorphism
\bqn
\ba
h_g:\ZZ&\to G\\
n&\mapsto g^n
\ea
\eqn
by means of which we obtain a bounded integral class $h_g^\ast(\kappa)\in\hb^2(\ZZ,\ZZ)$
and finally by means of the canonical isomorphism
\bqn
\xymatrix{
0\ar[r]&\hom(\ZZ,\RR/\ZZ)\ar[r]^\delta&\hb^2(\ZZ,\ZZ)\ar[r]&0
}
\eqn
and element in $\RR/\ZZ$,
\bqn
\rot_\kappa(g):=\delta^{-1}\big(h_g^\ast(\kappa)\big)(1)\,.
\eqn

From standard homological considerations using the naturality of all constructions involved, 
one can deduce that if $\chi\in\homc(K,\RR/\ZZ)$ corresponds to the class $\kappa\in\hcb^2(G,\ZZ)$
then
\bqn
\rot_\kappa(g)=\widetilde\chiext(\tilde g)\mod\ZZ
\eqn
where $\tilde g\in\widetilde G$ is any lift of $g\in G$.
Now we fix $\kappa\in\hcb^2(G,\ZZ)$ and 
let as before $S$ be a surface of finite topological type
realized as the interior of $\Sigma$.
Given a homomorphism $\rho:\pi_1(S)\to G$, we define then
\bqn
\T_\kappa(\Sigma,\rho)=\big\<f_\RR^{-1}\big(\rho^\ast(\kappa)\big),[\Sigma,\partial\Sigma]\big\>\,.
\eqn
Of course if $\partial\Sigma=\emptyset$, that is if $S$ is compact,
the definition takes the simplified form
\bqn
\T_\kappa(S,\rho)=\big\<\rho^\ast(\kappa),[S]\big\>\,.
\eqn
Then we have the following:

\begin{theorem}[\cite{Burger_Iozzi_Wienhard_toledo}]\label{thm:5.5} 
Let $S$ be of finite topological type with presentation 
\bqn
\pi_1(S)=\left\<a_1,b_1,\dots,a_g,b_g,c_1,\dots,c_n:\prod_{i=1}^g[a_i,b_i]\prod_{j=1}^nc_j=e\right\>
\eqn
and $\rho:\pi_1(S)\to G$ a homomorphism.  Let  $\kappa\in\hcb^2(G,\ZZ)$ and $\chi\in\homc(K,\RR/\ZZ)$
the corresponding homomorphism. 
\be
\item Assume that $S$ is compact.  Then
\bqn
\T_\kappa(S,\rho)=-\chi_\ast\left(\prod_{i=1}^g[\rho(a_i),\rho(b_i)]^{\tilde\,}\right)\,,
\eqn
where $\chi_\ast:\pi_1(G)=\pi_1(K)\to\ZZ$ is the morphism induced by $\chi$ and 
$[\cdot, \cdot]^{\tilde\, }$ is the commutator map introduced in \S~\ref{subsec:hyp_surf_quasiconj} .

\item Assume that $S$ is not compact.  If 
$\widetilde\rho:\pi_1(S)\to\widetilde G$ is a lift of $\rho$ to $\widetilde G$, then
\bqn
\T_\kappa(\Sigma,\rho)=-\sum_{j=1}^n\widetilde\chiext\big(\widetilde\rho(c_j)\big)\,.
\eqn
\ee
\end{theorem}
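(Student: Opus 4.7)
The plan is to imitate the proof of Theorem~\ref{thm:4.6}, replacing the translation quasimorphism $\tau$ on $\widetilde{\PSU(1,1)}$ by the continuous homogeneous quasimorphism $\widetilde\chiext$ on $\widetilde G$ provided by Theorem~\ref{thm:5.4}(1). The decisive ingredient is the explicit bounded integral cocycle representative of $\kappa$ given in Theorem~\ref{thm:5.4}(2), namely
\bqn
c(g_1,g_2)=\big[\widetilde\chiext(\tilde g_1\tilde g_2)\big]-\big[\widetilde\chiext(\tilde g_1)\big]-\big[\widetilde\chiext(\tilde g_2)\big]\,,
\eqn
where $\tilde g_i\in\widetilde G$ are arbitrary lifts of $g_i\in G$.

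For part (1), $S$ is compact, so $\T_\kappa(S,\rho)=\langle\rho^\ast\kappa,[S]\rangle$; since $\kappa$ is integral and $\h^2\big(\pi_1(S),\ZZ\big)=\ZZ[\overline\Gamma_g]$ by Proposition~\ref{prop:3.1}, this pairing is an integer of the form $N\langle[\overline\Gamma_g],[S]\rangle$. I would identify $N$ via central extensions. The class $\kappa$ corresponds under $\hc^2(G,\ZZ)\cong\hom\big(\pi_1(G),\ZZ\big)$ to $\chi_\ast$, so the central extension of $G$ by $\ZZ$ classified by $\kappa$ is obtained by pushing $\widetilde G\to G$ forward along $\chi_\ast$. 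Pulling back this extension via $\rho$ and invoking the description after Proposition~\ref{prop:3.1} of $\h^2\big(\pi_1(S),\ZZ\big)\cong\ZZ[\overline\Gamma_g]$ in terms of commutators of lifted generators, one obtains
\bqn
N=\chi_\ast\left(\prod_{i=1}^g\big[\rho(a_i),\rho(b_i)\big]^{\widetilde\,}\right)\,,
\eqn
since for any set-theoretic lifts $\tilde\rho(a_i),\tilde\rho(b_i)\in\widetilde G$ the product $\prod_i[\tilde\rho(a_i),\tilde\rho(b_i)]$ lies in $\pi_1(G)$ and its image under $\chi_\ast$ is precisely the generator's twist in the pulled-back extension. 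Combined with $\langle[\overline\Gamma_g],[S]\rangle=-1$, which is forced by \eqref{eq:3.7.1} and \eqref{eq:3.7.2}, this yields the desired formula.

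For part (2), $S$ is noncompact, so $\pi_1(S)$ is free, and $\rho$ lifts to a homomorphism $\tilde\rho:\pi_1(S)\to\widetilde G$. The composition $\psi:=\widetilde\chiext\circ\tilde\rho:\pi_1(S)\to\RR$ is a homogeneous quasimorphism, and its coboundary $-d\psi$ is a bounded real cocycle which, modulo the bounded coboundary coming from the fractional parts of $\widetilde\chiext\circ\tilde\rho$, coincides with the pullback of $c$; consequently $[-d\psi]$ represents the image of $\rho^\ast\kappa$ in $\hb^2\big(\pi_1(S),\RR\big)$. To produce the relative class $f_\RR^{-1}(\rho^\ast\kappa)\in\hb^2(\Sigma,\partial\Sigma,\RR)$, I would use that $-\psi$ itself is a bounded primitive of $-d\psi$ upon restriction to each cyclic boundary subgroup $\langle c_j\rangle$, which is exactly why $f_\RR$ is an isomorphism (both $\hb^1$ and $\hb^2$ of $\partial\Sigma$ vanish in the relevant range, cf. \S~\ref{subsec:bbd_eul_bdd_tol}). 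A standard boundary pairing in relative bounded cohomology, analogous to Stokes' theorem and mirroring the derivation of the rotation-number formula of \S~\ref{subsec:bdd_eul}, then yields
\bqn
\big\langle f_\RR^{-1}(\rho^\ast\kappa),[\Sigma,\partial\Sigma]\big\rangle=-\sum_{j=1}^n\psi(c_j)=-\sum_{j=1}^n\widetilde\chiext\big(\tilde\rho(c_j)\big)\,.
\eqn

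The main obstacle I anticipate is in part (2): rigorously identifying $f_\RR^{-1}(\rho^\ast\kappa)$ with the pair $(-d\psi,\,-\psi|_{\partial\Sigma})$ in the complex computing $\hb^2(\Sigma,\partial\Sigma,\RR)$, and showing that evaluation on the relative fundamental class reduces precisely to the boundary sum. This amounts to an ``integration by parts'' via the connecting homomorphism $\hb^1(\partial\Sigma,\RR)\to\hb^2(\Sigma,\partial\Sigma,\RR)$, and requires a careful matching between singular and group-cohomological models of relative bounded cohomology, a more delicate version of the argument that underlies Theorem~\ref{thm:4.6} for $G=\PSU(1,1)$.
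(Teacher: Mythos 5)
The survey itself gives no proof of Theorem~\ref{thm:5.5} (it is quoted from \cite{Burger_Iozzi_Wienhard_toledo}), and your proposal assembles precisely the ingredients the text supplies for it --- Theorem~\ref{thm:5.4} for the quasimorphism $\widetilde\chiext$ and its integral cocycle, the central-extension description $\h^2\big(\pi_1(S),\ZZ\big)=\ZZ\big[\overline\Gamma_g\big]$ of \S~\ref{subsec:descr} together with the sign fixed by \eqref{eq:3.7.1}--\eqref{eq:3.7.2} for part (1), and the evaluation lemma of \S~\ref{sec:scl} for part (2) --- exactly mirroring the route taken for $\PSU(1,1)$ in Theorem~\ref{thm:4.6}; this is the intended argument.

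The one point you must still pin down is the sign bookkeeping in part (2), where your two steps do not cohere as written: you identify $\rho^\ast\kappa$ with $\big[d^1(-\psi)\big]$, but the lemma of \S~\ref{sec:scl} applied to the representative $d^1F$ with $F=-\psi$ returns $\big\<f_\RR^{-1}(\rho^\ast\kappa),[\Sigma,\partial\Sigma]\big\>=-\sum_{j}F(c_j)=+\sum_{j}\psi(c_j)$, the opposite of the sign you assert via your ``Stokes'' pairing. Whether the fix is that the integer-part cocycle of Theorem~\ref{thm:5.4}(2) is in fact cohomologous to $+d^1\widetilde\chiext$ with the bar differential of \S~\ref{subsec:bdd_eul}, or that the pairing convention carries the extra sign, cannot be settled from the survey alone (its own conventions around Theorem~\ref{thm:4.6}(2), \eqref{eq:3.7.2} and Theorem~\ref{thm:4.18} are not mutually uniform), so you should fix one convention for $d^1$ and for the cocycle of Theorem~\ref{thm:5.4}(2) and carry it through both steps rather than let two sign flips cancel; the final formula you reach is the correct one, and the conceptual content of the proof is sound.
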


\subsection{Maximal representations and basic geometric properties}\label{subsec:max_rep_geom_pr}
Let $S$ be of finite topological type and $\rho:\pi_1(S)\to G$ a homomorphism.
Based on our considerations in the case of $\PSU(1,1)$ in \S~\ref{sec:surf_fin_eul}
and the computation of the Gromov norm of the bounded K\"ahler class $\kgb\in\hcb^2(G,\RR)$
by Clerc and \O rsted, we can conclude

\begin{corollary}\label{cor:5.6}  The Toledo invariant $\T(\rho)$ defined in \eqref{eq:7.1.1}
satisfies the inequality
\bqn
\big|\T(\rho)\big|\leq\rk_X\big|\chi(S)\big|
\eqn
with equality if and only if
\bqn
\rho^\ast(\kgb)=\pm\rk_X\ksrb\,,
\eqn
where $\ksrb$ is the bounded real class defined in the context of Corollary~\ref{cor:4.14}.
\end{corollary}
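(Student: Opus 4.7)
The plan is to deduce the statement by combining Corollary~\ref{cor:4.14} with the Clerc--\O rsted computation of the norm of the bounded K\"ahler class (Theorem~\ref{thm:5.2}). Concretely, I would set $\alpha:=\rho^\ast(\kgb)\in\hb^2\big(\pi_1(S),\RR\big)$, so that by definition of the Toledo invariant we have $\T(\rho)=\<f_\RR^{-1}(\alpha),[\Sigma,\partial\Sigma]\>$. Applying Corollary~\ref{cor:4.14} to this $\alpha$ gives
\[
|\T(\rho)|\le 2\,\|\alpha\|\,|\chi(S)|\,,
\]
and since pullback in bounded cohomology is norm non-increasing and $\|\kgb\|=\tfrac12\rk_X$ by Theorem~\ref{thm:5.2}, we obtain $\|\alpha\|\le\tfrac12\rk_X$. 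Chaining these two bounds yields the desired inequality $|\T(\rho)|\le\rk_X\,|\chi(S)|$.

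For the equality case, suppose $|\T(\rho)|=\rk_X\,|\chi(S)|$; then the first of the two above inequalities must be sharp, so by the characterization in Corollary~\ref{cor:4.14} one has $\alpha=c\,\ksrb$ for some $c\in\RR$. To identify $c$, I would use a hyperbolization $h:\pi_1(S)\to\PSU(1,1)$: from the discussion preceding Corollary~\ref{cor:4.17}, $h^\ast(\erb)=\ksrb$ and $\eb(h)=2g-2+n=|\chi(S)|$, so
\[
\<f_\RR^{-1}(\ksrb),[\Sigma,\partial\Sigma]\>=|\chi(S)|\,.
\]
Substituting $\alpha=c\,\ksrb$ into the defining formula for $\T(\rho)$ therefore gives $\T(\rho)=c\,|\chi(S)|$, and combined with the assumption $|\T(\rho)|=\rk_X\,|\chi(S)|$ this forces $c=\pm\rk_X$. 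Conversely, if $\rho^\ast(\kgb)=\pm\rk_X\,\ksrb$ then the same substitution immediately yields $|\T(\rho)|=\rk_X\,|\chi(S)|$, closing the equivalence.

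No serious obstacle arises: the entire content of the proof is packaged into Corollary~\ref{cor:4.14} (the extremality statement for the transfer-and-evaluate map) and Theorem~\ref{thm:5.2} (the Clerc--\O rsted computation). The only care needed is in the normalizations: the factor $\tfrac12$ in $\|\kgb\|=\tfrac12\rk_X$ is precisely what matches the factor $2$ on the right-hand side of Corollary~\ref{cor:4.14}, and the identification $\<f_\RR^{-1}(\ksrb),[\Sigma,\partial\Sigma]\>=|\chi(S)|$ (with the correct sign) handles the orientation bookkeeping via the hyperbolization convention recorded just before Corollary~\ref{cor:4.17}. Note that one does not need to analyze the second inequality $\|\alpha\|\le\|\kgb\|$ separately to pin down the scalar $c$: once $\alpha=c\,\ksrb$ is known, evaluating on $[\Sigma,\partial\Sigma]$ and using the assumed value of $\T(\rho)$ determines $c$ directly.
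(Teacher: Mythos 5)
Your proof is correct and follows the paper's own argument almost verbatim: apply Corollary~\ref{cor:4.14} to $\alpha=\rho^\ast(\kgb)$, use that $\rho^\ast$ is norm decreasing together with $\|\kgb\|=\tfrac12\rk_X$ from Theorem~\ref{thm:5.2}, and in the equality case conclude $\alpha=c\,\ksrb$ from the extremality statement. The only (harmless) difference is how you pin down the scalar: the paper reads $|c|=\rk_X$ off the forced norm equality $\|\rho^\ast(\kgb)\|=\tfrac12\rk_X$, while you evaluate $f_\RR^{-1}(c\,\ksrb)$ on $[\Sigma,\partial\Sigma]$ via the hyperbolization normalization --- both are valid.
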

\begin{proof}  Corollary~\ref{cor:4.14} with $\alpha=\rho^\ast(\kgb)$ implies 
that 
\bqn
\big|\T(\rho)\big|\leq2\big\|\rho^\ast(\kgb)\big\|\,\big|\chi(S)\big|\,.
\eqn
Using the fact that $\rho^\ast$ is norm decreasing and the value of
$\|\kgb\|$ (see Theorem~\ref{thm:5.2}) we obtain that
\bqn
\big|\T(\rho)\big|\leq\rk_X\big|\chi(S)\big|\,.
\eqn
Equality implies that 
\bqn
\big|\T(\rho)\big|=2\big\|\rho^\ast(\kgb)\big\|\,\big|\chi(S)\big|=\rk_X\big|\chi(S)\big|\,.
\eqn
It follows from the first equality that 
\bqn
\rho^\ast(\kgb)=\lambda\,\ksrb
\eqn
for some $\lambda\in\RR$ (Corollary~\ref{cor:4.14}), 
and from the second equality that $|\lambda|=\rk_X$.
\end{proof}

Thus we now introduce the following

\begin{definition}\label{def:5.7} A representation $\rho:\pi_1(S)\to G$ is
called {\em maximal}\index{maximal representation}\index{representation!maximal} if 
\bqn
\T(\rho)=\rk_X\big|\chi(S)\big|\,.
\eqn
\end{definition}

The basic example of a family of maximal representations is obtained 
via a geometric fact of fundamental importance called the polydisk theorem.
Recall that in a symmetric space of noncompact type there are maximal flat subspaces,
they are all $G$-conjugate and their common dimension is the rank $r=\rk_X$ of $X$.
When $X$ is Hermitian symmetric, a geometric version of a fundamental result of Harish-Chandra
says that the complexification of a maximal flat is a maximal polydisk,
or, in other words, that the image under the exponential map of the complexified tangent space of a maximal flat
is a totally geodesic holomorphic copy of $\DD^r$.
The fact that the normalized metric on $X$ is taken to be of minimal holomorphic 
sectional curvature $-1$ is equivalent to the property 
that any maximal polydisk embedding
\bqn
\varphi:\DD^r\to X
\eqn
is isometric.  To such a map corresponds a homomorphism
\bqn
\Phi:\SU(1,1)^r\to G
\eqn
with respect to which $\varphi$ is equivariant.
Given then $r$ hyperbolizations $\rho_1,\dots,\rho_r:\pi_1(S)\to\SU(1,1)$,
the homomorphism 
\bqn
\rho(\gamma)=\Phi\big(\rho_1(\gamma),\dots,\rho_r(\gamma)\big)
\eqn
is then maximal.  The main point is the fact that
\bqn
\Phi^\ast(\kgb)=\kappa_{\SU(1,1)^r}^{\rm b}\,.
\eqn
This follows from a Lie algebra computation which gives 
\bqn
\Phi^\ast(\omega_X)=\omega_{\DD^r}
\eqn
and the naturality of the isomorphisms
\bqn
\Omega^2(X)^G\cong\hc^2(G,\RR)\cong\hcb^2(G,\RR)\,.
\eqn
Observe that we could have taken an antiholomorphic embedding 
$\overline\varphi:\DD^r\to X$, in which case
\bqn
\overline{\rho}(\gamma)=\overline{\Phi}\big(\rho_1(\gamma),\dots,\rho_r(\gamma)\big)
\eqn
has then 
\bqn
\T(\overline\rho)=-\rk_X\,\big|\chi(S)\big|
\eqn
as Toledo invariant.

Now, the class $\kgb$ is not always integral, but there is a specific natural number $n_X$ 
depending on the root system of $G$ such that $\kappa = n_X\kgb$ is
an integral class.  From this and Theorem~\ref{thm:5.5} we deduce:

\begin{corollary}[\cite{Burger_Iozzi_Wienhard_toledo}]\label{cor:5.7} The map $\rho\mapsto\T(\rho)$ on 
$\hom\big(\pi_1(S),G\big)$ is continuous.
\be
\item If $S$ is compact, it takes values in $\frac{1}{n_X}\ZZ$ and
is constant on connected components.
\item If $S$ is not compact, its range is 
\bqn
\left[-\rk_X\big|\chi(S)\big|,\rk_X\big|\chi(S)\big|\right]\,.
\eqn
\ee
\end{corollary}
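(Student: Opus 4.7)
The plan is to deduce all three assertions from the explicit formulas of Theorem~\ref{thm:5.5} applied to the integral class $\kappa = n_X \kgb$, so that $\T(\rho) = \tfrac{1}{n_X}\T_\kappa(\cdot,\rho)$. In the compact case, the map $P : \rho \mapsto \prod_{i=1}^g [\rho(a_i),\rho(b_i)]^{\tilde{\,}}$ is continuous from $\hom(\pi_1(S),G)$ to $\widetilde{G}$; because the surface relation forces $\prod[\rho(a_i),\rho(b_i)] = e$ in $G$, the value $P(\rho)$ lies in the discrete subgroup $\pi_1(G) = \ker(\widetilde{G} \to G)$, so $P$ is locally constant and hence constant on connected components. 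Composing with the homomorphism $\chi_* : \pi_1(G) \to \ZZ$ gives $\T_\kappa(S,\rho) = -\chi_*(P(\rho)) \in \ZZ$, constant on components, and dividing by $n_X$ yields claim (i).

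For the noncompact case, I first address continuity of $\T_\kappa(\Sigma,\rho) = -\sum_j \widetilde\chiext(\tilde\rho(c_j))$. Since $\pi_1(S)$ is free on $2g+n-1$ generators, any $\rho$ lifts to some $\tilde\rho : \pi_1(S) \to \widetilde{G}$; locally around any $\rho_0$ I can lift the finitely many free generators continuously through the covering $\widetilde{G} \to G$, which produces a continuous family of lifts $\tilde\rho$, and then $\widetilde\chiext$ (continuous on $\widetilde{G}$) is applied. Independence of the lift is forced by the surface relation: two lifts differ by a homomorphism $\pi_1(S) \to \pi_1(G)$, commutator lifts are insensitive to this central subgroup, and $\widetilde\chiext|_{\pi_1(G)} = \chi_*$ is a genuine homomorphism, so the contributions $\chi_*(\xi_j)$ coming from the shifts of the $c_j$'s telescope to zero.

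It remains to show that $\T$ is surjective onto $[-\rk_X|\chi(S)|,\rk_X|\chi(S)|]$, the upper bound being Corollary~\ref{cor:5.6}. The polydisk construction preceding this corollary yields $\rho^+ = \Phi\circ(\rho_1,\dots,\rho_r)$ and $\rho^- = \overline\Phi\circ(\rho_1,\dots,\rho_r)$ with $\T(\rho^\pm) = \pm\rk_X|\chi(S)|$, using the additivity $\Phi^*\kgb = \kappa^{\rm b}_{\SU(1,1)^r}$ together with the fact that each hyperbolization $\rho_i$ contributes $|\chi(S)|$ in $\SU(1,1)$. Since $\pi_1(S)$ is free, $\hom(\pi_1(S),G) \cong G^{2g+n-1}$ is path-connected, so $\rho^+$ and $\rho^-$ can be joined by a continuous path; the intermediate value theorem combined with the continuity established above then produces every intermediate value. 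The main obstacle is the bookkeeping for the noncompact formula: verifying independence of the lift requires the precise interaction between the surface relation and the fact that $\widetilde\chiext$ is a quasimorphism whose restriction to $\pi_1(G)$ is the homomorphism $\chi_*$; once that is in hand, the remaining claims follow from Theorem~\ref{thm:5.5} by soft topological arguments.
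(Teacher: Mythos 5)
Your proposal is correct and follows essentially the same route as the paper's (very terse) proof: continuity and integrality are read off from the formulas of Theorem~\ref{thm:5.5} applied to the integral class $n_X\kgb$, the compact case reduces to the observation that the lifted commutator product lands in the discrete central subgroup $\pi_1(G)$, and the noncompact range follows from the polydisk construction at the endpoints together with the intermediate value theorem on the connected space $G^{2g+n-1}$. The details you supply (local continuous lifting of the free generators, and well-definedness of the lift via the fact that $\widetilde\chiext$ restricts to the homomorphism $\chi_*$ on the central subgroup while $\prod_j c_j$ is a product of commutators) are exactly the ones the paper leaves implicit.
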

\begin{proof} The continuity follows from the formulas in Theorem~\ref{thm:5.5}.
Then (1) is clear and (2) follows from the fact that, since $\pi_1(S)$ is free,
then $\hom\big(\pi_1(S),G\big)\cong G^{2g+n-1}$
is connected and therefore the intermediate value theorem implies the statement.
\end{proof}

In the study of representations of $\pi_1(S)$, where $S$ is of finite topological type,
we were several times led to consider the corresponding ``completed'' compact surface $\Sigma$
with boundary, for which of course we have $\pi_1(S)=\pi_1(\Sigma)$. 
In the light of the Fenchel--Nielsen approach to Teichm\"uller theory, 
it is natural to ask what happens to Toledo invariants 
when one glues together two surfaces along a component of their boundary.
The answer is given by the following

\begin{proposition}[\cite{Burger_Iozzi_Wienhard_toledo}]\label{prop:5.8}  
Let $\Sigma$ be a compact oriented surface
with boundary and $\rho:\pi_1(\Sigma)\to G$ a homomorphism.
\be
\item If $\Sigma=\Sigma_1\cup_C\Sigma_2$ is the connected sum
of two subsurfaces $\Sigma_1$ and $\Sigma_2$ along a separating loop $C$, then
\bqn
\T(\Sigma,\rho)=\T(\Sigma_1,\rho_1)+\T(\Sigma_2,\rho_2)\,,
\eqn
where $\rho_i$ is the restriction of $\rho$ to $\pi_1(\Sigma_i)$.
\item If $\Sigma'$ is the surface obtained by cutting $\Sigma$ along a
non-separating loop $C$ and $i:\Sigma'\to\Sigma$ is the canonical map, then 
\bqn
\T(\Sigma',\rho\,i_\ast)=\T(\Sigma,\rho)\,.
\eqn
\ee
\end{proposition}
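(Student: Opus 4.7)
The plan is to derive both identities directly from Theorem~\ref{thm:5.5}, exploiting two standard consequences of homogeneity of a continuous real-valued quasimorphism $\tau$ on $\widetilde G$: $\tau(g^{-1})=-\tau(g)$, and conjugation-invariance $\tau(hgh^{-1})=\tau(g)$, both obtained by raising to the $n$-th power and letting $n\to\infty$.

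Let $n_X\kgb\in\hcb^2(G,\ZZ)$ be an integer multiple of the bounded K\"ahler class, and set $\tau := n_X^{-1}\widetilde\chiext$, where $\widetilde\chiext$ is the associated homogeneous quasimorphism from Theorem~\ref{thm:5.4}. Since $\pi_1(\Sigma)$ is free (as $\partial\Sigma\ne\emptyset$), $\rho$ admits a homomorphism lift $\widetilde\rho:\pi_1(\Sigma)\to\widetilde G$, and Theorem~\ref{thm:5.5}(2), rescaled by $n_X^{-1}$, yields
\bqn
\T(\Sigma,\rho) = -\sum_{j=1}^n \tau\bigl(\widetilde\rho(c_j)\bigr).
\eqn
Any restriction of $\widetilde\rho$ along an inclusion of a subsurface is again a valid lift, so the same formula applies to $\Sigma_1$, $\Sigma_2$, and $\Sigma'$ with their boundary loops expressed as elements of $\pi_1(\Sigma)$.

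For part~(1), the boundary loops of $\Sigma_i$ consist of the components of $\partial\Sigma$ contained in $\Sigma_i$ together with a loop $C^{(i)}$ corresponding to $C$ with the orientation induced from $\Sigma_i$. Because these induced orientations on $C$ are opposite, the images of $C^{(1)}$ and $C^{(2)}$ in $\pi_1(\Sigma)$ are freely homotopic to $C$ and to $C^{-1}$ respectively, hence conjugate in $\pi_1(\Sigma)$ to $C^{\pm 1}$. Summing the displayed formula over $\Sigma_1$ and $\Sigma_2$ recovers $\T(\Sigma,\rho)$ from the contributions of the $c_j$'s, leaving the correction
\bqn
-\tau\bigl(\widetilde\rho(C^{(1)})\bigr)-\tau\bigl(\widetilde\rho(C^{(2)})\bigr) = -\tau\bigl(\widetilde\rho(C)\bigr)-\tau\bigl(\widetilde\rho(C)^{-1}\bigr) = 0\,.
\eqn

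Part~(2) is entirely parallel. The surface $\Sigma'$ acquires two new boundary loops $C_+,C_-$ arising from the two sides of the cut, and $i_*(C_\pm)$ are conjugate to $C^{\pm 1}$ in $\pi_1(\Sigma)$. Applying the displayed formula to $\Sigma'$ using the lift $\widetilde\rho\circ i_*$, the original $c_j$-contributions reassemble $\T(\Sigma,\rho)$, while the two new contributions $\tau(\widetilde\rho(i_*(C_+)))$ and $\tau(\widetilde\rho(i_*(C_-)))$ cancel by exactly the same homogeneity identity.

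The only non-trivial step is topological bookkeeping: checking that the induced boundary orientations really send $C^{(1)},C^{(2)}$ (respectively $C_+,C_-$) to conjugates of $C$ and $C^{-1}$ with opposite signs, so that the correction terms do cancel. This is standard from the convention that an oriented surface lies on the left of its induced boundary orientation; beyond this, everything reduces to the one-line homogeneity identity above.
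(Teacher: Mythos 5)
The survey states Proposition~\ref{prop:5.8} without proof (it is quoted from \cite{Burger_Iozzi_Wienhard_toledo}), so there is no in-paper argument to compare against; your derivation from Theorem~\ref{thm:5.5} is the natural one and, under the reading $\partial\Sigma\neq\emptyset$, it is correct. The two inputs you isolate are exactly the right ones: the formula $\T_\kappa(\Sigma,\rho)=-\sum_j\widetilde\chiext\big(\widetilde\rho(c_j)\big)$ holds for \emph{any} lift, so the restrictions of a single global lift $\widetilde\rho$ (resp.\ $\widetilde\rho\circ i_\ast$) are admissible on each piece; and a continuous homogeneous quasimorphism is a class function satisfying $\widetilde\chiext(g^{-1})=-\widetilde\chiext(g)$, so that the value of $-\widetilde\chiext\circ\widetilde\rho$ on a boundary generator depends only on the free homotopy class of the corresponding loop in $\Sigma$, and the two circles created by the cut, being freely homotopic to $C$ and to $C^{-1}$, contribute cancelling terms. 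Dividing by $n_X$ to pass from $\T_\kappa$ with $\kappa=n_X\kgb$ to $\T$ is also how the survey itself argues in Corollary~\ref{cor:5.7}, and your proof in fact establishes the stronger statement of Remark~\ref{remark} for every integral $\kappa$.

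The one point to flag is the closed case. The motivation given just before the proposition (Fenchel--Nielsen decompositions, and later the assertion that maximal representations of a compact surface are built from pairs of pants) requires part (1), and especially part (2), when $\partial\Sigma=\emptyset$, and there your argument does not apply as written: Theorem~\ref{thm:5.5}(1) expresses $\T(\Sigma,\rho)$ as $-\chi_\ast$ of a product of lifted commutators rather than as a sum of boundary terms, and splitting that product between $\Sigma_1$ and $\Sigma_2$ requires an \emph{exact} additivity $\widetilde\chiext(uv)=\widetilde\chiext(u)+\widetilde\chiext(v)$ for the two partial products, which the quasimorphism property gives only up to the defect. The case can still be handled with your toolkit, but via a different identity: writing $uv=z\in\pi_1(G)$ (the relator forces the total product to be central) one has $v=u^{-1}z$ and uses $\widetilde\chiext(u^{-1}z)=-\widetilde\chiext(u)+\chi_\ast(z)$, a standard property of homogeneous quasimorphisms on central elements, to reconcile the commutator formula for $\Sigma$ with the boundary formulas for the pieces. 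If the hypothesis ``with boundary'' is read strictly as $\partial\Sigma\neq\emptyset$, your proof is complete; otherwise this extra computation should be added.
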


\begin{remark}\label{remark} This result holds also for the invariants $\T_\kappa(\Sigma,\rho)$
for $\kappa\in\hcb^2(G,\ZZ)$ introduced in \S\ref{subsec:coh_fr}.
\end{remark}

In the situation of Proposition~\ref{prop:5.8}, 
if we take into account that the Euler characteristic is additive
under connected sum, we obtain that $\rho\in\hom\big(\pi_1(\Sigma),G\big)$
is maximal if and only if $\rho_i\in\hom\big(\pi_1(\Sigma_i),G\big)$
are maximal for $i=1,2$.

Concerning the geometric properties of maximal representations, the first fundamental
result is:

\begin{theorem}[\cite{Burger_Iozzi_Wienhard_toledo}]\label{thm:5.9} 
Maximal representations are injective and with discrete image.
\end{theorem}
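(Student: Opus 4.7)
The plan is to deduce both assertions from the cohomological maximality $\rho^\ast(\kgb) = \pm\rk_X\,\ksrb$ furnished by Corollary~\ref{cor:5.6}, combined with Matsumoto's rigidity Theorem~\ref{thm:4.16} for $\PSU(1,1)$ and the Clerc--\O rsted computation $\|\kgb\| = \tfrac12 \rk_X$ (Theorem~\ref{thm:5.2}). The central tool I would introduce is a measurable $\rho$-equivariant boundary map $\varphi\colon \partial\DD \to \cs$ into the Shilov boundary of $X$: such a map exists because the action of $\pi_1(S)$ on $\partial\DD$ (via a fixed hyperbolization $h$) is amenable and $\cs$ is $G$-compact.

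The key step would then be to translate cohomological maximality into a pointwise \emph{positivity} property of $\varphi$. Using a higher-rank analogue of Proposition~\ref{prop:4.9}, which identifies $\hcb^2(G,\RR)$ with alternating essentially bounded cocycles on $\cs^3$ and realizes $\kgb$ via the Bergmann cocycle $\beta$ (the Shilov-boundary analogue of the orientation cocycle $\orn$), the identity $\rho^\ast(\kgb) = \pm\rk_X \ksrb$ together with the proof technique of Theorem~\ref{thm:4.13} forces, almost everywhere, the equality $\beta\bigl(\varphi(x_1),\varphi(x_2),\varphi(x_3)\bigr) = \pm \rk_X\cdot \tfrac12\,\orn(x_1,x_2,x_3)$. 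In other words, $\varphi$ sends positively oriented triples in $\partial\DD$ to \emph{maximal} triples in $\cs$, where $\beta$ attains its extremal value.

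With positivity of $\varphi$ in hand, injectivity and discreteness should follow by arguments close in spirit to the proof of Theorem~\ref{thm:4.16}. For injectivity, any $\gamma\in\ker\rho$ would satisfy $\varphi = \varphi\circ h(\gamma)$; minimality of the $h$-action on $\partial\DD$ combined with the fact that maximal triples in $\cs$ have discrete $G$-stabilizers (a standard feature of the Shilov boundary of a Hermitian symmetric space) would then force $\gamma = e$. For discreteness, the identity component $H^{0}$ of the closure $H = \overline{\rho(\pi_1(S))}$ would fix every maximal triple in the essential image of $\varphi$; rigidity of such stabilizers would force $H^{0}$ to lie in the (finite) center of $G$, hence $H^{0} = \{e\}$. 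The principal obstacle will be the construction of the higher-rank bounded cohomology model and the translation of the equality of cohomology classes into pointwise positivity of $\varphi$; this requires a careful generalization of the circle-boundary techniques of \S~\ref{subsec:comp_bdd_coh}, exploiting both amenability of the Poisson boundary and the $G$-orbit structure on triples in $\cs$.
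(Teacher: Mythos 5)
Your overall architecture --- a measurable equivariant map $\varphi:\partial\DD\to\cs$, translation of $\rho^\ast(\kgb)=\pm\rk_X\,\ksrb$ into the statement that $\varphi$ sends positively oriented triples to maximal triples --- is exactly the route of the cited reference and of Theorem~\ref{thm:5.21}; it is worth noting that the survey itself points to a different proof, via homogeneous quasimorphisms and causal representations. But two of your concrete steps fail. The claim that maximal triples in $\cs$ have \emph{discrete} $G$-stabilizers is false as soon as $\rk_X\geq 2$: for $G=\Sp(2n,\RR)$ the stabilizer of a maximal triple of Lagrangians is conjugate to $\O(n)\subset\GL(n,\RR)$, compact but positive-dimensional. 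The correct and useful fact is \emph{compactness} of these stabilizers. Injectivity survives without any appeal to stabilizers: if $\ker\rho\neq\{e\}$ it is an infinite normal subgroup, so $h(\ker\rho)$ acts ergodically on $(\partial\DD)^2$, the a.e.\ $h(\ker\rho)$-invariant map $\varphi$ is essentially constant, and this contradicts the fact that $\varphi$ sends almost every positive triple to a triple of pairwise transverse, in particular distinct, points.

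The genuine gap is in the discreteness step. The essential image $\essim\varphi$ is invariant under $H=\overline{\rho(\pi_1(S))}$ only \emph{as a set}; there is no reason whatsoever for $H^0$ to fix its points, and pointwise fixing is precisely what your argument needs. The actual proof first upgrades $\varphi$ to a strictly equivariant, left-continuous, monotone map $\varphi_-$ (as in \S~\ref{subsec:bdry_rot_repvar}) and then argues dynamically: if $\rho(\gamma_n)\to e$ with $\gamma_n\neq e$, then $h(\gamma_n)\to\infty$ in $\PSU(1,1)$ and, after passing to a subsequence, contracts $\partial\DD$ minus a point; hence the triples $\rho(\gamma_n)\big(\varphi_-(x),\varphi_-(y),\varphi_-(z)\big)=\big(\varphi_-(h(\gamma_n)x),\varphi_-(h(\gamma_n)y),\varphi_-(h(\gamma_n)z)\big)$ must both converge to the fixed nondegenerate maximal triple and degenerate (by monotonicity, since $(h(\gamma_n)x,h(\gamma_n)y,h(\gamma_n)z)$ collapses), a contradiction. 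Finally, note that amenability of the $\pi_1(S)$-action on $\partial\DD$ only produces a map into the probability measures $\Pp(\cs)$; getting a map with values in $\cs$ itself is a nontrivial consequence of maximality (or of Zariski density after applying the structure theorem) and cannot be dismissed as "amenability plus compactness".
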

\begin{remark}
As soon as the Lie group $G$ is not locally isomorphic to $\PSU(1,1)$, 
there are injective representations with discrete image that are not maximal. 
\end{remark}

The proof of this result relies on the fact (see Corollary~\ref{cor:5.6})
that if $\rho:\pi_1(S)\to G$ is maximal and $\rk_X$ is the real rank of $G$, then 
\bqn
\rho^\ast(\kgb)=\rk_X\,\ksrb\,.
\eqn
The proof proceeds then by an appropriate reinterpretations of this equality 
in terms of homogeneous quasimorphisms; this approach works for a larger class
of representations, namely the class of {\it causal} representations 
which satisfy that $\rho^\ast(\kgb) = \lambda \, \ksrb$ for some $\lambda \neq 0$;
details will appear in \cite{BenSimon_Burger_Hartnick_Iozzi_Wienhard}.

While Theorem~\ref{thm:5.9} holds for all surfaces of finite type,
one has a substantially stronger result when $S$ is compact, namely:

\begin{theorem}[\cite{Burger_Iozzi_Wienhard_toledo}]\label{thm:5.9.2}
 Let $S$ be a compact surface, $\rho:\pi_1(S)\to G$ a maximal
representation and $X$ the symmetric space associated to $G$.  Then there are
constants $A>0$ and $B\geq0$ such that 
\bqn
A^{-1}\|\gamma\|-B\leq d_X\big(\rho(\gamma)x_0,x_0\big)\leq A\|\gamma\|+B\,,
\eqn
where $x_0\in X$ is a basepoint and $\|\,\cdot\,\|$ is a word metric on $\pi_1(S)$.
\end{theorem}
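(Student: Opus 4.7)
The upper bound is immediate: fix a finite symmetric generating set $F \subset \pi_1(S)$ and set $A_0 := \max_{s \in F} d_X(\rho(s) x_0, x_0)$; writing $\gamma$ as a word of length $\|\gamma\|$ in $F$ and applying the triangle inequality yields $d_X(\rho(\gamma) x_0, x_0) \leq A_0 \|\gamma\|$. All the content lies in the lower bound, which I plan to reduce to a quasi-isometric embedding statement for a $\rho$-equivariant map $\DD \to X$.

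Fix a hyperbolization $h : \pi_1(S) \to \PSU(1,1)$; since $S$ is compact of genus $\geq 2$, $h(\pi_1(S))$ acts properly and cocompactly on $\DD$, so by the Milnor--Schwarz lemma the word length $\|\gamma\|$ is bi-Lipschitz equivalent to $d_\DD(h(\gamma) \cdot 0, 0)$. Construct a smooth $\rho$-equivariant map $f : \DD \to X$ by extending a map defined on a fundamental domain; cocompactness of the $h$-action makes $f$ automatically Lipschitz. Taking $x_0 := f(0)$, equivariance gives
\bqn
d_X(\rho(\gamma) x_0, x_0) = d_X\big(f(h(\gamma) \cdot 0), f(0)\big),
\eqn
so the estimate reduces to showing that $f$ is a quasi-isometric embedding.

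The crucial input is maximality, packaged cohomologically as $\rho^*(\kgb) = \pm \rk_X \cdot \ksrb$ by Corollary~\ref{cor:5.6}. Passing to the bounded cocycle model of Proposition~\ref{prop:4.9} and invoking the equality case of Theorem~\ref{thm:4.13} shows that, as alternating cocycles on $(\partial \DD)^3$, any $\rho$-equivariant measurable boundary map $\varphi : \partial \DD \to \check{S}$ into the Shilov boundary of $X$ pulls the bounded K\"ahler cocycle of $G$ back to $\rk_X$ times the orientation cocycle almost everywhere. This ``a.e.\ maximal positivity'' forces $\varphi$ to be measurably injective and to send positively oriented triples to uniformly transverse, positive triples in $\check{S}$. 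Combined with the injectivity and discreteness of $\rho$ from Theorem~\ref{thm:5.9}, this is precisely the input which, in the theory of maximal representations, pins the image $f(\alpha)$ of a geodesic $\alpha \subset \DD$ to a uniform tubular neighborhood of a geodesic segment inside a maximal polydisk of $X$ whose Shilov endpoints are the $\varphi$-images of the endpoints of $\alpha$.

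The principal obstacle is this final step: converting the a.e.\ cohomological identity for $\varphi$ into a \emph{uniform, quantitative} lower bound on $d_X(f(p), f(q))$ in terms of $d_\DD(p,q)$. This requires the full boundary-map machinery for maximal representations --- existence of $\varphi$, its essential injectivity, and its positivity properties --- developed in \S\S~\ref{subsec:tight}--\ref{subsec:bdry_rot_repvar}. Once that technology is granted, a Milnor--Schwarz-type upgrade using a compact fundamental domain for $h(\pi_1(S))$ in $\DD$ promotes the large-scale estimate to a uniform bi-Lipschitz statement for $f$, which combined with the first paragraph's reduction yields the desired lower bound $A^{-1}\|\gamma\| - B \leq d_X(\rho(\gamma) x_0, x_0)$.
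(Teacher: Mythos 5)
Your reduction of the theorem to the lower bound, and the identification of the equivariant boundary map into the Shilov boundary as the key consequence of maximality, are both on target and consistent with the paper's route: the paper derives Theorem~\ref{thm:5.9.2} from the fact that maximal representations are Anosov with respect to the stabilizers of points of $\cs$ (see \S~\ref{subsec:anosov}), and the Anosov property is itself established via the boundary maps of Theorem~\ref{thm:5.21}. However, there is a genuine gap at exactly the point you flag as ``the principal obstacle'': you never actually pass from the (a.e., then upgraded to left-continuous and positive) boundary map to a \emph{quantitative} lower bound on $d_X\big(f(p),f(q)\big)$. The assertion that positivity ``pins the image $f(\alpha)$ of a geodesic to a uniform tubular neighborhood of a geodesic segment inside a maximal polydisk'' is essentially a restatement of the conclusion, not a proof of it, and the concluding appeal to ``a Milnor--Schwarz-type upgrade'' cannot close the gap: Milnor--Schwarz applies to cocompact actions, and while $h\big(\pi_1(S)\big)$ acts cocompactly on $\DD$ (which correctly handles the domain side, $\|\gamma\|\asymp d_\DD(h(\gamma)0,0)$), the group $\rho\big(\pi_1(S)\big)$ does \emph{not} act cocompactly on $X$, so no such argument controls the target side. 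Discreteness and injectivity of $\rho$ (Theorem~\ref{thm:5.9}) give only properness of the orbit map, which is strictly weaker than a linear lower bound.

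The missing mechanism is the contraction estimate that constitutes the Anosov property: one uses the boundary maps $\xi_\pm=\varphi$ to build the section $\sigma$ of the bundle $\mathcal{O}(\rho)$ over $T^1S$, and the transversality/maximality of the pairs $\big(\varphi(v_+),\varphi(v_-)\big)$ together with compactness of $T^1S$ yields \emph{uniform} exponential contraction of the geodesic flow on $\sigma^*E^+$ (Definition~\ref{def:anosov_def}(\ref{two})). It is this uniform contraction, not the cohomological identity alone, that converts displacement along geodesics in $\DD$ into linear growth of displacement in $X$ and hence gives $A^{-1}\|\gamma\|-B\le d_X\big(\rho(\gamma)x_0,x_0\big)$. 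As written, your argument establishes the upper bound and correctly assembles the inputs for the lower bound, but the lower bound itself is assumed rather than derived.
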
  
This result is a consequence of the fact that maximal representations are {\em Anosov} (see \S~\ref{subsec:anosov});
the proof of this fact uses the structure theorem 
Theorem~\ref{thm:5.10}  presented in the next section. 

\subsection{The structure theorem and tube type domains}\label{subsec:str_thm_ttdom}
Almost from its beginning in the 80's, research on maximal representations was driven by
``irreducibility'' questions.  For instance D.~Toledo, using tools from the
Gromov--Thurston proof of Mostow rigidity for real hyperbolic manifolds, showed in \cite{Toledo_89} that 
a maximal representation from a compact surface group into $\SU(n,1)$ leaves invariant
a complex geodesic, or equivalently its image is contained in a conjugate of
$\operatorname{S}\big(\operatorname{U}(n-1)\times\operatorname{U}(1,1)\big)$.
Then L.~Hern\'andez showed in \cite{Hernandez} that 
if $\SU(n,2)$ (for $n\geq2$) is the target group,
the image must be contained in a conjugate of
$\operatorname{S}\big(\operatorname{U}(n-2)\times\operatorname{U}(2,2)\big)$.
In \cite{Bradlow_GarciaPrada_Gothen} S.~Bradlow, O.~Garc\'\i a-Prada and P.~Gothen
then showed that a reductive maximal representation with target $\SU(p,q)$,
with $p\leq q$, is contained in a conjugate of 
$\operatorname{S}\big(\operatorname{U}(p,p)\times\operatorname{U}(q-p)\big)$
using methods from the theory of Higgs bundles.

In its most general form the problem presents itself naturally in the following way:
given a maximal representations $\rho:\pi_1(S)\to G$ where $G:=\gG(\RR)^\circ$
consists of the real points of the connected component of a semisimple algebraic 
group $\gG$ defined over $\RR$, determine the Zariski closure
$\lL:=\overline{\rho\big(\pi_1(S)\big)}^{\rm Z}$ of the image of $\rho$.
In \cite{Burger_Iozzi_Wienhard_toledo} we gave a complete answer to this question
and most of this section is devoted to the description of the result
and the ingredients of the proof.

Recall that every Hermitian symmetric space $X$ (of noncompact type)
is biholomorphic to a bounded domain $\Dd\subset\CC^n$.  While this is 
the natural generalization of the Poincar\'e disk, the question 
of the generalization of the upper half plane leads to the notion of tube type domain.
We say that $X$ (or $\Dd$) is of {\em tube type}\index{tube type domain} if it is biholomorphic 
to a domain of the form $V+\imath\Omega$, where $V$ is a real vector space and
$\Omega\subset V$ is an open convex proper cone in $V$.
The groups corresponding to irreducible Hermitian symmetric spaces of tube type are
 $\Sp(2n,\RR)$, $\SU(p,p)$, $\SO^\ast(2n)$
(for $n$ even), $\SO(2,n)$ and one of the two exceptional ones. 
There are many known characterizations of tube type domains, mainly in terms of special 
geometric structures, or the topology of their Shilov boundary, and 
we will add a new one in Theorem~\ref{thm:5.20}. 

With the notion of tube type at hand, the structure of the Zariski closure of the image
of  a maximal representation is described by the following 

\begin{theorem}[\cite{Burger_Iozzi_Wienhard_tight}]\label{thm:5.10}  Let $G:=\gG(\RR)^\circ$ 
be a Lie group of Hermitian type
with associate symmetric space $X$.  Let $\rho:\pi_1(S)\to G$ 
be a maximal representation and $\lL:=\overline{\rho\big(\pi_1(S)\big)}^{\rm Z}$
the Zariski closure of its image.  Then:
\be
\item the Lie group $L:=\lL(\RR)^\circ$ is reductive with compact centralizer in $G$;
\item the semisimple part of $L$ is of Hermitian type;
\item the Hermitian symmetric space $\Yy$ associated to $L$ is of tube type and 
  the totally geodesic embedding $\Yy\hookrightarrow X$ is tight.
\ee
\end{theorem}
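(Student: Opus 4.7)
The plan is to factor the theorem through the general notion of a \emph{tight homomorphism}: a continuous homomorphism $\iota\colon L\to G$ between locally compact groups (with $G$ of Hermitian type) such that the pullback map on bounded continuous cohomology preserves the Gromov norm of the bounded K\"ahler class, i.e.\ $\|\iota^\ast(\kgb)\|=\|\kgb\|$. The three conclusions of the theorem are then read off from a structure result for tight embeddings, so the work divides into two independent steps: (i) show that the inclusion $L\hookrightarrow G$ of the Zariski closure is tight; (ii) prove that any tight embedding has the stated reductive, Hermitian, and tube-type properties.

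For step (i), I would start from the maximality equation $\rho^\ast(\kgb)=\pm\rk_X\,\ksrb$ of Corollary~\ref{cor:5.6}. Combining this with Theorem~\ref{thm:5.2} (giving $\|\kgb\|=\tfrac12\rk_X$) and Corollary~\ref{cor:4.14} applied to $\alpha=\rho^\ast(\kgb)$ (which also pins down $\|\ksrb\|=\tfrac12$ by running the equality case in the opposite direction using a hyperbolization), one obtains $\|\rho^\ast(\kgb)\|=\|\kgb\|$. Since pullback is norm-decreasing and $\rho$ factors through $L$, this forces $\|\iota^\ast(\kgb)\|=\|\kgb\|$, i.e.\ the inclusion $\iota\colon L\hookrightarrow G$ is tight. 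Here the Zariski density of $\rho(\pi_1(S))$ in $L$ is essential: it ensures that no norm can be lost at the intermediate step $\pi_1(S)\to L$.

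For step (ii), the idea is to use the cocycle model of $\hcb^2(L,\RR)$ from Proposition~\ref{prop:4.9}, in which the bounded K\"ahler class is represented by an alternating measurable cocycle on the Shilov boundary $\cs$ of $X$ (the generalized Maslov/Bergmann cocycle), bounded in sup-norm by $\tfrac12\rk_X$. Tightness means that $\iota^\ast\kgb$ attains the same sup-norm, which forces the equivariant measurable boundary map into $\cs$ to take values, on almost every triple, in the locus where the cocycle is extremal. A detailed analysis of this extremal locus yields (1) that the unipotent radical of $L$ must act trivially on this measurable image and therefore is trivial, and that any non-compact piece of the centralizer would violate the extremality on a positive-measure set, so $L$ is reductive with compact centralizer in $G$; (2) the restriction of the non-trivial class $\kgb$ to the semisimple part of $L$ is non-trivial, and by Guichardet--Wigner a simple Lie group carries a non-zero class in $\hcb^2(\cdot,\RR)$ only if it is of Hermitian type, giving (2); (3) finally the extremality of the Bergmann cocycle on the image can be reformulated as the existence of a $G$-equivariant map of Shilov boundaries $\cs_\Yy\to\cs_X$ sending maximal triples to maximal triples, which is precisely the condition characterizing both the tube-type property of $\Yy$ and the tightness of $\Yy\hookrightarrow X$ (compare Theorem~\ref{thm:5.20}).

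The principal obstacle is the tube-type assertion in (3). Reductivity and the Hermitian conclusion can both be extracted by fairly soft equivariance and cohomological arguments, but forcing $\Yy$ to be of tube type requires a precise description of where the Bergmann cocycle on $\cs_X$ attains its maximum value $\tfrac12\rk_X$, together with a characterization of tube-type Hermitian symmetric spaces in terms of the existence of a $\RR$-valued, maximal, transverse-pair-of-points cocycle on their Shilov boundary. This is where the bulk of the work lies, and it is the step that the paper signals will be surveyed in the subsequent subsections on tightness and the boundary of the representation variety.
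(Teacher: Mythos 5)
Your overall architecture---maximal implies tight, then invoke a structure theorem for tight homomorphisms---is exactly the paper's route for conclusions (1), (2) and for the tightness of the embedding $\Yy\hookrightarrow X$: equality in the Milnor--Wood type inequality of Corollary~\ref{cor:5.6} forces $\|\rho^\ast(\kgb)\|=\|\kgb\|$, and the general structure theorem for tight homomorphisms (Theorem~\ref{thm:5.16}) then yields reductivity, compactness of the centralizer, the Hermitian type of the semisimple part, and the tightness and positivity of $\Yy\hookrightarrow X$. (A small point: your appeal to Zariski density to ensure that ``no norm is lost at the intermediate step'' is unnecessary; the sandwich $\|\rho^\ast(\kgb)\|\leq\|\iota^\ast(\kgb)\|\leq\|\kgb\|$ does this for any factorization of $\rho$ through $L$.)

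The genuine gap is in conclusion (3). You assert that extremality of the Bergmann cocycle on the essential image ``is precisely the condition characterizing both the tube-type property of $\Yy$ and the tightness of $\Yy\hookrightarrow X$.'' It characterizes tightness, but not tube type: a holomorphic isometric embedding of equal rank is tight and carries maximal triples to maximal triples whether or not the spaces are of tube type (the identity map of a non-tube-type $X$ is tight), so no amount of analysis of the extremal locus of $\beta_\Dd$ alone can produce the tube-type conclusion. Indeed the paper stresses that this is the one assertion that does \emph{not} follow from tightness and is specific to $\pi_1(S)$ being a surface group. The actual mechanism is algebro-geometric: after reducing to the Zariski-dense case (so $\Yy=X$), the maximality identity $\beta_\Dd\big(\varphi(x),\varphi(y),\varphi(z)\big)=\rk_\Dd\,\beta_\DD(x,y,z)$ forces the square of the Hermitian triple product $\<\<\,\cdot\,,\,\cdot\,\>\>^2$ to equal $1$ on $(\essim\varphi)^{(3)}$, a Zariski-dense subset of $\cs^{(3)}$; since this is a rational function it is then identically $1$ on $\cs^{(3)}$, and Theorem~\ref{thm:5.20} (non-tube type $\Leftrightarrow$ $\cs^{(3)}$ connected $\Leftrightarrow$ Hermitian triple product non-constant) gives the contradiction. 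Knowing where $\beta_\Dd$ attains $\tfrac12\rk_\Dd$ (Theorem~\ref{thm:5.18}) only places each individual maximal triple in the boundary of \emph{some} maximal tube-type subdomain; without the rationality-plus-Zariski-density step you cannot force them all into one, nor force $X$ itself to be of tube type.
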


In statement (3) the embedding $\Yy\hookrightarrow X$ is not necessarily 
holomorphic but it is tight, a notion involving the area of geodesic triangles
in $\Yy$ and $X$ with respect to $\omega_X$. We will elaborate on this notion
in \S~\ref{subsec:tight}.

In order to relate this result to the ``irreducibility question'' described above, 
we recall that in every Hermitian symmetric space $X$, maximal tube type subdomains
exists, they are all conjugate and of rank equal to the rank of $X$.
We have then:

\begin{corollary}[\cite{Burger_Iozzi_Wienhard_toledo}]\label{cor:5.11} 
 Let $\rho:\pi_1(S)\to G$ be a maximal representation.
Then there is a maximal tube type subdomain which is $\rho\big(\pi_1(S)\big)$-invariant.
\end{corollary}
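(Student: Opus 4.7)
The plan is to deduce the corollary directly from the structure theorem. First, I would apply Theorem~\ref{thm:5.10} to the maximal representation $\rho:\pi_1(S)\to G$: its Zariski closure $L = \overline{\rho(\pi_1(S))}^{\mathrm Z}(\RR)^\circ$ is reductive, the symmetric space $\Yy$ associated to its semisimple part is of tube type, and the totally geodesic embedding $\iota:\Yy\hookrightarrow X$ is tight. By construction $L$ (and hence $\rho(\pi_1(S))$) preserves $\iota(\Yy)$.

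Next I would show that $\rk_\Yy = \rk_X$, which is the key numerical consequence of tightness. Tightness of $\iota$ means, at the level of bounded K\"ahler classes, that $\|\iota^\ast \kgb\| = \|\kgb\|$. By Theorem~\ref{thm:5.2} this gives $\|\iota^\ast\kgb\| = \tfrac12\rk_X$. On the other hand, since $\iota^\ast\kgb \in \hcb^2(L,\RR)$ is a multiple of the bounded K\"ahler class $\kappa_L^{\mathrm b}$ of the Hermitian semisimple part of $L$, we have $\|\iota^\ast\kgb\| \le \|\kappa_L^{\mathrm b}\| = \tfrac12\rk_\Yy$. Combined with the obvious inequality $\rk_\Yy \le \rk_X$ coming from the existence of the totally geodesic embedding, this forces $\rk_\Yy = \rk_X$.

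Finally, I would invoke the classification of tube type subdomains of $X$. Since $\iota(\Yy)$ is a tube type totally geodesic subdomain of $X$ of maximal rank, it is contained in a maximal tube type subdomain $T\subset X$. The last step is to arrange for such a $T$ to be $\rho(\pi_1(S))$-invariant: the normalizer of $\iota(\Yy)$ in $G$ acts on the set of maximal tube type subdomains of $X$ containing $\iota(\Yy)$, and I would argue that this set is either a singleton or a compact homogeneous space on which the (reductive, with compact centralizer) group $L$ must fix a point — either directly from the Jordan-algebraic description of maximal tube type subdomains through a given maximal flat, or via an averaging/fixed-point argument using the compactness of the centralizer of $L$ in $G$. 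Since $\rho(\pi_1(S))\subset L$, any such fixed $T$ is $\rho$-invariant. The main obstacle is this last $L$-invariance statement: producing a preferred maximal tube type subdomain canonically attached to the rank-$\rk_X$ tube type subspace $\iota(\Yy)$ requires the explicit classification, which is where I would expect to rely most heavily on the structural results on tube type domains recalled just before the statement.
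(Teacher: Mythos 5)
Your second step is false, and it is contradicted by an example appearing in this very section. Tightness of the embedding $\iota:\Yy\hookrightarrow X$ does \emph{not} force $\rk_\Yy=\rk_X$: the diagonal disk $d:\DD\to X$ is tight and positive (Proposition~\ref{prop:5.14}(3)), and the maximal representation $\rho_0=\Delta\circ\rho$ of Theorem~\ref{thm:5.12} has Zariski closure whose associated symmetric space is $\Yy=\DD$, of rank $1$, inside an $X$ of arbitrary rank $r$ (see also Proposition~\ref{prop:5.15}). The error lies in the inequality $\|\iota^\ast\kgb\|\leq\frac12\rk_\Yy$: the pullback $\iota^\ast\kgb$ is a positive linear combination $\sum_i\lambda_i\kappa_i^{\rm b}$ of the bounded K\"ahler classes of the irreducible factors of $\Yy$, and the coefficients $\lambda_i$ need not be bounded by $1$ (for the diagonal disk, $\iota^\ast\kgb=r\,\kappa_{\SU(1,1)}^{\rm b}$, of norm $\frac{r}{2}$). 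Tightness says precisely that $\|\iota^\ast\kgb\|=\frac12\rk_X$, which is perfectly compatible with $\rk_\Yy<\rk_X$. The rank characterization in Proposition~\ref{prop:5.14} applies only to \emph{holomorphic} isometric maps, whereas the embedding produced by Theorem~\ref{thm:5.10} need not be holomorphic.

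Consequently you cannot place $\iota(\Yy)$ inside a maximal tube type subdomain by a rank count; the fact you actually need --- that a \emph{tightly} embedded tube type subdomain is contained in a canonically determined maximal tube type subdomain of $X$ --- is part of the structure theory of tight embeddings from \cite{Burger_Iozzi_Wienhard_tight} and cannot be bypassed. Once such a canonical $T\supset\iota(\Yy)$ is available, invariance is immediate: anything in $G$ normalizing $\iota(\Yy)$ preserves $T$ by canonicity, and $L$, hence $\rho\big(\pi_1(S)\big)$, normalizes $\iota(\Yy)$. Your proposed fallback --- a fixed point of $L$ on a compact homogeneous space of candidate subdomains --- would not work in any case: $L$ is a noncompact reductive group, and such groups need not fix points on compact homogeneous spaces (compactness of the \emph{centralizer} of $L$ in $G$ is irrelevant here). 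An alternative repair, in the spirit of \cite{Burger_Iozzi_Wienhard_toledo}, avoids the normalizer discussion altogether: the boundary map $\varphi:\partial\DD\to\cs$ of Theorem~\ref{thm:5.21} sends positive triples to maximal triples, a maximal triple lies in the Shilov boundary of a uniquely determined maximal tube type subdomain (cf.\ the remark following Theorem~\ref{thm:5.18}), and equivariance of $\varphi$ then forces that subdomain to be $\rho\big(\pi_1(S)\big)$-invariant.
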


A special case of Theorem~\ref{thm:5.10} is when $\rho$ has Zariski dense image in $\gG$,
in which case $\Yy=X$ and hence $X$ is of tube type.
This result is optimal, in the sense that every tube type domain admits 
a maximal representation with Zariski dense image.  In order to be more specific,
we recall that a diagonal disk in $X$ is a holomorphic totally geodesic embedding
\bqn
d:\DD\to X
\eqn
obtained as the composition of a diagonal embedding $\DD\to\DD^r$ (where $r=\rk_X$)
and a maximal polydisk embedding $\DD^r\to X$.  Let 
\bqn
\Delta:\SU(1,1)\to G
\eqn
denote the homomorphism corresponding to $d$.  Let now $\rho:\pi_1(S)\to\SU(1,1)$
be a hyperbolization.  Then we have the following

\begin{theorem}[\cite{Burger_Iozzi_Wienhard_toledo}]\label{thm:5.12}  Assume that $X$ is of tube type.
Then there exists a path of homomorphisms
\bqn
\rho_t:\pi_1(S)\to G\,,
\eqn
for $t\geq0$, such that 
\be
\item $\rho_t$ is maximal for all $t\geq0$ and $\rho_0=\Delta\circ\rho$;
\item $\rho_t$ has Zariski dense image for $t>0$.
\ee
\end{theorem}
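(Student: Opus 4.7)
The plan is to deform $\rho_0 := \Delta\circ\rho$ via the classical bending construction along a nonseparating simple closed curve, producing a one-parameter family that remains maximal by additivity of the Toledo invariant, and then to show that for a generic bending direction the Zariski closure escapes all proper Hermitian tube-type subgroups for every $t>0$.

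First, I fix a nonseparating simple closed curve $\gamma\subset S$ with $\rho(\gamma)$ hyperbolic and present $\pi_1(S)$ as an HNN extension $\langle\pi_1(S'),s\mid s\gamma s^{-1}=\gamma'\rangle$, where $S'$ is obtained by cutting $S$ along $\gamma$ and $\gamma,\gamma'$ are its two boundary loops. The element $g:=\Delta(\rho(\gamma))$ is regular semisimple in $G$, so its centralizer $T:=Z_G(g)$ is a maximal torus of real dimension $\rk_X$. Assuming $\rk_X\geq 2$ (the rank-one case being classical), I choose a one-parameter subgroup $a_t=\exp(tY)\subset T$ with tangent vector $Y\in\Lie(T)\setminus\Lie(\Delta(\SU(1,1)))$ and define
\bqn
\rho_t|_{\pi_1(S')}:=\rho_0|_{\pi_1(S')},\qquad \rho_t(s):=a_t\cdot\rho_0(s).
\eqn
Since $a_t$ commutes with $g=\rho_0(\gamma)=\rho_0(\gamma')$, the HNN relation is satisfied and this yields a continuous family of representations with $\rho_0=\Delta\circ\rho$.

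Maximality of $\rho_t$ for all $t\geq 0$ follows from Proposition~\ref{prop:5.8} together with Remark~\ref{remark}: cutting along $\gamma$ gives $\T(\rho_t)=\T(\rho_t|_{\pi_1(S')})$, which is independent of $t$ and equal to $\T(\rho_0)=\rk_X|\chi(S)|$.

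The hard step is showing Zariski density of $\rho_t(\pi_1(S))$ for every $t>0$. By Theorem~\ref{thm:5.10}, the Zariski closure $L_t$ is reductive with Hermitian tube-type semisimple part and tightly embedded associated symmetric space $\Yy_t\hookrightarrow X$, so $\Lie(L_t)$ belongs to a finite list of $G$-conjugacy classes of Hermitian tube-type subalgebras of $\frakg$. For each such class $[\frakl]$, the locus $\{t\geq 0:\Lie(L_t)\subseteq\Ad(G)\cdot\frakl\}$ is a real-algebraic subvariety of $\RR_{\geq 0}$; by analyticity of $t\mapsto\rho_t$ it is either all of $\RR_{\geq 0}$ or discrete. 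The main technical task is to choose $Y$ so that no proper conjugacy class $[\frakl]$ yields the entire half-line. Infinitesimally at $t=0$ this becomes a linear condition on $Y$: the derivative $\frac{d}{dt}\big|_{t=0}\rho_t(s)=Y\cdot\rho_0(s)$ must escape every proper Hermitian tube-type subalgebra of $\frakg$ containing $\Lie(\Delta(\SL(2,\RR)))$. Tube-typeness of $X$ enters precisely here, ensuring that $T$ has enough off-diagonal directions relative to $\Delta(\SU(1,1))$ and that brackets of $Y$ with $\rho_0(\pi_1(S'))$ eventually span all of $\frakg$. After such a generic choice of $Y$ and a reparameterization that pushes the finitely many remaining exceptional parameters to $t=0$, one obtains a path with $\rho_t$ Zariski dense for all $t>0$.
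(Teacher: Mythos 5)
The survey states Theorem~\ref{thm:5.12} without proof, quoting \cite{Burger_Iozzi_Wienhard_toledo}, so your argument has to stand on its own. Your overall strategy --- a bending deformation along a simple closed curve, with maximality preserved via the invariance of the Toledo number under cutting (Proposition~\ref{prop:5.8} and Remark~\ref{remark}) --- is the right kind of construction, and that half of the argument is essentially sound modulo two slips. First, $g=\Delta(\rho(\gamma))$ is \emph{not} regular semisimple in $G$ and its centralizer is not a maximal torus: for $G=\Sp(2n,\RR)$, for instance, the centralizer of the diagonal image of a hyperbolic element is a copy of $\GL(n,\RR)$, the Levi factor of the Shilov parabolic. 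This only gives you more bending directions, but the stated justification is false. Second, $\rho_0(\gamma)\neq\rho_0(\gamma')$ in general (since $\gamma'=s\gamma s^{-1}$), so with $\rho_t(s)=a_t\rho_0(s)$ the HNN relation forces $a_t$ to centralize $\rho_0(\gamma')$, not $\rho_0(\gamma)$; twisting on the other side, $\rho_t(s)=\rho_0(s)a_t$, repairs this.

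The genuine gap is the Zariski density step, which is the entire content of the theorem. Since $\overline{\rho_0(\pi_1(S'))}^{\rm Z}=\Delta\big(\SU(1,1)\big)$ and $\rho_0(s)$ lies in this subgroup, the Zariski closure $L_t$ is exactly $\overline{\big\langle\Delta(\SU(1,1)),\exp(tY)\big\rangle}^{\rm Z}$, so everything reduces to a concrete Lie-theoretic claim: there is a $Y$ in the centralizer of $g$ such that $\Delta\big(\fraks\frakl(2,\RR)\big)$ together with (the relevant part of the torus generated by) $\exp(tY)$ generates $\frakg$. This is precisely where tube type must enter: by Theorem~\ref{thm:5.10} no such $Y$ can exist when $X$ is not of tube type, even though the centralizer of $g$ is just as large there. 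Your sentence ``tube-typeness ensures that $T$ has enough off-diagonal directions'' therefore misidentifies the mechanism and simultaneously asserts exactly what has to be proved. In addition, the claim that $\{t:\Lie(L_t)\subseteq\Ad(G)\cdot\frakl\}$ is a real-algebraic subset of $\RR_{\geq0}$ does not follow: $t\mapsto\exp(tY)$ is analytic but not algebraic, and the condition involves an existential quantifier over the conjugating element, so at best one obtains a subanalytic set; the finiteness of the list of conjugacy classes of possible tight tube-type reductive Zariski closures is also asserted without justification. As written, the proposal establishes the easy half (a continuous path of maximal representations through $\Delta\circ\rho$) and defers the hard half to an unproved ``generic choice of $Y$''.
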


\begin{remark}
  Using the structure theory developed in
  \cite{Burger_Iozzi_Wienhard_tight} which is described in the
  following section, Kim and Pansu \cite{Kim_Pansu} recently showed
  that for fundamental groups of compact surfaces the global rigidity
  result for maximal representation into non-tube type Hermitian Lie
  groups given by Theorem~\ref{thm:5.10} arises only in this context.
  For a precise statement of their result see \cite[Corollary
  2]{Kim_Pansu}.
\end{remark}

In the next section we describe the various ingredients 
entering the proof of the structure theorem (Theorem~\ref{thm:5.10}).

\subsection{Tight homomorphisms, triangles \\ and the Hermitian triple product}\label{subsec:tight}
Maximal representations are a special case of more general type of homomorphism,
namely {\em tight homomorphisms}\index{tight homomorphism}; they are defined on any locally compact group $L$
and take values in a Lie group of Hermitian type $G$.

\begin{definition}\label{defi:5.10} A continuous homomorphism $\rho:L\to G$  
is {\em tight}\index{tight homomorphism}\index{homomorphism!tight} if
\bqn
\big\|\rho^\ast(\kgb)\big\|=\|\kgb\|\,.
\eqn
\end{definition}

By inspecting the proof of the Milnor--Wood type inequality in Corollary~\ref{cor:5.6}, 
one verifies easily that maximal representations are tight.

In the case in which also $L$ is a Lie group of Hermitian type and $\Yy$ is the
associated symmetric space, then a continuous homomorphism
$h:L\to G$ gives rise to a totally geodesic map $f:\Yy\to X$.
The geometric condition on $f$ for $\rho$ to be tight is then
\bq\label{eq:5.4.1}
\sup_{\Delta\subset\Yy}\int_\Delta f^\ast\omega_X=\sup_{\Delta\subset X}\int_\Delta\omega_X
\eq
and we call $f$ tight if it satisfies \eqref{eq:5.4.1}.  
A useful observation is that if $\rho:\pi_1(S)\to L$ is a homomorphism
such that $h\circ\rho$ is maximal, then $h$ is tight.
For the converse, we need to introduce an additional notion.
Namely, recall that the space 
\bqn
\hcb^2(L,\RR)\cong\hc^2(L,\RR)\cong\Omega^2(\Yy)^L
\eqn
is generated as a vector space by the pullback to $\Yy$ of 
the K\"ahler form of the irreducible factors of $\Yy$.
The open cone generated by the linear combination 
with strictly positive coefficients of these forms is called
the {\em cone of positive K\"ahler classes} 
and denoted by $\hcb^2(L,\RR)_{>0}$.

\begin{definition}\label{defi:5.11}  A continuous homomorphism $h:L\to G$ 
is positive\index{homomorphism!positive} if
\bqn
h^\ast(\kgb)\in\hcb^2(L,\RR)_{>0}\,.
\eqn
\end{definition}

With these definitions we have then:

\begin{proposition}[\cite{Burger_Iozzi_Wienhard_tight}]\label{prop:5.13}  
If $\rho:\pi_1(S)\to L$ is maximal and 
$h:L\to G$ is tight and positive, then $h\circ\rho:\pi_1(S)
\to G$ is maximal.
\end{proposition}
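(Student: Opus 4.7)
The plan is to compute $(h\circ\rho)^\ast(\kgb)$ explicitly as a scalar multiple of $\ksrb$, and then pin down that scalar by sandwiching it between a Milnor--Wood upper bound and a tightness lower bound.

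First I would show that maximality of $\rho$ passes to the irreducible factors of $\Yy$. Decompose $\Yy=\Yy_1\times\cdots\times\Yy_k$ into irreducible factors, with projections $\pi_j\colon L\to L_j$ onto the associated Hermitian Lie groups $L_j$. Since the K\"ahler form of a product is the sum of the pullbacks of the factor K\"ahler forms, we have $\kappa_L^{\mathrm b}=\sum_{j=1}^k\pi_j^\ast(\kljb)$ in $\hcb^2(L,\RR)$, and therefore $\T(\rho)=\sum_j\T(\rho_j)$ with $\rho_j:=\pi_j\circ\rho$. Each summand satisfies $|\T(\rho_j)|\leq\rk_{\Yy_j}\,|\chi(S)|$ by Corollary~\ref{cor:5.6}, and since $\sum_j\rk_{\Yy_j}=\rk_\Yy$, maximality of $\rho$ forces equality and positive sign in every factor. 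Applying the equality clause of Corollary~\ref{cor:5.6} to each $\rho_j$ then yields $\rho_j^\ast(\kljb)=\rk_{\Yy_j}\cdot\ksrb$ for all $j$.

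Next I would use positivity to identify $(h\circ\rho)^\ast(\kgb)$. By positivity of $h$, write $h^\ast(\kgb)=\sum_j\lambda_j\kljb$ with $\lambda_j>0$. Pulling back through $\rho$ and invoking the previous step,
\bqn
(h\circ\rho)^\ast(\kgb)=\sum_j\lambda_j\,\rho_j^\ast(\kljb)=\mu\cdot\ksrb,\qquad\mu:=\sum_j\lambda_j\rk_{\Yy_j}>0,
\eqn
so that $\T(h\circ\rho)=\mu\cdot|\chi(S)|$ and maximality of $h\circ\rho$ reduces to the equality $\mu=\rk_X$. To pin down $\mu$, on the one hand Corollary~\ref{cor:5.6} applied to $h\circ\rho$ gives $\mu\leq\rk_X$. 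On the other hand, the triangle inequality for the Gromov norm, together with Theorem~\ref{thm:5.2}, gives
\bqn
\|h^\ast(\kgb)\|\leq\sum_j\lambda_j\|\kljb\|=\tfrac12\sum_j\lambda_j\rk_{\Yy_j}=\tfrac12\mu,
\eqn
while tightness forces $\|h^\ast(\kgb)\|=\|\kgb\|=\tfrac12\rk_X$, yielding $\mu\geq\rk_X$. Hence $\mu=\rk_X$, as required.

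The heart of the argument is conceptual rather than technical: one must match two estimates of opposite sign, tightness pushing $\mu$ up and Milnor--Wood pushing it down. The crucial quantitative input that makes them meet exactly is the Clerc--\O rsted value $\|\kgb\|=\tfrac12\rk_X$ of Theorem~\ref{thm:5.2}. Positivity of $h$ enters only to keep signs coherent, guaranteeing $\mu>0$ so that the sign of $\T(h\circ\rho)$ is that of the maximal value $+\rk_X|\chi(S)|$ and so that the factorwise argument of the first step applies with the correct sign to every $\rho_j$.
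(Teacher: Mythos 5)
Your argument is correct. The paper itself only cites \cite{Burger_Iozzi_Wienhard_tight} for this proposition and gives no proof, but your reconstruction is exactly the intended mechanism: write $h^\ast(\kgb)=\sum_j\lambda_j\kljb$ with $\lambda_j>0$ (positivity), note that maximality of $\rho$ forces $\rho_j^\ast(\kljb)=\rk_{\Yy_j}\ksrb$ factor by factor, and then squeeze $\mu=\sum_j\lambda_j\rk_{\Yy_j}$ between the Milnor--Wood bound $\mu\leq\rk_X$ and the lower bound $\mu\geq\rk_X$ coming from tightness via the Clerc--\O rsted computation $\|\kgb\|=\tfrac12\rk_X$. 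The only point worth flagging is cosmetic: $L$ need not literally split as a product of the $L_j$, so the projections $\pi_j$ should be understood at the level of the symmetric space $\Yy=\Yy_1\times\cdots\times\Yy_k$ (or of adjoint groups), which is harmless since $\hcb^2(L,\RR)\cong\Omega^2(\Yy)^L$ is spanned by the pullbacks of the factor K\"ahler forms.
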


This is particularly useful in combination with the following geometric examples.

\begin{proposition}[\cite{Burger_Iozzi_Wienhard_tight}]\label{prop:5.14}  
Let $\Yy$ and $X$ be Hermitian symmetric spaces
with normalized metrics and let $f:\Yy\to X$ be a holomorphic and isometric map.
Then $f$ is tight if and only if $\rk_X=\rk_\Yy$, in which case it is also positive.
In particular:
\be
\item a maximal polydisk $t:\DD^r\to X$ is tight and positive;
\item the inclusion $T\hookrightarrow X$ of a maximal tube type subdomain is tight and positive;
\item a diagonal disk $d:\DD\to X$ is tight and positive.
\ee
\end{proposition}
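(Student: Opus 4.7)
The plan is to reduce the entire statement to a single computation, namely that for a holomorphic isometric map $f:\Yy\to X$ induced by a Lie group homomorphism $h:L\to G$, one has $f^*\omega_X=\omega_\Yy$, and consequently $h^*\kgb=\kyb$ in $\hcb^2(L,\RR)$.

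First I would check the pointwise equality of K\"ahler forms. Since $f$ is holomorphic, $df\circ J_\Yy=J_X\circ df$, and since it is isometric,
\bqn
(f^*\omega_X)(v,w)=\langle J_X\,df\,v,\,df\,w\rangle_X=\langle df\,J_\Yy v,\,df\,w\rangle_X=\langle J_\Yy v,w\rangle_\Yy=\omega_\Yy(v,w)
\eqn
for all $v,w\in T\Yy$. Next, using the explicit representative
\bqn
c_X(g_1,g_2)=\frac{1}{2\pi}\int_{\Delta(x_0,g_1x_0,g_1g_2x_0)}\omega_X
\eqn
for $\kgb$, choosing $x_0=f(y_0)$ and exploiting the equivariance $f(l\cdot y_0)=h(l)\cdot f(y_0)$ together with the fact that $f$ is totally geodesic (which is automatic for maps induced by a Lie group homomorphism between symmetric spaces), the geodesic triangle with vertices $h(l_1)^\varepsilon\cdots x_0$ in $X$ is the image under $f$ of the corresponding geodesic triangle in $\Yy$, so Stokes' theorem and the identity $f^*\omega_X=\omega_\Yy$ give $h^*\kgb=\kyb$.

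Third, I would invoke Theorem~\ref{thm:5.2}: the normalized metric assumption implies $\|\kgb\|=\tfrac12\rk_X$ and $\|\kyb\|=\tfrac12\rk_\Yy$. Since $\Yy$ sits totally geodesically in $X$, any maximal flat of $\Yy$ is a flat in $X$, so $\rk_\Yy\leq\rk_X$, with equality precisely characterizing the tightness condition
\bqn
\|h^*\kgb\|=\|\kyb\|=\tfrac12\rk_\Yy=\tfrac12\rk_X=\|\kgb\|.
\eqn
When this equality holds, positivity is automatic: $h^*\kgb=\kyb$ is, by definition, the bounded K\"ahler class associated with the K\"ahler form of $\Yy$, which decomposes as a sum with strictly positive coefficients of the pullbacks of the K\"ahler classes of the irreducible factors of $\Yy$, and hence lies in $\hcb^2(L,\RR)_{>0}$.

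Finally, I would deduce the three special cases. Cases (1) and (2) are immediate: a maximal polydisk $t:\DD^r\to X$ and the inclusion $T\hookrightarrow X$ of a maximal tube type subdomain are both holomorphic isometric totally geodesic embeddings with rank equal to $\rk_X$. Case (3) requires a separate direct computation because the diagonal embedding $\delta:\DD\to\DD^r$, $z\mapsto(z,\dots,z)$, is holomorphic but rescales the metric by $\sqrt r$, hence is not covered by the main equivalence. However, writing $d=t\circ\delta$ with $t$ a maximal polydisk (isometric and holomorphic), one computes
\bqn
d^*\omega_X=\delta^*(t^*\omega_X)=\delta^*\omega_{\DD^r}=r\,\omega_\DD,
\eqn
so $d^*\kgb=r\cdot\kappa^{\rm b}_{\SU(1,1)}$ in $\hcb^2\bigl(\SU(1,1),\RR\bigr)$. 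Its Gromov norm is $r\cdot\tfrac12=\tfrac12\rk_X=\|\kgb\|$, proving tightness, and the positive coefficient $r>0$ yields positivity. The main potential obstacle is the rigorous passage from pointwise equality of K\"ahler forms to equality of cocycles; this is where the totally geodesic property (implicit in $f$ arising from a Lie group homomorphism) is used in an essential way, ensuring that $f$ sends geodesic simplices to geodesic simplices so that Stokes' theorem applies without correction terms.
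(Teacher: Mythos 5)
Your argument is correct, and it is worth noting that the paper itself gives no proof of this proposition (it is quoted from \cite{Burger_Iozzi_Wienhard_tight}); what you have written is a sound reconstruction from the ingredients the survey does supply. Your chain --- holomorphy plus isometry give $f^\ast\omega_X=\omega_\Yy$ pointwise, total geodesicity (valid because $f$ is the map associated to a homomorphism $h:L\to G$, which is the standing context of \S\ref{subsec:tight}) transports geodesic simplices, hence $h^\ast\kgb=\kappa^{\rm b}_\Yy$ at the cocycle level, and then Theorem~\ref{thm:5.2} converts the norm condition $\|h^\ast\kgb\|=\|\kgb\|$ into $\tfrac12\rk_\Yy=\tfrac12\rk_X$ --- is exactly the intended mechanism; the only stylistic difference from the paper's own sketch in the polydisk discussion after Definition~\ref{def:5.7} is that there the passage from $\Phi^\ast\omega_X=\omega_{\DD^r}$ to $\Phi^\ast\kgb=\kappa^{\rm b}_{\SU(1,1)^r}$ is done by invoking naturality of the isomorphisms $\Omega^2(X)^G\cong\hc^2(G,\RR)\cong\hcb^2(G,\RR)$ rather than by your explicit triangle-cocycle computation; both are legitimate. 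Two small remarks: if one takes \eqref{eq:5.4.1} as the definition of tightness of $f$, the equivalence follows even more directly from the Domic--Toledo/Clerc--\O rsted bound $\sup_\Delta\int_\Delta\omega=\pi\,\rk$ applied separately to $\Yy$ and $X$, with no need to pass through the norm of the pulled-back class; and you were right to single out the diagonal disk, which fails to be isometric (it scales the metric by $r$) and so genuinely requires the separate computation $d^\ast\kgb=r\,\kappa^{\rm b}_{\SU(1,1)}$ that you give.
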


We stress the fact that all Hermitian spaces involved carry the normalized metric,
that is the one with minimal holomorphic sectional curvature $-1$.  

There are many interesting tight embeddings which are not holomorphic,
as the following result shows.

\begin{proposition}\label{prop:5.15}  The $2n$-dimensional irreducible representation
\bqn
\rho_{2n}:\SL(2,\RR)\to\Sp(2n,\RR)
\eqn
is tight and corresponds to a holomorphic map only when $n=1$.
\end{proposition}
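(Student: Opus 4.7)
The case $n=1$ is immediate: $\rho_2:\SL(2,\RR)\to\Sp(2,\RR)$ is the identity (under the tautological identification $\Sp(2,\RR)=\SL(2,\RR)$), so the induced map on symmetric spaces is the identity, which is both tight and holomorphic.

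For tightness when $n\geq 2$, since the symmetric space of $\SL(2,\RR)$ has rank one, $\hcb^2(\SL(2,\RR),\RR)$ is one dimensional, and I can write $\rho_{2n}^\ast\kappa_{\Sp(2n,\RR)}^{\rm b}=\lambda\,\kappa_{\SL(2,\RR)}^{\rm b}$ for a unique $\lambda\in\RR$. Theorem~\ref{thm:5.2} gives $\|\kappa_{\Sp(2n,\RR)}^{\rm b}\|=n/2$ and $\|\kappa_{\SL(2,\RR)}^{\rm b}\|=1/2$, so the tightness assertion reduces to $|\lambda|=n$. I would compute $\lambda$ via Theorem~\ref{thm:5.4}: a suitable integer multiple of $\kappa_{\Sp(2n,\RR)}^{\rm b}$ corresponds to the determinant character on the maximal compact $\operatorname{U}(n)\subset\Sp(2n,\RR)$. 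Restricting $\rho_{2n}$ to $\SO(2)\subset\SL(2,\RR)$ yields a homomorphism into $\operatorname{U}(n)$ sending a rotation $r_\theta$ to a unitary matrix with eigenvalues $e^{i(2k-1)\theta}$ for $k=1,\ldots,n$ (the positive weights of the $(2n)$-dimensional irreducible representation of $\SL(2,\RR)$); hence the pullback of the determinant is the character $r_\theta\mapsto e^{in^2\theta}$, using $1+3+\cdots+(2n-1)=n^2$. Matching this with the integral generator of $\hcb^2(\SL(2,\RR),\ZZ)$ corresponding to the identity character of $\SO(2)$ gives $|\lambda|=n$ after reconciling the normalizations of the two K\"ahler classes (both taken so that the minimal holomorphic sectional curvature equals $-1$).

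For non-holomorphicity when $n\geq 2$, I would use the infinitesimal criterion: the induced totally geodesic map $f:\HH\to X$, where $X$ is the symmetric space of $\Sp(2n,\RR)$, is holomorphic if and only if $d\rho_{2n}$ intertwines the complex structures on the $\frakp$-components. These are given by $J=\operatorname{ad}(Z)$ with $Z\in\frakk$ the K\"ahler element, so holomorphicity reduces to $\bigl[d\rho_{2n}(Z^{\SL(2,\RR)})-Z^{\Sp(2n,\RR)},\,d\rho_{2n}(\frakp^{\SL(2,\RR)})\bigr]=0$. Since $\frakp^{\SL(2,\RR)}$ generates $\mathfrak{sl}(2,\RR)$ as a Lie algebra and $d\rho_{2n}$ acts irreducibly on $\CC^{2n}$, Schur's lemma shows that the centralizer of $d\rho_{2n}(\mathfrak{sl}(2,\RR))$ in $\mathfrak{sp}(2n,\CC)$ is $\{0\}$, so holomorphicity is equivalent to the strict equality $d\rho_{2n}(Z^{\SL(2,\RR)})=Z^{\Sp(2n,\RR)}$. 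But $Z^{\Sp(2n,\RR)}$ acts on $\CC^{2n}$ with only the two eigenvalues $\pm i/2$, while $d\rho_{2n}(Z^{\SL(2,\RR)})$ is proportional to the action of $E-F$ on the irreducible $(2n)$-dimensional module and hence has the $2n$ distinct eigenvalues $\pm i/2,\pm 3i/2,\ldots,\pm(2n-1)i/2$. These two spectra disagree for $n\geq 2$, so $\rho_{2n}$ does not correspond to a holomorphic map.

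The principal obstacle is the normalization bookkeeping in the tightness argument, namely pinning down precisely which integer multiple of $\kappa_{\Sp(2n,\RR)}^{\rm b}$ is carried by the determinant character and reconciling this with the analogous integral generator on the $\SL(2,\RR)$ side. Once this is in place, tightness reduces to the elementary identity $1+3+\cdots+(2n-1)=n^2$, and non-holomorphicity reduces to the eigenvalue comparison above.
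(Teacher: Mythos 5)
Your non-holomorphy argument is correct and complete: holomorphy at the base point forces $d\rho_{2n}(Z^{\mathfrak{sl}(2,\RR)})=Z^{\mathfrak{sp}(2n,\RR)}$ because, by Schur's lemma and irreducibility, the centralizer of $d\rho_{2n}\big(\mathfrak{sl}(2,\RR)\big)$ in $\mathfrak{sp}(2n,\CC)$ is trivial, and the spectra $\{\pm i/2,\pm 3i/2,\dots,\pm(2n-1)i/2\}$ versus $\{\pm i/2\}$ are incompatible for $n\geq 2$. The reduction of tightness to showing $|\lambda|=n$, where $\rho_{2n}^\ast\kappa^{\rm b}_{\Sp(2n,\RR)}=\lambda\,\kappa^{\rm b}_{\SL(2,\RR)}$, is also the right move. (The paper states the proposition without proof, so there is nothing to compare against beyond the reference \cite{Burger_Iozzi_Wienhard_tight}.)

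The gap is in your computation of $\lambda$, and it is not the normalization bookkeeping you flag as the main obstacle. The image of $r_\theta$ lands in $\operatorname{U}(n)=\Sp(V,\omega)\cap\GL(V,J)$ and its determinant is computed on $\CC^n=V^{1,0}$, which contains exactly \emph{one} weight line from each conjugate pair $\{+(2k-1),-(2k-1)\}$; which one is selected is dictated by positivity of $-i\omega(u,\bar u)$ on $V^{1,0}$, i.e., by compatibility of the invariant complex structure with the symplectic form. On $V=\operatorname{Sym}^{2n-1}(\RR^2)$ the induced symplectic pairing of a weight line with its conjugate alternates in sign as one moves through the weights, so the holomorphic weights are $1,-3,5,-7,\dots,(-1)^{n-1}(2n-1)$ up to a global sign — \emph{not} $1,3,\dots,2n-1$. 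The degree of $\det\circ\rho_{2n}|_{\SO(2)}$ is therefore the alternating sum $1-3+5-\cdots=(-1)^{n-1}n$, giving $|\lambda|=n$ directly; no further reconciliation is needed, since the diagonal embedding $\SL(2,\RR)\to\SL(2,\RR)^n\subset\Sp(2n,\RR)$ (degree $n$, $\lambda=n$) shows the two integral normalizations already agree. Your value $n^2$ cannot be rescued by any renormalization: it would yield $\big\|\rho_{2n}^\ast\kappa^{\rm b}_{\Sp(2n,\RR)}\big\|=n^2/2>n/2=\big\|\kappa^{\rm b}_{\Sp(2n,\RR)}\big\|$, contradicting the fact that pullback in bounded cohomology is norm-decreasing. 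As a concrete check, for $n=2$ the holomorphic weights of $\operatorname{Sym}^3(\RR^2)$ are $\{1,-3\}$ (sum $-2$), not $\{1,3\}$ (sum $4$).
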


The main structure theorem concerning tight homomorphisms is then the following:
\begin{theorem}[\cite{Burger_Iozzi_Wienhard_tight}]\label{thm:5.16} 
Let $L$ be a locally compact second countable group,
$G=\gG(\RR)^\circ$ a Lie group of Hermitian type and 
let $\rho:L\to G$ be a continuous tight homomorphism.  Then:
\be
\item the Zariski closure $\hH:=\overline{\rho(L)}^{\rm Z}$ is reductive;
\item the centralizer of $H:=\hH(\RR)^\circ$ in $G$ is compact;
\item the semisimple part of $H$ is of Hermitian type and the associated symmetric space
$\Yy$ admits a unique $H$-invariant complex structure such that 
the inclusion $H\hookrightarrow G$ is tight and positive.
\ee
\end{theorem}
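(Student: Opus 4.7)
The plan is to reduce all three conclusions to a single structural input: that the restricted class $\kgb|_H \in \hcb^2(H,\RR)$, with $H:=\hH(\RR)^\circ$, realizes the maximal Gromov norm $\|\kgb\|=\tfrac{1}{2}\rk_X$ of Theorem~\ref{thm:5.2}. Since $\rho(L)$ is Zariski-dense in $\hH$ and $\rho^\ast$ factors through the restriction $\hcb^2(G,\RR)\to\hcb^2(H,\RR)$, tightness forces this intermediate restriction to be isometric on the bounded K\"ahler class. The main tool to exploit this will be the boundary cocycle model of Proposition~\ref{prop:4.9}, generalized to the Shilov boundary $\cs$ of $X$: here $\kgb$ is represented by the bounded Bergman cocycle on $\cs^3$, so maximality of $\|\kgb|_H\|$ becomes a rigidity statement about $H$-invariant alternating measurable cocycles on $\cs^3$.

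For reductivity (1), I would argue by contradiction. Suppose $\Uu:=R_{\mathrm{u}}(\hH)$ is non-trivial and write $\hH=\Mm\ltimes\Uu$ for a Levi factor $\Mm$. Pick a one-parameter subgroup $\{a_t\}_{t\in\RR}\subset \Mm(\RR)$ whose adjoint action contracts $\Uu$ as $t\to+\infty$, and set $\rho_t:=\Ad(a_t)\circ\rho$. Since $\kgb$ is $G$-invariant, $\rho_t^\ast(\kgb)=\rho^\ast(\kgb)$, so every $\rho_t$ is tight. Extracting an accumulation point $\rho_\infty$ by lower semi-continuity of the norm, one obtains a tight homomorphism factoring through the reductive projection $\hH\to\Mm$; in particular $\|\kgb|_{\Mm(\RR)^\circ}\|=\tfrac{1}{2}\rk_X$. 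On the other hand, the Bergman cocycle representing $\kgb|_{\Mm(\RR)^\circ}$ is naturally supported on the Shilov boundary of $\Mm$'s symmetric space, which sits as a proper invariant subset of $\cs$ as soon as $\Uu\neq\{e\}$; a rank comparison then forces $\|\kgb|_{\Mm(\RR)^\circ}\|<\tfrac{1}{2}\rk_X$, the desired contradiction.

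For compactness of the centralizer (2), if $Z_G(H)$ were non-compact it would contain a non-trivial $\RR$-split one-parameter subgroup $\{b_s\}$, so $H$ would be contained in the Levi $L_P:=Z_G(\{b_s\})$ of a proper parabolic $P<G$. The associated symmetric subspace $X_{L_P}\subset X$ is a reductive Hermitian subspace of strictly smaller rank, whence Theorem~\ref{thm:5.2} inside $L_P$ gives $\|\kgb|_{L_P}\|<\tfrac{1}{2}\rk_X$; since $\|\kgb|_H\|\le\|\kgb|_{L_P}\|$, this contradicts the maximality established in the reduction.

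Finally for (3), with $H$ reductive and of compact $G$-centralizer, the class $\kgb|_{H^{ss}}\in\hcb^2(H^{ss},\RR)\cong\hc^2(H^{ss},\RR)$ has maximal norm. Applying Proposition~\ref{prop:4.9} factor-by-factor, any simple factor of $H^{ss}$ not of Hermitian type would contribute $0$ to this bounded cohomology for lack of an invariant boundary cocycle of positive norm, so only Hermitian simple factors can appear; this forces $H^{ss}$ to be of Hermitian type. Each such factor admits two $H$-invariant complex structures differing by the sign of its K\"ahler form, and the decomposition of $\rho^\ast(\kgb)$ into K\"ahler classes of the simple factors singles out a unique sign on each factor---this simultaneously provides the unique $H$-invariant complex structure on $\Yy$, tightness of $H\hookrightarrow G$, and positivity in the sense of Definition~\ref{defi:5.11}. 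The main obstacle throughout is step~(1): the contraction argument has to be carried out in the boundary-cocycle model, and the technical heart is showing that a maximal-norm Bergman-type cocycle on $\cs$ cannot be supported on a proper $G$-invariant measurable subset, a rigidity statement at infinity that drives the entire theorem.
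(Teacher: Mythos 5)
The survey does not actually prove Theorem~\ref{thm:5.16}; it quotes it from \cite{Burger_Iozzi_Wienhard_tight}. Your overall skeleton --- reduce (1) and (2) to the statement that a tight subgroup cannot lie in a proper parabolic, via norm estimates for restrictions of $\kgb$, and use the vanishing of $\hcb^2$ on non-Hermitian simple factors for (3) --- is indeed the shape of the argument in that reference. But two steps are genuinely missing.

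First, the quantitative heart: you need $\|\kgb|_{L}\|<\|\kgb\|$ for the Levi $L$ of any proper parabolic $\qQ<\gG$; this single estimate powers both (1) (via Borel--Tits) and (2). Your justification --- that the Bergmann cocycle restricted to $L$ is ``supported on a proper invariant subset of $\cs$'', plus a rank comparison --- does not prove it. The Gromov norm is an infimum over \emph{all} bounded representatives, so the support of one cocycle yields no estimate by itself; and writing $\|\kgb|_{L}\|=\tfrac12\rk_{X_L}$ conflates the restriction of $G$'s class with $L$'s own K\"ahler class, which is exactly the point at issue: $\kgb|_L$ is an a priori unknown combination $\sum_i\lambda_i\kappa_i^{\rm b}$ over the Hermitian factors of $L$, and bounding $\sum_i|\lambda_i|\rk_{X_i}$ requires the Lie-algebra computation (comparison of the central elements of the maximal compact subalgebras) carried out in \cite{Burger_Iozzi_Wienhard_tight}, not a support argument. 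Relatedly, your contraction argument in (1) chooses $a_t$ inside a Levi factor of $\hH$, which need not contain any cocharacter contracting $R_{\rm u}(\hH)$ (take $\hH$ unipotent); one should instead invoke Borel--Tits to place $\hH$ in a proper parabolic $\qQ$ of $\gG$ and then use amenability of $R_{\rm u}(\qQ)$ to identify $\hcb^2(Q,\RR)$ with $\hcb^2(L,\RR)$ --- no limiting procedure is needed. In (2) you also omit the case where the noncompact centralizer contains unipotent elements rather than a split torus.

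Second, in (3) your argument only shows that $\kgb|_{H}$ is a combination of K\"ahler classes of the \emph{Hermitian} factors of $H^{\rm ss}$; it does not exclude noncompact non-Hermitian factors, nor Hermitian factors occurring with coefficient $\lambda_i=0$. The missing step is a bootstrap through part (2): if $H_1$ is any factor with $\kgb|_{H_1}=0$, the product $H_2$ of the remaining factors satisfies $\|\kgb|_{H_2}\|=\|\kgb|_{H}\|=\|\kgb\|$, hence is itself tight, and $H_1\subset Z_G(H_2)$ is then compact by (2). This is also what guarantees that every Hermitian factor carries a nonzero coefficient, hence a preferred complex structure on each factor, from which uniqueness and positivity follow. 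Without it, conclusion (3) does not follow from what you wrote.
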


Setting $L=\pi_1(S)$ and assuming $\rho$ to be maximal, 
the above result accounts then for most of the statements in the structure Theorem~\ref{thm:5.10}
except the one, essential, that $\Yy$ is of tube type.  
This is specific to the hypothesis that $L=\pi_1(S)$ is a surface group.

An important ingredient of the structure theorem for tight homomorphisms is 
the work of Clerc and \O rsted on the characterization of ``ideal triangles with 
maximal symplectic area'' \cite{Clerc_Orsted_2}.
To describe some important features, we will assume for simplicity
that $G$ is simple (of Hermitian type), that is the associated symmetric space $X$
is irreducible.  
Let $\Dd\subset\CC^n$ be the bounded domain realization of $X$;
there is an explicit realization of $\Dd$, called
{\em Harish-Chandra realization}, and in which the Bergmann kernel
\bqn
K_\Dd:\Dd\times\Dd\to\CC^\ast
\eqn
of $\Dd$ can be computed rather explicitly.
We let $G$ act on $\Dd$ via the isomorphism $X\to\Dd$;
the Hermitian metric defined by $K_\Dd$ is of course $G$-invariant,
and it has the interesting feature that its K\"ahler form comes from 
an integral class in $\hc^2(G,\ZZ)$.  However,
this metric is not normalized.  One introduces then the normalized Bergmann kernel
\bqn
k_\Dd:=K_\Dd^{1/n_\Dd}
\eqn
where $n_\Dd = n_X$ is a specific integer (see Corollary~\ref{cor:5.7} and the discussion preceding it).
This leads to the normalized metric whose K\"ahler form is 
\bqn
\omega_\Dd=\imath\partial\overline\partial\log k_\Dd(z,z)\,.
\eqn
In fact, on the specific formulas for $k_\Dd$ one sees that 
it is defined and not vanishing on $\Dd^2$ of course and even on a certain
open dense subset $\overline\Dd^{(2)}\subset\overline\Dd^2$ 
of pairs of points satisfying a certain transversality condition.
The domain $\overline\Dd^{(2)}$ being star shaped, we let $\arg k_\Dd$ denote 
the unique continuous determination of the argument of $k_\Dd$ on $\overline\Dd^{(2)}$
vanishing on the diagonal of $\Dd\times\Dd$.  Then we have the following formula
for the area of a triangle with geodesic sides:

\begin{lemma}[\cite{Domic_Toledo, Clerc_Orsted_2}]\label{lem:5.17} For $x,y,z\in\Dd$
\bqn
\int_{\Delta(x,y,z)}\omega_\Dd=-\big(\arg k_\Dd(x,y)+\arg k_\Dd(y,z)+\arg k_\Dd(z,x)\big)\,.
\eqn
\end{lemma}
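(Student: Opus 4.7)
My strategy is to apply Stokes' theorem to a primitive of $\omega_\Dd$ built from its global K\"ahler potential, then use $G$-equivariance to reduce the boundary integrals along geodesic segments to the case where one endpoint is the origin, and finally invoke the polydisk theorem to reduce to a one-dimensional computation on the Poincar\'e disk.

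Set $\phi(z) := \log k_\Dd(z,z)$, a global K\"ahler potential for $\omega_\Dd = i\partial\bar\partial\phi$. The real $1$-form $\eta := \tfrac{1}{2i}(\partial - \bar\partial)\phi$ satisfies $d\eta = \omega_\Dd$, so Stokes' theorem on $\Delta(x,y,z)$ gives
\[
\int_{\Delta(x,y,z)}\omega_\Dd = \int_{[x,y]}\eta + \int_{[y,z]}\eta + \int_{[z,x]}\eta.
\]
The lemma will follow once I establish the identity $\int_{[p,q]}\eta = -\arg k_\Dd(p,q)$ for every $(p,q) \in \overline\Dd^{(2)}$. The key tool is the automorphy relation $k_\Dd(gz,gw) = j_g(z)^{-1}\overline{j_g(w)}^{-1} k_\Dd(z,w)$ for the normalized Bergmann kernel, which gives $g^*\phi = \phi - 2\operatorname{Re}\log j_g$ and hence $g^*\eta = \eta - d\arg j_g$. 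Picking $g \in G$ with $gq = 0$ yields
\[
\int_{[p,q]}\eta = \int_{[gp,0]}\eta + \arg j_g(q) - \arg j_g(p).
\]
In the Harish-Chandra realization $k_\Dd(\cdot, 0)$ is a positive real constant, so $\arg k_\Dd(gp, 0) = 0$; applying the automorphy relation at $(p,q)$ then gives $\arg j_g(q) - \arg j_g(p) = -\arg k_\Dd(p,q)$. Hence the identity reduces to showing $\int_{[\tilde p, 0]}\eta = 0$ for every $\tilde p \in \Dd$.

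For this vanishing, the stabilizer $K$ of $0$ acts on $\Dd$ by isometries fixing $0$ and preserves $\phi$, so $k^*\eta = \eta$ for all $k \in K$. By the polydisk theorem I choose $k \in K$ with $k\tilde p$ in a maximal polydisk $\DD^r \hookrightarrow \Dd$ through $0$. Since this embedding is totally geodesic, the segment $[k\tilde p, 0]$ is contained in $\DD^r$, and with the normalization of minimal holomorphic sectional curvature $-1$ the embedding is holomorphically isometric, so $\omega_\Dd|_{\DD^r} = \sum_{i=1}^r \omega_\DD(z_i)$ and $\eta$ restricts to $\sum_i \eta_\DD(z_i)$ modulo an exact form with cancelling endpoints. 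The integral then splits as a sum of disc integrals, and a direct computation with $\phi_\DD(z) = -2\log(1-|z|^2)$ gives $\eta_\DD = 2\operatorname{Im}(\bar z\,dz)/(1-|z|^2)$, which vanishes along the straight segment $z(t) = ty$ because $\bar z\,dz = t|y|^2\, dt$ is real there.

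The main obstacle lies in the polydisk restriction step: while $\omega_\Dd$ restricts cleanly by the isometric embedding, the normalized Bergmann kernels $k_\Dd$ and $\prod_i k_\DD$ on $\DD^r$ need not agree, only differing by a pluriharmonic factor dictated by the normalization constant $n_\Dd$; one must check that this discrepancy leaves $\eta$ unchanged along the specific segment $[k\tilde p, 0]$ up to an exact differential whose endpoint contributions cancel. A secondary subtlety is the coherent choice of continuous branches of $\arg k_\Dd$ on the star-shaped domain $\overline\Dd^{(2)}$, so that the automorphy identities involving $\arg j_g$ hold in $\RR$ and not merely in $\RR/2\pi\ZZ$.
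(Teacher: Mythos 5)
The paper does not actually prove this lemma: it is quoted from Domic--Toledo and Clerc--\O rsted, and your Stokes-plus-equivariance strategy is essentially the classical one. Your skeleton is sound: $d\eta=\omega_\Dd$ is correct, the reduction $\int_{[p,q]}\eta=\int_{[gp,0]}\eta-\arg k_\Dd(p,q)$ is correct (your two uses of the automorphy factor are mutually consistent whichever convention one adopts for $j_g$), and the constancy of $k_\Dd(\cdot,0)$ in the Harish--Chandra realization is a genuine fact. Everything therefore hinges on the vanishing $\int_{[\tilde p,0]}\eta=0$, and that is exactly where the argument is incomplete, as you yourself flag. The step ``$\eta$ restricts to $\sum_i\eta_\DD(z_i)$ modulo an exact form with cancelling endpoints'' is unjustified as written: the isometry of the polydisk embedding only tells you that $\phi|_{\DD^r}-\sum_i\phi_\DD(z_i)$ is pluriharmonic, say $\mathrm{Re}(h)$ with $h$ holomorphic, and the resulting correction to $\eta$ is $\tfrac12\,d\,\mathrm{Im}(h)$, whose integral over $[\tilde p,0]$ is $\tfrac12\bigl(\mathrm{Im}\,h(0)-\mathrm{Im}\,h(\tilde p)\bigr)$; nothing so far forces this to vanish. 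A second slip: the geodesic of the product metric from $y$ to $0$ is not the Euclidean segment $t\mapsto ty$ unless all $|y_i|$ coincide; what saves the one-dimensional computation is only that each component stays on a ray through the origin.

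Both defects are repaired by one observation you did not exploit: $\phi$ is invariant under the torus $(z_1,\dots,z_r)\mapsto(e^{i\theta_1}z_1,\dots,e^{i\theta_r}z_r)$, which lies in $K$ (it is the image of the diagonal torus of $\SU(1,1)^r$ under the polydisk homomorphism, and $|j_k|\equiv 1$ for $k\in K$). Differentiating this invariance in each $\theta_j$ separately gives $\mathrm{Im}\bigl(z_j\,\partial\phi/\partial z_j\bigr)=0$ on $\DD^r$, hence $\eta(v)=\mathrm{Im}(\partial_v\phi)=0$ for every tangent vector of the form $v=(\lambda_1z_1,\dots,\lambda_rz_r)$ with all $\lambda_j$ real; the velocity of the polydisk geodesic ending at $0$ is exactly of this form, so $\int_{[\tilde p,0]}\eta=0$ follows with no need to compare $k_\Dd|_{\DD^r}$ with the product kernel. (Equivalently: your pluriharmonic discrepancy $\mathrm{Re}(h)$ is torus-invariant, hence constant, so the exact correction vanishes identically.) The branch issue you raise is then routine: both sides of $\int_{[p,q]}\eta=-\arg k_\Dd(p,q)$ are continuous on the connected set $\Dd^2$, vanish on the diagonal, and agree modulo $2\pi\ZZ$ by the pointwise automorphy identity, hence agree exactly. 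With these two points supplied, your proof is complete and agrees with the cited ones.
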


Guided by this, we introduce a fundamental object on the set $\overline\Dd^{(3)}$
of triples of pairwise transverse points, namely the {\em Bergmann cocycle}\index{Bergmann cocycle}
\bqn
\beta_\Dd(x,y,z)=-\frac{1}{2\pi}\big(\arg k_\Dd(x,y)+\arg k_\Dd(y,z)+\arg k_\Dd(z,x)\big)\,.
\eqn
Its role is to extend in a meaningful way the notion of area to ``ideal triangles''.
This function is continuous on $\overline\Dd^{(3)}$, $G$-invariant and 
satisfies an obvious cocycle property.
The following is then a summary of some work of Clerc and \O rsted.

\begin{theorem}[\cite{Clerc_Orsted_2}]\label{thm:5.18} 
Let $\cs$ and $\rk_\Dd$ be respectively the Shilov boundary\index{Shilov boundary} 
and the real rank of $\Dd$.
\be
\item Then
\bqn
-\frac{1}{2}\rk_\Dd\leq\beta_\Dd(x,y,z)\leq\frac{1}{2}\rk_\Dd\,,
\eqn
with strict inequality if $(x,y,z)\in\Dd^3$;
\item we have that $\beta_\Dd(x,y,z)=\frac{\rk_\Dd}{2}$ if and only if 
$x,y,z\in\cs$ and there exists a diagonal disk 
\bqn
d:\overline\DD\to\overline\Dd
\eqn
with $d(1)=x$, $d(\imath)=y$ and $d(-1)=z$.
\ee
\end{theorem}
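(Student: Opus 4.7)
The plan is to reduce the inequality and its equality case to the classical disk via the polydisk theorem, and then to invoke the Gauss--Bonnet bound for geodesic triangles in $\DD$.

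\textbf{Step 1 (Reduction to a polydisk).} The first thing I would show is that, for every triple $(x,y,z)\in\overline{\Dd}^{(3)}$ of pairwise transverse points, there is a maximal polydisk embedding $\iota\colon\overline{\DD^r}\to\overline{\Dd}$ (with $r=\rk_\Dd$) and points $\tilde x,\tilde y,\tilde z\in\overline{\DD^r}$ with $\iota(\tilde x)=x$, $\iota(\tilde y)=y$, $\iota(\tilde z)=z$. This is an avatar of the polydisk theorem: using $G$-transitivity on transverse pairs one first moves $(x,y)$ to a standard pair $(0,y_0)$ with $y_0$ lying in a maximal flat $A\cdot 0$, and then a spectral/polar decomposition in the Hermitian Jordan triple system attached to $\Dd$ places the third point in the complexification $\DD^r\subset\Dd$ of this flat (after a further adjustment by the isotropy $K\cap\Stab(y_0)$). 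Since the Bergmann cocycle is $G$-invariant and continuous on $\overline{\Dd}^{(3)}$, it is enough to bound and characterize $\beta_\Dd$ on such polydisk triples.

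\textbf{Step 2 (Factorization on the polydisk).} On $\DD^r$ with its product normalized metric, the Bergmann kernel factors as $k_{\DD^r}(u,v)=\prod_{i=1}^r k_\DD(u_i,v_i)$, so the chosen continuous determination of the argument and the cocycle itself are additive:
\begin{equation*}
\beta_{\DD^r}(\tilde x,\tilde y,\tilde z)=\sum_{i=1}^r \beta_\DD(\tilde x_i,\tilde y_i,\tilde z_i).
\end{equation*}
Naturality of the Bergmann cocycle under a rank-preserving holomorphic and isometric embedding of Hermitian symmetric domains with normalized metrics (which underlies Proposition~\ref{prop:5.14}) gives $\beta_\Dd(x,y,z)=\beta_{\DD^r}(\tilde x,\tilde y,\tilde z)$, so the general case is reduced to bounding each $\beta_\DD$ separately.

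\textbf{Step 3 (The case $r=1$ and conclusion).} By Lemma~\ref{lem:5.17}, $\beta_\DD$ equals $\frac{1}{2\pi}$ times the symplectic area of the geodesic triangle with vertices $\tilde x_i,\tilde y_i,\tilde z_i$. Gauss--Bonnet bounds this area by $\pi$ in absolute value, with strict inequality whenever a vertex lies in $\DD$, and $\beta_\DD=\tfrac12$ precisely for positively oriented ideal triangles on $\partial\DD$. Summing over the $r$ factors yields $|\beta_\Dd(x,y,z)|\leq r/2$, strict as soon as one coordinate is interior (hence for any $(x,y,z)\in\Dd^3$), which is assertion~(1). For equality in~(2), each $\beta_\DD(\tilde x_i,\tilde y_i,\tilde z_i)$ must equal $+\tfrac12$, forcing $\tilde x,\tilde y,\tilde z\in(\partial\DD)^r$ with the same positive cyclic orientation in every factor; composing $\iota$ with a diagonal isometry of $\DD^r$ sending this triple on $(\partial\DD)^r$ to $(1,\imath,-1)$ in the diagonal disk $\{(u,\dots,u):u\in\DD\}$ produces the desired diagonal disk $d\colon\overline{\DD}\to\overline{\Dd}$. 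That the three points end up in $\cs$ is automatic, since $\iota$ maps the Shilov boundary $(\partial\DD)^r$ of $\DD^r$ into $\cs$.

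The main obstacle is Step~1: the existence of a common maximal polydisk through an arbitrary triple of pairwise transverse points relies on the Jordan-algebraic structure of bounded symmetric domains, whereas the rest of the argument is then a direct bookkeeping on $\DD^r$ together with the elementary Gauss--Bonnet estimate on $\DD$.
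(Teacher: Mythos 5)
The paper does not actually prove Theorem~\ref{thm:5.18}: it is quoted from \cite{Clerc_Orsted_2} (see also \cite{Domic_Toledo}), so there is no internal argument to compare against. Your proposal, however, contains a genuine gap at Step~1, and it is fatal to the whole reduction. It is not true that three pairwise transverse points of $\overline\Dd$ lie in the closure of a common maximal polydisk. A maximal polydisk has complex dimension $r=\rk_\Dd$, in general far smaller than $\dim_\CC\Dd$; two interior points do lie in a common maximal polydisk (complexify a maximal flat through them), but the union of the maximal polydisks containing a fixed generic pair is a proper subset of $\Dd$, so a third generic point escapes all of them. The cleanest counterexample is the rank-one, non-tube case where $\Dd$ is the unit ball in $\CC^2$ (for $\PU(2,1)$): maximal polydisks are complex geodesics, three generic points do not lie on a common complex geodesic, and $\beta_\Dd$ on ideal triples is the Cartan angular invariant, which takes a \emph{continuum} of values in $[-1/2,1/2]$ --- impossible if every transverse triple sat inside a disk closure. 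The same obstruction hits your equality analysis on the Shilov boundary: for non-tube-type domains the Hermitian triple product of Theorem~\ref{thm:5.20} is a nonconstant continuous invariant of transverse triples in $\cs$, whereas triples contained in the boundary of a maximal polydisk (a tube type subdomain) form a proper subfamily on which it is constant.

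Steps~2 and~3 are sound as far as they go (a tight holomorphic isometric embedding pulls $\omega_\Dd$ back to $\omega_{\DD^r}$, the cocycle is additive over the factors, and Gauss--Bonnet handles each factor), but they only prove the theorem for the special triples covered by Step~1. The actual arguments of Domic--Toledo and Clerc--\O rsted run differently: one bounds the primitive $\arg k_\Dd(\cdot,\cdot)$ directly and uses the cocycle identity to reduce to triangles with a vertex at the base point, and the equality case requires a separate analysis of transverse triples on $\cs$ via the Cayley transform and the Jordan-algebraic structure of the domain. The diagonal disk and the tube type subdomain appear there as the \emph{output} of that analysis, not as a reduction available at the outset.
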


In the sequel we call the restriction of the Bergmann cocycle to the Shilov boundary 
{\em Maslov cocycle}\index{Maslov cocycle} and we call 
a triple of points $(x,y,z)$ on the Shilov boundary
{\em maximal} if
\bqn
\beta_\Dd(x,y,z)=\frac{\rk_\Dd}{2}\,.
\eqn
Observe that a maximal triple is always contained in the boundary of 
a maximal tube type subdomain of $\Dd$.

One of the corollaries of the above result is the computation of the Gromov norm of $\kgb$ (Theorem~\ref{thm:5.2}).
This is based on the following
\begin{corollary}[\cite{Burger_Iozzi_supq, Burger_Iozzi_Wienhard_toledo}]\label{cor:5.19}  
Under the canonical map
\bqn
\h^\bullet\big(\linfty(\cs^\bullet)^G\big)\to\hcb^\bullet(G,\RR)
\eqn
the class defined by $\beta_\Dd$ goes to $\kgb$.
\end{corollary}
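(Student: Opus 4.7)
The plan is to compute the image of $[\beta_\Dd]$ under the canonical map explicitly, using the Burger--Monod realization of $\hcb^\bullet(G,\RR)$ via the Shilov boundary, and then to identify this image with the cocycle representing $\kgb$ through Lemma~\ref{lem:5.17}.

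First I recall that the canonical map in the statement is an isomorphism because $\cs = G/Q$ is an amenable $G$-space ($Q$ being a parabolic subgroup of $G$), so by Burger--Monod the complex $\linfty(\cs^{\bullet+1})^G$ is a relatively injective strong resolution of $\RR$; a class $[\alpha]\in \h^n\big(\linfty(\cs^{\bullet+1})^G\big)$ corresponds, for any choice of base point $s\in\cs$, to the class in $\hcb^n(G,\RR)$ of the bounded Borel cocycle $(g_1,\dots,g_n)\mapsto \alpha(s,g_1 s,\dots, g_1\cdots g_n s)$. Applied to $\beta_\Dd$, which by Theorem~\ref{thm:5.18} is bounded on $\cst$, is $G$-invariant, and satisfies the cocycle relation on $\cst$ (inherited by continuity from the Stokes-theorem identity on $\Dd^4$), this produces the bounded Borel cocycle $c_s(g_1,g_2) := \beta_\Dd(s, g_1 s, g_1 g_2 s)$ as a representative of the image in $\hcb^2(G,\RR)$.

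Second, by definition $\kgb$ is represented by the cocycle $c(g_1,g_2) = \frac{1}{2\pi}\int_{\Delta(x_0,g_1 x_0,g_1 g_2 x_0)}\omega_\Dd$, and Lemma~\ref{lem:5.17} rewrites this identically as $c(g_1,g_2) = \beta_\Dd(x_0, g_1 x_0, g_1 g_2 x_0) =: c_{x_0}(g_1, g_2)$. The corollary therefore reduces to the identity $[c_s] = [c_{x_0}]$ in $\hcb^2(G,\RR)$, i.e.\ to showing that evaluations of the Bergmann cocycle at an interior base point $x_0\in\Dd$ and at a boundary base point $s\in\cs$ yield the same bounded class.

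This last step is the heart of the argument, and I would prove it by exploiting that $\beta_\Dd$ is a single bounded measurable $G$-invariant cocycle defined on the open dense transverse locus $\overline{\Dd}^{(3)}\subset\overline{\Dd}^3$ containing both $\Dd^3$ and $\cst$. The four-point cocycle relation on $\overline{\Dd}^{(4)}$ yields the explicit bounded measurable primitive
\[
\psi(g) \;:=\; \beta_\Dd(x_0, s, g s) - \beta_\Dd(x_0, g x_0, g s),
\]
satisfying $d\psi = c_s - c_{x_0}$ by a direct computation from the cocycle relation. The main obstacle is technical and concerns the transversality condition: $\beta_\Dd$ is not defined on all of $\overline{\Dd}^3$, so the formulas above make sense only almost everywhere, and one must verify that for generic $s$ the non-transversality loci are negligible in $G$ and $G^2$. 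The most efficient way to sidestep this issue is to work inside the Burger--Monod double complex on $L^\infty\big(G^{\bullet+1}\times\cs^{\bullet+1}\big)^G$, in which the comparison between the interior evaluation at $x_0\in\Dd$ and the boundary evaluation at $s\in\cs$ becomes a formal consequence of the fact that both rows compute $\hcb^\bullet(G,\RR)$, so that no explicit primitive needs to be exhibited.
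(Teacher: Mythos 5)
Your overall strategy is sound and is in fact the same one the paper itself sketches for the rank-one analogue, Proposition~\ref{prop:4.7}(2): represent $\kgb$ by the interior-basepoint cocycle $c_{x_0}(g_1,g_2)=\beta_\Dd(x_0,g_1x_0,g_1g_2x_0)$ (which is exactly what Lemma~\ref{lem:5.17} gives), represent the image of $[\beta_\Dd]$ by the boundary-basepoint cocycle $c_s$, and exhibit the two as cohomologous in the complex of bounded cochains. Your primitive $\psi(g)=\beta_\Dd(x_0,s,gs)-\beta_\Dd(x_0,gx_0,gs)$ is the standard prism homotopy, and the computation $d\psi=c_s-c_{x_0}$ does follow from the four-point cocycle relation on $\overline\Dd^{(4)}$; since $|\beta_\Dd|\leq\frac12\rk_\Dd$ everywhere on its domain, $\psi$ is bounded, and the non-transversality loci are null for generic $s$, so the identity holds in the $L^\infty$ complex. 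This part is correct.

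There is, however, one genuine error in your opening step: the Shilov boundary $\cs=G/Q$ is \emph{not} an amenable $G$-space in general, because $Q$ is a \emph{maximal} parabolic, not a minimal one, and its Levi factor is typically non-amenable (for $G=\Sp(2n,\RR)$, $n\geq2$, the Levi of the Siegel parabolic is $\GL(n,\RR)$). Consequently $\linfty(\cs^{\bullet+1})^G$ is not a resolution computing $\hcb^\bullet(G,\RR)$, and the canonical map in the statement is in general not an isomorphism --- its injectivity in degree two is a nontrivial point, not a formality. This mistake is repairable because the corollary only requires the forward direction: the canonical map can be defined, without any amenability hypothesis, as the composition of the pullback along the orbit map $G\to\cs$, $g\mapsto gs$, with the identification of $\hcb^\bullet(G,\RR)$ via the homogeneous resolution $\linfty(G^{\bullet+1})^G$ (equivalently, by pulling back along the projection $G/P_{\min}\to G/Q$ from the genuinely amenable Furstenberg boundary). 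With that corrected description your base-point evaluation formula for the image of $[\beta_\Dd]$ stands, and the rest of the argument goes through. The same caveat applies to your closing suggestion to "work in the double complex, since both rows compute $\hcb^\bullet(G,\RR)$": the $\cs$-row does not, so you should stick with the explicit primitive together with the genericity argument for transversality, which is all that is needed.
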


Finally we turn to the ingredient which leads to the conclusion in Theorem~\ref{thm:5.10}
that $\Yy$ is of tube type.  For this we construct an invariant of triples of points
on $\cs$ which we call the Hermitian triple product\index{Hermitian triple product} 
and whose definition goes as follows;
recall that the Bergmann kernel satisfies the relation
\bqn
K_\Dd(gz,gw)=j(g,z) K_\Dd(z,w)\overline{j(g,w)}\,,
\eqn
where $j(g,z)$ is the complex Jacobian of $g$ at the point $z\in\Dd$.
Then we define on the set $\cs^{(3)}$ of pairwise transverse points the Hermitian
triple product
\bqn
\<\<x,y,z\>\>:=K_\Dd(x,y) K_\Dd(y,z) K_\Dd(z,x)\mod\RR^\times\,.
\eqn
Recall that $\cs$ is of the form $G/Q$, where $Q$ is a maximal parabolic subgroup of $G$,
 and is hence in a natural way the set of real points
of a complex projective variety.

\begin{theorem}[\cite{Burger_Iozzi_Wienhard_kahler}]\label{thm:5.20}  The function 
\bqn
\<\<\,\cdot\,,\,\cdot\,\>\>:\cs^{(3)}\to\RR^\times\backslash\CC^\times
\eqn
is a $G$-invariant multiplicative cocycle
and, for an appropriate real structure on $\RR^\times\backslash\CC^\times$,
it is a real rational function.
Moreover the following are equivalent:
\be
\item $\Dd$ is not of tube type;
\item $\cs^{(3)}$ is connected;
\item the Hermitian triple product is not constant.
\ee
\end{theorem}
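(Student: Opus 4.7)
The plan is to split the proof into the three general properties (cocycle, $G$-invariance, real rationality), which I would establish by direct computation, and the three-fold equivalence, which requires a mix of explicit constructions and orbit analysis. The cocycle relation follows by inspection: in the alternating product over the four faces of a tetrahedron on $\{x,y,z,w\}$, each factor $K_\Dd(a,b)$ appears exactly twice with opposite signs and cancels. For $G$-equivariance, substituting $K_\Dd(gz,gw) = j(g,z)K_\Dd(z,w)\overline{j(g,w)}$ into each of the three factors produces the prefactor $|j(g,x)j(g,y)j(g,z)|^2 \in \RR^\times$, which is trivial modulo $\RR^\times$. Real rationality follows from the Harish-Chandra realization, in which $K_\Dd(z,w)$ is polynomial in $z$ and $\overline w$; so modulo $\RR^\times$ the triple product is a rational function in $(x,y,z,\overline x,\overline y,\overline z)$, and the appropriate real structure on $\RR^\times\backslash\CC^\times$ is the complex conjugation induced by simultaneous conjugation of all three arguments.

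For the equivalence (1)$\Leftrightarrow$(3), in the tube case a Cayley transform realizes $\Dd$ as $V + \imath\Omega$ with Shilov boundary the real vector space $V$; on $V\times V$ the Bergmann kernel is a real rational function (essentially a power of $\det(x-y)^{-1}$ in the classical cases, or an analogous Jordan-algebraic expression), so each factor of $\<\<x,y,z\>\>$ is real and the triple product is constantly trivial in $\RR^\times\backslash\CC^\times$. Conversely, in the non-tube case I would exhibit explicit non-real triples, taking as model the Cartan angular invariant on the sphere at infinity of $\SU(1,n)$, $n\geq 2$: this invariant is precisely the argument of $\<\<\cdot,\cdot,\cdot\>\>$ and is manifestly non-constant; the general case would then reduce to this one by embedding a non-tube type Hermitian subsymmetric space inside the given $\Dd$.

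The main obstacle will be (1)$\Leftrightarrow$(2). For the easy direction, if $\Dd$ is of tube type then the Maslov cocycle $\beta_\Dd:\cs^{(3)}\to\RR$ of Theorem~\ref{thm:5.18} is a continuous $G$-invariant function taking only discrete values (the generalized Maslov indices); since it attains its maximum $\rk_\Dd/2$ exactly on triples forming the vertices of diagonal disks and strictly smaller values elsewhere, it is both non-constant and locally constant, exhibiting $\cs^{(3)}$ as a disjoint union of clopen level sets. The hard direction is to show that $\cs^{(3)}$ is connected when $\Dd$ is not of tube type, and this is where I expect the main difficulty. My plan is to write $\cs = G/Q$ for $Q$ a maximal parabolic subgroup and analyze the action of the Levi factor $L$ of $Q$ on the opposite big cell: in the non-tube case $L$ should act transitively on the complement of a proper algebraic subvariety, reflecting the transitivity of the structure group of the associated Hermitian positive Jordan triple system on its generic elements. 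Iterating this analysis, one obtains a single open $G$-orbit on $\cs^{(3)}$ whose complement has positive real codimension, which forces $\cs^{(3)}$ (a Zariski open subset of the irreducible real algebraic variety $\cs^3$) to be connected.
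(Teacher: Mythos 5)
This survey does not actually prove Theorem~\ref{thm:5.20}; it quotes it from \cite{Burger_Iozzi_Wienhard_kahler}, so your proposal has to be measured against the argument there. The formal parts of your plan are fine: in the cocycle identity the paired factors are $K_\Dd(a,b)$ and $K_\Dd(b,a)$, which are complex conjugates rather than equal, so they multiply to $|K_\Dd(a,b)|^2$ --- the cancellation is only up to positive reals, which is all you need; the Jacobian computation for $G$-invariance is correct; and the implications ``tube type $\Rightarrow$ constant'' and ``tube type $\Rightarrow$ $\cs^{(3)}$ disconnected'' go through as you describe, the latter resting on the Clerc--\O rsted fact that the Maslov index is discretely valued on $\cs^{(3)}$ in the tube type case, a genuine input you should cite rather than treat as obvious. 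The reduction of non-constancy to $\SU(1,n)$ is also in the spirit of \cite{Burger_Iozzi_Wienhard_kahler}, but the substance there is exactly the point you gloss over: one must produce, inside an arbitrary non-tube type $\Dd$, a holomorphically and totally geodesically embedded $\HH^2_\CC$ whose Shilov boundary lands in $\cs$, together with a controlled relation between the two Bergmann kernels. For a general holomorphic totally geodesic embedding the Shilov boundary of the subdomain does \emph{not} map into $\cs$, so this existence statement carries real content.

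The step that actually fails is your proof that $\cs^{(3)}$ is connected in the non-tube type case. There is in general no open $G$-orbit on $\cs^{(3)}$: already for $G=\PU(1,2)$ --- your own model case --- one has $\dim\cs^{(3)}=9>8=\dim G$, and the orbits are the level sets of the Cartan angular invariant, a continuous non-constant invariant, so no orbit is dense and the codimension argument has nothing to act on. The parenthetical appeal to Zariski-openness in the irreducible real variety $\cs^3$ is also invalid: a Zariski open subset of an irreducible real algebraic variety can perfectly well be disconnected in the Hausdorff topology, and that is precisely what happens for tube type domains. The mechanism that does work, and is the one underlying \cite{Burger_Iozzi_Wienhard_kahler}, is a dichotomy about real forms: $\Dd$ is of tube type if and only if $\cs$ is totally real in its complexification $\cs_\CC$. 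One fibers $\cs^{(3)}$ over the single connected orbit $\cs^{(2)}$; the fiber over $(x,y)$ is the complement, in an affine Bruhat/Harish-Chandra cell of $\cs$, of the non-transversality locus, which is the trace on $\cs$ of a complex hypersurface of $\cs_\CC$. In the tube type case this trace has real codimension $1$ and disconnects the fiber; in the non-tube type case it has real codimension $2$, so the fibers, and hence $\cs^{(3)}$, are connected. You should rebuild the equivalence of (1) and (2) on this dichotomy rather than on orbit counting.
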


\begin{proof}[{\it Sketch of the proof that $\Yy$ is of tube type}]
Let $\rho:\pi_1(S)\to G:=\gG(\RR)^\circ$ be a maximal representation.
Using the structure theorem for tight homomorphisms we may assume that
$\rho\big(\pi_1(S)\big)$ is Zariski dense in $\gG$ and hence $\Yy=X$.
We have to show that $X$ is of tube type.
Realize $\pi_1(S)$ as a lattice in $\PSU(1,1)$ via an appropriate hyperbolization.
Since $\rho$ has Zariski dense image, the action on the Shilov boundary $\cs$ is strongly proximal.
 This together with the amenability of the action of $\pi_1(S)$ on $\partial \DD$
 via the chosen hyperbolization implies (according to \cite{Margulis_book}) 
the existence of an equivariant measurable map (see e.g. \cite{Burger_Iozzi_supq}
 for a description of the construction)
\bqn
\varphi:\partial\DD\to\cs
\eqn
into the Shilov boundary.  From the maximality assumption 
we deduce that 
\bqn
\rho^\ast(\kgb)=\rk_\Dd\ksrb
\eqn
and hence that 
\bqn
 \beta_\Dd\big(\varphi(x),\varphi(y),\varphi(z)\big)
=\rk_\Dd\beta_\DD(x,y,z)
\eqn
for almost every $(x,y,z)$.
Thus we obtained that for almost every $x,y,z\in\partial\DD$
\bqn
\ba
K_\Dd\big(\varphi(x),\varphi(y)\big) 
K_\Dd\big(\varphi(y),\varphi(z)\big)
K_\Dd\big(\varphi(z),\varphi(x)\big)&\\
=e^{2\pi\imath n_\Dd\beta_\Dd\big(\varphi(x),\varphi(y),\varphi(z)\big)}&\mod\RR^{\times+}\\
=e^{2\pm\pi\imath n_\Dd\rk_\Dd\frac12}&\mod\RR^{\times \,+}
\ea
\eqn
and as a result the square $\<\<\,\cdot\,,\,\cdot\,\>\>^2$ of the Hermitian triple 
product is equal to $1$ on $(\essim\varphi)^{(3)}\subset\cs^{(3)}$,
where $\essim\varphi\subset\cs$ is the essential image of $\varphi$.
But, being invariant under $\rho$, this set is Zariski dense in $\cs$ hence 
the rational function $\<\<\,\cdot\,,\,\cdot\,\>\>^2$ on $\cs^{(3)}$
is identically equal to $1$. If now $\Dd$ were not of tube type,
$\cs^{(3)}$ would be connected and hence $\<\<\,\cdot\,,\,\cdot\,\>\>$
would be identically equal to $1$ on $\cs^{(3)}$, which is a contradiction.
\end{proof}

\subsection{Boundary maps, rotation numbers \\ and representation varieties}\label{subsec:bdry_rot_repvar}
We have seen that if $\rho:\pi_1(S)\to G$ is a maximal representation 
into a group of Hermitian type and
$h:\pi_1(S)\to\PSU(1,1)$ is a hyperbolization of $S$ of finite area, 
then there exists a measurable $\rho\circ h^{-1}$-equivariant map $\varphi:\partial\DD\to\cs$
and furthermore
\bq\label{eq:5.5.1}
\beta_\Dd\big(\varphi(x),\varphi(y),\varphi(z)\big)=\r_\Dd\beta_\DD(x,y,z)
\eq
for almost every $(x,y,z)\in(\partial\DD)^3$.  Here $\beta_\Dd$ is the Maslov cocycle 
on the Shilov boundary of the bounded symmetric domain $\Dd$; 
observe that $\beta_\DD$ is just $\frac12$ of the orientation cocycle.

In the case where $G=\PSU(1,1)$ we have see that 
$h$ and $\rho$ are semiconjugate by using Ghys' theorem;
an alternative approach would be to use the equality \eqref{eq:5.5.1} 
to show that $\varphi$ coincides
almost everywhere with a weakly monotone map; 
this has been carried out in \cite{Iozzi_ern}.  
In fact, this way of ``improving" the regularity of $\varphi$
works in general and the basic idea is presented in \cite{Burger_Iozzi_Labourie_Wienhard}.  
One considers the essential graph of $\varphi$
\bqn
\essgr(\varphi)\subset\partial\DD\times\cs
\eqn
which is by definition the support of the direct image of the Lebesgue measure on $\partial\DD$
under the map
\bqn
\ba
\partial\DD&\to\partial\DD\times\cs\\
x&\mapsto\big(x,\varphi(x)\big)\,.
\ea
\eqn
Then one shows that there are exactly two sections $\varphi_-$ and $\varphi_+$ of the projection
of $\essgr(\varphi)$ on $\partial\DD$ such that:
\be
\item $\varphi_-$ and $\varphi_+$ are strictly equivariant;
\item $\varphi_-$ is right continuous while $\varphi_+$ is left continuous;
\item $\essgr(\varphi)=\big\{\big(x,\varphi_-(x)\big),\big(x,\varphi_+(x)\big):\,x\in\partial\DD\big\}$;
\item for every positive triple $x,y,z\in\partial\DD$, both triples 
        $\varphi_+(x),\varphi_+(y),\varphi_+(z)$ and $\varphi_-(x),\varphi_-(y),\varphi_-(z)$ 
        are maximal.
\ee
This generalizes exactly the $\PSU(1,1)$ picture and, remarkably, 
the discontinuities of $\varphi_-$ and $\varphi_+$ are simple.  

One can summarize the situation as follows:

\begin{theorem}[\cite{Burger_Iozzi_Wienhard_toledo}]\label{thm:5.21}
 The representation $\rho:\pi_1(S)\to G$ is maximal 
if and only if there exists a left continuous map $\varphi:\partial\DD\to\cs$ such that 
\be
\item $\varphi$ is strictly $\rho\circ h^{-1}$-equivariant;
\item $\varphi$ maps every positively oriented triple in $\partial\DD$ to a maximal triple on $\cs$.
\ee
\end{theorem}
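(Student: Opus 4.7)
The argument attacks the two directions via the realization of the bounded K\"ahler class by the Maslov cocycle on the Shilov boundary (Corollary~\ref{cor:5.19}), together with the model of $\hb^2(\pi_1(S),\RR)$ in terms of $\la((\partial\DD)^\bullet)^{\pi_1(S)}$ provided by the amenability of the $\pi_1(S)$-action on $\partial\DD$ through $h$.

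For the forward direction, suppose $\rho$ is maximal. The amenability of $\pi_1(S)\curvearrowright\partial\DD$ together with the strong proximality of $G\curvearrowright\cs=G/Q$ (reducing to the Zariski closure of $\rho(\pi_1(S))$ via the structure Theorem~\ref{thm:5.10} if necessary) produces, by the Furstenberg--Margulis argument, a measurable, strictly $\rho\circ h^{-1}$-equivariant map $\varphi:\partial\DD\to\cs$. Maximality of $\rho$ then translates, via the equality case in Corollary~\ref{cor:5.6} and the identification of $\kgb$ with $\beta_\Dd$ in Corollary~\ref{cor:5.19}, into the almost-everywhere cocycle identity
\bqn
\beta_\Dd\big(\varphi(x),\varphi(y),\varphi(z)\big)=\r_\Dd\,\beta_\DD(x,y,z).
\eqn
Next, I would promote $\varphi$ to the required left-continuous map following the essential-graph analysis sketched in the paragraph preceding the theorem: thanks to the rigidity of maximal triples provided by Theorem~\ref{thm:5.18}(2), the set $\essgr(\varphi)\subset\partial\DD\times\cs$ admits exactly two strictly equivariant sections $\varphi_-,\varphi_+$, right- and left-continuous respectively. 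Taking $\varphi:=\varphi_+$ yields (1), and the continuity of $\varphi_+$ combined with the a.e.\ maximality upgrades (2) to every positive triple.

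For the backward direction, assume that $\varphi$ as in (1)--(2) exists. Equivariance allows me to pull back the Maslov cocycle, and by Corollary~\ref{cor:5.19} the class $\rho^\ast(\kgb)\in\hb^2(\pi_1(S),\RR)$ is represented by
\bqn
(x,y,z)\mapsto\beta_\Dd\big(\varphi(x),\varphi(y),\varphi(z)\big),
\eqn
which, by condition (2) and the antisymmetry of $\beta_\Dd$, equals $\tfrac{\r_\Dd}{2}\orn(x,y,z)$ on positive triples and hence almost everywhere. Applying the same boundary model to $\PSU(1,1)$, Proposition~\ref{prop:4.7}(2) combined with the identity $\beta_\DD=\tfrac12\orn$ shows that the class $\ksrb=h^\ast(\erb)$ is represented by $\tfrac12\orn$. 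Therefore $\rho^\ast(\kgb)=\r_\Dd\,\ksrb$ in $\hb^2(\pi_1(S),\RR)$, and Corollary~\ref{cor:5.6} concludes that $\rho$ is maximal.

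The main obstacle is the essential-graph step in the forward direction: extracting honest left- and right-continuous sections from an a.e.-defined equivariant measurable map. The argument turns on the fact that the locus of maximal triples in $\cs^3$ carries a canonical cyclic-order-type structure, inherited from the Shilov boundary of a maximal tube-type subdomain containing them (Theorem~\ref{thm:5.18}(2)), and this cyclic structure is exactly what enables a weakly-monotone extension in the spirit of the $\PSU(1,1)$ argument of Proposition~\ref{prop:4.1}.
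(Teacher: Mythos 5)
Your proposal follows essentially the same route as the paper's own discussion: the forward direction is exactly the chain (measurable equivariant boundary map from amenability and strong proximality, the a.e.\ identity $\beta_\Dd\circ\varphi=\r_\Dd\,\beta_\DD$ from the equality case of the Milnor--Wood inequality, then the essential-graph analysis producing the one-sided continuous sections $\varphi_\pm$) that the paper lays out in the paragraphs preceding Theorem~\ref{thm:5.21}, and your backward direction is the standard converse via representing $\rho^\ast(\kgb)$ by the pullback of the Maslov cocycle and invoking Corollary~\ref{cor:5.6}. Both you and the paper leave the essential-graph step at the level of a sketch (the paper defers it to the cited references), so the proposals are at the same level of rigor and there is no substantive divergence.
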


The first obvious consequence is the following result on the existence of fixed points:

\begin{corollary}[\cite{Burger_Iozzi_Wienhard_toledo}]\label{cor:5.22}  Let $\rho:\pi_1(S)\to G$ be maximal.
Then:
\be
\item if $\gamma$ is freely homotopic to a boundary component, 
  $\rho(\gamma)$ has a fixed point in $\cs$;
\item if $\gamma$ is not conjugate to a boundary component, then $\rho(\gamma)$ has (at least)
        two fixed points in $\cs$, which are transverse. 
\ee
\end{corollary}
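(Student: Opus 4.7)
The plan is to deduce both assertions directly from the existence of the equivariant boundary map $\varphi: \partial\DD \to \cs$ provided by Theorem~\ref{thm:5.21}, using only the basic dynamics of a hyperbolization on $\partial\DD$ together with the definition of a maximal triple.

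For part (1), I would argue that if $\gamma \in \pi_1(S)$ is freely homotopic to a boundary component then $\gamma$ is conjugate to some $c_i$ in $\pi_1(S)$, so $h(\gamma) \in \PSU(1,1)$ is conjugate to $h(c_i)$ and hence is either parabolic or hyperbolic. In either case it admits at least one fixed point $\xi \in \partial\DD$. Applying the strict $\rho \circ h^{-1}$-equivariance of $\varphi$ furnished by Theorem~\ref{thm:5.21}(1) gives
\bqn
\rho(\gamma)\varphi(\xi) = \varphi(h(\gamma)\xi) = \varphi(\xi),
\eqn
exhibiting $\varphi(\xi) \in \cs$ as a fixed point of $\rho(\gamma)$.

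For part (2), if $\gamma$ is not conjugate to a boundary loop then $h(\gamma)$ is hyperbolic and has exactly two distinct fixed points $\xi_-, \xi_+ \in \partial\DD$. The equivariance argument of (1) shows that both $\varphi(\xi_-)$ and $\varphi(\xi_+)$ are fixed by $\rho(\gamma)$. To promote this to transversality I would pick any $\eta \in \partial\DD$ so that $(\xi_-, \eta, \xi_+)$ is positively oriented; then Theorem~\ref{thm:5.21}(2) forces $(\varphi(\xi_-), \varphi(\eta), \varphi(\xi_+))$ to be a maximal triple, and since the Bergmann cocycle $\beta_\Dd$ achieves its extremal value only on triples of pairwise transverse points, this triple lies in $\cs^{(3)}$. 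In particular $\varphi(\xi_-)$ and $\varphi(\xi_+)$ are transverse, and a fortiori distinct.

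The main obstacle is not in the argument itself, which is very short, but in its legitimacy: we are evaluating $\varphi$ and invoking its equivariance at the individual points $\xi, \xi_\pm \in \partial\DD$. A measurable equivariant boundary map obtained purely from amenability is defined only up to null sets, so this pointwise step could a priori fail. The crucial input that rescues it is the left-continuous, strictly equivariant refinement recorded in Theorem~\ref{thm:5.21}; once that is in hand the corollary is essentially immediate, and all the conceptual depth sits upstream in the construction of the boundary map from the equality $\rho^\ast(\kgb) = \rk_X\, \ksrb$.
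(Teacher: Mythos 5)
Your proof is correct and is exactly the argument the paper intends: the paper offers no separate proof, simply calling the corollary "the first obvious consequence" of Theorem~\ref{thm:5.21}, and your write-up spells out that consequence — fixed points of $h(\gamma)$ on $\partial\DD$ push forward under the strictly equivariant map $\varphi$ to fixed points of $\rho(\gamma)$ on $\cs$, and maximality of the image of a positively oriented triple $(\xi_-,\eta,\xi_+)$ forces transversality of $\varphi(\xi_-)$ and $\varphi(\xi_+)$. You also correctly locate the only real subtlety, namely that the pointwise evaluation is legitimate precisely because Theorem~\ref{thm:5.21} upgrades the a.e.-defined measurable boundary map to a genuine, everywhere-defined strictly equivariant one.
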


We use again the standard presentation of $\pi_1(S)$ (see (\ref{eq:fg})) and 
define the following subset of $\homcs\big(\pi_1(S),G\big)$:
\bqn
\ba
 \homcs\big(\pi_1(S),G\big)
=\big\{
\rho\in\hom\big(\pi_1(S),G\big);\, \hbox{for every }1\leq i\leq n,& \\
        \rho(c_i)\hbox{ has at least one fixed point in }\cs\big\}&
\ea
\eqn
Then $\homcs\big(\pi_1(S),G\big)$ is a semialgebraic subset of $\hom\big(\pi_1(S),G\big)$
and we have from Corollary~\ref{cor:5.22} that
\bqn
\hommax\big(\pi_1(S),G\big)\subset\hom_\cs\big(\pi_1(S),G\big)\,.
\eqn

\begin{theorem}[\cite{Burger_Iozzi_Wienhard_toledo}]\label{thm:5.23} Assume that $\Dd$ is of tube type.  
Then the Toledo invariant $\rho\mapsto \T(\Sigma,\rho)$ is locally constant on
$\homcs\big(\pi_1(S),G\big)$.  In particular, the subset of maximal representations
$\hommax\big(\pi_1(S),G\big)$ is a union of connected components of
$\homcs\big(\pi_1(S),G\big)$ and therefore semialgebraic.
\end{theorem}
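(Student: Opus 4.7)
I would leverage the explicit cocycle formula of Theorem~\ref{thm:5.5}(2). Let $\kappa=n_X\kgb\in\hcb^2(G,\ZZ)$ be the integral K\"ahler class and $\chi:K\to\RR/\ZZ$ the corresponding character, so that $n_X\T(\Sigma,\rho)=\T_\kappa(\Sigma,\rho)$. Since $\pi_1(S)$ is free, any representation $\rho$ admits a lift $\widetilde\rho:\pi_1(S)\to\widetilde G$ which depends continuously on $\rho$ on any small open set in $\hom\big(\pi_1(S),G\big)$. Combined with the identity
\bqn
n_X\,\T(\Sigma,\rho)=-\sum_{j=1}^n\widetilde\chiext\big(\widetilde\rho(c_j)\big)
\eqn
and the continuity of $\widetilde\chiext:\widetilde G\to\RR$ from Theorem~\ref{thm:5.4}, this shows that $\T$ is a continuous function on $\hom\big(\pi_1(S),G\big)$, hence on $\homcs\big(\pi_1(S),G\big)$.

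\textbf{Discreteness step.} To deduce local constancy I must then show that $\T$ takes values in a discrete subset of $\RR$ on $\homcs$. Reducing $\widetilde\chiext(\widetilde\rho(c_j))$ modulo $\ZZ$ yields $\rot_\kappa(\rho(c_j))$, so the key point is to establish that whenever $g\in G$ admits a fixed point in $\cs$, the rotation number $\rot_\kappa(g)$ lies in a single finite subgroup of $\RR/\ZZ$ independent of $g$. Because $\chiext$ depends only on the $G$-conjugacy class of the elliptic component $g_e$ of the refined Jordan decomposition, and because $g_e$ lies in the compact part of a Levi factor of the parabolic $Q=\Stab_G(\xi)$ stabilizing a fixed point $\xi\in\cs$ of $g$, the assertion reduces to proving that $\chi$, restricted (after $G$-conjugation into $K$) to this compact Levi, has finite image in $\RR/\ZZ$. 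Here the tube type hypothesis is used crucially: via the Cayley transform $\Dd\cong V+i\Omega$, the Levi of $Q$ identifies with the structure group of the Jordan algebra $V$, and the restriction of $\chi$ to its compact form becomes (a power of) the sign of the Jordan determinant, which is a character with finite image.

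\textbf{Conclusion and semialgebraicity.} Granting the previous step, $\T$ is a continuous function on $\homcs$ taking values in $\tfrac{1}{Nn_X}\ZZ$ for some integer $N$ depending only on $G$, hence it is locally constant. Consequently $\hommax\big(\pi_1(S),G\big)=\T^{-1}\big(\rk_X|\chi(S)|\big)$ is both open and closed in $\homcs\big(\pi_1(S),G\big)$, that is, a union of its connected components. Since $\homcs\big(\pi_1(S),G\big)$ is a real semialgebraic subset of $\hom\big(\pi_1(S),G\big)$ by its explicit description in terms of trace inequalities, and connected components of real semialgebraic sets are themselves semialgebraic (Proposition~\ref{prop:2.1}), $\hommax\big(\pi_1(S),G\big)$ is semialgebraic as claimed.

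\textbf{Main obstacle.} The technical heart is the rotation-number quantization invoked in the discreteness step. Its conceptual source is that in the tube type case the Maslov/Bergman cocycle $\beta_\Dd$ takes only finitely many values on $\cs^{(3)}$, so that any class pulled back from $\kgb$ to a Shilov stabilizer is integral up to a bounded denominator; in the non tube type case, Theorem~\ref{thm:5.20} shows $\cs^{(3)}$ is connected and $\beta_\Dd$ is non-constant, and correspondingly $\T$ genuinely varies on $\homcs$. Making the finiteness uniform in $\xi\in\cs$ and in $g$ requires the Jordan-algebra description of the tube domain and a careful analysis of the structure of the compact Levi of $Q$, which is the one place in the argument where the tube type hypothesis cannot be bypassed.
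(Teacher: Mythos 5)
Your proposal is correct and follows the same overall strategy as the paper's (one\-/sentence) sketch: continuity of $\rho\mapsto\T(\Sigma,\rho)$ via the boundary formula of Theorem~\ref{thm:5.5}(2), plus a tube\-/type property of the Shilov stabilizer $Q$ that forces the values to lie in a discrete subset of $\RR$. Where you differ is in how that key property is realized. The paper invokes the cohomological lemma that the restriction $\hcb^2(G,\RR)\to\hcb^2(Q,\RR)$ vanishes when $\Dd$ is of tube type, which makes $\rot_\kappa|_Q$ a homomorphism and then quantizes it. You instead argue directly from the refined Jordan decomposition: since $\chiext(g)$ depends only on the elliptic component $g_e$, and $g_e$ is conjugate into the maximal compact $K\cap L$ of the Levi of $Q$, the rotation numbers of elements fixing a point of $\cs$ land in $\chi(K\cap L)$, which is finite exactly in the tube type case (e.g.\ $\chi|_{\O(n)}$ is a power of $\det=\pm1$ for $\Sp(2n,\RR)$). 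This is a more concrete and arguably more elementary route that bypasses the bounded\-/cohomology vanishing entirely, at the cost of requiring the Jordan\-/algebra description of the compact Levi for each tube type domain; the paper's cohomological formulation has the advantage of being uniform and of tying the statement directly to the dichotomy of Theorem~\ref{thm:5.20}, which you correctly identify as the conceptual source of the tube type hypothesis. Both arguments yield $\T(\Sigma,\rho)\in\tfrac{1}{N n_X}\ZZ$ on $\homcs\big(\pi_1(S),G\big)$ for some integer $N$ depending only on $G$ (compare the constant $e_G$ in Theorem~\ref{thm:5.24}), after which your open\-/and\-/closed argument and the appeal to semialgebraicity of connected components are exactly as in the paper.
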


This result is essentially a consequence of the formulas in \S~\ref{eq:fg} 
for the invariants $\T_\kappa(\Sigma,\rho)$, $\kappa\in\hcb^2(G,\ZZ)$
together with the lemma that if $Q$ is the stabilizer in $G$ of a point in $\cs$,
and if $\Dd$ is of tube type, then the restriction map
\bqn
\hcb^2(G,\RR)\to\hcb^2(Q,\RR)
\eqn
is identically zero.  

We end by mentioning a result which gives additional invariants for maximal representations.
Recall that for $\chi\in\homc(K,\RR/\ZZ)$ we have introduced a class function 
$\chiext:G\to\RR/\ZZ$ extending $\chi$.  We have:

\begin{theorem}[{\cite[Theorem~13]{Burger_Iozzi_Wienhard_toledo}}]\label{thm:5.24} 
Let $\chi\in\homc(K,\RR/\ZZ)$ and fix $\rho_0:\pi_1(S)\to G$ maximal.  
\be
\item For every maximal $\rho:\pi_1(S)\to G$, the map
\bqn
\ba
R_\chi(S):\pi_1(S)&\longrightarrow\qquad\quad\RR/\ZZ\\
\gamma\quad&\mapsto\chiext\big(\rho(\gamma)\big)-\chiext\big(\rho_0(\gamma)\big)
\ea
\eqn
is a homomorphism.
\item If $\Dd$ is of tube type, $R_\chi(S)$ takes values in $e_G^{-1}\ZZ/\ZZ$ and
\bqn
\hommax\big(\pi_1(S),G\big)\to\hom\big(\pi_1(S),e_G^{-1}\ZZ/\ZZ\big)
\eqn
is constant on connected components.
(Here $e_G$ is an explicit constant depending on $G$, not just on the symmetric space associated to $G$, 
e.\,g $e_{\SL(2,\RR)} = 2$.)
\ee
\end{theorem}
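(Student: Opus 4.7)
I would reduce part~(1) to the following statement: choose lifts $\widetilde\rho,\widetilde{\rho_0}\colon\pi_1(S)\to\widetilde G$ of $\rho,\rho_0$ (automatic when $\pi_1(S)$ is free; otherwise work on the pullback central extension $\Gamma_\rho=\rho^*\widetilde G$) and set
\bqn
\widetilde R_\chi(\gamma)\;:=\;\widetilde\chiext\bigl(\widetilde\rho(\gamma)\bigr)-\widetilde\chiext\bigl(\widetilde{\rho_0}(\gamma)\bigr)\in\RR.
\eqn
Since the reduction of $\widetilde R_\chi$ modulo $\ZZ$ recovers $R_\chi$ and is independent of the choice of lifts, part~(1) is equivalent to showing $\widetilde R_\chi$ is a homomorphism. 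By Theorem~\ref{thm:5.4}(1), $\widetilde\chiext$ is a homogeneous quasimorphism on $\widetilde G$, so $\widetilde R_\chi$ inherits both homogeneity and the quasimorphism property on $\pi_1(S)$.

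Next I would identify the class in $\hb^2(\pi_1(S),\RR)$ of the defect cocycle $d\widetilde R_\chi$. By Theorem~\ref{thm:5.4}(2), the defect of $\widetilde\chiext\circ\widetilde\rho$ is cohomologous, in bounded real cohomology, to $\rho^*\kappa_\RR$, where $\kappa_\RR\in\hcb^2(G,\RR)$ is the image of $\kappa$ under $\ZZ\hookrightarrow\RR$. Hence the class of $d\widetilde R_\chi$ equals $\rho^*\kappa_\RR-\rho_0^*\kappa_\RR$. Now $\kappa_\RR$ lies in the span of the K\"ahler classes of the irreducible Hermitian factors of $X$; Corollary~\ref{cor:5.6} applied factor by factor (each projection being a maximal representation into the corresponding simple factor by the extremal property of the Milnor--Wood inequality) shows that for any maximal $\rho$ the pullback of each factor's K\"ahler class equals $\rk_{X_i}\,\ksrb$. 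Therefore $\rho^*\kappa_\RR=\rho_0^*\kappa_\RR$ and $d\widetilde R_\chi$ is cohomologically trivial. The injection $Q_{\mathrm h}(\pi_1(S),\RR)/\hom(\pi_1(S),\RR)\hookrightarrow\hb^2(\pi_1(S),\RR)$ (from Section~\ref{sec:scl}), combined with the vanishing of bounded homogeneous functions on the torsion-free group $\pi_1(S)$, forces the homogeneous quasimorphism $\widetilde R_\chi$ to be a homomorphism, proving~(1).

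For part~(2), assume $\Dd$ is of tube type. Corollary~\ref{cor:5.22} guarantees that every nontrivial $\rho(\gamma)$ is conjugate into a maximal parabolic subgroup $Q<G$ stabilizing a point of $\cs$. Since $\chiext$ is a class function, $\chiext(\rho(\gamma))$ can be computed on a representative in $Q$. A structural analysis of the refined Jordan decomposition then applies: the elliptic part $g_e$ of any $g\in Q$ lies in the compact group $K\cap Q$, and in the tube type case the restriction of $\chi$ to $K\cap Q$ takes values in $e_G^{-1}\ZZ/\ZZ$---this is precisely the defining property of the constant $e_G$, which encodes the relation between the character lattice of $K$ and the subgroup stabilizing a Shilov boundary point. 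Consequently $R_\chi(\gamma)\in e_G^{-1}\ZZ/\ZZ$ for every $\gamma$, so the homomorphism $R_\chi$ from~(1) factors through the finite cyclic group $e_G^{-1}\ZZ/\ZZ$. Continuity of the map $\rho\mapsto R_\chi$, immediate from continuity of $\chiext$ and the explicit formulas in Theorem~\ref{thm:5.5}, combined with discreteness of the target then gives local constancy on $\hommax(\pi_1(S),G)$.

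The main obstacle is the lifting step in part~(1) when $S$ is compact and the obstruction $\rho^*o_2\in\h^2(\pi_1(S),\pi_1(G))$ is nontrivial: the lifts $\widetilde\rho,\widetilde{\rho_0}$ then do not exist as homomorphisms. The workaround I propose is to run the argument on the pullback central extensions $\Gamma_\rho,\Gamma_{\rho_0}$ and to verify that maximality, via Theorem~\ref{thm:5.5}(1) applied to a generating family of integral characters, forces $\rho^*o_2=\rho_0^*o_2$, so that $\Gamma_\rho\cong\Gamma_{\rho_0}$ as central extensions and the whole computation can be carried out on a common covering group. A secondary technical point is the precise identification of $e_G$ and verification of the integrality of $\chi$ on $K\cap Q$, which requires case-by-case inspection of the root data of each simple Hermitian Lie group of tube type.
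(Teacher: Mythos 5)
The survey states this theorem with a citation to \cite{Burger_Iozzi_Wienhard_toledo} and gives no proof, so I am judging your argument against the ingredients the text itself supplies (Theorems~\ref{thm:5.4}, \ref{thm:5.5}, \ref{thm:5.23} and Corollaries~\ref{cor:5.6}, \ref{cor:5.22}). Your part~(1) is essentially right: reducing to exactness of the homogeneous quasimorphism $\widetilde\chiext\circ\widetilde\rho-\widetilde\chiext\circ\widetilde{\rho_0}$, deducing that exactness from the fact that maximality pins down $\rho^\ast$ on all of $\hcb^2(G,\RR)$ (factor by factor, via the Milnor--Wood inequality and Corollary~\ref{cor:5.6}), and concluding with the injectivity of $Q_{\mathrm h}(\cdot,\RR)/\hom(\cdot,\RR)\to\hb^2(\cdot,\RR)$ is the correct mechanism. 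Your acknowledged lifting problem is real, however, and the workaround as stated does not quite close it: the characters $\chi_\ast:\pi_1(G)\to\ZZ$ see $\pi_1(G)$ only modulo torsion, so maximality does not force $\Gamma_\rho\cong\Gamma_{\rho_0}$ as extensions by $\pi_1(G)$; it only identifies their pushforwards to $\pi_1(G)/\mathrm{tors}$. The repair is to replace $\widetilde G$ by the intermediate cover $\widetilde G/\pi_1(G)_{\mathrm{tors}}$, to which $\widetilde\chiext$ descends because $\widetilde\chiext(zg)=\chi_\ast(z)+\widetilde\chiext(g)$ for central $z$; with that modification your argument for (1) goes through.

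Part~(2) is where the genuine gap sits. The reduction via Corollary~\ref{cor:5.22} and the refined Jordan decomposition is fine (note only that for $g\in Q$ the elliptic part $g_e$ is merely $Q$-conjugate into $K\cap Q$, not contained in it --- harmless, since $\chiext$ is a class function). But the claim that $\chi(K\cap Q)\subseteq e_G^{-1}\ZZ/\ZZ$ ``is precisely the defining property of $e_G$'' is the point at which the tube type hypothesis does all the work, and you neither prove it nor explain why it fails otherwise: since $\chi$ is a continuous homomorphism into $\RR/\ZZ$, finiteness of its image on $K\cap Q$ is equivalent to $\chi$ killing the identity component $(K\cap Q)^\circ$, and that is exactly what separates tube type from non-tube type (compare $\Sp(2n,\RR)$, where $K\cap Q=\operatorname{O}(n)\subset\operatorname{U}(n)$ and $\det$ has image $\{\pm1\}$, with $\SU(p,q)$, $p\neq q$, where the image is all of $\RR/\ZZ$). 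The survey's own hint, stated immediately after Theorem~\ref{thm:5.23}, gives the conceptual route: for $\Dd$ of tube type the restriction $\hcb^2(G,\RR)\to\hcb^2(Q,\RR)$ vanishes, so $\kappa|_Q$ lies in the image of $\delta:\homc(Q,\RR/\ZZ)\to\hcb^2(Q,\ZZ)$ and $\chiext|_Q=\rot_\kappa|_Q$ is the reduction of a continuous homomorphism with finite image; this both produces $e_G$ and shows why the hypothesis is needed. Your proposed ``case-by-case inspection of root data'' could in principle substitute for this, but as written it is a deferred computation rather than a proof. The final step (continuity of $\rho\mapsto\chiext\big(\rho(\gamma)\big)$ on generators plus discreteness of $\hom\big(\pi_1(S),e_G^{-1}\ZZ/\ZZ\big)$ gives local constancy) is correct.
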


\section{Hitchin representations and positive representations}\label{subsec:hitchin_positive}
Hitchin representations and positive representations are defined when $G$ is a split real Lie group. 

\begin{definition}\label{defi:split}
A real simple Lie groups $G$ is {\em split}\index{split real Lie group}\index{Lie group!split} 
if its real rank equals the complex rank of its complexification $G_\CC$, 
i.\,e. the maximal torus is diagonalizable over $\RR$.
\end{definition}

\subsection{Hitchin representations}\index{Hitchin representation}\index{representation!Hitchin}
Let $S$ be a compact surface and $G$ a split real simple adjoint group,
e.g. $G= \PSL(n,\RR)$, $\PSp(2n,\RR)$, $\PO(n,n+1)$ or $\PO(n,n)$,
Hitchin \cite{Hitchin} singled out a connected component 
\bqn
\RH \big(\pi_1(S), G \big) \subset \R \big(\pi_1(S), G \big)\, 
\eqn 
which he called {\em
  Teichm\"uller component}, now it is usually called {\em Hitchin
  component}\index{Hitchin  component}.

In order to define the Hitchin component we recall that the Lie
algebra $\frakg$ of a split real simple adjoint Lie group $G$ contains
a (up to conjugation) unique principal three dimensional simple Lie
algebra.  That is an embedded subalgebra isomorphic to 
$\fraks\frakl(2,\RR)$ which
is the real form of a subalgebra $\fraks\frakl(2,\CC) \subset
\frakg_\CC$ given (via the theorem of Jacobson--Morozov)
by a regular nilpotent element in $\frakg_\CC$. Here $\frakg_\CC$
denotes the complexification of $\frakg$ and a nilpotent element is
regular if its centralizer is of dimension equal to the rank of
$\frakg_\CC$. For more details on principal three dimensional
subalgebra we refer the reader to \cite{Onishchik_Vinberg_III} or
Kostant's original papers \cite{Kostant1,Kostant2,Kostant3}.

The embedding $\fraks\frakl(2,\RR) \to \frakg$ gives rise to an
embedding $\pi: \SL(2,\RR) \to G$. Precomposition of $\pi$ with a
discrete (orientation preserving) 
embedding of $\pi_1(S)$ into $\SL(2,\RR)$ defines a
homomorphism $\rho_0: \pi_1(S) \to G$, which we call a principal
Fuchsian representation.  The  Hitchin component $\RH \big(\pi_1(S), G \big)$ is
defined as the connected component of $\R \big(\pi_1(S),G \big)$ containing
$\rho_0$.  By construction it contains a copy of Teichm\"uller space.

\begin{remark}\label{rem:irreducible}
  When $G = \PSL(n,\RR)$, $\PSp(2m,\RR)$ or $\PO(m,m+1)$ the embedding
  $\pi$ is given by the $n$-dimensional irreducible representation
  $\PSL(2,\RR) \to \PSL(n,\RR)$, which is contained in $\PSp(2m,\RR)$
  if $n= 2m$ and in $\PO(m,m+1)$ if $n = 2m+1$.
  For $G = \PO(m,m)$, the embedding $\pi$ is given by the composition of 
the $2m-1$-dimensional irreducible representation into $\PO(m,m-1)$ 
with the embedding $\PO(m,m-1)$ into $\PO(m,m)$.
\end{remark}

\begin{remark}\label{rem:not_adjoint}
When $G$ is a finite cover of a split real simple adjoint Lie group $G^{\rm Ad}$,
 one can define the Hitchin components 
$\RH\big(\pi_1(S), G\big)$ by taking lifts of the principal Fuchsian representations 
$\pi_1(S) \to G^{\rm Ad}$ and the corresponding connected components.
 Equivalently one can define a Hitchin representation $\rho: \pi_1(S) \to G$ 
as a representation whose projection $\rho: \pi_1(S) \to G^{\rm Ad}$ is a Hitchin representation. 
%
\end{remark}


Hitchin studied the Hitchin component following an analytic approach,
relying on the correspondence between irreducible representations of
$\pi_1(S)$ in $\PSL(n,\CC)$ and stable Higgs bundles. One direction of
this correspondence is due to Corlette \cite{Corlette_1988} and
Donaldson \cite{Donaldson_1985}, following ideas of Hitchin
\cite{Hitchin_1987}, the other direction is due to Simpson
\cite{Simpson_1988, Simpson_1992}.

The correspondence requires to fix a complex structure $j$ on $S$;
with this choice, there is an isomorphism (see \cite{Hitchin})
%
%
\bq\label{eq:hitchin_map}
h_{j}: \RH \big(\pi_1(S), G \big)/G \to 
H^0 \big(S,\oplus_{k=1}^r \Omega_{(S,j)}^{d_k} \big)\,,
\eq 
where $H^0 \big(S, \Omega_{(S,j)}^d \big)$ is
the vector space of holomorphic differentials (with respect to the
fixed complex structure $j$ on $S$) of degree $d$.  The coefficients
$d_k$, $ k = 1, \cdots, r = \rk(G)$ are the degrees of a basis of the
algebra of invariant polynomials on $\frakg$.
In particular, this proves 
\begin{theorem}[{\cite[Theorem A]{Hitchin}}]\label{thm:hitchin}
  Let $S$ be a compact surface and $G$ the adjoint group of a split
  real Lie group. Then the Hitchin component $\RH \big(\pi_1(S), G \big)/G$ is
  homeomorphic to $\RR^{|\chi(S)|\dim G}$.
\end{theorem}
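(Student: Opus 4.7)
The theorem follows from a direct dimension count given the Hitchin parametrization already stated in (\ref{eq:hitchin_map}). My plan has three steps, and the only substantive input beyond that parametrization is a classical Lie-theoretic identity.

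First, fix a complex structure $j$ on $S$, turning $S$ into a compact Riemann surface $X$ of genus $g \geq 2$ with canonical line bundle $K := \Omega^1_{(S,j)}$. By the isomorphism $h_j$ in (\ref{eq:hitchin_map}), there is a homeomorphism
\[
\RH\big(\pi_1(S), G\big)/G \;\xrightarrow{\ \cong\ }\; V := \bigoplus_{k=1}^{r} H^0\big(X, K^{d_k}\big),
\]
where $r$ is the rank of $G$ and $d_1, \ldots, d_r$ are the degrees of a homogeneous basis of the algebra of $G$-invariant polynomials on $\frakg$ (equivalently $d_k = e_k + 1$ for the exponents $e_k$ of $G$). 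It therefore suffices to compute the real dimension of $V$.

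Second, apply Riemann--Roch to each summand. Since $G$ is simple, each exponent satisfies $e_k \geq 1$, hence $d_k \geq 2$, and $\deg K^{d_k} = d_k(2g-2) > 2g - 2$ for $g \geq 2$. Serre duality then gives $H^1(X, K^{d_k}) \cong H^0(X, K^{1-d_k})^{\ast} = 0$ (the line bundle $K^{1-d_k}$ has negative degree), and Riemann--Roch yields
\[
\dim_{\CC} H^0(X, K^{d_k}) \;=\; d_k(2g-2) - g + 1 \;=\; (2 d_k - 1)(g-1).
\]

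Third, invoke the identity $\sum_{k=1}^r (2 d_k - 1) = \dim G$: writing $\dim G = r + 2N$ with $N$ the number of positive roots, this follows from the classical equality $\sum_k e_k = N$ together with $d_k = e_k + 1$, so $\sum_k(2d_k - 1) = 2\sum_k e_k + r = 2N + r = \dim G$. Summing the Riemann--Roch computation gives $\dim_{\CC} V = (g-1)\dim G$, and since $V$ is a complex vector space,
\[
\dim_{\RR} V \;=\; 2(g-1)\dim G \;=\; |\chi(S)|\dim G,
\]
using $|\chi(S)| = 2g - 2$. Combined with $h_j$, this yields $\RH(\pi_1(S), G)/G \cong \RR^{|\chi(S)|\dim G}$.

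The main conceptual obstacle lies entirely in the construction of the parametrization $h_j$, which rests on the non-abelian Hodge correspondence (Corlette--Donaldson--Hitchin--Simpson) and on the Hitchin section built from a principal $\fraks\frakl_2$-triple in $\frakg_\CC$; since we are permitted to assume (\ref{eq:hitchin_map}), the residual input is the invariant-theoretic identity $\sum (2 d_k - 1) = \dim G$, and the remainder is Riemann--Roch together with a change of scalars.
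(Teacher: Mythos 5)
Your proposal is correct and follows exactly the route the paper takes: the theorem is deduced from the parametrization $h_j$ of \eqref{eq:hitchin_map}, with the remaining content being the Riemann--Roch dimension count $\dim_{\CC}H^0(X,K^{d_k})=(2d_k-1)(g-1)$ and the identity $\sum_k(2d_k-1)=\dim G$. You have merely made explicit the routine computation that the paper leaves implicit after stating the isomorphism.
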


Hitchin pointed out that the analytic approach via Higgs bundles gives
no indication about the geometric significance of the representations belonging to this component.
The only example supporting the idea that Hitchin components might
 parametrize geometric structures on $S$ available at that time was 
 given in work of Goldman \cite{Goldman_convex} and Choi and Goldman
 \cite{Goldman_Choi}, who showed that for $G=\PSL(3,\RR)$ the Hitchin
 component parametrizes convex real projective structures on $S$. Now
 we have a better, but not yet satisfactory understanding of the
 geometric significance of the Hitchin components beyond
 $\PSL(3,\RR)$, which will be described in more detail in
 \S~\ref{subsec:geom_struc} below.

%
 
 A direct consequence of Choi and Goldman's result is that any Hitchin
 representation $\rho: \pi_1(S) \to \PSL(3,\RR)$ is a discrete
 embedding with the additional property that for any $\gamma \in
 \pi_1(S) -\{1\}$, the element $\rho(\gamma)$ is diagonalizable with
 distinct real eigenvalues.  These properties have been generalized to
 all Hitchin representations into $\PSL(n,\RR)$ by Labourie
 \cite{Labourie_anosov}.

\begin{theorem}[\cite{Labourie_anosov}]\label{thm:Labourie}
  Let $\rho: \pi_1(S) \to \PSL(n,\RR)$ be a Hitchin representation.
  Then $\rho$ is a discrete embedding, and for any $\gamma \in
  \pi_1(S) -\{1\}$, the element $\rho(\gamma)$ is diagonalizable with
  distinct real eigenvalues.
\end{theorem}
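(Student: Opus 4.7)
The plan is to prove Theorem~\ref{thm:Labourie} by producing, for every Hitchin representation $\rho:\pi_1(S)\to\PSL(n,\RR)$, a continuous $\rho$-equivariant map $\xi:\partial\pi_1(S)\to\Ff$ from the Gromov boundary of the surface group (a topological circle) to the full flag manifold $\Ff=\PSL(n,\RR)/B$, satisfying a strong genericity condition: the flags assigned to any $n$ pairwise distinct points of $\partial\pi_1(S)$ are in general position (a \emph{hyperconvex}, or \emph{Frenet}, curve). Once such $\xi$ is constructed, the statements of the theorem follow from elementary dynamics on $\Ff$. My strategy would thus split into three pieces: (a) existence of $\xi$ at a single point of $\RH$, (b) persistence of $\xi$ along the whole Hitchin component, and (c) deduction of discreteness, faithfulness and the diagonalizability/real-spectrum statement from the properties of $\xi$.

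For (a), I fix a principal Fuchsian representation $\rho_0=\pi\circ h:\pi_1(S)\to\PSL(n,\RR)$ coming from a hyperbolization $h:\pi_1(S)\to\PSL(2,\RR)$. Here one has an explicit Frenet curve: starting from the natural identification $\partial\pi_1(S)\cong\partial\DD\cong\PP^1$, the $n$-dimensional irreducible representation (see Remark~\ref{rem:irreducible}) sends $\PP^1$ to the rational normal curve in $\PP^{n-1}$; taking osculating flags produces the desired hyperconvex equivariant curve $\xi_0$. For (b), I would run an openness-and-closedness argument on the subset of $\RH\big(\pi_1(S),\PSL(n,\RR)\big)$ consisting of those $\rho$ admitting a Frenet $\xi$. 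Openness follows by viewing $\xi$ as giving a section of the associated flat flag bundle over $T^1S$ which is transverse to a pair of stable/unstable foliations coming from the geodesic flow; structural stability of such transverse sections under small perturbation of the flat connection yields a nearby Frenet curve for any $\rho'$ close to $\rho$. Since $\RH$ is connected by Theorem~\ref{thm:hitchin}, it suffices then to establish closedness.

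Closedness is the step I expect to be the main obstacle. Given $\rho_k\to\rho_\infty$ in $\RH$ with Frenet curves $\xi_k$, one can extract a limiting weakly monotone map $\xi_\infty$ with values in $\Ff$; the nontrivial point is that the hyperconvexity is not a closed condition under pointwise limits, so one must rule out any collapse of the curve. For each $\gamma\neq1$ the hyperconvexity of $\xi_k$ forces $\rho_k(\gamma)$ to be proximal with real, distinct, simple eigenvalues whose attracting and repelling flags are $\xi_k(\gamma^+)$ and $\xi_k(\gamma^-)$. A uniform spectral-gap argument (uniform over $k$ and over $\gamma$ in suitable compact families, using the cocompact action of $\pi_1(S)$ on $T^1S$) propagates these gaps to the limit, preventing coincidences of $\xi_\infty$ on the dense $\pi_1(S)$-orbit of fixed point pairs, and then on all of $\partial\pi_1(S)$ by continuity and equivariance.

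Finally, for (c), assume the hyperconvex equivariant $\xi$ exists for $\rho$. For any $\gamma\neq 1$ in $\pi_1(S)$, the action on $\partial\pi_1(S)$ has exactly two fixed points $\gamma^\pm$ with north–south dynamics; equivariance forces $\rho(\gamma)$ to fix the flags $\xi(\gamma^\pm)\in\Ff$, which by the Frenet condition are transverse. A pair of transverse complete flags determines a unique decomposition $\RR^n=\bigoplus_{i=1}^n L_i$ into lines, in which $\rho(\gamma)$ acts diagonally; the contracting dynamics of $\rho(\gamma)$ on $\Ff$ inherited from $\xi$ and the north–south dynamics on $\partial\pi_1(S)$ force the eigenvalues to be real and of pairwise distinct moduli, hence pairwise distinct. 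Faithfulness follows because $\gamma\in\ker\rho\setminus\{1\}$ would give $\rho(\gamma)=\id$ while also forcing $\gamma$ to act with two distinct fixed points on $\partial\pi_1(S)$, contradicting the injectivity of $\xi$ on fixed-point pairs. Discreteness follows because a sequence $\rho(\gamma_n)\to\id$ in $\PSL(n,\RR)$ would violate the uniform spectral-gap estimate established along the way, forcing $\gamma_n=1$ eventually.
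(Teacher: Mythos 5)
Your proposal reconstructs exactly the architecture of the cited proof: the survey itself does not prove Theorem~\ref{thm:Labourie} but quotes it from \cite{Labourie_anosov}, and the surrounding material (Theorem~\ref{thm:convexmap}, the osculating-flag discussion and Proposition~\ref{prop:boundary_Anosov} in \S~\ref{subsec:anosov}) is precisely your scheme of producing an equivariant hyperconvex (Frenet) flag curve by an open--closed argument along the connected Hitchin component, seeded by the Veronese curve at the Fuchsian locus. Your steps (a) and (c) are correct and essentially as in the source: two transverse fixed flags $\xi(\gamma^{\pm})$ determine a $\rho(\gamma)$-invariant splitting of $\RR^n$ into real lines, whence real diagonalizability, and the convergence $\rho(\gamma)^k\xi(x)\to\xi(\gamma^+)$ for $x$ in general position forces the eigenvalue moduli, hence the real eigenvalues themselves, to be pairwise distinct; faithfulness and discreteness then follow as you say.

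The genuine gap is in your closedness step, which is indeed where all the difficulty of the theorem sits. You propose to prevent collapse of the limit curve by spectral-gap estimates that are ``uniform over $k$'', derived from cocompactness of the $\pi_1(S)$-action on $T^1S$. But cocompactness holds identically for every $k$ and yields no uniformity in $k$: the Anosov contraction constants of $\rho_k$ are exactly the quantities that may degenerate as $k\to\infty$, and a uniform bound on them along the sequence is essentially equivalent to the closedness you are trying to prove, so the argument as written begs the question. Moreover, even granting non-collapse at pairs of fixed points, pairwise transversality of the limit flags is strictly weaker than hyperconvexity (general position of $n$-tuples), and continuity plus equivariance of a pointwise limit of the $\xi_k$ is itself not automatic; Labourie's actual closedness argument works with limit maps that are a priori only semicontinuous, rules out degeneracy using non-degeneracy inputs special to the Hitchin component (irreducibility of the limiting representation and properties of the curves $\xi^1,\xi^{n-1}$), and upgrades to the full Frenet condition in a separate step. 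As it stands, your sketch identifies the right obstacle but does not supply a mechanism that would overcome it.
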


Note that by Remark~\ref{rem:irreducible} similar results hold for
Hitchin representations into $\Sp(2m,\RR)$ and $\SO(m,m+1)$. \medskip

\subsection{Positive representations} 
In the case when $S$ is a noncompact surface (of finite type), the
generalization of Hitchin's work, which is based on methods from the
theory of Higgs bundles, has only been partially carried out
as it presents some additional analytic difficulties
(see for example \cite{Biswas_etal}). But when $G$ is an adjoint split real
Lie group and $S$ is a noncompact surface, there is a completely different approach to
define a special subset of $\R \big(\pi_1(S), G \big)$ due to Fock
and Goncharov \cite{Fock_Goncharov} which leads to the set of {\em
  positive representations}\index{positive representation}\index{representation!positive} 
$\RP \big(\pi_1(S),G \big) \subset \R \big(\pi_1(S), G \big)$.

In order to describe the definition let us identify $S$ with a
punctured surfaces of the same topological type.  As recalled in \S~\ref{subsec:flat_gb} 
a homomorphism $\rho: \pi_1(S) \to G$ corresponds to a
flat principal $G$-bundle $G(\rho)$ on S.
The definition of the space of positive representations 
relies on considering the space of framed $G$-bundles on $S$. 
Let $\Bb = G/B$ be the space of Borel subgroups of $G$. 

A framed $G$-bundle on $S$ is a pair $\big(G(\rho), \beta \big)$, where
$G(\rho)$ is a flat principal $G$-bundle on $S$ and $\beta$ is a flat
section of the associated bundle $G(\rho) \times _G \Bb$ restricted to
the punctures.  There is a natural forgetful map from the space of
framed $G$-bundles to the space of flat principal $G$-bundles sending
$\big(G(\rho), \beta \big) \to G(\rho)$. Since there always exists a flat
section of $G(\rho) \times _G \Bb$ over the punctures, this map is
surjective.

Given an ideal triangulation\index{ideal triangulation}\index{triangulation!ideal} 
of the surface $S$, i.e. a triangulation
whose vertices lie at the punctures of $S$, one can use the
information provided by the section $\beta$ to define a coordinate
system on the space of framed $G$-bundles. Fock and Goncharov
show that these coordinate systems form a positive atlas. This means
in particular that the coordinate transformations are given by rational functions, 
involving only positive coefficients. Hence the set of positive framed
$G$-bundles, i.e. the set where for a given triangulation all coordinate functions are positive
real numbers, is well defined and independent of the chosen
triangulation.  The space of positive representations $\RP \big(\pi_1(S),G \big)$ 
is the image of the space of positive framed $G$-bundles under the
forgetful map.

The construction of the coordinates involves the notion of positivity\index{positivity}
in Lie groups introduced by Lusztig
\cite{Lusztig_groups,Lusztig_base}, to which we will come back in
\S~\ref{subsec:boundarymaps}.  When $G = \PSL(n,\RR)$ one can give
an elementary description of the coordinates in terms of projective
invariants of triples and quadruples of flags (see \cite[\S~9]{Fock_Goncharov}).  
In the case when $G = \PSL(2,\RR)$ the
coordinates correspond to shearing coordinates constructed first by Thurston
and Penner \cite{Penner} and similar coordinates constructed by Fock \cite{Fock}.

\begin{theorem}[{\cite[Theorem 1.13, Theorem 1.9 and Theorem 1.10]{Fock_Goncharov}}]\label{thm:Fock_Goncharov_1}
  The space $\RP \big(\pi_1(S), G \big)/G$ of positive representations and
  $\RR^{|\chi(S)| \dim G}$ are homeomorphic.  Every positive
  representation is a discrete embedding, and every nontrivial
  element $\gamma \in \pi_1(S)$ which is not homotopic to a loop
  around a boundary component of $S$, is sent to a positive hyperbolic
  element.
\end{theorem}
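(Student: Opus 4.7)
\emph{Overall approach.} The statement combines a coordinate-theoretic assertion (identifying $\RP(\pi_1(S),G)/G$ with $\RR^{|\chi(S)|\dim G}$) with a dynamical/geometric assertion (faithfulness, discreteness, positivity of non-peripheral elements). My plan is to use the positive atlas on the moduli of framed $G$-local systems for the first part, and an equivariant positive boundary map derived from the framings to treat the second.

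\emph{Coordinates and the homeomorphism.} I would fix an ideal triangulation $\tau$ of $S$ (with vertices at the punctures) and describe the Fock--Goncharov coordinate chart on the positive framed moduli space: each ideal triangle contributes triple-ratio type invariants parametrizing configurations of three generic flags modulo $G$, while each internal edge contributes Lusztig-positive parameters on the unipotent radical $U\subset B$. Positivity of all these coordinates cuts out a subset $\RR_{>0}^N\subset\RR^N$, and the forgetful map $(\rho,\beta)\mapsto\rho$ has image $\RP(\pi_1(S),G)/G$. Using $V-E+F=\chi(S)$ with $3F=2E$ for an ideal triangulation together with $\dim G=r+2\dim U$, the count yields $N=|\chi(S)|\dim G$. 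Independence of $\tau$ is then reduced, via the combinatorics of flip moves between ideal triangulations, to Lusztig's total positivity theorem: elementary flips are subtraction-free rational transformations of the coordinates, so the positive locus is intrinsic and $\RP(\pi_1(S),G)/G\cong\RR_{>0}^{|\chi(S)|\dim G}\cong\RR^{|\chi(S)|\dim G}$.

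\emph{Boundary map and dynamical consequences.} Given a positive $(\rho,\beta)$, the flat section $\beta$ yields a $\rho$-equivariant map $\varphi_0$ from the set of cusps of $\widetilde S$ to $\Bb=G/B$. Identifying $\partial\pi_1(S)$ with $\partial\widetilde S$ via an auxiliary hyperbolization, the cusps form a dense subset; I would extend $\varphi_0$ to an equivariant map $\varphi\colon\partial\pi_1(S)\to\Bb$ by propagating the framings along sequences of refined ideal triangulations and using the explicit positive coordinate formulas to control limits. The critical property to verify is that $\varphi$ sends cyclically positive tuples in $\partial\pi_1(S)$ (for the natural cyclic order) to Lusztig-positive tuples of flags in $\Bb$. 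Granted $\varphi$, the three remaining assertions follow uniformly: faithfulness holds because any $\gamma\in\ker\rho$ fixes pointwise a dense set of positive triples, while a positive triple of flags has trivial stabilizer in the adjoint group $G$; discreteness follows because an accumulation $\rho(\gamma_n)\to e$ with $\rho(\gamma_n)\ne e$ would contradict the free continuous positive action of the image on positive triples; and for non-peripheral $\gamma\in\pi_1(S)\setminus\{e\}$, the distinct attracting/repelling fixed points $\gamma^\pm\in\partial\pi_1(S)$ can be joined to a third boundary point in a positive triple, so $\varphi(\gamma^\pm)$ is a pair of opposite Borels determining a unique maximal split torus $A$, and the preservation of positivity by $\rho(\gamma)$ forces $\rho(\gamma)$ to lie in the positive Weyl chamber of $A$, i.e.\ $\rho(\gamma)$ is positive hyperbolic.

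\emph{Main obstacle.} The technical heart is the construction of $\varphi$ and the proof that it is positivity-preserving. This requires propagating Lusztig positivity of edge/triangle invariants along arbitrarily long paths of triangles in $\tau$, and controlling the limit of Borels $\varphi(x_n)$ as cusps $x_n$ converge to a non-cusp boundary point. Both steps rest on the combinatorics of positive reduced expressions in the Weyl group of $G$ and on Lusztig's total positivity theory, and this is where I expect the bulk of the work to concentrate.
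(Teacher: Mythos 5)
The paper itself offers no proof of this statement: it is imported verbatim from Fock--Goncharov (their Theorems 1.9, 1.10 and 1.13), and the surrounding text only sketches the ingredients (the positive atlas attached to an ideal triangulation, invariance under flips, and --- in \S~\ref{subsec:boundarymaps} --- the characterization of positive representations by equivariant positive boundary maps, Theorem~\ref{thm:positivemap}). Your outline reconstructs exactly that strategy: coordinates from triangles and edges with the dimension count $|\chi(S)|\dim G$, flip-invariance via Lusztig's subtraction-free coordinate changes, and then the dynamical consequences extracted from an equivariant positive map $\varphi:\partial\pi_1(S)\to\Bb$. As a roadmap to the Fock--Goncharov proof this is faithful, and you correctly locate the technical heart in the construction of $\varphi$ and the verification that it preserves positivity of cyclically ordered tuples (in Fock--Goncharov the map is only one-sided continuous at non-cusp points, which is why monotonicity/positivity, not mere density of cusps, is what makes the limits exist).

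Two steps deserve more care than you give them. First, your coordinates parametrize the moduli space of \emph{framed} positive $G$-bundles, and you pass to $\RP\big(\pi_1(S),G\big)/G$ simply by saying the forgetful map has this image; but a surjection from $\RR_{>0}^N$ does not yield a homeomorphism of the image with $\RR^N$ unless the forgetful map is injective (and proper) on the positive locus. You must argue that a positive representation admits a \emph{unique} positive framing --- this is a genuine point in Fock--Goncharov, since for non-positive framed bundles the fibres of the forgetful map can be nontrivial (already for $\PGL(2,\RR)$ a hyperbolic boundary holonomy fixes two points, and only the positivity condition selects one). Second, the discreteness argument as stated ("an accumulation $\rho(\gamma_n)\to e$ would contradict the free continuous positive action") is not yet a proof: freeness of an action does not preclude non-discreteness of the acting group. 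The standard route is either to use the transversality of $\varphi$ at distinct boundary points together with the north--south dynamics of positive hyperbolic elements to get a uniform lower bound on displacement, or (for $S$ compact, as the survey does via Theorem~\ref{thm:Fock_Goncharov_3} and the Anosov property) to invoke the quasiisometric-embedding property. Make one of these precise and the outline closes up.
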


The notion of positive representations can be extended to the
situation when $S$ is compact, using the characterization of positive
representations in terms of equivariant positive boundary maps, which
is described in the next section.

\begin{theorem}[{\cite[Theorem 1.15]{Fock_Goncharov}}]\label{thm:Fock_Goncharov_3}
When $S$ is compact,   
$\RP \big(\pi_1(S), G \big) = \RH \big(\pi_1(S),G \big)$.
\end{theorem}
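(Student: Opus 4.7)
The plan is to use the characterization of positivity in the compact setting via equivariant boundary maps: a representation $\rho : \pi_1(S) \to G$ is positive if there exists a $\rho$-equivariant map $\varphi : \partial \pi_1(S) \to G/B$ sending every positively oriented triple in $\partial \pi_1(S) \cong S^1$ to a positive triple of flags in the sense of Lusztig, i.e.\ a triple lying in the positive cell of $(G/B)^3$. With this characterization, $\RP\big(\pi_1(S),G\big)$ and $\RH\big(\pi_1(S),G\big)$ can be compared directly as subsets of $\R\big(\pi_1(S),G\big)$.

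The first step is to exhibit a common element. A principal Fuchsian representation $\rho_0 = \pi \circ h$, where $h : \pi_1(S) \to \PSL(2,\RR)$ is a hyperbolization and $\pi : \PSL(2,\RR) \to G$ the principal embedding, lies in $\RH$ by definition and in $\RP$ via the following construction. The $h$-equivariant identification $\partial \pi_1(S) \cong \partial \DD \cong \PSL(2,\RR)/B_{\PSL_2}$ composed with the $\pi$-equivariant inclusion $\PSL(2,\RR)/B_{\PSL_2} \hookrightarrow G/B$ gives an equivariant boundary map $\varphi_0$, and its positivity reduces to the statement that $\pi$ sends the positive subsemigroup of the upper-triangular Borel of $\PSL(2,\RR)$ into Lusztig's totally positive semigroup $U^G_{>0} \subset G$.

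Next I would show $\RH \subset \RP$. Hitchin representations are Anosov (Theorem~\ref{thm:Labourie} for $\PSL(n,\RR)$ and its extensions to every split adjoint $G$), and therefore carry a unique continuous equivariant boundary map $\varphi_\rho : \partial \pi_1(S) \to G/B$ varying continuously with $\rho$. Positivity of $\varphi_\rho$ is open because the positive cell is open in $(G/B)^3$, and closed on the Anosov locus because the closure of this cell consists of weakly transverse configurations while Anosov boundary maps always send distinct points to strictly transverse flags. Since $\rho_0 \in \RH \cap \RP$ and $\RH$ is connected, every Hitchin representation is positive. For the converse $\RP \subset \RH$, a positive boundary map gives $\rho$ the Anosov property; combined with a connectedness argument for $\RP$ (e.g.\ via Fock--Goncharov coordinates adapted to a pants decomposition, where each pair of pants is treated as a noncompact surface to which the coordinate theory applies, with gluing parameters constrained to be positive in $\RP$), and using that $\RH$ is a connected component of $\R\big(\pi_1(S),G\big)$ meeting $\RP$, one concludes $\RP \subset \RH$.

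The main obstacle is twofold: verifying that $\pi$ respects Lusztig positivity, which rests on expressing the image of a positive upper-triangular element of $\SL(2,\RR)$ as a product $x_{i_1}(t_1) \cdots x_{i_N}(t_N)$ with $t_k > 0$ along a reduced expression of the longest element of the Weyl group of $G$; and establishing closedness of positivity on the Anosov locus, which requires controlling limits of equivariant boundary maps and using that the Anosov condition provides uniform transversality preventing degeneration into the non-positive boundary of the closed positive cell.
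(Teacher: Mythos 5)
Your proposal follows the same route the paper attributes to Fock--Goncharov: anchor at the principal Fuchsian representation, characterize positivity via equivariant boundary maps into $G/B$, and run an open--closed--connectedness argument on the Hitchin component. The one place you go beyond (and are thinner than) that sketch is the converse inclusion $\RP\subset\RH$, where the proposed connectedness of $\RP$ via pants decompositions is speculative for a closed surface; the paper's summary simply folds this containment into the initial observation that the positive representations form an open subset of the Hitchin component.
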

\begin{remark}
  In the case when $G= \PSL(n,\RR), \PSp(2n,\RR)$ or $\PO(n,n+1)$ this
  theorem also follows from Labourie's work \cite{Labourie_anosov}.
\end{remark}
When $S$ is a noncompact surface, the set of positive framed $G$-bundles carries many more interesting structures.  
It is a cluster variety,
admits a mapping class group invariant Poisson structure and natural quantizations. We will
not discuss any of these interesting structures and refer the reader
to \cite{Fock_Goncharov} and \cite{Fock_Goncharov_htt} for further
reading.

\section{Higher Teichm\"uller spaces -- a comparison}\label{sec:higher}

In this section 
we discuss common structures as well as differences between
the higher Teichm\"uller spaces introduced above -- comparing maximal representations 
on one hand and Hitchin representations on the other hand. 
We explain that, when $S$ is compact, representations in higher
Teichm\"uller spaces fit into the context of Anosov structures, 
which is a more general concept 
(for the definition see \S~\ref{subsec:anosov}). 
From this further geometric information about higher
Teichm\"uller spaces can be obtained.

\subsection{Boundary maps}\index{boundary map}\label{subsec:boundarymaps}
The higher Teichm\"uller spaces we are discussing here were defined
and studied with very different methods and so far we see no unified
approach to it.  Nevertheless there is a common theme in all these
works which highlights an important underlying structure for all
higher Teichm\"uller spaces: The existence of very special {\em
  boundary maps}.

Since $\pi_1(S)$ is a word hyperbolic group, the boundary
$\partial\pi_1(S)$ of $\pi_1(S)$ is a well defined compact metrizable space. 
When $S$ is compact $\partial \pi_1(S)$ identifies
naturally with a topological circle $S^1$ endowed with a canonical H\"older
structure. When $S$ is not compact there is no natural identification
of $\partial \pi_1(S)$ with a subset of $S^1$ (see the discussion in
\S~\ref{sec:surf_fin_eul}). 
%

All higher Teichm\"uller spaces can be characterized as the set of representations for which 
there exist special equivariant (semi)continuous maps into a flag variety.  The
special boundary maps all satisfy some positivity condition, where the
notion of positivity depends on the context.

For maximal representations, i.e in the case when $G$ is a Lie group
of Hermitian type, we saw in \S~\ref{sec:max_rep} that the Maslov
cocycle on the Shilov boundary $\cs$ of the symmetric space associated
to $G$ gives rise to the notion of a maximal triple of points in
$\cs$; and Theorem~\ref{thm:5.21} characterizes maximal representations as those 
which admit an equivariant boundary map $\phi:\partial \pi_1(S) \to \cs$, 
which sends every positively oriented triple to a maximal triple in $\cs$.
%
%

For Hitchin representations Labourie  constructed special boundary maps in \cite{Labourie_anosov}. 
In this case $S$ is compact and $\partial\pi_1(S) = S^1$.   
A map $\phi: S^1\to \RR\PP^{n-1}$ is said to be convex if for every $n$-tuple 
of distinct points $x_1, \cdots, x_n \in S^1$ the images $\phi(x_1) , \cdots, \phi(x_n) $ 
are in direct sum - or, equivalently, the map is injective and any hyperplane 
in $ \RR\PP^{n-1}$ intersects $\phi(S^1)$ in at most $(n-1)$ points. 

The characterization of Hitchin representation into $\PSL(n,\RR)$ in
terms of convex maps is due to a combination of the construction by
Labourie and a result by Guichard:
\begin{theorem}[{\cite{Labourie_anosov, Guichard_convex}}]\label{thm:convexmap}
  A representation $\rho: \pi_1(S) \to \PSL(n,\RR)$ lies in the Hitchin
  component if and only if there exists a $\rho$-equivariant continuous convex map 
$\phi: \partial\pi_1(S) \to
  \RR\PP^{n-1}$.
\end{theorem}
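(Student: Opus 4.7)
The plan is to establish the two implications separately, since they require quite different techniques, drawing on Labourie's construction for the forward direction and Guichard's rigidity result for the converse. The guiding principle is that the Anosov property already recorded in Theorem~\ref{thm:Labourie} readily produces the boundary map, while the converse requires showing that the presence of a convex boundary map determines the connected component of the representation.

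For the direction ``Hitchin implies the existence of a convex boundary map,'' first I would treat the principal Fuchsian case. If $\rho_0 = \pi \circ j$ with $j:\pi_1(S) \to \PSL(2,\RR)$ a discrete embedding and $\pi:\PSL(2,\RR) \to \PSL(n,\RR)$ the principal embedding realized on $\mathrm{Sym}^{n-1}(\RR^2)$, then the Veronese map $\phi_0:\partial\DD \to \RR\PP^{n-1}$, $[x:y] \mapsto [x^{n-1}: x^{n-2}y:\cdots:y^{n-1}]$, is $\rho_0$-equivariant and continuous, and convex because $n$ distinct points on the rational normal curve span $\RR^n$. For an arbitrary Hitchin $\rho$, Theorem~\ref{thm:Labourie} and its upgrade to an Anosov property with respect to the minimal parabolic provide a $\rho$-equivariant continuous map $\xi:\partial \pi_1(S) \to \mathrm{Flag}(\RR^n)$; the composition $\phi$ of $\xi$ with the projection to the first flag factor is the candidate. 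To verify convexity one connects $\rho$ to $\rho_0$ by a path in the Hitchin component and uses that the transversality conditions built into the Anosov property are preserved along this deformation, so the image curve remains in general position for all $n$-tuples of distinct points.

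For the converse, following Guichard, I would let $\mathcal{C} \subset \hom(\pi_1(S),\PSL(n,\RR))$ denote the set of representations admitting a continuous convex equivariant boundary map and argue that $\mathcal{C}$ is open, closed, and equal to the Hitchin component. The central technical input is a regularity result: a continuous convex curve $\phi:S^1 \to \RR\PP^{n-1}$ is automatically $C^1$ (in fact the osculating flag map $S^1 \to \mathrm{Flag}(\RR^n)$ is well defined and continuous). This promotes $\phi$ to a flag-valued boundary map and pins down the dynamics of $\rho(\gamma)$ on $\RR\PP^{n-1}$ for every infinite-order $\gamma$, implying that $\rho$ is a discrete embedding with $\rho(\gamma)$ diagonalizable with distinct eigenvalues. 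Closedness of $\mathcal{C}$ follows by passing to Hausdorff limits of the image curves using Arzel\`a--Ascoli and the bound on how convex curves can concentrate, while openness uses continuous dependence of $\phi$ on $\rho$. Since $\mathcal{C}$ contains the Hitchin component by the forward direction, and since the forward direction also shows that each connected component of $\mathcal{C}$ contains a principal Fuchsian representation (reached via the one-parameter deformation argument in the osculating flag framework), one deduces $\mathcal{C} = \RH(\pi_1(S),\PSL(n,\RR))$.

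The hard part will be the reverse direction, and within it the twofold step of (i) upgrading continuity of the convex curve to $C^1$ regularity of the induced osculating flag map, and (ii) ruling out that some extraneous component of $\hom(\pi_1(S),\PSL(n,\RR))$ might also carry a convex boundary map. Step (i) is the analytic heart of Guichard's contribution and requires delicate curve-theoretic estimates that are specific to the projective setting; step (ii) then reduces to a component count, which becomes tractable once the osculating flag map is available and the Anosov framework of Labourie can be invoked to identify the component in question with the Hitchin one.
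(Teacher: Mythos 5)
First, note that the paper does not prove this statement: it is quoted as an external result, attributed to the combination of Labourie's construction \cite{Labourie_anosov} and Guichard's converse \cite{Guichard_convex}, so there is no in-paper argument to compare against. Your overall architecture (Veronese curve for the principal Fuchsian locus, Labourie's Anosov machinery for the forward direction, Guichard's open--closed argument for the converse) is the correct division of labour. However, as a proof there are two genuine gaps at exactly the load-bearing points.

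In the forward direction, you deduce convexity of the curve $\phi$ from ``the transversality conditions built into the Anosov property,'' preserved along a path to $\rho_0$. This does not work as stated: the Anosov property with respect to the minimal parabolic gives \emph{pairwise} transversality of the flags $\xi_+(t)$ and $\xi_-(t')$ for $t\neq t'$, whereas convexity of $\phi$ is the condition that \emph{$n$-tuples} of distinct points have images in direct sum (hyperconvexity). A pairwise condition on flags does not imply, nor obviously propagate, an $n$-point general position condition for $n\geq 3$; establishing that the set of representations admitting a hyperconvex Frenet curve is \emph{closed} (not merely open) in the Hitchin component is precisely the hard analytic core of Labourie's paper, and it cannot be replaced by an appeal to stability of Anosov transversality. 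In the converse direction, your open--closed argument for the set $\mathcal{C}$ of representations carrying a convex boundary map would, at best, show that $\mathcal{C}$ is a union of connected components of the representation variety; it does not by itself exclude a component of $\mathcal{C}$ disjoint from the Fuchsian locus. You acknowledge this as ``step (ii)'' but then assert that each component of $\mathcal{C}$ contains a principal Fuchsian representation ``via the one-parameter deformation argument,'' which is exactly the content of Guichard's theorem and is nowhere supplied. Until the regularity statement (existence and semicontinuity of osculating flags for a merely continuous convex curve) is actually used to build such a deformation, or some other mechanism identifies the component, the converse remains unproved. So the proposal is a correct roadmap to the two cited papers, but both implications still rest on the main theorems of those papers rather than on arguments you have given.
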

 
In the context of positive representations the notion of positivity for the
boundary maps relies on Lusztig's notion of positivity. Recall that a
matrix in $\GL(n,\RR)$ is totally positive if all its minors are
positive numbers. An upper triangular matrix is positive if all
not obviously zero minors are positive. The notion of positivity has
been extended to all split real semisimple Lie groups $G$ by Lusztig
\cite{Lusztig_groups,Lusztig_base}. This can be used to define a
notion of positivity for $k$-tuples in full flag varieties. Let $B^+$
be a Borel subgroup of $G$, $B^-$ an opposite Borel subgroup and $U$
the unipotent radical of $B^+$. Then the set of Borel groups in $\Bb$
being opposite to $B^+$ can be identified with the orbit of $B^-$
under $U$.  The notion of positivity gives us a well defined subset
$U(\RR_{>0}) \subset U$.  A $k$-tuple of points $ (B_1, \cdots, B_k)$
in $\Bb$ is said to be positive if (up to the action of $G$) it can be
written as $\big(B^+, B^-, u_1 B^-, \cdots (u_1 \cdots u_{k-2}) B^-\big)$,
where $u_i \in U (\RR_{>0})$ for all $i= 1, \cdots , k-2$.

\begin{definition}\label{defi:positivemap}
  A map $\partial \pi_1(S) \to G/B$ is said to be positive if it sends
  every positively oriented $k$-tuple in $\partial\pi_1(S)$ to a positive
  $k$-tuple of flags in $G/B$.
\end{definition}

\begin{remark}
A map $\partial \pi_1(S) \to G/B$ is positive if and only if it sends 
every positively oriented triple in $\partial\pi_1(S)$ to a positive tripe of flags in $G/B$.
\end{remark}

\begin{theorem}[{\cite[Theorem 1.6]{Fock_Goncharov}}]\label{thm:positivemap}
  Let $G$ be a split real simple Lie group and $B<G$ a Borel subgroup.
  A representation $\rho: \pi_1(S) \to G$ is positive if and only if
  there exists a $\rho$-equivariant positive map\index{positive map}\index{map!positive} $\phi:
  \partial\pi_1(S) \to G/B$.
\end{theorem}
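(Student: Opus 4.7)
My plan is to prove both implications by translating between two notions of positivity: the Lusztig positivity of tuples of flags in $\Bb = G/B$, and the positivity of the Fock--Goncharov coordinates on the moduli space of framed $G$-bundles associated to an ideal triangulation. The crucial link, ultimately due to Lusztig, is that the big Bruhat cell $U \cdot B^-$ of flags opposite to a fixed $B^+$ admits a canonical parametrization by the totally positive unipotent semigroup $U(\RR_{>0})$, so that a triple $(B^+, B^-, uB^-)$ is positive precisely when $u$ corresponds to positive Fock--Goncharov coordinates. The plan is therefore to reduce both directions to this dictionary on triangles (for triples) and on pairs of adjacent triangles (for quadruples).

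For the forward implication, suppose $\rho \in \RP(\pi_1(S), G)$. By definition, $G(\rho)$ carries a framing $\beta$ over the punctures whose coordinates in some (hence any) ideal triangulation $\tau$ are positive. Lifting $\beta$ to the universal cover yields a $\rho$-equivariant map $\phi_0: P \to \Bb$, where $P \subset \partial \pi_1(S)$ is the dense set of attracting fixed points of parabolic conjugates of the boundary loops. Given any positively oriented triple $(p_1, p_2, p_3) \in P^3$, I would exhibit an ideal triangulation (obtained from $\tau$ by a finite sequence of flips) whose lift contains a triangle with those vertices; the positivity of the triangle coordinates then forces $(\phi_0(p_1), \phi_0(p_2), \phi_0(p_3))$ to be a positive triple in $\Bb$, using the preservation of positivity under the positive mutation formulas of \cite{Fock_Goncharov}.

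The next step is to extend $\phi_0$ continuously from $P$ to all of $\partial \pi_1(S)$ while preserving positivity of triples. Positivity on $P$ forces $\phi_0$ to be monotone with respect to the cyclic orders on $P$ and on the appropriate part of $\Bb$, much as weak monotonicity was exploited in \S~\ref{subsec:hyp_surf_quasiconj}; hence left and right limits exist along any sequence in $P$ converging in $\partial \pi_1(S)$. Equivariance together with the convergence dynamics of $\rho(\gamma)$ on $\Bb$ (which one first establishes from positivity by showing that $\rho$ is a discrete embedding mapping each $\gamma$ to a proximal element) singles out a well defined continuous extension $\phi$, and openness of the positivity condition in $\Bb^3$ guarantees that $\phi$ remains positive on all positively oriented triples.

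For the converse, given a $\rho$-equivariant positive $\phi:\partial\pi_1(S)\to\Bb$, restrict it to $P$ to recover a flat section of $G(\rho) \times_G \Bb$ over the punctures, yielding a framing $\beta$. For any ideal triangulation, the coordinate at each triangle (resp.\ edge) is determined by the triple (resp.\ quadruple) of flags at its vertices, and positivity of $\phi$ on these configurations translates directly into positivity of the coordinates, so $(G(\rho),\beta)$ is a positive framed $G$-bundle. The hardest step is the extension argument in the forward direction: passing from a positive map defined on the dense set $P$ to a genuinely continuous positive map on $\partial \pi_1(S)$, and identifying the image of points outside $P$ via the dynamical properties forced by positivity. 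A secondary subtlety is the compact case, where $P$ is empty; there the statement is essentially definitional, and the content is to verify consistency with the noncompact case by restricting to subsurfaces obtained from a pair-of-pants decomposition and applying the noncompact result to each piece.
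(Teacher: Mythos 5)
The first thing to say is that the survey contains no proof of Theorem~\ref{thm:positivemap} to compare against: it is quoted from Fock--Goncharov, and the only proof-related content in the text is the observation that the theorem is proved there for \emph{noncompact} $S$, that for compact $S$ the boundary-map characterization is taken as the \emph{definition} of positivity (the substantive compact-case statement being the equality with the Hitchin component, Theorem~\ref{thm:Fock_Goncharov_3}), and the remark that continuity of $\phi$ depends on Fock--Goncharov's particular identification of $\partial\pi_1(S)$ with a subset of $S^1$. Your overall strategy --- translate Lusztig positivity of configurations of flags into positivity of the triangle and edge coordinates, prove the forward direction by first defining $\phi$ on the dense subset $P$ of lifts of the punctures and then extending by monotone limits, and obtain the converse by restricting $\phi$ to $P$ to recover a positive framing --- is indeed the Fock--Goncharov route, and your treatment of the compact case as definitional is consistent with the text (the pair-of-pants consistency check you propose is not needed there).

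There is, however, a concrete gap in the forward direction. You assert that any positively oriented triple $(p_1,p_2,p_3)\in P^3$ is the vertex set of a triangle of some ideal triangulation obtained from $\tau$ by flips. This is false: an edge of an ideal triangulation projects to a \emph{simple} arc on $S$, whereas the arc determined by an arbitrary pair of points of $P$ need not be simple; concretely, for the once-punctured torus $P$ is identified with $\PP^1(\QQ)$ and the triangles of ideal triangulations are exactly the Farey triangles, so a triple such as $(0,1/2,\infty)$ spans no triangle of any ideal triangulation. What the argument actually needs is the stronger statement that for \emph{every} finite cyclically ordered subset of $P$ the associated configuration of flags is positive; this is obtained by taking the triangulated ideal polygon spanned by those points, amalgamating the positive configurations attached to its triangles (positivity of configurations is stable under this gluing), and using that sub-configurations of positive configurations are positive. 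Replacing your flip argument by this amalgamation step closes the gap. Two smaller cautions: deriving the extension of $\phi$ to all of $\partial\pi_1(S)$ from discreteness and proximality of the $\rho(\gamma)$ risks circularity, since in Theorem~\ref{thm:Fock_Goncharov_1} those dynamical properties are themselves consequences of positivity, so proximality must be extracted directly from total positivity of the holonomies; and the continuity you claim for the extension holds only for the Fock--Goncharov identification of the boundary, exactly as the remark following the theorem warns.
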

\begin{remark}
Note that Fock and Goncharov choose a different identification of the boundary of 
$\pi_1(S)$ with a subset of $S^1$. For their identification the boundary map is indeed continuous. 
\end{remark}
Let us emphasize that Fock and Goncharov prove Theorem~\ref{thm:positivemap} when $S$ is a
noncompact surface. In the case when $S$ is a compact surface, they
use the characterization of Theorem~\ref{thm:positivemap} in order to
define positive representations $\rho: \pi_1(S) \to G$ by requiring
the existence of a $\rho$-equivariant positive map $\phi:
\partial\pi_1(S) = S^1 \to G/B$.  In order to prove the equality
$\RH \big(\pi_1(S), G \big) = \RP \big(\pi_1(S),G \big)$ for compact surfaces $S$, they
observe first that the set of positive representation is an open
subset of the Hitchin component. Then they study limits of positive
representations in order to prove that it is also closed. Hence as a nonempty open
 and closed subset it is a connected component and thus coincides with the Hitchin component. 

For $G = \PSL(n,\RR)$ there is an intimate relation between positive
maps of $\partial \pi_1(S)$ into the full flag variety $G/B$ and
convex maps\index{convex map}\index{map!convex} into the the partial flag variety $\RR\PP^{n-1}$. In
particular, the projection of a positive map into the flag variety 
to $\RR\PP^{n-1}$ is a
convex map, and convex maps (with some regularity) naturally lift to positive maps, 
see \cite[Theorem~1.3]{Fock_Goncharov} and \cite[Chapter~5]{Labourie_anosov}, \cite[Appendix B]{Labourie_McShane} 
for details.

\subsection{The symplectic group\index{symplectic group}}\label{subsubsec:symplectic}
The only simple groups which are both of Hermitian type as well as split real are
the real symplectic groups,  $G = \PSp(2n,\RR)$. 
When $n\geq 2$, Hitchin representations or positive representation,
 and maximal representation provide different generalizations of Teichm\"uller space in this situation. 
It is indeed not difficult to see that the Hitchin
component and the space of positive representations are properly
contained in the space of maximal representations.
%

Moreover, for the symplectic group the properties of the boundary maps required 
in Theorem~\ref{thm:positivemap} and in
Theorem~\ref{thm:5.21} are related in the following way.  In
this situation $\mathcal{F} = G/B$ is the flag variety consisting of
full isotropic flags and $\cs =G/Q$ is the partial flag variety
consisting only of Lagrangian (i.e. maximal isotropic) subspaces.
Positive triples of flags in $\mathcal{F}$ in the sense of
Definition~\ref{defi:positivemap} are mapped to maximal triples in the
Shilov boundary in the sense of Theorem~\ref{thm:5.21} under the
natural projection
 \bqn
 \ba \mathcal{F}\qquad &\to \cs\\
  \, (F_1, \cdots, F_n)&\mapsto F_n\,.  
\ea
\eqn

%

\section{Anosov structures}\label{subsec:anosov}
The notion of Anosov structure is a dynamical analogue of the concept
of locally homogeneous $(G,X)$-structures\index{$(G,X)$-structure} in the sense of Ehresmann,
introduced by Labourie in \cite{Labourie_anosov} to study
Hitchin representations into $\PSL(n,\RR)$. Holonomy representations of
Anosov structure are called {\em Anosov representations}.


The class of Anosov representations is much bigger than the higher Teichm\"uller spaces discussed above. 
Anosov representations of
fundamental groups of surfaces exist into any semisimple Lie group,
and they can be defined more generally also for fundamental groups of
arbitrary closed negatively curved manifolds.  Nevertheless, when $S$
is a compact surface, 
representations in higher Teichm\"uller spaces are examples
of Anosov representations and recent results about Anosov
representations provide important geometric information about higher
Teichm\"uller spaces.

\subsection{Definition, properties and examples}
From now on let $S$ be a compact connected oriented surface with a fixed hyperbolic metric.
Denote by $T^1S$ its unit tangent bundle and by $\phi_t$ the geodesic
flow on $T^1S$. The group $\pi_1(S)$ acts as group of deck
transformations on $T^1\widetilde{S}$, commuting with $\phi_t$.

Let $G$ be a semisimple Lie group, given a representation $\rho:
\pi_1(S) \to G$ we obtain a proper action of $\pi_1(S)$ on
$T^1\widetilde{S} \times G$ by \bqn \gamma_\ast(x,g)=\big(T_\gamma
x,\rho(\gamma)g\big) \eqn whose quotient $\pi_1(S)\backslash
(T^1\widetilde{S}\times G)$ is the total space $G(\rho)$ of
a (flat) principal $G$-bundle over $T^1S$. 
(Note that the bundle $G(\rho)$ defined here is the pullback of the bundle $G(\rho)$ over $S$, 
defined in \S~\ref{subsec:flat_gb},  under  the canonical projection $T^1 S \to S$.)
The geodesic flow lifts to a flow on $G(\rho)$ defined (with a slight abuse of notation) by 
$\phi_t(x,g) = (\phi_t(x), g)$ on $T^1\widetilde{S} \times G.$

Let $P_+, P_-<G$ be a pair of opposite parabolic subgroup of $G$. The
unique open $G$-orbit $\mathcal{O} \subset G/P_+ \times G/P_-$
inherits two foliations, whose corresponding distributions we denote
by $E^\pm$, i.e. $(E^\pm)_{(z_+, z_-)} \cong T_{z_\pm} G/P_\pm$.

\begin{definition}\label{def:anosov_def}\cite{Labourie_anosov}
Let $\mathcal{O}(\rho)$ be the associated $\mathcal{O}$-bundle of $G(\rho)$. 
An Anosov structure\index{Anosov structure} on $\mathcal{O}(\rho)$ is a continuous section $\sigma$ such that 
\begin{enumerate}
\item\label{one} $\sigma$ commutes with the flow, and 
\item\label{two} the action of the flow $\phi_t$ on $\sigma^* E^+$
    (resp. $\sigma^* E^-$) is contracting (resp. dilating),
    \emph{i.e.} there exist  constants $A,a >0$ such that 
    \begin{itemize}
   \item for
    any $e$ in $\sigma^* (E^+)_m$  and for any $t>0$ one has
    \begin{center}
      $\displaystyle \| \phi_{ t} e \|_{\phi_{ t} m} \leq A
      \exp(-at) \| e \|_m, $ 
      \end{center}
  \item for any $e$ in $\sigma^* (E^-)_m$  and for any $t>0$ one has
    \begin{center}
      $\displaystyle \| \phi_{- t} e \|_{\phi_{- t} m} \leq A
      \exp(-at) \| e \|_m, $ 
    \end{center}
    \end{itemize}
  where $\| \cdot \|$ is any continuous norm on $\mathcal{O}(\rho)$.
  \end{enumerate}
\end{definition} 
\begin{remark}
The definition of Anosov structure does not depend on the choice of the hyperbolic metric on $S$. 
\end{remark}
\begin{definition}
  A representation $\rho: \pi_1(S) \to G$ is said to be a $(P_+,  P_-)$-Anosov 
representation\index{Anosov representation}\index{representation!Anosov} 
if $\mathcal{O}(\rho)$
  carries an Anosov structure.
\end{definition}

The conditions on $\sigma$ are equivalent to requiring that ${\sigma}
(T^1{S})$ is a hyberbolic set for the flow $\phi_t$.  Stability of
hyperbolic sets implies stability for $(P_+, P_-)$-Anosov
representations:

\begin{proposition}[\cite{Labourie_anosov}]
The set of $(P_+, P_-)$-Anosov representations is open in $\R \big(\pi_1(S),G \big)$.
\end{proposition}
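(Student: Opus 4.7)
My plan is to deduce openness from the classical structural stability of compact hyperbolic invariant sets of flows. Fix a $(P_+,P_-)$-Anosov representation $\rho_0:\pi_1(S)\to G$ with Anosov section $\sigma_0:T^1 S\to\Oo(\rho_0)$. By Definition~\ref{def:anosov_def}, the image $K_0:=\sigma_0(T^1 S)$ is a compact $\phi_t$-invariant set in $\Oo(\rho_0)$ whose normal bundle splits as $\sigma_0^\ast E^+\oplus\sigma_0^\ast E^-$ into exponentially contracting and dilating distributions; together with the one-dimensional flow direction this is exactly the data of a hyperbolic set for the lifted geodesic flow.

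The first step is to set up a parametrized family of flows on a single fixed space. For $\rho$ near $\rho_0$ in $\R\bigl(\pi_1(S),G\bigr)$, the bundles $G(\rho)$ and $G(\rho_0)$ (and hence $\Oo(\rho)$ and $\Oo(\rho_0)$) are topologically isomorphic, so one can build a continuous family of bundle identifications $\Phi_\rho:\Oo(\rho)\to\Oo(\rho_0)$, equal to the identity at $\rho=\rho_0$. Conjugation of the lifted geodesic flow by $\Phi_\rho$ yields a family of flows $\psi_t^\rho$ on $\Oo(\rho_0)$ converging, along with its derivatives in the flow direction, to $\psi_t^{\rho_0}=\phi_t$ uniformly on compact subsets as $\rho\to\rho_0$. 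This is most conveniently encoded cocycle-theoretically: the flow on $G(\rho)$ is determined by a $G$-valued cocycle over $\phi_t\rest{T^1 S}$, and continuous dependence of the cocycle on $\rho$ is immediate.

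The second step applies the structural stability theorem for compact hyperbolic invariant sets of flows (Hirsch--Pugh--Shub): for any flow sufficiently $C^1$-close to $\phi_t$ on a neighborhood of $K_0$, there exists a unique nearby compact invariant set, realized as the image of a continuous $\phi_t$-equivariant section $T^1 S\to\Oo(\rho_0)$, whose splitting satisfies the contraction and dilation estimates with uniform constants. Pushing this section back through $\Phi_\rho^{-1}$ produces a flow-equivariant Anosov section $\sigma_\rho$ for $\rho$, which shows that the set of $(P_+,P_-)$-Anosov representations is open.

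The main obstacle is technical rather than conceptual: the bundle $\Oo(\rho_0)$ is non-compact, so some care is needed to ensure the perturbation $\psi_t^\rho$ is genuinely $C^1$-small on a neighborhood of $K_0$ that persists under the perturbation. One resolves this either by restricting to a relatively compact tubular neighborhood of $K_0$ (which is invariant up to small error because $\phi_t$ preserves $K_0$ and the dynamics is hyperbolic transverse to it), or by exploiting the cocycle description, which reduces the question to perturbation of a $G$-valued cocycle over the fixed Anosov flow $\phi_t$ on $T^1 S$, the classical setting in which structural stability applies directly.
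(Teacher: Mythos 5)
Your proposal is correct and follows essentially the same route as the paper, which itself only remarks that the Anosov conditions make $\sigma(T^1S)$ a hyperbolic set for the lifted flow and that stability of hyperbolic sets then yields openness. Your write-up merely fills in the standard details (bundle identifications for nearby $\rho$, the cocycle formulation, and recovering the perturbed invariant set as a flow-equivariant section), exactly as in Labourie's argument.
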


Since we fixed a hyperbolic structure on $S$ we can equivariantly identify 
the boundary $\partial \pi_1(S)$ with the boundary $S^1 = \partial \DD$ 
of the Poincar\'e disk as we did in \S~\ref{subsec:hyp_surf_quasiconj}. 
%

\begin{proposition}[\cite{Labourie_anosov}]\label{prop:boundarymap}
  To every Anosov representation $\rho: \pi_1(S) \to G$ there are
  associated continuous $\rho$-equivariant boundary maps 
  \bqn 
  \xi_\pm: S^1 \to G/P_\pm\,, 
  \eqn 
  with the property that for all $t,t' \in  S^1$ distinct, we have $\big(\xi_+(t), \xi_-(t') \big) \in \mathcal{O}$.
  Moreover, for every element $\gamma \in \pi_1(S) - \{1\}$ with fixed 
  points $\gamma^\pm \in S^1$ the point $\xi_\pm (\gamma^+)$ is the
  unique attracting fixed point of $\rho(\gamma)$ in $G/P_\pm$ and
  $\xi_\pm (\gamma^-)$ is the unique repelling fixed point of   $\rho(\gamma)$ in $G/P_\pm$.
\end{proposition}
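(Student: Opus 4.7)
The plan is to extract the boundary maps by lifting the Anosov section, using its flow-invariance to descend it to the space of oriented geodesics, and then using the contraction/dilation estimates to split the descended map into two factors.

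First I would lift the section $\sigma$ to a $\pi_1(S)$-equivariant continuous map
\bqn
\tilde{\sigma}=(\tilde{\sigma}_+,\tilde{\sigma}_-)\colon T^1\widetilde{S}\longrightarrow \mathcal{O}\subset G/P_+\times G/P_-\,,
\eqn
using the identification $\mathcal{O}(\rho)=\pi_1(S)\backslash(T^1\widetilde{S}\times\mathcal{O})$. Condition (\ref{one}) of Definition~\ref{def:anosov_def} says $\tilde{\sigma}$ is constant along orbits of the lifted geodesic flow, hence descends to a continuous map on the space of oriented geodesics in $\widetilde{S}$, which is canonically identified with $(S^1)^{(2)}:=S^1\times S^1\setminus\Delta$ by taking endpoints. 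By abuse of notation write $\tilde{\sigma}=(\tilde{\sigma}_+,\tilde{\sigma}_-)\colon (S^1)^{(2)}\to\mathcal{O}$.

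The core step is to show that $\tilde{\sigma}_+(t_+,t_-)$ depends only on $t_+$, and symmetrically $\tilde{\sigma}_-(t_+,t_-)$ depends only on $t_-$. Given $t_+\in S^1$ and distinct $t_-,t'_-\in S^1\setminus\{t_+\}$, I would pick lifts in $\widetilde{S}$ of two geodesics $c,c'$ with common forward endpoint $t_+$ and backward endpoints $t_-,t'_-$, parametrized so that $d_{\widetilde{S}}(c(s),c'(s))\to 0$ as $s\to+\infty$. Projecting to $T^1 S$, the orbits $p(\dot c(s))$ and $p(\dot c'(s))$ are forward asymptotic for $\phi_s$. Working in a fixed local trivialization of $G(\rho)$ and using the contraction estimate on $\sigma^{\ast}E^+$ together with condition (\ref{two}), the $G/P_+$-components of $\sigma(p(\dot c(s)))$ and $\sigma(p(\dot c'(s)))$ converge to a common limit exponentially fast in $s$. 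Since $\tilde{\sigma}_+(\dot c(s))=\tilde{\sigma}_+(\dot c(0))$ is constant in $s$ by flow-invariance, this forces $\tilde{\sigma}_+(t_+,t_-)=\tilde{\sigma}_+(t_+,t'_-)$. A symmetric argument with the dilation estimate on $\sigma^{\ast}E^-$ proves the analogous statement for $\tilde{\sigma}_-$. This produces continuous $\rho$-equivariant maps $\xi_\pm\colon S^1\to G/P_\pm$, and the transversality $(\xi_+(t),\xi_-(t'))\in\mathcal{O}$ for $t\ne t'$ follows at once from $\tilde{\sigma}(t,t')\in\mathcal{O}$.

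For the dynamical characterization, fix $\gamma\in\pi_1(S)-\{1\}$ with attracting and repelling fixed points $\gamma^\pm\in S^1$. The axis of $\gamma$ in $\widetilde{S}$ descends to a closed orbit of period $\ell(\gamma)>0$ for $\phi_t$ on $T^1 S$. The section $\sigma$ is constant on this orbit and in a lift equals $(\xi_+(\gamma^+),\xi_-(\gamma^-))\in\mathcal{O}$; the holonomy of $\mathcal{O}(\rho)$ around it is $\rho(\gamma)$, so $\rho(\gamma)$ fixes $\xi_\pm(\gamma^\pm)$. Evaluating the contraction bound of (\ref{two}) around the closed orbit, the derivative of $\rho(\gamma)$ on $T_{\xi_+(\gamma^+)}G/P_+\cong(\sigma^{\ast}E^+)$ is a linear contraction with factor at most $A\exp(-a\ell(\gamma))<1$, so $\xi_+(\gamma^+)$ is the unique attracting fixed point of $\rho(\gamma)$ on $G/P_+$; reversing the orientation of the axis (which swaps $\gamma^\pm$ and exchanges $E^+,E^-$) identifies $\xi_+(\gamma^-)$ as the repelling one, and the analogous argument on $E^-$ handles $\xi_-$.

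The main obstacle is the splitting step: upgrading the infinitesimal contraction/dilation in condition (\ref{two}) to the finite statement that $\tilde{\sigma}_+$ really factors through $\mathrm{pr}_1$. Rigorously, one has to fix local trivializations of $G(\rho)$ over a finite cover of $T^1 S$, track how the two forward-asymptotic orbits $p(\dot c(s)),\,p(\dot c'(s))$ pass through these trivializations, and use cocompactness of the $\pi_1(S)$-action on $T^1\widetilde{S}$ to compare the fibre coordinates uniformly; the exponential contraction is then needed to conclude that the two sequences of $G/P_+$-values have the same limit in the trivialization, which combined with flow-invariance gives the desired equality.
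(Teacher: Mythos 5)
Your argument follows the same route as the paper's sketch: lift $\sigma$ to a flow-invariant $\rho$-equivariant map $F=(\tilde\sigma_+,\tilde\sigma_-)$ on $T^1\widetilde S$, descend to pairs of endpoints, and use the contraction/dilation on $\sigma^\ast E^\pm$ along forward/backward asymptotic orbits to show each factor depends on only one endpoint. You in fact supply more detail than the text does, in particular the holonomy argument over the closed orbit identifying $\xi_\pm(\gamma^\pm)$ as the attracting/repelling fixed points; the only small correction there is that $A\exp(-a\ell(\gamma))$ need not itself be $<1$ --- one iterates the return map and bounds the spectral radius by $\exp(-a\ell(\gamma))<1$.
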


\begin{proof}[Sketch of proof]
  Since the existence of these boundary maps play an important role in
  some results discussed above, let us sketch how these maps are
  obtained.  Recall that $T^1\widetilde{S}$ is naturally identified
  with the space of positively oriented triples in $S^1$, via the map
  \bqn
  \ba
  T^1\widetilde{S}\to&\quad (S^1)^{(3_+)}\\
  v \quad\mapsto& (v_+, v_0, v_-)\,,
  \ea 
  \eqn 
  where $v_\pm$ are the endpoints at
  $\pm \infty$ of the unique geodesic $g_v$ determined by
  $v$ and $v_0$ is the unique point which is mapped to the basepoint
  of $v$ under the orthogonal projection to the geodesic $g_v$ and
  such that $(v_+, v_0, v_-)$ is positively oriented.

  The existence of a continuous section $\sigma$ of
  $\mathcal{O}(\rho)$ is equivalent to the existence of
  a $\rho$-equivariant continuous map $F: T^1\widetilde{S} \to
  \mathcal{O}$. The Anosov condition (\ref{one}) on $\sigma$ is
  equivalent to $F$ being $\phi_t$-invariant.
  In particular, the map $F$ only depends of $(v_+, v_-) \in (S^1
  \times S^1 )- \textup{diag} =: (S^1)^{(2)}$. Thus we have a map \bqn
  F = (\xi_+, \xi_-) : (S^1)^{(2)} \to G/P_+ \times G/P_-.  \eqn It is
  not difficult to see that due to the contraction properties of the
  geodesic flow (see Anosov condition (\ref{two})) the map $\xi_+
  (v_+, v_-)$ only depends on $v_+$ and $\xi_- (v_+,v_-)$ only depends
  on $v_-$, and that $\xi_\pm$ satisfy the above properties.
\end{proof}

The property of a representation $\rho: \pi_1(S) \to G$ being
$(P_+,P_-)$-Anosov is indeed (almost) equivalent to the existence of such
continuous boundary maps.

\begin{proposition}[\cite{Guichard_Wienhard_dod}]\label{prop:boundary_Anosov}
  Let $\rho:\pi_1(S) \to G$ be a Zariski dense representation and assume that there
  exists $\rho$-equivariant continuous boundary maps $\xi_\pm : S^1
  \to G/P_\pm$ such that
\begin{enumerate}
\item for all  $t,t' \in S^1$ distinct, we have $\big(\xi_+(t), \xi_-(t') \big) \in \mathcal{O}$, and 
\item for all $t \in S^1$, the two parabolic subgroups stabilizing
  $\xi_+(t)$ and $\xi_-(t)$ contain a common Borel subgroup.
\end{enumerate}
Then $\rho$ is a $(P_+, P_-)$-Anosov representation.
\end{proposition}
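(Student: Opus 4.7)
The plan is to construct an Anosov section $\sigma$ of $\mathcal{O}(\rho)$ directly from the boundary maps $\xi_\pm$ and then verify the two dynamical requirements of Definition~\ref{def:anosov_def}. Following the construction scheme in the sketch of Proposition~\ref{prop:boundarymap}, I would first define the continuous $\rho$-equivariant map
$$F: T^1\widetilde{S} \to \mathcal{O}, \qquad F(v) := \big(\xi_+(v_+),\,\xi_-(v_-)\big),$$
where $v_\pm \in S^1$ are the endpoints of the oriented geodesic carrying $v$. The first hypothesis of the proposition guarantees that $F(v)$ lies in $\mathcal{O}$ since $v_+ \neq v_-$; continuity and $\pi_1(S)$-equivariance are inherited from $\xi_\pm$; and because $F$ depends only on $(v_+, v_-)$, it is invariant under the geodesic flow lifted to $T^1\widetilde{S} \times G$. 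Passing to the quotient yields a continuous $\phi_t$-invariant section $\sigma$ of $\mathcal{O}(\rho)$, taking care of the flow-invariance clause of Definition~\ref{def:anosov_def}.

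The main obstacle is establishing the uniform contraction/dilation property, which I would attack in two steps. The first step proves that for every $\gamma \in \pi_1(S) - \{1\}$, the element $\rho(\gamma)$ is proximal on $G/P_+$ with unique attracting fixed point $\xi_+(\gamma^+)$ and unique repelling fixed point $\xi_+(\gamma^-)$, and symmetrically on $G/P_-$. Indeed, $\rho$-equivariance gives $\rho(\gamma)\xi_\pm(\gamma^\pm) = \xi_\pm(\gamma^\pm)$, and continuity of $\xi_+$ together with the attractor-repeller dynamics of $\gamma$ on $S^1$ (i.e.\ $\gamma^n t \to \gamma^+$ for every $t \neq \gamma^-$) forces $\rho(\gamma)^n \xi_+(t) \to \xi_+(\gamma^+)$ on the image of $\xi_+$. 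Zariski density of $\rho$ is precisely what converts this convergence into honest proximality on $G/P_+$: otherwise the accumulation set would be contained in a proper closed subvariety invariant under $\rho(\gamma)$, and by using the $\rho$-equivariance of $\xi_+$ one would produce a proper subvariety invariant under all of $\rho(\pi_1(S))$, contradicting Zariski density. This gives a definite spectral gap for $\rho(\gamma)$, and hence the expected contraction/dilation on the fiber of $\sigma^*E^{\pm}$ over the periodic orbit of $\phi_t$ associated to $\gamma$.

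The second step upgrades this orbit-by-orbit information to the uniform exponential estimates demanded by Definition~\ref{def:anosov_def}, and this is the most delicate part. The plan is to produce a continuous norm on $\sigma^*E^+$ (and symmetrically on $\sigma^*E^-$) in which the cocycle defined by $\phi_t$ is uniformly contracting. The second hypothesis of the proposition enters crucially here: the common Borel subgroup at each diagonal point $(\xi_+(t), \xi_-(t))$ selects a canonical Levi-type frame that varies continuously with $t$, and therefore supplies a continuous family of reference frames over the closure in $G/P_+ \times G/P_-$ of the transverse pairs $(\xi_+(s), \xi_-(t))$, $s\neq t$. In this continuous frame, the proximality of the first step provides a lower bound on the spectral gap that is uniform along the dense set of periodic orbits of $\phi_t$ in the compact space $T^1 S$; a compactness-plus-continuity argument then extends this uniform gap to every orbit, yielding constants $A, a > 0$ that realize the Anosov contraction/dilation estimates on all of $T^1 S$ simultaneously and complete the proof.
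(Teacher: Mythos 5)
The paper does not actually prove this proposition: it is quoted from \cite{Guichard_Wienhard_dod} (listed as ``in preparation''), and the only indication given is the sentence following the statement, namely that the section $\sigma$ is reconstructed from the boundary maps via $T^1\widetilde S\cong (S^1)^{(3_+)}$. Your first paragraph reproduces exactly that: defining $F(v)=\big(\xi_+(v_+),\xi_-(v_-)\big)$, noting that hypothesis (1) places $F(v)$ in $\mathcal O$, and observing flow-invariance and equivariance is correct and is all the paper itself supplies. Your step 2(a) is also essentially sound: since a Zariski dense subgroup of $G$ admits no proper closed invariant subvariety of $G/P_\pm$, the Zariski closure of $\xi_\pm(S^1)$ is all of $G/P_\pm$, and this is the standard route to showing that the maps are dynamics-preserving, i.e.\ that $\xi_+(\gamma^+)$ is an attracting fixed point of $\rho(\gamma)$ (this is where hypothesis (2) actually enters, to pin down the relative position of $\xi_+(\gamma^\pm)$ and $\xi_-(\gamma^\pm)$ and hence the consistency of attracting/repelling data --- not to supply ``Levi-type frames'' as you suggest).

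The genuine gap is in step 2(b). You assert that proximality of each $\rho(\gamma)$ ``provides a lower bound on the spectral gap that is uniform along the dense set of periodic orbits,'' and that a ``compactness-plus-continuity argument'' then yields the exponential estimates of Definition~\ref{def:anosov_def} on all of $T^1S$. Neither assertion is justified, and together they constitute the entire difficulty of the theorem. Proximality of each individual $\rho(\gamma)$ gives a spectral gap for that element, but nothing in your argument bounds the gap per unit translation length uniformly over $\gamma\in\pi_1(S)$; and even granting such a uniform bound on periodic orbits, passing from periodic-orbit hyperbolicity to uniform hyperbolicity of a continuous cocycle is a nontrivial matter, not a routine compactness argument. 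The proofs that exist in the literature do not proceed this way: one either shows directly that it suffices to contract by a definite factor at a single uniform time $T$ and argues by contradiction along sequences $\gamma_n\to\infty$ in $\pi_1(S)$, using the continuity and transversality of $\xi_\pm$ together with the structure theory of Zariski dense subgroups (existence and abundance of proximal elements \`a la Benoist and Abels--Margulis--Soifer) to control $\rho(\gamma_n)$; or one invokes the later characterizations of the Anosov property via uniform gaps in singular values. As written, your proposal identifies the correct section and the correct per-element dynamics but does not contain an argument for the uniform contraction, which is precisely what the cited reference is needed for.
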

The Anosov section $\sigma$ can be easily reconstructed from the
boundary map using the identification $T^1\widetilde{S} \cong
(S^1)^{(3_+)}$.
 
 Let us list several consequences of the existence of such boundary
 maps, for proofs see \cite{Labourie_anosov} and
 \cite{Guichard_Wienhard_dod}.
\begin{enumerate}
\item The representation $\rho$ is faithful with discrete image.
\item For every $\gamma \in \pi_1(S) -\{1\}$ the holonomy
  $\rho(\gamma)$ is conjugate to a (contracting) element in $H = P_+
  \cap P_-$.
\item The orbit map $\pi_1(S) \to X \,, \,  \gamma \mapsto \rho(\gamma)x_0$
  for some $x_0 \in X$ is a quasiisometric embedding with respect to
  the word metric on $\pi_1(S)$ and any $G$-invariant metric on the
  symmetric space $X= G/K$.
\item The representation $\rho$ is well-displacing. 
\end{enumerate}
The concepts of welldisplacing representations and quasiisometric
embeddings are discussed in more detail in \S~\ref{subsubsec:mapgroup}.

We already mentioned that representations in higher Teichm\"uller
spaces are Anosov representations. We now describe this in a little
more detail using the existence of special boundary map discussed in
\S~\ref{subsec:boundarymaps}.  We want to emphasize that this is not the
order in which results are proved. In many cases the proof of the
existence of a continuous boundary map is intertwined with the proof
of the Anosov property. For example one constructs first a not
necessarily continuous boundary map, establishes the contraction
properties for a (not continuous) section constructed out of this
boundary map. Then using the contraction property 
one can conclude that the map
is indeed continuous, hence defining a genuine Anosov section. 
(For an illustration
of this strategy for maximal representations into the symplectic group
we refer the reader to \cite{Burger_Iozzi_Labourie_Wienhard}.)
    The
special ``positivity conditions'' for the map usually are derived from
more specific properties of the representations.

\begin{enumerate}
\item Hitchin representations into $\PSL(n,\RR)$, $\PSp(2n,\RR)$,
  $\PO(n,n+1)$ are Anosov representations with $P_\pm$ being minimal
  parabolic subgroups \cite{Labourie_anosov}. Using the
  characterization of Hitchin representations via the existence of
  convex curves, we are given $\phi:S^1 \to \RR\PP^{n-1}$ and we can take
  $\phi^*:S^1\to (\RR\PP^{n-1})^*$ to be the dual curve, i.e. $\phi^*(t)$ is
 the unique osculating hyperplane of the curve $\phi$ containing $\phi(t)$. 
We set $\xi^+ = \phi$, $\xi^- = \phi^*$, then $(\xi^+,
  \xi^-): S^1 \to \RR\PP^{n-1} \times (\RR\PP^{n-1})^*$ satisfies the
  hypothesis of Proposition~\ref{prop:boundary_Anosov}, thus $\rho$ is
  Anosov with respect to the parabolic subgroup stabilizing a line in
  $\RR^n$. In order to see that Hitchin representations are actually
  Anosov with respect to the minimal parabolic subgroup, note that for
  any point on the convex curve we can consider the osculating flag
  and obtain maps $\xi_\pm = \xi: S^1\to G/P_{min}$. The convexity of
  $\phi$ implies the transversality condition on $\xi_\pm$ (see \cite[Chapter~5]{Labourie_anosov})
\item Maximal representations are Anosov representation with $P_\pm$
  being stabilizers of points in the Shilov boundary $\cs$ of the
  Hermitian symmetric space
  \cite{Burger_Iozzi_Labourie_Wienhard,Burger_Iozzi_Wienhard_anosov}.
This means in particular that in this case the boundary maps 
$\xi_\pm = \phi:S^1 \to \cs$ (Theorem~\ref{thm:5.21}) 
sending positively oriented triples to maximal triples are continuous. 
Since $(\xi_+, \xi_-)$ satisfies the transversality conditions required in Proposition~\ref{prop:boundary_Anosov}, 
maximal representations are $(P_+, P_-)$-Anosov.
\end{enumerate}


\subsection{Quotients of Higher Teichm\"uller spaces}\label{subsec:quotients}

\subsubsection{Action of the mapping class group\index{mapping class group}}\label{subsubsec:mapgroup}
The automorphism groups of $\pi_1(S)$ and of $G$ act naturally on
$\R \big(\pi_1(S),G \big)$ 
\bqn
\ba
\aut \big (\pi_1(S) \big) \times \aut(G) \times \R \big(\pi_1(S),G \big) &\to& \R \big(\pi_1(S),G \big)\\
(\psi, \alpha, \rho)\qquad\qquad\qquad &\longmapsto& \quad\alpha \circ\rho\circ\psi^{-1}\,.
\ea 
 \eqn
When we consider the quotient of the representation variety
$\R \big(\pi_1(S), G \big)/G$ this action descends to an action of the outer
automorphism group $\out \big (\pi_1(S) \big) = \aut \big(\pi_1(S) \big)/\inn \big(\pi_1(S) \big)$ on
$\R \big(\pi_1(S), G \big)/G$. As we discussed in \S~\ref{sec:hyp_str}, 
if $S$ is closed, the 
group of orientation preserving outer automorphisms of $\pi_1(S)$ is
isomorphic to the mapping class group $\map(S)$, and we will refer to
this action as the action of the mapping class group.

The components of $\R \big(\pi_1(S), G \big)/G$ which form higher Teichm\"uller
spaces, are preserved by this action.  In the case when $G =
\PSU(1,1)$ the action of the mapping class group on Teichm\"uller
space is properly discontinuous and the quotient $\Mm(S)$ is the
moduli space of Riemann surfaces.

Given a higher Teichm\"uller space it is natural to consider its
quotient by the action of the mapping class group, to study how it
relates to the moduli space of Riemann surface, and to investigate its
possible compactifications. The first question here is whether the action of
the mapping class group is properly discontinuous on higher
Teichm\"uller spaces.  

In order to answer this question the essential notion is that of a representation being well-displacing. 
For this let us introduce the translation lengths
$\tau$ and $\tau_\rho$ of an element $\gamma \in \pi_1(S) -\{1\}$:
\bqn
\ba
\tau(\gamma) =& \inf_{p\in \widetilde{S}} d(p, \gamma p)\\
\tau_\rho(\gamma) =& \inf_{z\in X} d_G(z, \rho(\gamma) z)\,, 
\ea
\eqn 
where $d$ is
the lift of a hyperbolic metric on ${S}$ and $d_G$ is a $G$-invariant
Riemannian metric on the symmetric space $X$.

\begin{definition}
  A representation $\rho: \pi_1(S) \to G$ is {\em well-displacing}\index{well-displacing} if
  there exist constants $A,B >0$ such that for all $\gamma \in
  \pi_1(S)$ \bqn A^{-1} \tau_\rho(\gamma) -B \leq \tau(\gamma) \leq A
  \tau_\rho(\gamma) +B.  \eqn
\end{definition}

The translation length $\tau$ depends on the choice of hyperbolic
metric on $S$. For any two choices the translations lengths are
comparable, thus the definition of well-displacing is independent of
the chosen hyperbolic metric on $S$.

It is shown in 
\cite{Labourie_energy, Wienhard_mapping, Hartnick_Strubel} 
that representations in higher Teichm\"uller spaces are 
  well-displacing. Then 
  a simple argument (see e.g. \cite{Labourie_energy, Wienhard_mapping})
shows that this implies that the action of the mapping class group on higher Teichm\"uller 
spaces is properly discontinuous. 
%
%


\begin{remark}
  The notion of well-displacing is related to the notion of
  quasiisometric embeddings. A representation $\rho: \pi_1(S) \to G$
  is a quasiisometric embedding\index{quasiisometric embedding} if there exist constants $A,B >0$
  such that for all $\gamma \in \pi_1(S)$ \bqn A^{-1}
  d_G \big(\rho(\gamma)z,z \big) -B \leq d(\gamma p,p) \leq A \,
  d_G \big(\rho(\gamma)z,z \big) +B, \eqn for some $p\in \widetilde{S}$ and some
  $z \in X$.
  
  Both notions can be defined more generally for representations of
  arbitrary finitely generated groups. The relation between the two
  notions is studied in \cite{Delzant_Guichard_Labourie_Mozes}.
  Representations in higher Teichm\"uller spaces are also
  quasiisometric embeddings \cite{Labourie_energy, Wienhard_mapping,
    Hartnick_Strubel, Guichard_Wienhard_dod}.
\end{remark}

\subsubsection{Relation to moduli space}
The notion of well-displacement also plays an important role when trying 
to obtain a mapping class group invariant projection from
higher Teichm\"uller spaces to classical Teichm\"uller space. To
describe this approach recall that given a representation $\rho:
\pi_1(S) \to G$ and hyperbolic metric $h\in \hyp(S)$, one can define
the energy of a $\rho$-equivariant smooth map $f : \widetilde{S} \to
X$ into the symmetric space $X = G/K$ as
$$E_\rho(f,h) := \int_S ||df||^2\, {\rm dvol},$$
where $||df||(p)$, $p\in S$ is the norm 
of the linear map $df_p$ with respect 
to the hyperbolic metric on $S$ and the $G$-invariant Riemannian metric on $X$.

The map $f$ is said to be harmonic
if and only if it minimizes the energy in its $\rho$-equivariant homotopy class.  
Setting $E_\rho(h): = \inf _{f} E_\rho(f,h)$, where $f$ ranges over all $\rho$-equivariant 
smooth maps $\widetilde{S} \to X$, we obtain a function 
\bqn
E_\rho: {\mathcal{F}(S)} = \operatorname{Diff}^+_0(S)\backslash\hyp(S) \to \RR, 
\eqn
called the {\em energy functional}\index{energy functional} associated to the representation
$\rho: \pi_1(S) \to G$.  The energy functional is a smooth function on the Fricke space 
${\mathcal{F}(S)}$. In the case when $G
=\PSU(1,1)$ and $\rho$ is a discrete embedding, it is known that
$E_\rho$ has a unique minimum \cite{Fischer_Tromba,Wolf}, namely the
hyperbolic structure determined by $\rho$.

In the general case, one would like to construct a mapping class group
invariant projection from higher Teichm\"uller spaces to classical
Teichm\"uller space by showing that the energy functional has a unique
minimum.  
As a first step we have 
\begin{theorem}[{\cite[Theorem 6.2.1]{Labourie_energy}}]
  Let $\rho: \pi_1(S) \to G$ be a well-displacing representation,
  then the energy functional $E_\rho$ is a proper function on 
  ${\mathcal{F}(S)}$.
\end{theorem}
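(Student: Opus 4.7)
The plan is to prove properness by contrapositive: I would show that whenever a sequence $h_n \in \mathcal{F}(S)$ leaves every compact subset of $\mathcal{F}(S)$, the energies $E_\rho(h_n)$ tend to infinity. The starting point is Mumford's compactness criterion for $\mathcal{F}(S)$: a subset is precompact if and only if the systoles of its elements are uniformly bounded below. Hence along a divergent sequence $h_n$ there exist simple closed curves $\gamma_n \in \pi_1(S)$ with $\ell_{h_n}(\gamma_n) \to 0$, where $\ell_h(\gamma)$ denotes the length of the $h$-geodesic representative of $\gamma$.

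The core geometric input I would establish is a \emph{length--energy inequality}: for every $h \in \mathcal{F}(S)$, every $\rho$-equivariant smooth map $f:\widetilde S \to X$, and every $\gamma \in \pi_1(S)\setminus\{1\}$,
\bqn
E_\rho(f,h)\cdot\ell_h(\gamma)\,\geq\,C\,\tau_\rho(\gamma)^2
\eqn
for some universal constant $C>0$ depending only on the normalization of the metric on $X$. Passing to the infimum over $f$ yields the same bound for $E_\rho(h)$. To prove it, I would invoke the Collar Lemma: inside $(S,h)$ the geodesic representative of $\gamma$ admits an embedded tubular neighborhood $A_\gamma$ isometric to a standard hyperbolic collar whose width tends to $+\infty$ as $\ell_h(\gamma)\to 0$. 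Lifting $f$ to a strip covering $A_\gamma$ gives a map equivariant under $\langle\rho(\gamma)\rangle$, whose generator acts on $X$ with translation length at least $\tau_\rho(\gamma)$. Composing with the $1$-Lipschitz nearest-point projection onto the translation axis of $\rho(\gamma)$ gives, thanks to the convexity of the distance function in the non-positively curved space $X$, a pointwise lower bound on the transverse component of $df$; integration along the collar, optimized over its width, yields the announced inequality.

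Next I would convert the well-displacing hypothesis into a uniform lower bound $\tau_\rho(\gamma_n)\geq c>0$ along the sequence $\gamma_n$. A useful preliminary observation is the following: since translation length is homogeneous, i.e. $\tau_\rho(\gamma^n)=|n|\tau_\rho(\gamma)$ and $\tau(\gamma^n)=|n|\tau(\gamma)$, applying the well-displacing inequality to $\gamma^n$ and dividing by $n$ before letting $n\to\infty$ washes out the additive constant $B$ and upgrades the bound to a genuine bi-Lipschitz comparison $A^{-1}\tau(\gamma)\leq\tau_\rho(\gamma)\leq A\tau(\gamma)$ on all of $\pi_1(S)$. Fixing any auxiliary hyperbolic metric $h_0\in\mathcal{F}(S)$, every simple closed curve $\gamma$ on $S$ satisfies $\tau_{h_0}(\gamma)\geq\operatorname{sys}(h_0)>0$, so $\tau_\rho(\gamma_n)\geq c:=\operatorname{sys}(h_0)/A$ uniformly in $n$.

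Combining the two ingredients we obtain $E_\rho(h_n)\geq Cc^2/\ell_{h_n}(\gamma_n)\to+\infty$, establishing properness. The main technical obstacle is the length--energy inequality itself: while the heuristic -- that an equivariant map across a wide collar must displace by at least $\tau_\rho(\gamma)$ along the transverse direction -- is geometrically clear, turning it into a quantitative $L^2$ lower bound on $df$ requires carefully exploiting the $\mathrm{CAT}(0)$ geometry of $X$, working with arbitrary (not necessarily harmonic) equivariant maps, and tuning the scale on the collar so that the resulting estimate has the correct scaling $\tau_\rho(\gamma)^2/\ell_h(\gamma)$ as $\ell_h(\gamma)\to 0$.
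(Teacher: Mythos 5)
The survey states this theorem without proof, as a citation to Labourie's paper, so there is no in-paper argument to compare against; measured against the known proof, your outline is essentially the standard one (Mumford compactness, a collar/extremal-length estimate bounding the energy from below by $\tau_\rho(\gamma)^2/\ell_h(\gamma)$, and the well-displacing hypothesis homogenized over powers to remove the additive constant), and the overall architecture is sound. Two remarks. First, the step you flag as the main obstacle is easier than you make it, and your specific implementation has a flaw: $\rho(\gamma)$ need not admit a translation axis in $X$ (the displacement function of a non-semisimple isometry of a symmetric space attains no minimum), so the ``$1$-Lipschitz nearest-point projection onto the axis'' is not available in general. The standard fix avoids axes entirely: inside the collar $A_\gamma$ take a conformally embedded flat annulus of modulus $M\asymp \ell_h(\gamma)^{-1}$; each of its core circles maps under $f$ to a loop freely homotopic to the loop determined by $\rho(\gamma)$, hence of length at least $\tau_\rho(\gamma)$ (any path from $z$ to $\rho(\gamma)z$ has length at least $d\big(z,\rho(\gamma)z\big)\ge\tau_\rho(\gamma)$), and Cauchy--Schwarz in the angular direction followed by integration across the annulus gives $E_\rho(f,h)\ge M\,\tau_\rho(\gamma)^2$, using only the conformal invariance of the energy and no CAT(0) convexity. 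Second, in the homogenization step you only need the direction $\tau(\gamma)\le A\,\tau_\rho(\gamma)$, which follows from $\tau(\gamma^n)=n\,\tau(\gamma)$ together with the subadditivity $\tau_\rho(\gamma^n)\le n\,\tau_\rho(\gamma)$; the reverse inequality of your claimed bi-Lipschitz comparison would require $\tau_\rho(\gamma^n)\ge n\,\tau_\rho(\gamma)$, which is delicate for non-semisimple isometries and is not needed. With these repairs the argument is complete.
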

%

In \cite{Labourie_energy} Labourie describes an approach to realize the Hitchin component 
for $\PSL(n,\RR)$ as a vector bundle over Teichm\"uller space in a equivariant
way with respect to the mapping class group. 
Recall for this that the isomorphism (see (\ref{eq:hitchin_map})) 
\bqn
h_{j}: \RH \big(\pi_1(S), \PSL(n,\RR) \big)/\PSL(n,\RR) \to 
H^0 \big(S,\oplus_{k=2}^n \Omega_{(S,j)}^{k} \big)\,,
\eqn 
where $H^0 (S, \Omega_{(S,j)}^k)$ is
the vector space of holomorphic differentials (with respect to the
fixed complex structure $j$ on $S$) of degree $k$, is not mapping class group invariant. 
Consider the vector bundle $\mathcal{E}^n$ over Teichm\"uller space $\mathcal{T}(S)$ realized 
as space of complex structures on $S$, where the fiber over the complex structure $j$ equals 
$H^0 \big(S,\oplus_{k=3}^n \Omega_{(S,j)}^{k} \big)$. Then $\mathcal{E}^n$ has the same dimension as 
 $\RH \big(\pi_1(S), \PSL(n,\RR) \big)/\PSL(n,\RR)$. Labourie defines the Hitchin map 
 \bqn
 (j, \omega) \longmapsto h_j(0,\omega)\, , 
 \eqn
 where $j \in \mathcal{T}(S)$ is a complex structure and $\omega \in H^0 \big(S,\oplus_{k=3}^n \Omega_{(S,j)}^{k} \big)$. 
 This map is equivariant with respect to the mapping class group action. 
Labourie proves that it is surjective \cite[Theorem~2.2.1]{Labourie_energy}  
and conjectures that $H$ is a homeomorphism, which would imply  
 
  \begin{conjecture}[{\cite[Conjecture~2.2.3]{Labourie_energy}}]\label{conj:bundle}
   The quotient of  the Hitchin component $\RH \big(\pi_1(S), \PSL(n,\RR) \big)/\PSL(n,\RR)$ by the mapping class
   group is a vector bundle over the moduli space of Riemann surfaces
   with fiber being the space of holomorphic $k$-differentials 
   $H^0 \big(S,\oplus_{k=3}^n \Omega_S^{k} \big)$.
  \end{conjecture}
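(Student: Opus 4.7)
The plan is to reduce the conjecture to showing that for every Hitchin representation $\rho$ the energy functional $E_\rho:\mathcal{T}(S)\to\RR$ attains its infimum at a unique point, and then to identify the fibers of the resulting map via non-abelian Hodge theory. Labourie has already established that the Hitchin map $H:\mathcal{E}^n\to \RH\big(\pi_1(S),\PSL(n,\RR)\big)/\PSL(n,\RR)$ is a continuous surjection which is $\map(S)$-equivariant (for the natural action on $\mathcal{E}^n$ and on the Hitchin component). Since both sides are smooth manifolds of the same dimension, to conclude that $H$ is a homeomorphism it suffices to show that $H$ is injective; on quotient by $\map(S)$ this will then produce the asserted vector bundle structure over moduli space.

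First I would establish the existence of a well-defined projection $\Pi:\RH\big(\pi_1(S),\PSL(n,\RR)\big)/\PSL(n,\RR)\to\mathcal{T}(S)$ as follows. Given $\rho$ Hitchin, by Corlette's theorem there exists for each complex structure $j$ a $\rho$-equivariant harmonic map $f_{j,\rho}:\widetilde S\to X=\PSL(n,\RR)/\PSO(n)$; by uniqueness up to the stabilizer of $\rho(\pi_1(S))$ in $G$ and since the latter is trivial for Zariski dense $\rho$ (which is generic, but extends to the whole Hitchin component by a continuity argument based on Theorem~\ref{thm:Labourie}), the energy $E_\rho(j):=E_\rho(f_{j,\rho},h_j)$ is a well-defined smooth function of $j$. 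Because Hitchin representations are well-displacing (they are Anosov, see \S~\ref{subsec:anosov}), Labourie's properness theorem implies $E_\rho$ is proper; hence it attains its infimum. Define $\Pi(\rho)$ to be this minimizer, once uniqueness is established.

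The heart of the argument, and the main obstacle, is proving uniqueness of the critical point of $E_\rho$. A critical point $j$ corresponds to the Hopf differential $\phi(f_{j,\rho})\in H^0(S,\Omega^{2}_{(S,j)})$ of the harmonic map $f_{j,\rho}$ vanishing, i.e.\ to $f_{j,\rho}$ being a minimal immersion. In Higgs bundle language, fixing $j$ identifies $\rho$ with a polystable Higgs bundle $(E,\Phi)$ on $(S,j)$, and the Hitchin parameters $(q_2,\ldots,q_n)=h_j(\rho)$ encode the characteristic polynomial of $\Phi$; the critical point condition is precisely $q_2=0$. Thus injectivity of $H$ would follow from: for every Hitchin $\rho$ there is exactly one $j\in\mathcal{T}(S)$ with $q_2(j,\rho)=0$. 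To prove this I would attempt a convexity/Morse-theoretic argument: compute the Hessian of $E_\rho$ at a minimum using the second variation formula for harmonic maps of Sampson–Schoen–Yau type, and combine with the fact that for Hitchin representations the pullback metric $f_{j,\rho}^\ast\langle\cdot,\cdot\rangle$ has strong curvature properties (reflecting the convex cocompact / Anosov nature) to force strict convexity transverse to the trivial directions; this would upgrade properness plus local convexity to uniqueness. The approach is known to work for $n=2,3$ (Labourie–Loftin, via cubic differentials and affine spheres), and partial results exist for $n=4$; the general case will require new input, e.g.\ a suitable plurisubharmonicity of $E_\rho$ on $\mathcal{T}(S)$ or a direct analysis via opers and the Hitchin section.

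Once uniqueness of the minimizer is in hand, $\Pi$ is continuous by the implicit function theorem applied to the equation $q_2(j,\rho)=0$, and $\map(S)$-equivariant by construction. The fiber $\Pi^{-1}(j)$ then consists of Hitchin representations whose associated Higgs bundle on $(S,j)$ satisfies $q_2=0$, and the Hitchin parametrization $h_j$ restricts to a bijection between this fiber and $\bigoplus_{k=3}^n H^0(S,\Omega^{k}_{(S,j)})$; this is precisely the fiber of $\mathcal{E}^n\to\mathcal{T}(S)$ over $j$, so $H$ is a fiberwise homeomorphism and hence a global homeomorphism. Passing to the $\map(S)$-quotient (which acts properly discontinuously on both sides by the well-displacing property) yields the desired vector bundle structure over $\Mm(S)$.
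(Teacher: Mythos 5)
The statement you have been asked to prove is labelled a \emph{conjecture} in the paper, and the paper gives no proof of it. It records only the reduction that it would suffice to show that for every Hitchin representation $\rho$ the energy functional $E_\rho$ on $\mathcal{F}(S)$ has a nondegenerate (in particular unique) minimum, together with the fact that the conjecture is known only for $n=2$ and $n=3$ (the latter via the Labourie--Loftin identification with affine spheres and cubic differentials). Your proposal reproduces exactly this reduction: properness of $E_\rho$ from the well-displacing property, identification of critical points of $E_\rho$ with the vanishing of the Hopf differential, i.e.\ of the quadratic coefficient $q_2$ in the Hitchin parametrization, and the observation that uniqueness of such a critical point would make Labourie's map $(j,\omega)\mapsto h_j(0,\omega)$ injective, hence (with his surjectivity theorem and invariance of domain) a $\map(S)$-equivariant homeomorphism, from which the bundle structure over moduli space follows. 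All of this is consistent with what is in the literature, and is not in dispute.

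The genuine gap is that the central step --- uniqueness (and nondegeneracy) of the critical point of $E_\rho$ --- is precisely the open content of the conjecture, and your proposal does not establish it. The ``convexity/Morse-theoretic argument'' you sketch is a heuristic, not a proof: the second variation formula for equivariant harmonic maps into a nonpositively curved symmetric space controls variations of the map for a \emph{fixed} domain metric, whereas here it is the conformal structure $j$ on the domain that varies, and no convexity or plurisubharmonicity of $j\mapsto E_\rho(j)$ on Teichm\"uller space is known for $n\geq 4$; moreover ``local convexity at each minimum plus properness'' does not by itself exclude several isolated minima. You concede as much when you write that the general case ``will require new input.'' A proof cannot contain such a step. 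What you have written is therefore a correct reduction of the conjecture to the known open problem of uniqueness of the equivariant minimal surface, not a proof of the conjecture.
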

  
  In order to prove this conjecture it would be sufficient to show that for a Hitchin representation 
  $\rho\in \RH \big(\pi_1(S), \PSL(n,\RR) \big)/\PSL(n,\RR)$
 the energy functional $E_\rho$ has a nondegenerate minimum.  

Conjecture~\ref{conj:bundle} has been proved for $n=2$ and $n=3$. 
The proof for $\PSL(3,\RR)$ is independently due to Labourie \cite{Labourie_projective} and Loftin \cite{Loftin}.  
They rely on the description of $\RH \big(\pi_1(S), \PSL(3,\RR) \big )$ as deformation
space of convex real projective structures due to Choi and Goldman,
and use the theory of affine spheres developed in 
\cite{Cheng_Yau_1,Cheng_Yau_2} in order to prove 

\begin{theorem}[\cite{Labourie_projective,Loftin}]
  The quotient 
  \bqn
  \map(S) \backslash \RH \big(\pi_1(S), \PSL(3,\RR) \big)/\PSL(3,\RR)
  \eqn 
  is a vector bundle over the moduli space of
  Riemann surface with fiber being the space of cubic holomorphic
  differentials on the surface.
\end{theorem}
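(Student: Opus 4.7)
The plan is to establish a $\map(S)$-equivariant homeomorphism between the Hitchin component $\RH(\pi_1(S),\PSL(3,\RR))/\PSL(3,\RR)$ and the total space of the vector bundle $\mathcal{E}^3 \to \mathcal{T}(S)$ whose fiber over a complex structure $j$ is $H^0(S,\Omega^3_{(S,j)})$, and then pass to the quotient. First, I would invoke the Choi--Goldman theorem to identify $\RH(\pi_1(S),\PSL(3,\RR))/\PSL(3,\RR)$ with the deformation space of marked convex real projective structures on $S$; such a structure corresponds to a properly convex domain $\Omega \subset \RR\PP^2$ together with a discrete embedding $\pi_1(S) \hookrightarrow \PSL(3,\RR)$ preserving $\Omega$ and acting cocompactly on it.

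Next, I would produce the forward map $\rho \mapsto (j(\rho),q(\rho))$ using the Cheng--Yau theory of affine spheres. For any properly convex $\Omega$, lift to a sharp convex cone $\widetilde{C}(\Omega) \subset \RR^3$; by Cheng--Yau there exists a unique complete hyperbolic affine sphere $H \subset \widetilde{C}(\Omega)$ of affine mean curvature $-1$ asymptotic to the boundary of the cone, and $H$ is invariant under the lifted action of $\pi_1(S)$ in $\SL(3,\RR)$. The Blaschke metric on $H$ descends to a Riemannian metric on $S$ whose conformal class defines a complex structure $j(\rho)$, and the Pick cubic form descends to a cubic differential $q(\rho)$ which, by the integrability conditions for an affine sphere, is holomorphic with respect to $j(\rho)$.

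The inverse map is constructed by solving Wang's equation. Fix a conformal background metric $g_0$ in the class of $j$ and write the unknown Blaschke metric as $e^{u}g_0$; the structure equations for an affine sphere with Pick form $q$ reduce to the semilinear elliptic PDE
\bqn
\Delta_{g_0} u \;=\; 4\,\|q\|^2_{g_0}\,e^{-2u} \;-\; 2\,e^{u} \;-\; 2\kappa_{g_0}
\eqn
on $S$. Building on Cheng--Yau (and as made precise by Wang and by Labourie/Loftin), this equation admits a unique smooth solution $u$ for every holomorphic cubic differential $q$, and from $(j,q,u)$ one reconstructs equivariantly an affine sphere $H \subset \RR^3$, hence a convex cone, hence a properly convex domain $\Omega \subset \RR\PP^2$ carrying a cocompact projective action of $\pi_1(S)$, i.e.\ a point of $\RH(\pi_1(S),\PSL(3,\RR))/\PSL(3,\RR)$. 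Uniqueness of the Cheng--Yau affine sphere and of the solution to Wang's equation make the two constructions mutually inverse, and smooth dependence of solutions on $(j,q)$ provides continuity in both directions; a dimension count ($\dim\RH = 16(g-1) = (6g-6) + (10g-10) = \dim \mathcal{E}^3$) together with properness of the forward map yields a homeomorphism onto $\mathcal{E}^3$.

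Both constructions are intrinsic, so $\map(S)$-equivariance is automatic: pulling back a convex projective structure by a diffeomorphism pulls back the associated affine sphere, Blaschke metric, and Pick form. Since $\map(S)$ acts properly discontinuously on $\mathcal{T}(S)$ with quotient $\mathcal{M}(S)$ and the bundle $\mathcal{E}^3$ is $\map(S)$-equivariant, the quotient $\map(S)\backslash\mathcal{E}^3 \to \mathcal{M}(S)$ is a vector bundle (in the orbifold sense) with fiber $H^0(S,\Omega^3_{(S,j)})$, and the equivariant homeomorphism above descends to the desired identification. The hard step is the analytic heart of the argument: establishing existence, uniqueness, smoothness, and continuous dependence on parameters for Wang's equation, and showing that the resulting affine sphere is complete and its projectivized asymptotic cone is a properly convex domain whose holonomy lies in the Hitchin component — this is exactly where the Cheng--Yau estimates for Monge--Amp\`ere equations on convex cones enter in an essential way.
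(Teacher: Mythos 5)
Your proposal is correct and follows exactly the route the paper indicates for this cited result of Labourie and Loftin: the Choi--Goldman identification of the Hitchin component with convex real projective structures, the Cheng--Yau hyperbolic affine sphere asymptotic to the cone over the convex domain producing the Blaschke conformal structure and holomorphic Pick cubic form, and Wang's equation for the inverse, with $\map(S)$-equivariance coming from naturality of all constructions. The only superfluous step is invoking the dimension count and properness at the end --- once you have mutually inverse continuous maps you already have the homeomorphism.
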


  
  For maximal representations the quotients by the mapping class group
  are expected to look more complicated due to the fact that
\begin{enumerate}
\item the connected components consisting of maximal representations
  might have singularities, and
\item the space of maximal representations has several connected
  components which need to be treated separately. (We will come back
  to this problem in \S~\ref{subsec:invariants}.)
\end{enumerate}

\subsubsection{Compactifications}
Mapping class group equivariant compactifications of higher
Teichm\"uller spaces are partially understood.

A general construction to compactify the space of discrete, injective
nonparabolic representations of a finitely generated group into $G$
using the generalized marked length spectrum is given in
\cite{Parreau}. This construction applies to give compactifications of
higher Teichm\"uller spaces. Boundary points in this compactification
can be interpreted as actions on $\RR$-buildings \cite{Paulin}.

For the Hitchin component $\RH \big(\pi_1(S), \PSL(3,\RR) \big)/\PSL(3,\RR)$ the
identification with the deformation space of convex real projective
structures allows to obtain a better understanding of this
compactification, see e.g.
\cite{Kim,Loftin_compactifications,Loftin_limits,Cooper_etal}. Through
the study of degenerations of convex projective structures, Cooper et
al. \cite{Cooper_etal} obtain a description of boundary points as
mixtures of measured laminations and special Finsler metrics
(Hex metrics) on $S$.

Fock and Goncharov construct tropicalizations of the spaces of positive representations 
which they expect to provide (partial) completions \cite{Fock_Goncharov} when $S$ is
noncompact. But except for the case when $G = \PSL(2,\RR)$ (treated
in \cite{Fock_Goncharov_htt}), they do not define a topology of the union of 
space of positive representations and its tropicalized counterpart. 

\subsubsection{Crossratios}
Realizing $\partial\DD \subset \CC\PP^1$, the restriction of
the classical crossratio function\index{crossratio function}\index{crossratio} on $\CC\PP^1$  
\bqn 
c(x, y, t, z) =
\frac{x-y}{x-t} \frac{z-t}{z-y} 
\eqn 
gives a
continuous real valued $\PSU(1,1)$-invariant function on 
\bqn
(\partial\DD)^{4*}:= \big\{(x, y, z, t) \in (\partial \DD)^4 \, :\, x\neq t\, y \neq z \big\}\,.
\eqn
This crossratio and several
generalizations (see e.g. \cite{Otal,Ledrappier}) play an important
role in the study of negatively curved manifolds and hyperbolic
groups.  

Given a hyperbolic element $\gamma \in \PSU(1,1)$ its period is
defined as \bqn l_c(\gamma) = \log c(\gamma^-, z, \gamma^+, \gamma
z)\,, \eqn where $\gamma^+ $ is the unique attracting fixed point and $\gamma^- $ 
the unique repelling fixed point of $\gamma$ in $\partial \DD$ 
and $z\in \partial \DD- \{\gamma^\pm\}$ is arbitrary. The period of
$\gamma$ equals the translation length 
$\tau(\gamma) = \inf_{p \in  \DD} d_{\DD} (p, \gamma p)$.

Given a discrete embedding $\rho: \pi_1(S) \to \PSU(1,1)$, let 
\bqn
c_\rho:= \phi^*c: (S^1)^{4*} \to \RR
\eqn
be the pullback of $c$ by some $\rho$-equivariant boundary map,  be the associated crossratio
function. Then $c_\rho$ contains all information about the marked
length spectrum of $S$ with respect to the hyperbolic metric defined
by $\rho$. In particular, two discrete embeddings $\rho_1, \rho_2$ are
conjugate if and only if $c_{\rho_1} = c_{\rho_2}$.

 A generalized crossratio function is a $\pi_1(S)$-invariant continuous functions 
\bqn
(S^1)^{4*} = \big\{(x,y,z,t) \in (S^1)^4 \, :\,  x
\neq t\, y \neq z \big\} \to \RR 
\eqn
satisfying the following relations \cite[Introduction]{Labourie_crossratio}:
\begin{enumerate}
\item (Symmetry) $c(x,y,z,t) = c(z,t,x,y)$
\item (Normalization) \begin{enumerate}
    \item[ ] $c(x,y,z,t)  = 0$ if and only if $x=y$ or $z=t$
    \item[ ] $c(x,y,z,t) = 1$ if and only if $x=z$ or $y=t$
      \end{enumerate}
\item (Cocycle identity) \begin{enumerate} 
\item[ ] $c(x,y,z,t) = c(x,y,z,w) c(x,w,z,t)$
\item[ ] $c(x,y,z,t) = c(x,y,w,t) c(w,y,z,t)$.
\end{enumerate}
\end{enumerate}
Among such functions crossratios arising from a discrete embedding
$\pi_1(S) \to \PSU(1,1)$ are uniquely characterized by the functional
equation $ 1-c(x,y,z,t) = c(t,y,z,x)$.

The study of generalized crossratio functions associated to higher
Teichm\"uller spaces has been pioneered by Labourie. In particular, he
associates a generalized crossratio function to any Hitchin
representation into $\PSL(n,\RR)$ and shows that crossratio functions
arising from a Hitchin representation into $\PSL(n,\RR)$ are
characterized by explicit functional equations \cite{Labourie_crossratio}.
 
In \cite{Labourie_McShane} Labourie and McShane establish generalized
McShane identities for the crossratios associated to Hitchin
representations into $\PSL(n,\RR)$.

\begin{remark}
  Related crossratio functions of four partial flags consisting of a
  line and a hyperplane are used in the work of Fock and Goncharov
  \cite{Fock_Goncharov} in order to construct explicit coordinates for
  the space of positive representations into $\PSL(n,\RR)$.
\end{remark}

In the context of maximal representations crossratio functions have
been defined and studied by Hartnick and Strubel
\cite{Hartnick_Strubel}. They construct crossratio functions defined
on a suitable subset $\cs^{4*}$ of the fourfold product of the Shilov
boundary of any Hermitian symmetric space of tube type. They show that
there is a unique such crossratio function which satisties some
natural functorial properties.
Given a maximal representation $\rho: \pi_1(S) \to G$ a concrete implementation of the continuous
boundary map $\phi: S^1 \to \cs$ (see Theorem~\ref{thm:5.21}) allows
to pullback this crossratio function to a generalized crossratio
function on $(S^1)^{4*}$.  The well-displacing property of
representations in higher Teichm\"uller spaces can be easily deduced
from the existence of generalized crossratio functions.  In all works
investigating crossratio functions, the existence of boundary maps
with special positivity properties (as discussed in
\S~\ref{subsec:boundarymaps}) play an important role.

\subsection{Geometric structures\index{geometric structures}}\label{subsec:geom_struc}
We already mentioned that Hitchin had asked in \cite{Hitchin} about
the geometric significance of Hitchin components, and one might raise
the same question for maximal representation,
even though the picture there seems to be more complicated due to the
fact that the space of maximal representations has singularities and
multiple components.

Interpreting higher Teichm\"uller spaces as deformation spaces of
geometric structures is not just of interest in itself. Any such
interpretation gives an important tool to study these spaces, their
quotients by the mapping class group, their relations to the moduli
space of Riemann surfaces as well as their compactifications. This is
illustrated by the fact that the deeper understanding of these
questions for the Hitchin component of $\PSL(3,\RR)$ relies on the
Theorem by Choi and Goldman, which we already mentioned above:

\begin{theorem}[\cite{Goldman_convex,Goldman_Choi}]\label{thm:goldman_choi}
  The Hitchin component 
  \bqn
  \RH \big(\pi_1(S), \PSL(3,\RR) \big)/\PSL(3,\RR)
  \eqn
  parametrizes convex real projective structures\index{convex real projective structure}\index{projective structure} on $S$.
\end{theorem}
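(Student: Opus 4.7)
The plan is to build a map $\Phi$ from the moduli space $\mathcal{P}(S)$ of marked convex $\RR\PP^2$-structures on $S$ to $\RH\big(\pi_1(S),\PSL(3,\RR)\big)/\PSL(3,\RR)$ sending a convex structure to the conjugacy class of its holonomy, and show that $\Phi$ is a homeomorphism. The main topological engine will be Hitchin's result (Theorem~\ref{thm:hitchin}) that the Hitchin component is connected; granting this, it suffices to check that the image of $\Phi$ is nonempty, open and closed inside $\RH$, and that $\Phi$ is injective.

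For injectivity, if two convex $\RR\PP^2$-structures on $S$ have conjugate holonomies, their developing maps realize $\widetilde S$ as two properly convex domains $\Omega_1,\Omega_2\subset\RR\PP^2$ on each of which $\rho\big(\pi_1(S)\big)$ acts properly discontinuously and cocompactly; a standard argument using the Hilbert metric shows that a properly convex domain admitting such a divisible action is determined by the group, so $\Omega_1=\Omega_2$ up to the projective action of $\PSL(3,\RR)$ and the two structures are equivalent. For the image of $\Phi$ to lie in the Hitchin component, I use that the Fuchsian locus is realized via the Klein model---the hyperbolic plane as the interior of a conic in $\RR\PP^2$, acted on by the image of the principal embedding $\PSL(2,\RR)\to\PSL(3,\RR)$. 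Since $\mathcal{P}(S)$ is connected and its image under $\Phi$ meets $\RH$ at a Fuchsian point, the image must lie entirely in $\RH$.

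Openness of the image in $\hom\big(\pi_1(S),\PSL(3,\RR)\big)/\PSL(3,\RR)$, and hence inside $\RH$, is the Ehresmann--Thurston stability principle applied to $\big(\PSL(3,\RR),\RR\PP^2\big)$-structures: any representation close to the holonomy of a convex structure arises as the holonomy of a nearby $\big(\PSL(3,\RR),\RR\PP^2\big)$-structure, and, because $S$ is compact and proper convexity of the developing image is an open condition under small deformations of the developing map, nearby structures remain convex.

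The main obstacle will be closedness. Given a sequence $\rho_n\in\Phi\big(\mathcal{P}(S)\big)$ converging to $\rho_\infty\in\RH$, I must produce a convex structure with holonomy $\rho_\infty$. The plan is to extract a limit domain $\Omega_\infty$ by Hausdorff compactness of closed subsets of $\RR\PP^2$, and then establish three things: (i) $\Omega_\infty$ is properly convex, neither degenerating to a line segment nor opening to an affine chart; (ii) $\rho_\infty\big(\pi_1(S)\big)$ acts properly discontinuously on $\Omega_\infty$; (iii) the action is cocompact. Property (i) will rely on the strong irreducibility of $\rho_\infty$: by Theorem~\ref{thm:Labourie}, $\rho_\infty$ is a discrete embedding whose every nontrivial element is diagonalizable with distinct real eigenvalues, so no invariant projective line can arise in the limit and no unipotent degeneration can occur. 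Property (iii) will use uniform control on the Hilbert-metric diameter of the quotients $\Omega_n/\rho_n\big(\pi_1(S)\big)$ via equivariant Gromov--Hausdorff convergence, to rule out collapse of the action. Combining openness, closedness, injectivity and nonemptiness of $\Phi\big(\mathcal{P}(S)\big)$ with the connectedness of $\RH$ yields the claimed homeomorphism.
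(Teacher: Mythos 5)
Your proposal is correct in outline, but it follows the original Goldman/Choi--Goldman deformation-theoretic route rather than the one this survey sketches. The paper's proof derives the theorem from Theorem~\ref{thm:convexmap}: given $\rho$ in the Hitchin component, the $\rho$-equivariant convex curve $\xi:S^1\to\RR\PP^2$ bounds a strictly convex domain $\Omega_\xi$ on which $\rho\big(\pi_1(S)\big)$ acts freely and properly discontinuously with quotient diffeomorphic to $S$, and conversely a convex structure yields the convex boundary curve $\xi:S^1\to\partial\Omega$; no openness/closedness argument and no appeal to connectedness of $\RH\big(\pi_1(S),\PSL(3,\RR)\big)$ is needed. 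Your route (holonomy map $\Phi$, openness, closedness, injectivity, connectedness) is the historically original one and is also the template for the higher-rank constructions of \S~\ref{subsec:geom_struc}; the paper's route is shorter but only because it front-loads the difficulty into Theorem~\ref{thm:convexmap}, whose converse direction is Guichard's theorem. Three points in your write-up are doing more work than you acknowledge: openness is not a formal consequence of Ehresmann--Thurston, since the persistence of convexity under deformation is Koszul's openness theorem; placing the image of $\Phi$ inside the Hitchin component via connectedness of the deformation space of convex structures uses Goldman's cell theorem, itself a substantial input; and your use of Theorem~\ref{thm:Labourie} in the closedness step is legitimate (Labourie's proof is logically independent of Choi--Goldman) but replaces their hard degeneration analysis by an appeal to a later and deeper theorem --- at which point the paper's direct construction from the boundary map is arguably the more economical path.
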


The original proof of this theorem relied on Goldman's work on convex
projective structures on surfaces \cite{Goldman_convex}, which implied
that the deformation space of these structures is an open domain in
$\RH \big(\pi_1(S), \PSL(3,\RR) \big)/\PSL(3,\RR)$. Goldman and Choi \cite{Goldman_Choi} 
then proved
that this subset is furthermore closed, establishing the above
theorem.

In terms of the properties of Hitchin representations we have
discussed so far, Theorem~\ref{thm:goldman_choi} is basically
equivalent to the characterization of Hitchin representations into
$\PSL(3,\RR)$ by the existence of a  convex map from $S^1$ into $\RR\PP^2$
(Theorem~\ref{thm:convexmap}). We give a sketch of how
Theorem~\ref{thm:goldman_choi} follows from
Theorem~\ref{thm:convexmap} when $n=3$.

\begin{proof}[Sketch of a proof of Theorem~\ref{thm:goldman_choi} assuming Theorem~\ref{thm:convexmap}]
A convex real projective structure on $S$ is a pair $(N, f )$,
where $N$ is the quotient
$\Omega /\Gamma$ of a strictly convex domain $\Omega$ in $\RR\PP^{2}$ by a
discrete subgroup $\Gamma$ of $\PSL(3,\RR)$, and $f : S \rightarrow N$ is
a diffeomorphism.

Starting from a representation $\rho: \pi_1(S) \to \PSL(3,\RR)$ in the Hitchin
component, let $\Omega_\xi \subset \RR\PP^{2}$ be the strictly
convex domain bounded by the convex curve $\xi(S^1)
\subset \RR\PP^2$. 
Then $\rho \big(\pi_1(S) \big)$ acts freely and properly
discontinuously on $\Omega_\xi$. The quotient $\Omega_\xi/\rho \big(\pi_1(S) \big)$
is a real projective convex manifold, diffeomorphic to $S$. Conversely
given a real projective structure on $S$, we can $\rho$-equivariantly
identify $S^1$ (identified with the boundary of $\pi_1(S)$) 
with the boundary of $\Omega$ and get a
convex curve $\xi: S^1 \to \partial \Omega \subset
\RR\PP^{2}$.
\end{proof}

Inspired by this proof and with Theorem~\ref{thm:convexmap} at hand
for arbitrary $n$, one might try to follow a similar strategy on order
to find geometric structures parametrized by the Hitchin component for
$\PSL(n,\RR)$. This works for $n=4$, where we obtain the following
\begin{theorem}[\cite{Guichard_Wienhard_convex}]\label{thm:guichard_wienhard_convex}
 \label{thm:guichard_wienhard_PGL4}
  The Hitchin component for $\PSL(4,\RR)$ is naturally homeomorphic to the
  moduli space of properly convex foliated projective structures on $T^1S$.
\end{theorem}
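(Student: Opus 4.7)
The plan is to construct an explicit bijection between the Hitchin component for $\PSL(4,\RR)$ and the moduli space $\mathcal{M}$ of properly convex foliated projective structures on $T^1 S$ using the equivariant boundary curves available for Hitchin representations, and then to verify continuity in both directions.

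First I would associate a projective structure on $T^1 S$ to each Hitchin representation $\rho: \pi_1(S) \to \PSL(4,\RR)$. By the refinement of Theorem~\ref{thm:convexmap} for $n=4$ (see \cite{Labourie_anosov}), there is a $\rho$-equivariant positive curve $\xi = (\xi^1, \xi^2, \xi^3): S^1 \to \mathrm{Flag}(\RR^4)$ into the full flag variety, where $\xi^k(t)$ is a subspace of dimension $k$. Using the canonical identification $T^1 \widetilde{S} \cong (S^1)^{(3_+)}$ recalled in the proof of Proposition~\ref{prop:boundarymap}, I would define a map
\[
F: T^1\widetilde{S} \longrightarrow \RR\PP^3, \qquad (x, v_0, y) \longmapsto \bigl[\xi^1(x) + \xi^1(y)\bigr] \cap \bigl[\xi^3(v_0)\bigr],
\]
where the bracketed expressions denote the projective line through $\xi^1(x), \xi^1(y)$ and the projective hyperplane associated to $\xi^3(v_0)$. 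Positivity of $\xi$ on triples ensures that this intersection is transverse, hence consists of a single point.

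Next I would verify the key properties of $F$. Equivariance is immediate from the equivariance of $\xi$, so $F$ descends to a developing map for a projective structure on $T^1 S$. For fixed $x, y$ the leaf $\{(x, v_0, y)\}$ of the geodesic foliation maps into the fixed projective line $[\xi^1(x)+\xi^1(y)]$, which gives the foliated condition. As $v_0$ moves monotonically along the arc of $S^1$ from $x$ to $y$, the hyperplanes $\xi^3(v_0)$ sweep monotonically across this line (a consequence of the Frenet-type positivity of $\xi$), so the image of each leaf is a properly convex arc on the line. Finally, $F$ is a local diffeomorphism by a direct computation of its differential using convexity of $\xi^1$ and $\xi^3$ in complementary directions. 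This produces a well-defined continuous map $\Phi: \RH(\pi_1(S),\PSL(4,\RR))/\PSL(4,\RR) \to \mathcal{M}$.

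For the inverse direction I would start from a properly convex foliated projective structure on $T^1 S$ with holonomy $\tilde\rho: \pi_1(T^1 S) \to \PSL(4,\RR)$. The foliated condition, combined with the fact that the fiber of $T^1 S \to S$ generates the center of $\pi_1(T^1 S)$, forces the holonomy to factor through $\pi_1(S)$ up to an explicit cyclic action; proper convexity of the leaves will then let me recover, from the two ideal endpoints of each lifted leaf in $\RR\PP^3$, an equivariant map $\xi^1: S^1 \to \RR\PP^3$. Proper convexity of the developing map globally implies that $\xi^1$ is a convex curve in the sense of Theorem~\ref{thm:convexmap}, so the underlying representation is Hitchin; this defines the inverse $\Psi$ of $\Phi$. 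Continuity of $\Psi$ follows from continuous dependence of the developing map on the structure (Ehresmann--Thurston), and the fact that $\Psi \circ \Phi$ and $\Phi \circ \Psi$ are the identity is verified by tracing the boundary curve through both constructions.

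The main obstacle is the inverse direction: one must show that the axes of leaves at infinity assemble into a single continuous convex curve $\xi^1: S^1 \to \RR\PP^3$ and not merely into an abstract collection of pairs of points, and moreover that proper convexity of the three-dimensional structure translates precisely into the convexity property required by Theorem~\ref{thm:convexmap}. A secondary but essential step is checking that the local diffeomorphism property of $F$ in the forward direction does not fail at ``diagonal'' points where two of the three coordinates of a triple come close together; this requires the fine regularity of the positive boundary curve (in particular the existence of a nondegenerate osculating flag almost everywhere), which is precisely what positivity of $\xi$ provides.
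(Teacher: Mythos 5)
Your forward direction is a legitimate repackaging of the construction sketched in the paper, which proceeds instead via a domain of discontinuity: one takes the convex curve $\xi^1$, forms the discriminant surface $\Delta\subset\RR\PP^3$ (the union of its tangent lines), and shows that $\rho\big(\pi_1(S)\big)$ acts properly discontinuously on the two components of $\RR\PP^3\setminus\Delta$, the quotient of one of them being $T^1S$ with the desired structure. Your map $F(x,v_0,y)=\big[\xi^1(x)+\xi^1(y)\big]\cap\xi^3(v_0)$ is exactly the parametrization of that ``chord'' component by positively oriented triples, so the two routes are two sides of the same coin; what your packaging still owes is global injectivity of $F$, not just the local diffeomorphism property. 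Concretely you need (i) that a point of the relevant component lies on a \emph{unique} secant line of $\xi^1$ (this is where hyperconvexity of the Frenet curve enters, and it is not automatic from transversality of the intersection defining $F$), and (ii) that for a fixed secant line $\ell=[\xi^1(x)+\xi^1(y)]$ the map $v_0\mapsto \xi^3(v_0)\cap\ell$ is injective and monotone on the arc from $x$ to $y$. Without these, $F$ is only an immersed developing map and you cannot identify the quotient with a component of $\RR\PP^3\setminus\Delta$, nor conclude proper convexity of the developed leaves.

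For the converse, you have correctly located the difficulty, and your plan (reconstruct $\xi^1$ from the ideal endpoints of developed leaves, prove it is a continuous convex curve, invoke Theorem~\ref{thm:convexmap}) is the right one; the paper itself declines to carry this out, calling it ``rather tedious.'' Two points in your sketch deserve to be flagged as genuine steps rather than remarks. First, the holonomy of a projective structure on $T^1S$ is a representation of $\pi_1(T^1S)$, a central extension of $\pi_1(S)$ by $\ZZ$; your phrase ``factors through $\pi_1(S)$ up to an explicit cyclic action'' conceals the argument that the holonomy of the fiber class is actually trivial, which must be extracted from proper convexity of the leafwise structures (each leaf carries a properly convex projective structure on a circle or a line, and one has to classify these and their holonomies). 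Second, continuity and injectivity of the reconstructed $\xi^1$, and the passage from proper convexity of the three\-dimensional structure to convexity of the curve in the sense of Theorem~\ref{thm:convexmap}, are precisely where the length of the original argument lies; as written your proposal asserts rather than proves them. So: the architecture matches the paper, the forward direction is essentially complete modulo the injectivity lemmas above, and the converse remains a program.
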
  

Properly convex foliated
projective structures are locally homogeneous 
$\big( \PSL(4, \RR),\RR\PP^3 \big)$-structures on $T^1 S$ 
satisfying the following additional conditions:
\begin{itemize}
\item every orbit of the geodesic flow is locally a projective line,
\item every (weakly) stable leaf of the geodesic flow is locally a projective
  plane and the projective structure on the leaf obtained by restriction is
  convex.
\end{itemize} 

Using the convex curve provided by Theorem~\ref{thm:convexmap} one can
consider the corresponding discriminant surface $\Delta \subset
\RR\PP^3$, i.e. the union of all its tangent lines. The complement
$\RR\PP^3 -\Delta$ consists of two connected components, on both of
which $\rho \big(\pi_1(S) \big)$ acts properly discontinuous.  The
quotient of one of the connected components by $\pi_1(S)$ is
homeomorphic to $T^1 S$, equipped with a properly convex foliated
projective structure.
%
The main work goes into
establishing the converse direction, i.e. showing that the holonomy
representation of a properly convex foliated projective structure on
$T^1 S$ lies in the Hitchin component -- this is rather tedious.

\begin{remark}   
  The above theorem implies that the Hitchin component for
  $\PSp(4,\RR)$ is naturally homeomorphic to the moduli space of
  properly convex foliated projective contact structures on the unit
  tangent bundle of $S$.
\end{remark}

For $n\geq 5$ the above strategy seems to fail in general. 


The first step in the strategy described above to find geometric
structures which are parametrized by representations $\rho: \pi_1(S)
\to G$ in higher Teichm\"uller spaces is to find domains of
discontinuity for such representations in homogeneous spaces, more
precisely in generalized flag varieties associated to $G$, on which
$\pi_1(S)$ is supposed to act with compact quotient. This problem
becomes more difficult the bigger $G$ gets, since $\pi_1(S)$ is a
group of cohomological dimension $2$, whereas the dimension of the
generalized flag varieties grows as $G$ get bigger.  So it comes a bit
as a surprise that finding domains of discontinuity with compact
quotient can be accomplished in the very general setting of Anosov
representations.

\begin{theorem}[\cite{Guichard_Wienhard_dod_ann, Guichard_Wienhard_dod}]\label{thm:domainsofdiscontinuity}
  Let $G$ be a semisimple Lie group and assume that no simple factor of $G$ is locally isomorphic to $\PSL(2,\RR)$.
  Let $\rho: \pi_1(S) \to G$ be a $(P_+,
  P_-)$-Anosov representation.  
  Let $P=MAN$ be the minimal parabolic subgroup of $G$. 
Then there exists an open non-empty set $\Omega_\rho \subset G/AN$, 
  on which $\pi_1(S) $ acts freely, properly discontinuous and with
  compact quotient.
\end{theorem}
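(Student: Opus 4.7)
The plan is to construct $\Omega_\rho$ as the complement in $G/AN$ of an explicit closed $\rho(\pi_1(S))$-invariant ``bad set'' $K_\rho$ built from the boundary maps $\xi_\pm\colon S^1 \to G/P_\pm$ of Proposition~\ref{prop:boundarymap}. After arranging $AN \subset P_+$, one has a projection $\pi\colon G/AN \to G/P_+$. For each flag $F \in G/P_+$ the Bruhat decomposition relative to $F$ stratifies $G/AN$ into locally closed pieces indexed by the Weyl group $W$, and I would single out a $G$-equivariantly chosen, Weyl-invariant subcollection $\mathcal{W}_{\text{bad}} \subset W$ whose union $K_F \subset G/AN$ is closed and of positive codimension (a careful enlargement of the complement of the open cell, tailored so that the dynamics of both $\xi_+$ and $\xi_-$ can be controlled). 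Setting
\bqn
K_\rho := \bigcup_{t \in S^1} K_{\xi_+(t)}, \qquad \Omega_\rho := (G/AN) \setminus K_\rho,
\eqn
closedness of $K_\rho$ follows from the continuity of $\xi_+$ together with the compactness of $S^1$, while non-emptiness of $\Omega_\rho$ is obtained from a dimension count, since each $K_F$ has positive codimension and the family is parametrized by the $1$-dimensional set $\xi_+(S^1)$.

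Freeness is immediate: if $\rho(\gamma) \cdot gAN = gAN$ with $\gamma \neq 1$, then $gP_+ \in G/P_+$ is fixed by $\rho(\gamma)$ and so equals $\xi_+(\gamma^+)$ or $\xi_+(\gamma^-)$ by Proposition~\ref{prop:boundarymap}; consequently $gAN$ sits in a non-open Bruhat stratum relative to $\xi_+(\gamma^+)$ and lies in $K_\rho$, a contradiction. Proper discontinuity is obtained from the uniform exponential contraction in Definition~\ref{def:anosov_def}\,\ref{two}: for $\gamma_n \to \infty$ in $\pi_1(S)$ with $\rho(\gamma_n) x_n \to y$ and $x_n, y$ in a compact $C \subset \Omega_\rho$, one passes to a subsequence with $\gamma_n^+ \to t^+ \in S^1$; the Anosov contraction forces $\pi(\rho(\gamma_n) x_n) \to \xi_+(t^+)$ and, more strongly, $\rho(\gamma_n) x_n$ itself to approach $K_{\xi_+(t^+)} \subset K_\rho$, contradicting $y \in \Omega_\rho$.

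For cocompactness I would construct a continuous proper $\pi_1(S)$-equivariant map $\Phi\colon \Omega_\rho \to T^1 \widetilde{S}$ with compact fibers, so that cocompactness of $T^1 S$ descends to $\pi_1(S) \backslash \Omega_\rho$. Using the identification $T^1 \widetilde{S} \cong (S^1)^{(3+)}$ recalled in the proof of Proposition~\ref{prop:boundarymap}, each $x \in \Omega_\rho$ naturally determines a pair of points in $S^1$ via transversality with $\xi_+$ and $\xi_-$; $\mathcal{W}_{\text{bad}}$ is to be chosen so that an additional ``ordering'' datum assembles these into a positive triple. The fibers of $\Phi$ are then identified with a compact homogeneous space of residual isotropy, and it is precisely here that the hypothesis excluding $\PSL(2,\RR)$ factors enters, since in that rank-one case the residual fiber degenerates to a non-compact piece.

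The step I expect to be the main obstacle is this cocompactness argument: the combinatorial choice of $\mathcal{W}_{\text{bad}}$ must simultaneously ensure (i) closedness of each $K_F$, (ii) capture of every $\rho(\gamma)$-fixed point, (iii) smallness of $K_\rho$ enough to leave $\Omega_\rho$ nonempty, and (iv) compactness of the fibers of $\Phi$. Reconciling these four requirements — and doing so uniformly across all $G$ to which the theorem applies, with precisely the $\PSL(2,\RR)$ factors excluded — is the heart of the matter.
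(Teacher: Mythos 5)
The survey states this theorem with a citation to Guichard--Wienhard and gives no proof, so I can only compare your plan with the construction in the cited works. Your overall architecture does match theirs: one removes from $G/AN$ a closed $\rho\big(\pi_1(S)\big)$-invariant union $K_\rho=\bigcup_{t\in S^1}K_{\xi_+(t)}$ of Schubert-type strata attached to the boundary map, proves properness from the contraction in Definition~\ref{def:anosov_def}, and gets non-emptiness from a codimension count. But as written the proposal has no proof of the theorem, because the object carrying all the content, your $\mathcal{W}_{\mathrm{bad}}$, is never constructed. Everything hinges on exhibiting a subset of the Weyl group that is simultaneously closed under the Bruhat order (so $K_F$ is closed), large enough that contraction pushes orbits into $K_\rho$ (your properness step) and that $K_\rho$ absorbs all fixed points, yet small enough that each $K_F$ has codimension at least $2$ in $G/AN$ (so the union over the one-dimensional circle still has non-empty complement). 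Producing such a choice uniformly for all $G$ and all pairs $(P_+,P_-)$ is precisely the substance of the Guichard--Wienhard proof (and of the later notion of a balanced ideal); you have correctly listed the constraints but not resolved them, as you acknowledge.

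Two specific steps are moreover wrong as stated. First, freeness: Proposition~\ref{prop:boundarymap} only identifies the \emph{attracting} and \emph{repelling} fixed points of $\rho(\gamma)$ in $G/P_\pm$ with $\xi_\pm(\gamma^\pm)$; a nontrivial $\rho(\gamma)$ can perfectly well fix other points of $G/P_+$ (think of a regular diagonal element acting on $\RR\PP^{n-1}$), so from $\rho(\gamma)gAN=gAN$ you cannot conclude $gP_+\in\{\xi_+(\gamma^+),\xi_+(\gamma^-)\}$. The correct route is to deduce freeness from proper discontinuity together with the torsion-freeness of $\pi_1(S)$. Second, the role of the hypothesis that no factor of $G$ is locally isomorphic to $\PSL(2,\RR)$: it is needed for \emph{non-emptiness}, not for compactness of fibers of a map to $T^1\widetilde S$. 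For $G=\PSL(2,\RR)$ one has $M=\{e\}$, so $G/AN\cong G/P\cong S^1$, the bad set is the full limit set $\xi(S^1)=S^1$, and $\Omega_\rho=\emptyset$; excluding such factors is exactly what lets the codimension count give $\mathrm{codim}\,K_F\geq 2$ and hence $\Omega_\rho\neq\emptyset$. Finally, cocompactness in the cited works is not obtained by fibering $\Omega_\rho$ over $T^1\widetilde S$ -- that bundle picture describes the quotient only in special cases (cf.\ the results quoted after Theorem~\ref{thm:domainsofdiscontinuity}) -- but by a direct expansion/accumulation argument near $K_\rho$; your $\Phi$ is not defined and there is no reason its fibers should be compact homogeneous spaces in general.
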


\begin{remark}
The homogeneous space $G/AN$ is the maximal compact quotient of $G$. 
In many cases the domain $\Omega_\rho$ descends to a domain of discontinuity in $G/P$. 
\end{remark}

\begin{remark}
  Theorem~\ref{thm:domainsofdiscontinuity} holds more general for Anosov representations
  of convex cocompact subgroups of Hadamard
  manifolds of strictly negative curvature or even of hyperbolic groups. 
The reader interested in the more general statement is
  referred to \cite{Guichard_Wienhard_dod}.
\end{remark}

The main tool in order to define the domain of discontinuity
$\Omega_\rho$ are the $\rho$-equivariant continuous boundary maps
$\xi_\pm: S^1 \to G/P_\pm $ associated to the $(P_+, P_-)$-Anosov
representation (see Proposition~\ref{prop:boundarymap}).

There is some evidence that -- at least in the case of higher
Teichm\"uller spaces -- the quotients $\Omega_\rho / \rho \big(\pi_1(S) \big)$ 
are homeomorphic to the total spaces of bundles over $S$ with compact fibers.
This has been established for maximal representation into $\Sp(2n,\RR)$ 
as well as for Hitchin representations into $\SL(2n,\RR)$. 

\begin{theorem}[\cite{Guichard_Wienhard_dod}]
\begin{enumerate}
\item The Hitchin component for $\SL(2n,\RR)$ parametrizes 
real projective structures on a compact manifold $M$, 
which is topologically a $\O(n)/\O(n-2)$-bundle over the surface $S$.
\item Maximal representations into $\Sp(2n,\RR)$ parametrize real 
projective structures on a compact manifold $M$ 
homeomorphic to
an $\O(n)/\O(n-2)$-bundle over the surface $S$. 
Its isomorphism type depends on the connected component 
containing the representation.  
\end{enumerate}
\end{theorem}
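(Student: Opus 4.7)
The plan is to apply Theorem~\ref{thm:domainsofdiscontinuity} in the specific cases at hand, carefully identifying the resulting domains of discontinuity and pinning down the topology of their quotients. Both Hitchin representations into $\SL(2n,\RR)$ and maximal representations into $\Sp(2n,\RR)$ are Anosov representations (the former by Theorem~\ref{thm:Labourie} and its boundary-map refinements, the latter by the result recalled at the end of \S~\ref{subsec:anosov}), so the general machinery applies. The first step is to refine the construction of $\Omega_\rho$ so as to land in $\RR\PP^{2n-1}$ rather than in the full maximal compact quotient $G/AN$, thereby producing the projective structure directly.

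For Hitchin $\rho:\pi_1(S)\to\SL(2n,\RR)$, I would use the osculating flag map $\xi:S^1\to \mathcal{F}(\RR^{2n})$ (Proposition~\ref{prop:boundarymap} together with Theorem~\ref{thm:convexmap}) and project to its component $\xi^n:S^1\to \Gr(n,2n)$. Setting
\bqn
\Omega_\rho := \RR\PP^{2n-1}\smallsetminus\bigcup_{t\in S^1}\PP\big(\xi^n(t)\big),
\eqn
one exploits the convexity of the osculating curve and the uniform transversality provided by the Anosov dynamics to show that the $\pi_1(S)$-action on $\Omega_\rho$ is free, properly discontinuous and cocompact. For maximal $\rho:\pi_1(S)\to\Sp(2n,\RR)$ the argument is parallel, with the boundary map $\phi:S^1\to \cs$ (Theorem~\ref{thm:5.21}) now taking values in the Lagrangian Grassmannian; one defines
\bqn
\Omega_\rho := \RR\PP^{2n-1}\smallsetminus\bigcup_{t\in S^1}\PP\big(\phi(t)\big),
\eqn
and again derives the needed dynamical properties from the Anosov structure.

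The geometric heart of the argument is to identify the homeomorphism type of $M_\rho:=\pi_1(S)\backslash\Omega_\rho$. Here the plan is to build an explicit $\rho$-equivariant continuous map $T^1\widetilde S\to M_\rho$ by sending a unit vector $v$ with endpoints $(v_+,v_-)$ to the transverse $(n-1)$-plane complementary to $\xi^n(v_+)$ and $\xi^n(v_-)$ (respectively $\phi(v_+)$, $\phi(v_-)$), and then studying the fiber of the projection $\Omega_\rho\to T^1\widetilde S/\phi_t$: a generic point in $\Omega_\rho$ is a line whose space of orthogonal complements meeting the distinguished pair of $n$-planes in a fixed transverse configuration is homeomorphic to the Stiefel manifold $V_2(\RR^n)=\O(n)/\O(n-2)$. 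This exhibits $\Omega_\rho$ equivariantly as a $V_2(\RR^n)$-bundle over $T^1\widetilde S/\phi_t$, and hence $M_\rho\to S$ as the required bundle. For the converse, any real projective structure on such a bundle gives a developing pair whose holonomy is Anosov (by openness of the Anosov condition and a limiting argument), and the resulting map between deformation spaces is a homeomorphism by openness plus connectedness of the Hitchin component (respectively of the fixed component of maximal representations).

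For the Sp case, the component-dependence of the topological type of $M_\rho$ is detected by the discrete invariants of Theorem~\ref{thm:5.24}, and by the integer-valued characteristic numbers $\T_\kappa(\Sigma,\rho)$ of \S~\ref{subsec:coh_fr}: these are locally constant on $\hommax\big(\pi_1(S),\Sp(2n,\RR)\big)$ and control the twisting of the $V_2(\RR^n)$-fibration, hence its Stiefel--Whitney and Euler classes. The main obstacle I foresee is this last identification of the fiber as precisely $\O(n)/\O(n-2)$: while Anosov dynamics forces the \emph{existence} of a continuous equivariant fibration of $\Omega_\rho$ and cocompactness, determining the homotopy type of the fiber requires a careful local model for the stratification of $\RR\PP^{2n-1}$ by the $S^1$-family of pairwise complementary $(n-1)$-planes (respectively Lagrangians), and it is precisely the linear-algebraic difference between the two situations (generic $n$-planes vs.\ Lagrangians) which ultimately accounts for the component-dependent topology in the symplectic case and the component-independent topology in the $\SL(2n,\RR)$ case.
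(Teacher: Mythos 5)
First, a caveat: the survey itself does not prove this theorem --- it is quoted from \cite{Guichard_Wienhard_dod} (listed as ``in preparation''), so there is no in-paper argument to match yours against. Measured against the argument that reference actually supplies, your general strategy is the right one, and your domains are the correct ones: one specializes the construction behind Theorem~\ref{thm:domainsofdiscontinuity} to $\RR\PP^{2n-1}$ by deleting $\bigcup_{t}\PP\big(\xi^n(t)\big)$, resp.\ $\bigcup_t\PP\big(\phi(t)\big)$, and proper discontinuity and cocompactness do come from the Anosov property of the boundary maps.

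The genuine gap is in your identification of the homeomorphism type of $M_\rho$. The fibration $\Omega_\rho\to T^1\widetilde S/\phi_t$ you propose does not exist: a line $\ell$ lies in $\Omega_\rho$ precisely when $\ell\cap\xi^n(t)=0$ for \emph{every} $t\in S^1$, so $\ell$ singles out no distinguished pair $(v_+,v_-)$; and even granting such a map, $\pi_1(S)$ does not act properly discontinuously on the space of geodesics $(S^1)^{(2)}$ (orbits of non-closed geodesics are dense), so the quotient of your putative base is not $S$ and is not even Hausdorff. The argument in \cite{Guichard_Wienhard_dod} runs differently: one first shows that the homeomorphism type of the compact quotient is locally constant on the (open) set of Anosov representations --- an Ehresmann-type argument using the continuous dependence of $\Omega_\rho$ on $\rho$ and compactness of the quotients --- and then computes $M_\rho$ explicitly for one model representation in each connected component: the irreducible Fuchsian representation for the Hitchin component, and the twisted diagonal representations $\rho_\theta=\iota\otimes\theta$ for the other maximal components. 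In the model $\RR^{2n}=\RR^2\otimes\RR^n$ the deleted set is the locus of rank-one tensors, and the fiber $\O(n)/\O(n-2)$ falls out of this explicit linear algebra, not out of a general local model for the stratification. This route also corrects your account of the component-dependence: the isomorphism type of the bundle is governed by the twisting datum $\theta\colon\pi_1(S)\to\O(n)$, i.e.\ by the Stiefel--Whitney (and, for $n=2$, Euler) classes of the canonical $\GL(n,\RR)$-reduction of \S~\ref{subsec:invariants}. Your appeal to the numbers $\T_\kappa(\Sigma,\rho)$ cannot work: for $G=\Sp(2n,\RR)$ one has $\hcb^2(G,\ZZ)\cong\ZZ$, so every $\T_\kappa$ is a multiple of the Toledo invariant, which is constant (equal to its extremal value) on all of $\hommax\big(\pi_1(S),\Sp(2n,\RR)\big)$ by the very definition of maximality, and hence distinguishes none of its $3\times 2^{2g}$ components.
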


\subsection{Topological invariants}\label{subsec:invariants}
The Hitchin component is by definition a single 
connected component, but the space of maximal representations is a priori
only a union of connected components, and their might be more than
one. In many cases the exact number of connected components of the
space of maximal representations has been computed using methods from
the theory of Higgs bundles\index{Higgs bundle} \cite{Gothen, GarciaPrada_Mundet,
  GarciaPrada_Gothen_Mundet, Bradlow_GarciaPrada_Gothen,
  Bradlow_GarciaPrada_Gothen_survey}.  And the most interesting family
in terms of the number of connected components are maximal
representations into symplectic groups $\Sp(2n,\RR)$: there are
$3\times 2^{2g}$ connected components when $n\geq 3$
\cite{GarciaPrada_Gothen_Mundet} and $(3 \times 2^{2g} + 2g-4)$
connected components when $n=2$ \cite{Gothen}.
  
Invariants to distinguish these connected components can be derived
from the associated Higgs bundles, but topological invariants to
distinguish the different connected components also arise from
considering maximal representations as Anosov representations.

Recall that in the definition of Anosov structures one considers the
flat $G$-bundle $G(\rho)$ over $T^1 S$ and the
associated bundle
$\mathcal{O}(\rho)$.  
The first part of the data of an
Anosov structure is a section $\sigma$ of
$\mathcal{O}(\rho)$. Since $\mathcal{O}(\rho)$ is the $G/H$-bundle associated to $G(\rho)$ its 
sections are in one-to-one
correspondence with reductions of the structure group of
$G(\rho)$ from $G$ to $H$. In general there is no canonical
section, but in the case of Anosov structures we have
\begin{proposition}[\cite{Guichard_Wienhard_invariants}]
  If a section $\sigma$ of $\mathcal{O}(\rho)$ with the
  properties required in Definition~\ref{def:anosov_def} exists, then
  it is unique.
\end{proposition}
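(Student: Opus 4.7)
The plan is to reduce uniqueness of $\sigma$ to uniqueness of the associated boundary maps $\xi_\pm:S^1\to G/P_\pm$ produced by Proposition~\ref{prop:boundarymap}. Specifying a continuous, $\phi_t$-equivariant section of $\mathcal{O}(\rho)$ is the same as specifying a continuous, $\rho$-equivariant, flow-invariant map $F:T^1\widetilde{S}\to\mathcal{O}\subset G/P_+\times G/P_-$. Under the identification $T^1\widetilde{S}\cong (S^1)^{(3_+)}$ recalled in the proof of Proposition~\ref{prop:boundarymap}, flow-invariance forces $F$ to factor through $(S^1)^{(2)}$, so one may write $F=(\widetilde\xi_+,\widetilde\xi_-)$.

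The first step is to exploit the contracting/dilating behaviour required by Definition~\ref{def:anosov_def} in order to show that $\widetilde\xi_+(v_+,v_-)$ depends only on $v_+$, and $\widetilde\xi_-(v_+,v_-)$ only on $v_-$. The idea is that if $v,v'\in T^1\widetilde{S}$ have the same forward endpoint $v_+=v'_+$, their forward orbits converge exponentially fast; combined with the exponential contraction of the flow on $\sigma^*E^+$ and the continuity of $F$, this forces $\widetilde\xi_+(v)=\widetilde\xi_+(v')$. A symmetric argument with backward orbits gives the analogous statement for $\widetilde\xi_-$. This produces continuous, $\rho$-equivariant maps $\xi_\pm:S^1\to G/P_\pm$ which completely determine $F$, and hence $\sigma$.

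The heart of the argument, and the main obstacle, is to pin down $\xi_\pm$ uniquely from $\rho$ alone. For this, let $\gamma\in\pi_1(S)-\{1\}$ with attracting/repelling fixed points $\gamma^\pm\in S^1$; since $S$ is a closed hyperbolic surface, $\gamma$ determines a periodic orbit of $\phi_t$ in $T^1S$ of period equal to the translation length $\tau(\gamma)$. The time-$\tau(\gamma)$ return map of $\phi_t$ along this orbit acts on the fibre of $\sigma^*E^+$ as the differential of $\rho(\gamma)$ at the point $\xi_+(\gamma^+)\in G/P_+$, and is contracting by the second condition of Definition~\ref{def:anosov_def}. Since $\rho(\gamma)$ acts on $G/P_+$ with at most one point at which its differential is strictly contracting, namely the unique attracting fixed point of $\rho(\gamma)$ in $G/P_+$, the value $\xi_+(\gamma^+)$ is forced. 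Symmetrically, $\xi_-(\gamma^-)$ is the unique attracting fixed point of $\rho(\gamma)$ in $G/P_-$.

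To conclude, one invokes the classical fact that the set $\{(\gamma^+,\gamma^-):\gamma\in\pi_1(S)-\{1\}\}$ is dense in $(S^1)^{(2)}$, a standard consequence of minimality of the action of $\pi_1(S)$ on its Gromov boundary. Together with the continuity of $\xi_\pm$, this shows that any two Anosov sections $\sigma_1,\sigma_2$ give rise to boundary maps satisfying $\xi_\pm^{(1)}=\xi_\pm^{(2)}$, hence to the same map $F$, and therefore $\sigma_1=\sigma_2$.
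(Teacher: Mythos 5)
Your proof is correct and follows exactly the route the paper indicates: the survey states this proposition without proof (deferring to the cited reference), but your reduction of $\sigma$ to the boundary maps $\xi_\pm$ reproduces the sketch of Proposition~\ref{prop:boundarymap}, and pinning down $\xi_\pm$ on the dense set of fixed-point pairs via the contraction/dilation along periodic orbits, then invoking continuity, is the standard conclusion. The only blemish is a sign convention: with the orientation of the flow in Definition~\ref{def:anosov_def}, $\xi_-(\gamma^-)$ is the unique \emph{repelling} fixed point of $\rho(\gamma)$ in $G/P_-$ (equivalently, the attracting fixed point of $\rho(\gamma^{-1})$), not the attracting one --- a harmless slip, since either description determines the point from $\rho$ alone.
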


As a consequence an Anosov representation $\rho: \pi_1(S) \to G$ gives
a canonical reduction of the $G$-principal bundle $G(\rho)$
to an $H$-principal bundle. This $H$-bundle is
in general not flat; its characteristic classes give topological
invariants of the Anosov representation $\rho$ which live in
$\textup{H}^*(T^1S)$.

In the situation of maximal representations $\rho: \pi_1(S) \to
\Sp(2n,\RR)$, we have that $H= \GL(n,\RR)$, embedded into $\Sp(2n,\RR)$ as
the stabilizer of two transverse Lagrangian subspaces.  The
topological invariants of significance are first and second
Stiefel--Whitney classes\index{Stiefel--Whitney class}, as well as an Euler class\index{Euler class} if $n=2$.

\begin{theorem}[\cite{Guichard_Wienhard_invariants}]
  The topological invariants distinguish the connected components of
  $\RM \big(\pi_1(S), \Sp(2n,\RR) \big) \setminus\RH \big(\pi_1(S), \Sp(2n,\RR) \big)$.
  
  Considering Hitchin representations as $(P_{min},
  P^{opp}_{min})$-Anosov representations there is an additional first
  Stiefel--Whitney class, which distinguishes the connected components
  of $\RH \big(\pi_1(S), \Sp(2n,\RR \big)$.
\end{theorem}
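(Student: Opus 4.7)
The plan is to exploit the canonical $\GL(n,\RR)$-reduction of the flat $\Sp(2n,\RR)$-bundle over $T^1 S$ that every maximal representation provides, use stability of Anosov structures to argue that the resulting characteristic classes are locally constant on $\RM\bigl(\pi_1(S),\Sp(2n,\RR)\bigr)$, and then match the number of realizable values against the known connected component count coming from the Higgs bundle side.

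First I would recall that a maximal representation $\rho:\pi_1(S)\to\Sp(2n,\RR)$ is $(P_+,P_-)$-Anosov, where $P_\pm$ are stabilizers of Lagrangians, so that $H:=P_+\cap P_-\cong\GL(n,\RR)$ sits inside $\Sp(2n,\RR)$ as the stabilizer of a transverse pair of Lagrangians. The uniqueness of the Anosov section in the statement preceding the theorem promotes the pair of boundary maps $\xi_\pm$ to a canonical reduction of $G(\rho)\to T^1 S$ to a principal $\GL(n,\RR)$-bundle $V_\rho\to T^1 S$, i.e.\ to a rank $n$ real vector bundle. The Stiefel--Whitney classes $w_1(V_\rho)\in H^1(T^1 S,\ZZ/2)$ and $w_2(V_\rho)\in H^2(T^1 S,\ZZ/2)$ (plus an Euler class on the subset where $w_1=0$, relevant when $n=2$) are, by general principles, continuous functions of a continuously varying reduction; combined with openness of the Anosov condition and the uniqueness statement, this forces them to be locally constant on the space of maximal representations. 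Therefore each value determines a union of connected components of $\RM\bigl(\pi_1(S),\Sp(2n,\RR)\bigr)$.

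Next I would determine the range of these invariants. Because $T^1 S\to S$ is a circle bundle with Euler number $\chi(S)$, the Gysin sequence identifies $H^1(T^1 S,\ZZ/2)\cong H^1(S,\ZZ/2)\oplus\ZZ/2\cdot t$ (with $t$ the fiber class) and pins down $H^2(T^1 S,\ZZ/2)$; Anosov rigidity and naturality of Stiefel--Whitney classes will constrain $w_1$ to take exactly $2\cdot 2^{2g}$ values (pairing the fiber component with the surface component) and $w_2$ to take one of three possible residual values once $w_1$ is fixed, yielding $3\cdot 2^{2g}$ in total for $n\ge 3$. For $n=2$ the component with $w_1=0$ in the fiber direction carries an additional Euler class, and the Gysin--Thom computation will produce the correction term $2g-4$ matching \cite{Gothen}. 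To realize each value, I would combine the polydisk/diagonal-disk constructions of \S~\ref{subsec:str_thm_ttdom} with the Proposition of \S~\ref{subsec:max_rep_geom_pr} on gluing along boundary loops and with twists by elements of $\Hom\bigl(\pi_1(S),Z(\Sp(2n,\RR))\bigr)$ and $\Hom\bigl(\pi_1(S),\ZZ/2\bigr)$, which act on $\RM$ while changing $w_1$ and $w_2$ in prescribed ways.

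Finally, to separate components of the Hitchin locus I would use that Hitchin representations are simultaneously $(P_{\min},P_{\min}^{\rm opp})$-Anosov, producing a canonical reduction to the full diagonal Levi $(\RR^\times)^n\subset\Sp(2n,\RR)$ and hence a splitting of $V_\rho$ into $n$ real line bundles. The first Stiefel--Whitney class of any of these line factors (well-defined up to the Weyl-group symmetry preserved by the principal embedding) provides the advertised extra invariant; computing it on a principal Fuchsian representation through the $n$-dimensional irreducible $\PSL(2,\RR)\to\PSp(2n,\RR)$ identifies it with a canonical nonzero class in $H^1(T^1 S,\ZZ/2)$, and all other values are realized by lifting along the center of $\Sp(2n,\RR)$. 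Counting these gives exactly the number of Hitchin components known from Higgs bundle methods.

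The main obstacle I expect is the bookkeeping step that identifies the Stiefel--Whitney classes of $V_\rho$ for explicit model representations (diagonal disks, polydisk embeddings, their twists, and connected sums along Corollary~\ref{cor:5.22} fixed points) with the classes produced on the Higgs bundle side by Gothen and Garc{\'\i}a-Prada--Gothen--Mundet. Concretely, one needs a direct cohomological computation of $w_1$ and $w_2$ of the Anosov reduction for these models, together with verification that the twisting operations shift the invariants by the expected classes in $H^*(T^1 S,\ZZ/2)$; once this dictionary is in hand, the component count from \cite{Gothen,GarciaPrada_Gothen_Mundet} forces the invariants to be complete, and the separate treatment of the Hitchin locus follows from the refinement to the $(P_{\min},P_{\min}^{\rm opp})$-Anosov structure.
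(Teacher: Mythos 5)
This theorem is quoted from \cite{Guichard_Wienhard_invariants}; the paper itself gives no proof, only the surrounding narrative (canonical Anosov reduction to $H=\GL(n,\RR)$, characteristic classes in $H^*(T^1S)$, model representations in each component). Your architecture matches that narrative: define the invariants via the unique Anosov section, show they are locally constant, and compare with the component count from \cite{Gothen, GarciaPrada_Gothen_Mundet}. The logical skeleton is sound: if one exhibits a locally constant invariant taking $N$ distinct values and one knows independently that there are exactly $N$ components, the invariant separates them.

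There are, however, two concrete problems. First, your step that determines the range of the invariants a priori does not work as written: ``Anosov rigidity and naturality'' do not by themselves constrain $w_1$ to ``exactly $2\cdot 2^{2g}$ values'' with ``three residual values'' of $w_2$, and in any case $2\cdot 2^{2g}\times 3\neq 3\cdot 2^{2g}$, so the arithmetic cannot reproduce the known count. The genuine constraint in the actual argument comes from computing the restriction of the reduction to a fiber circle of $T^1S\to S$ (which pins down the fiber component of the invariants) together with explicit evaluation on the model representations --- irreducible Fuchsian, twisted diagonal, and hybrid representations --- in each of the $3\times 2^{2g}$ (resp.\ $3\times 2^{2g}+2g-4$) components; this is precisely the ``bookkeeping'' you defer, and it is the mathematical content of the theorem rather than a routine verification. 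Second, your treatment of the Hitchin locus should make explicit why the coarse $\GL(n,\RR)$-invariants fail there (all Hitchin components share the same $w_1,w_2$ of the Lagrangian reduction, which is why the theorem singles them out), and then why the refined reduction to the Levi of $P_{\min}\cap P_{\min}^{\mathrm{opp}}$ yields an \emph{ordered} splitting into line bundles --- there is no residual Weyl ambiguity to quotient by, since the Anosov section furnishes a full flag, not an unordered one. With those two points repaired, your outline coincides with the proof in \cite{Guichard_Wienhard_invariants}.
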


The invariants constructed using the Anosov property of maximal
representations are in principle computable for a given representation
$\rho: \pi_1(S) \to \Sp(2n,\RR)$. Explicit computations for various
representations allows us to describe model representations in any
connected component.  This is of particular interest for $\Sp(4,\RR)$
as there are $2g-3$ connected components in which every representation
is Zariski dense (see also
\cite{Bradlow_GarciaPrada_Gothen_sp4}).

Besides the {\em irreducible Fuchsian representation\index{Fuchsian representation}\index{representation!Fuchsian}}
which were introduced to define the Hitchin component, there are two
other kinds of model representations: A {\em twisted diagonal
  representation}\index{twisted diagonal
  representation}\index{diagonal
  representation}\index{representation!diagonal}\index{representation!twisted diagonal} is a maximal representation 
  \bqn
  \rho_\theta = (\iota\otimes \theta): \pi_1(S) \to \SL(2,\RR)\times\O(n) \subset
\Sp(2n,\RR)\,,
\eqn 
where $\iota: \pi_1(S) \to \SL(2,\RR)$
is a discrete embedding and $\theta: \pi_1(S) \to \O(n)$ is an orthogonal
representation; $\SL(2,\RR)\times\O(n)$ sits in $\Sp(2n,\RR)$ as the
normalizer of the diagonal embedding 
\bqn
\SL(2,\RR) \to \SL(2,\RR)^n\subset \Sp(2n,\RR)\,.
\eqn  
A {\em hybrid representation}\index{hybrid representation}\index{representation!hybrid} is a maximal
representation 
\bqn
\rho_k = \rho_1* \rho_2: S = S_1 \cup_{\gamma}S_2 \to\Sp(2n,\RR)\,,
\eqn
 $k= 3-2g, \cdots, -1$, which is obtained by amalgamation
of an irreducible Fuchsian representation on $\pi_1(S_1)$ and a
suitable deformation of an (untwisted) diagonal representation on
$\pi_1(S_2)$. The subscript $k$ indicates the Euler characteristic of
$S_1$. The construction of hybrid representations relies on the
additivity of the Toledo number and the Euler characteristic under
gluing (see Proposition~\ref{prop:5.8}).

\begin{theorem}[\cite{Guichard_Wienhard_invariants}]
  When $n\geq 3$ any maximal representation $\rho: \pi_1(S) \to
  \Sp(2n,\RR)$ can be deformed either to an irreducible Fuchsian
  representation or to a twisted diagonal representation.
  
  When $n= 2$ there are $2g-3$ connected components $\Hh_k$,
  $k=1,\cdots, 2g-3$ of $\RM \big(\pi_1(S), \Sp(4,\RR) \big)$ in which every
  representation has Zariski dense image. Representations in $\Hh_k$ 
can be deformed to $k$-hybrid representations.
\end{theorem}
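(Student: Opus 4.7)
The plan is to reduce the classification of deformation classes of maximal representations to a matching problem for the topological invariants introduced in the preceding theorem. Since those Stiefel--Whitney classes (plus an Euler class when $n=2$) distinguish the connected components of $\RM(\pi_1(S),\Sp(2n,\RR)) \setminus \RH(\pi_1(S),\Sp(2n,\RR))$, and the Hitchin component itself is distinguished by an additional first Stiefel--Whitney class in the $(P_{min},P_{min}^{opp})$-Anosov reduction, it suffices for each connected component to exhibit a model representation (Fuchsian, twisted diagonal, or, when $n=2$, hybrid) whose invariants coincide with those of any chosen representation in that component. Combined with the known count of connected components from Higgs bundles (namely $3\cdot 2^{2g}$ for $n\geq 3$ and $3\cdot 2^{2g}+2g-4$ for $n=2$), this reduces the theorem to a pair of accounting statements.

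For $n\geq 3$, I would first treat the Hitchin and ``reversed'' Hitchin components, which each contain an irreducible Fuchsian representation $\rho_{2n}\circ\iota$ (resp.\ its composition with an orientation-reversing automorphism) by construction. For the remaining components, I would analyze the family of twisted diagonal representations $\rho_\theta = \iota\otimes\theta$, where $\theta:\pi_1(S)\to \mathrm{O}(n)$. The canonical $\GL(n,\RR)$-reduction provided by the Anosov section restricts on these examples to the $\mathrm{O}(n)$-bundle determined by $\theta$, so computing its Stiefel--Whitney classes reduces to computing $w_1(\theta), w_2(\theta)\in H^\bullet(S,\ZZ/2)$. Varying $\theta$ over all $2^{2g}$ choices of $w_1$ and both choices of $w_2$ (using that $\hom(\pi_1(S),\mathrm{O}(n))$ has $2\cdot 2^{2g}$ components when $n\geq 3$, a standard homotopy calculation), one realizes exactly the $2\cdot 2^{2g}$ further components, and together with the two Hitchin components one obtains all $3\cdot 2^{2g}$.

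For $n=2$ the same argument accounts for $3\cdot 2^{2g}$ components via Fuchsian and twisted diagonal models, leaving $2g-3$ components to be identified with $\Hh_k$, $k=1,\dots,2g-3$. Here I would use the additivity of the Toledo number (Proposition~\ref{prop:5.8}) and of the Euler characteristic under the decomposition $S=S_1\cup_\gamma S_2$ to verify that the amalgamated representation $\rho_k = \rho_1*\rho_2$ is indeed maximal, and then compute its Euler class, which depends linearly on $k=\chi(S_1)$ and takes the $2g-3$ values required. Zariski density of $\rho_k$ can be established by showing, via the structure theorem (Theorem~\ref{thm:5.10}), that if the Zariski closure were a proper reductive tight Hermitian subgroup of tube type in $\Sp(4,\RR)$, then by positivity it would force both $\rho_1$ and $\rho_2$ to factor through it, contradicting the choice of $\rho_1$ as irreducible Fuchsian. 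The main obstacle is precisely this last point: establishing that hybrid representations are genuinely Zariski dense and that their topological invariants are pairwise distinct for different $k$, which requires a careful interplay between the gluing construction, the additivity formulas, and the classification of proper Hermitian subgroups of tube type in $\Sp(4,\RR)$.
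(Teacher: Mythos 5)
The survey itself offers no proof of this statement---it is quoted directly from \cite{Guichard_Wienhard_invariants}---so your proposal can only be compared with the strategy of that reference, which you do capture at the right level of generality: compute the Anosov-theoretic invariants on explicit model representations and match the result against the component count coming from Higgs bundle theory. However, two of your steps contain genuine errors. The first is the accounting itself, which is the backbone of the whole argument. For $n\ge 3$ the Hitchin representations into $\Sp(2n,\RR)$ do not form two components but $2^{2g}$ of them (the lifts to $\Sp(2n,\RR)$ of the single Hitchin component of $\PSp(2n,\RR)$, cf.\ Remark~\ref{rem:not_adjoint}; this is also why the cited theorem speaks of an ``additional first Stiefel--Whitney class'' distinguishing the components of $\RH$). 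Your tally $2\cdot 2^{2g}+2$ is never equal to $3\cdot 2^{2g}$; the correct split is $2^{2g}$ Hitchin components plus $2\cdot 2^{2g}$ components of $\RM\setminus\RH$ realized by twisted diagonal models. (Note also that a ``reversed'' Hitchin component has Toledo invariant $-\rk_X|\chi(S)|$ and is not maximal in the sense of Definition~\ref{def:5.7}.) Likewise for $n=2$ your split $3\cdot 2^{2g}+(2g-3)$ overshoots the total $3\cdot 2^{2g}+2g-4$ by one; the correct decomposition is $\bigl(3\cdot 2^{2g}-1\bigr)+(2g-3)$, the discrepancy arising because when $w_1=0$ the canonical $\GL(2,\RR)$-reduction is orientable and carries an Euler class refining $w_2$.

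The second problem is your Zariski density argument for the hybrid models. The proposed contradiction---that a proper tight tube-type Zariski closure would force $\rho_1$ to factor through it, ``contradicting the choice of $\rho_1$ as irreducible Fuchsian''---fails, because the irreducible Fuchsian representation \emph{does} factor through a proper tight tube-type subgroup, namely the principal $\SL(2,\RR)\subset\Sp(4,\RR)$, which is tight by Proposition~\ref{prop:5.15}; the structure theorem gives no contradiction at that point. The correct argument must combine the fact that the Zariski closure of $\rho_k$ contains a conjugate of the principal $\SL(2,\RR)$ (from $\pi_1(S_1)$) together with the image of the deformed diagonal piece (from $\pi_1(S_2)$), and then invoke the classification of proper reductive subgroups of $\Sp(4,\RR)$ that can occur as Zariski closures of maximal representations to see that no proper one contains both. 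Finally, the theorem asserts that \emph{every} representation in $\Hh_k$ has Zariski dense image, not merely the hybrid model; this requires showing that any maximal representation whose Zariski closure is one of the finitely many admissible proper tight subgroups has topological invariants placing it in one of the other $3\cdot 2^{2g}-1$ components. Your proposal does not address this step.
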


The information about model representations in each connected
component can be used to obtain further information about the
holonomies of maximal representations.

For representations in the Hitchin components,
Theorem~\ref{thm:Labourie} implies that for every $\gamma \in \pi_1(S)
- \{1\}$ the image $\rho(\gamma)$ is diagonalizable over $\RR$ with
distinct eigenvalues.  This does not hold for the other
components of maximal representations. 

The Anosov property implies that for every  $\gamma \in \pi_1(S)\setminus \{e\}$ 
the image $\rho(\gamma)$ is conjugate to an element in $\GL(n,\RR) < \Sp(2n,\RR)$. 
More precisely, we have:
\begin{theorem}[\cite{Guichard_Wienhard_invariants}\label{thm:holonomy}]
Let $\hH$ be a connected component of 
\bqn
\RM \big(\pi_1(S),\Sp(2n,\RR) \big) \setminus \RH \big(\pi_1(S), \Sp(2n,\RR) \big)\,,
\eqn
and let $\g \in \pi_1(S)-\{1\}$ be an element corresponding to a
simple curve.  
Then there exist 
\begin{enumerate}
\item a representation $\rho \in \hH$ such that the Jordan
  decomposition of $\rho(\g)$ in $\GL(n,\RR)$ has a nontrivial
  parabolic component.
\item a representation $\rho' \in \hH$ such that the Jordan
  decomposition of $\rho(\g)$ in $\GL(n,\RR)$ has a nontrivial
  elliptic component.
\end{enumerate}
\end{theorem}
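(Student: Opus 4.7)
The plan is to exhibit, in each non-Hitchin connected component $\hH$, explicit representations with the prescribed Jordan type for $\rho(\gamma)$, using the classification of models provided in the preceding theorem (twisted diagonal representations for $n\geq 3$, hybrid representations for $n=2$) together with a bending construction along $\gamma$, and then invoking the topological invariants of Anosov representations to control connected components.

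I would first address part (2). By the preceding theorem, every component $\hH$ in the statement contains (up to deformation within $\hH$) a twisted diagonal representation $\rho_\theta = \iota \otimes \theta : \pi_1(S) \to \SL(2,\RR)\times\O(n) \subset \Sp(2n,\RR)$, or for $n=2$ a hybrid that contains such a piece on one side of the amalgamation. If $v_\lambda$ is an eigenvector of $\iota(\gamma)$ with eigenvalue $\lambda$, then $L = v_\lambda\otimes\RR^n$ is a Lagrangian stabilized by $\rho_\theta(\gamma)$, and the induced element of $\GL(n,\RR)$ acting on $L$ is $\lambda \cdot \theta(\gamma)$. Consequently it suffices to choose $\theta$ so that $\theta(\gamma)$ has a non-trivial rotation block in $\O(n)$. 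Since $n\geq 2$ and $\gamma$ is non-trivial (and a simple curve, so non-trivial in $\h_1(S,\ZZ)$ if non-separating, or in the $\h_1$ of an adjacent subsurface if separating), such $\theta$ exists: factor through a character into $\SO(2)\subset \O(n)$ that is non-trivial on $\gamma$. Small variations of $\theta$ of this form keep $\rho_\theta$ inside $\hH$ because the topological invariants (Stiefel–Whitney and Euler classes) of a twisted diagonal depend only on the isotopy class of the orthogonal factor and are locally constant under perturbation.

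For part (1), I would use a bending construction along $\gamma$. Since $\gamma$ is a simple curve, it gives either an amalgamated decomposition $\pi_1(S) = \pi_1(S_1)*_{\langle\gamma\rangle}\pi_1(S_2)$ or an HNN decomposition $\pi_1(S) = \pi_1(S')*_{\langle\gamma\rangle}$. Starting from a representation $\rho_0 \in \hH$ whose holonomy $\rho_0(\gamma)$ has at least two coinciding eigenvalues (e.g. the twisted diagonal with $\theta(\gamma) = I$, so that $\rho_0(\gamma)|_L = \lambda I$ has centralizer $\GL(n,\RR)$), one defines a deformation $\rho_t$ by conjugating the representation on one side of the cut by a path $z_t \in Z_{\Sp(2n,\RR)}(\rho_0(\gamma))$. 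When $z_t$ runs through a unipotent one-parameter subgroup of $Z_{\Sp(2n,\RR)}(\rho_0(\gamma))$ (available precisely because of the eigenvalue coincidence), the holonomy $\rho_t(\gamma) = \rho_0(\gamma)$ is unchanged in isolation but the global representation deforms non-trivially, and one can arrange that the resulting $\rho_t(\gamma)$ acquires a non-trivial unipotent (parabolic) Jordan block. Maximality is preserved under this deformation by Proposition~\ref{prop:5.8} (additivity of the Toledo invariant under gluing), applied to the two pieces whose individual Toledo invariants are both unchanged.

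The main obstacle will be to verify that the deformations constructed do not jump between connected components of $\RM(\pi_1(S),\Sp(2n,\RR))$. For this one uses that the topological invariants introduced in \S~\ref{subsec:invariants} (Stiefel–Whitney classes and, for $n=2$, an Euler class) of the canonical $\GL(n,\RR)$-reduction provided by the Anosov property are continuous in the representation, and that a bending along a single simple curve (respectively, a perturbation of the orthogonal factor of a twisted diagonal) induces a controllable effect on the classifying map $T^1S \to B\GL(n,\RR)$. A secondary delicate point is ensuring the existence of representations $\rho_0 \in \hH$ with sufficiently degenerate $\rho_0(\gamma)$ to allow a unipotent bending; for hybrid components in $\Sp(4,\RR)$ this may require a preliminary deformation within $\hH$ to move $\gamma$ into a region where the holonomy has a repeated eigenvalue, which is available because the dimension of $\hH$ is strictly larger than the codimension of the locus of representations with repeated eigenvalues on any fixed element.
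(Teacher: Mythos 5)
This theorem is stated in the survey as a citation of \cite{Guichard_Wienhard_invariants} with no proof given, so your outline has to stand on its own; as written it has two genuine gaps. The fatal one is in part (1): bending along $\gamma$ by a path $z_t$ in the centralizer of $\rho_0(\gamma)$ --- in either the amalgamated or the HNN picture --- yields representations with $\rho_t(\gamma)=\rho_0(\gamma)$ for every $t$, because $\gamma$ lies in a vertex group that is either left untouched or conjugated by an element commuting with $\rho_0(\gamma)$. Your own sentence records this (``the holonomy $\rho_t(\gamma)=\rho_0(\gamma)$ is unchanged'') and then asserts that $\rho_t(\gamma)$ nevertheless ``acquires a non-trivial unipotent block'', which is contradictory: no bending, unipotent or otherwise, can change the Jordan type of the edge holonomy. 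To produce a parabolic component one must deform the restriction of $\rho_0$ to the subsurface(s) adjacent to $\gamma$ so that the boundary holonomy itself moves from $\lambda\cdot\mathrm{Id}$ to $\lambda u$ with $u$ a non-trivial unipotent, and simultaneously match the deformation on the other side; only then does Proposition~\ref{prop:5.8} enter, to guarantee that the glued representation is still maximal. That relative deformation, and the verification that it does not leave $\hH$, is the actual content of the proof and is absent from your outline.

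Part (2) has a smaller but real gap. When $\gamma$ is separating it is a product of commutators in $\pi_1(S)$ (and bounds the adjacent subsurface, so it also dies in that subsurface's $\h_1$), hence every homomorphism to the abelian group $\SO(2)$ kills it; the ``character into $\SO(2)$'' you propose does not exist in that case. For $n\geq 3$ this is repairable --- take $\theta$ with dense image in $\SO(n)$ in the prescribed component of $\hom\big(\pi_1(S),\O(n)\big)$, which generically gives $\theta(\gamma)$ a non-trivial rotation block --- but for $n=2$ the components of $\hom\big(\pi_1(S),\O(2)\big)$ with trivial first Stiefel--Whitney class consist entirely of $\SO(2)$-valued representations, for which $\theta(\gamma)=1$ whenever $\gamma$ separates, no matter how $\theta$ is perturbed; and the $2g-3$ hybrid components of $\RM\big(\pi_1(S),\Sp(4,\RR)\big)$ contain no twisted diagonal at all, while the given $\gamma$ need not sit inside the diagonal piece of the hybrid model. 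These cases need deformations that leave the twisted-diagonal locus, which your argument does not provide.
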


This results indicates that understanding the structure of the space
of maximal representations is much more complicated than understanding
the structure of Hitchin components, since already the conjugacy
classes in which the holonomy of one element can lie in might differ
from connected component to connected component.

\section{Open questions and further directions}\label{subsec:questions}
In the previous sections we already mentioned some open questions regarding the quotients of 
higher Teichm\"uller spaces, their compactifications as well as their geometric significance. 
In this section we want to conclude our survey with mentioning some further directions in the study of 
higher Teichm\"uller spaces which to our knowledge have not yet been explored. 

\subsection{Positivity\index{positivity}, causality\index{causality} and other groups}
As we pointed out in \S~\ref{subsec:boundarymaps}
 an underlying common structure of higher Teichm\"uller spaces is 
that the homomorphisms in them admit equivariant boundary maps 
which satisfy some positivity or causality property. 

The relation between positive triples in the full flag variety
and the weaker notion of 
maximal triples in the space of Lagrangians  discussed in \S~\ref{subsubsec:symplectic} is very special. 
It
would be very interesting to discover weaker notions of positivity of
$k$-tuples in (partial) flag varieties which then extends to other
groups which are neither split real forms nor of Hermitian type. 
Such notions of positivity might lead to discovering higher
Teichm\"uller spaces for other Lie groups $G$, which are again characterized by
the existence of special boundary maps. 

A first family of groups to look at could be $G=\PO(p,q)$, which is of
Hermitian type if $(p,q) = (2,q)$ and a split real form if $(p,q) =
(n, n+1)$ or $(p,q) = (n,n)$.

Every time there is a notion of positivity or cyclic ordering, 
the images of boundary maps tend to be more regular, namely rectifiable circles.
This contrasts with the case of quasifuchsian deformations into $\PSL(2,\CC)$
of compact surface groups in $\PSL(2,\RR)$,
where in fact the limit set, or -- what is the same -- the image of the boundary map,
is a topological circle with Hausdorff dimension larger than 1, unless the deformed group 
is Fuchsian.\footnote{This is due to \cite{Bowen}:  see also the footnote on p.~76 of Fricke's address
in Chicago in 1893, \cite{Fricke}.}
This suggests to study the deformations of the homomorphism
\bqn
i\circ\rho:\pi_1(S)\to\gG(\CC)\,,
\eqn
where $i:G=\gG(\RR)^\circ\to\gG(\CC)$ is the natural inclusion and $\rho:\pi_1(S)\to G$ is 
either a maximal representation into a group of Hermitian type or a Hitchin representation 
into a real split Lie group.  Observe that $i\circ\rho$ is Anosov for a suitable pair of parabolic
subgroups and, as a result, small deformations of $i\circ\rho$ are as well.

\subsection{Coordinates and quantizations \\ for maximal representations}
Fock and Goncharov describe explicit coordinate for the space of positive representations.
 For $\PSL(n,\RR)$ these coordinates have a particular nice form. 
Based on the explicit coordinate system they describe the cluster variety structure and 
quantizations of the space of positive representations. 

It would be interesting to construct similar explicit coordinate
systems for the space of maximal representations, in particular when
$G = \Sp(2n,\RR)$. Theorem~\ref{thm:holonomy} gives a hint that
constructing coordinates for the space of maximal representations is
more involved. The structure of the coordinates also needs to be more
complicated as they have to model the singularities of the space of
maximal representations.

The additivity of the Toledo number on the other hand implies that the
space of maximal representations of a compact surface $S$ can be built
out of the space of maximal representations of a pair of pants.

Having coordinates at hand, one might also ask for quantizations of
the space of maximal representations or try to express the symplectic
form on the space of maximal representations explicitly in
coordinates.

\frenchspacing
\bibliographystyle{abbrv}

\vskip1cm
\printindex
\end{document}